\DeclarePairedDelimiter\abs{\lvert}{\rvert}%
\def\oversortoftilde#1{\mathop{\vbox{\m@th\ialign{##\crcr\noalign{\kern3\p@}%
      \sortoftildefill\crcr\noalign{\kern3\p@\nointerlineskip}%
      $\hfil\displaystyle{#1}\hfil$\crcr}}}\limits}
\def\sortoftildefill{$\m@th \setbox\z@\hbox{$\braceld$}%
  \braceld\leaders\vrule \@height\ht\z@ \@depth\z@\hfill\braceru$}
\DeclareMathOperator{\B}{\overline{B}}
\DeclareMathOperator{\Id}{Id}
\DeclareMathOperator{\Ker}{Ker}
\DeclareMathOperator{\AJ}{AJ}
\DeclareMathOperator{\lcm}{lcm}
\DeclareMathOperator{\Def}{Def}
\DeclareMathOperator{\alg}{alg}
\DeclareMathOperator{\an}{an}
\DeclareMathOperator{\ord}{ord}
\DeclareMathOperator{\Gal}{Gal}
\DeclareMathOperator{\Res}{Res}
\DeclareMathOperator{\Div}{Div}
\DeclareMathOperator{\Cusp}{Cusp}
\DeclareMathOperator{\Eis}{Eis}
\DeclareMathOperator{\Hom}{Hom}
\DeclareMathOperator{\Card}{Card}
\DeclareMathOperator{\Frob}{Frob}
\DeclareMathOperator{\SL}{SL}
\DeclareMathOperator{\GL}{GL}
\DeclareMathOperator{\PSL}{PSL}
\DeclareMathOperator{\rk}{rk}
\newcommand{\cf}{\textit{cf. }}
\newcommand{\ie}{\textit{i.e. }}
\newcommand{\eg}{\textit{e.g. }}
\theoremstyle{definition}
\newtheorem{rem}{Remark}[section]
\newtheorem{rems}{Remarks}
\theoremstyle{plain}
\newtheorem{thm}{Theorem}[section]
\newtheorem{lem}[thm]{Lemma}
\newtheorem{prop}[thm]{Proposition}
\title{Mixed modular symbols and the generalized cuspidal $1$-motive}
\author{Emmanuel Lecouturier}
\begin{document}
\maketitle
\begin{abstract}
We define and study the space of \textit{mixed modular symbols} for a given finite index subgroup $\Gamma$ of $\SL_2(\mathbf{Z})$. This is an extension of the usual space of modular symbols, which in some cases carries more information about Eisenstein series. We make use of mixed modular symbols to construct some $1$-motives related to the generalized Jacobian of modular curves. In the case $\Gamma = \Gamma_0(p)$ for some prime $p$, we relate our construction to the work of Ehud de Shalit on $p$-adic periods of $X_0(p)$.
\end{abstract}

\section{Introduction}\label{section_introduction}
\subsection*{Overview and motivations}\label{intro_overview}
Consider the torsion free abelian group $\mathcal{M}$ generated by the symbols $\{\alpha, \beta\}$ where $(\alpha, \beta) \in \mathbf{P}^1(\mathbf{Q})^2$ with the following relations
$$\{\alpha, \beta\}+\{\beta, \gamma\}+\{\gamma, \alpha\}=0$$
for $(\alpha, \beta, \gamma) \in \mathbf{P}^1(\mathbf{Q})^3$. We denote by $\GL_2^+(\mathbf{Q})$ the subgroup of $\GL_2(\mathbf{Q})$ consisting of positive determinant matrices. We let $\GL_2^+(\mathbf{Q})$ act on the left on $\mathbf{P}^1(\mathbf{Q})$. If $\Gamma$ is a finite index subgroup of $\SL_2(\mathbf{Z})$, we denote by $\mathcal{M}_{\Gamma}$ the largest torsion-free quotient of $\mathcal{M}$ on which $\Gamma$ acts trivially. This group is usually called the group of modular symbols of level $\Gamma$ (and weight $2$). The group $\mathcal{M}_{\Gamma}$ is canonically isomorphic to the relative homology group $H_1(X_{\Gamma}, \mathcal{C}_{\Gamma}, \mathbf{Z})$, where $X_{\Gamma}$ is the compact modular curve associated to $\Gamma$ and $\mathcal{C}_{\Gamma}$ is the set of cusps of $X_{\Gamma}$. By intersection duality, we have a canonical isomorphism $\mathcal{M}_{\Gamma} \xrightarrow{\sim} \Hom_{\mathbf{Z}}(H_1(Y_{\Gamma}, \mathbf{Z}), \mathbf{Z})$ where $Y_{\Gamma} = X_{\Gamma} - \mathcal{C}_{\Gamma}$ is the open modular curve. 

If $\Gamma$ is a congruence subgroup, there is a Hecke-equivariant injective map of $\mathbf{R}$-vector spaces
$$p_{\Gamma} \otimes \mathbf{R}: H_1(Y_{\Gamma}, \mathbf{Z}) \otimes_{\mathbf{Z}} \mathbf{R} \hookrightarrow \Hom_{\mathbf{C}}(M_2(\Gamma), \mathbf{C})$$
given by $c \otimes 1 \mapsto \left(f \mapsto \int_{c} 2i\pi f(z)dz\right)$. Here, $M_2(\Gamma)$ is the complex vector space of modular forms of weight $2$ and level $\Gamma$ (this includes Eisenstein series). The map $p_{\Gamma}$ is not surjective in general, since the dimension of the left hand side is $2g(\Gamma)+c(\Gamma)-1$ whereas the dimension of the right hand side is $2g(\Gamma)+2\cdot(c(\Gamma)-1)$, where $g(\Gamma)$ (resp. $c(\Gamma)$) is the genus (resp. the number of cusps) of $X_{\Gamma}$. Dually one copy of Eisenstein series is missing to the space $\mathcal{M}_{\Gamma} \otimes_{\mathbf{Z}} \mathbf{C}$, which is a well-known Hodge theoretic phenomenon for non-projective smooth curves.

In this note, we define an extension $\tilde{\mathcal{M}}_{\Gamma}$ of $\mathcal{M}_{\Gamma}$, called the space of \textit{mixed modular symbols} of level $\Gamma$, with rank equal to $2\dim_{\mathbf{C}} M_2(\Gamma)$. The definition does not require any assumption on $\Gamma$ (in particular, it could be a non-congruence subgroup of $\SL_2(\mathbf{Z})$). The map $p_{\Gamma}$ extends to a Hecke-equivariant map $\tilde{p}_{\Gamma} \otimes \mathbf{R} : \tilde{\mathcal{M}}_{\Gamma} \otimes_{\mathbf{Z}} \mathbf{R} \rightarrow  \Hom_{\mathbf{C}}(M_2(\Gamma), \mathbf{C})$. The construction makes use of a cocycle construction by Glenn Stevens \cite{Stevens_book}, and involves special values of $L$-functions. 

We define and study various objects related to the space of mixed modular symbols, namely Hecke operators, the complex conjugation, Manin symbols, intersection duality and its relation with the extended Petersson pairing of Don Zagier \cite{Zagier_Rankin} and Vinsentiu Pasol and Alexandru A. Popa \cite{Popa_Haberland}, relation with generalized Jacobians and the associated $\ell$-adic Galois representations. We also study in more details the particular case $\Gamma = \Gamma_0(p)$ were $p$ is prime. In this case, we relate our construction to the one of Ehud de Shalit on generalized $p$-adic periods \cite{deShalit}. This will be used in our forthcoming work on the Mazur-Tate conjecture in conductor $p$ \cite{Lecouturier_MT}. We now describe in more details our main results.

\subsection{Definition of mixed modular symbols}\label{intro_def_mixed_modsymb}
Consider the torsion-free abelian group $\tilde{\mathcal{M}}$ generated by the symbols $\{g,g'\}$ where $(g,g') \in \SL_2(\mathbf{Z})^2$ with the following relations:
\begin{enumerate}
\item $\{g,g'\}+\{g',g''\}+\{g'',g\}=0$ for all $(g,g',g'') \in \SL_2(\mathbf{Z})^3$;
\item $\{g,g'\}-\{\epsilon_1g, \epsilon_2 g'\}=0$ for all $(g,g') \in \SL_2(\mathbf{Z})^2$ and $(\epsilon_1,\epsilon_2)\in \{1,-1\}^2$; 
\item $\{g,gT^n\}-n\cdot \{g,gT\}=0$ for all $g\in \SL_2(\mathbf{Z})$, $n\in \mathbf{Z}$. In this note, we let $T = \begin{pmatrix} 1 & 1 \\ 0 & 1 \end{pmatrix}$.
\end{enumerate}
There is a left action of $\SL_2(\mathbf{Z})$ on $\tilde{\mathcal{M}}$, given by $g \cdot \{g',g''\} = \{gg',gg''\}$ for all $(g,g',g'') \in \SL_2(\mathbf{Z})^3$. We let $\tilde{\mathcal{M}}_{\Gamma}$ be the largest torsion-free quotient of $\tilde{\mathcal{M}}$ on which $\Gamma$ acts trivially. There is a surjective group homomorphism 
$$\pi_{\Gamma} : \tilde{\mathcal{M}}_{\Gamma} \rightarrow \mathcal{M}_{\Gamma}$$
given by $\pi_{\Gamma}(\{g,g'\}) = \{g\infty, g'\infty\}$. We show in Proposition \ref{rank_computation} that $\Ker(\pi_{\Gamma})$ is isomorphic to the cokernel of the group homomorphism $\mathbf{Z}\rightarrow \mathbf{Z}[\mathcal{C}_{\Gamma}]$ given by $1 \mapsto \frac{1}{d_{\Gamma}} \sum_{c \in \mathcal{C}_{\Gamma}} e_c\cdot [c]$, where $e_c$ is the width of the cusp $c$ and $d_{\Gamma} = \gcd(e_c)_{c \in \mathcal{C}_{\Gamma}}$. In particular, $\tilde{\mathcal{M}}_{\Gamma}$ has the required rank. There is a boundary map $\partial_{\Gamma} : \tilde{\mathcal{M}}_{\Gamma} \rightarrow \mathbf{Z}[\mathcal{C}_{\Gamma}]^0$ (the upper $0$ meaning the augmentation subgroup), whose kernel contains $H_1(Y_{\Gamma}, \mathbf{Z}) \hookrightarrow \tilde{\mathcal{M}}_{\Gamma}$ with finite index equal to $\frac{1}{d_{\Gamma}}\prod_{c \in \mathcal{C}_{\Gamma}} e_c$.

\subsection{The generalized period map $\tilde{p}_{\Gamma}$}\label{intro_generalized_periods}

In \S \ref{section_period_iso}, we extend the period map $p_{\Gamma}$ to a group homomorphism $$\tilde{p}_{\Gamma} : \tilde{\mathcal{M}}_{\Gamma} \rightarrow \Hom_{\mathbf{C}}(M_2(\Gamma), \mathbf{C}) \text{ .}$$
One of the main properties of $\tilde{p}_{\Gamma}$ is that it encodes the special values of $L$-functions. More precisely, for any $g \in \SL_2(\mathbf{Z})$ and any $f \in M_2(\Gamma)$, we have $\tilde{p}_{\Gamma}(\{g,gS\}) = L(f \mid g,1)$ where $S = \begin{pmatrix} 0 & -1 \\ 1 & 0 \end{pmatrix}$ and as usual $(f \mid g)(z) = (cz+d)^{-2}\cdot f(\frac{az+b}{cz+d})$. The map $p_{\Gamma} \otimes \mathbf{R}$ is not always an isomorphism (equivalently injective), for instance if $\Gamma$ is a principal congruence subgroup of level divisible by $6$. Nevertheless, we prove the following result.
\begin{thm}\label{intro_thm_period}
The map $\tilde{p}_{\Gamma} \otimes \mathbf{R}$ is an isomorphism if $\Gamma$ is a congruence subgroup of level $p^n$ for some prime $p$ and integer $n \geq 1$.
\end{thm}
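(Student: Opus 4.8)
The plan is to exploit that both $\tilde{\mathcal{M}}_{\Gamma} \otimes_{\mathbf{Z}} \mathbf{R}$ and $\Hom_{\mathbf{C}}(M_2(\Gamma), \mathbf{C})$ have the same real dimension $2\dim_{\mathbf{C}} M_2(\Gamma) = 2\bigl(g(\Gamma) + c(\Gamma) - 1\bigr)$, the first by Proposition~\ref{rank_computation}. Hence it suffices to prove that $\tilde{p}_{\Gamma} \otimes \mathbf{R}$ is injective, or equivalently surjective. Since $\tilde{p}_{\Gamma}$ is Hecke-equivariant, I would first decompose the target along the Hecke-stable splitting $M_2(\Gamma) = S_2(\Gamma) \oplus \Eis_2(\Gamma)$, giving $\Hom_{\mathbf{C}}(M_2(\Gamma), \mathbf{C}) = \Hom_{\mathbf{C}}(S_2(\Gamma), \mathbf{C}) \oplus \Hom_{\mathbf{C}}(\Eis_2(\Gamma), \mathbf{C})$, and treat the two summands separately.

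For the cuspidal summand the claim is insensitive to the level. On cusp forms the integral defining $\tilde{p}_{\Gamma}$ depends only on the endpoints $g\infty, g'\infty$, so the restriction of $\tilde{p}_{\Gamma}$ to $S_2(\Gamma)$ factors through $\pi_{\Gamma} : \tilde{\mathcal{M}}_{\Gamma} \rightarrow \mathcal{M}_{\Gamma}$ and agrees, via the inclusion $H_1(X_{\Gamma}, \mathbf{Z}) \hookrightarrow \mathcal{M}_{\Gamma}$, with the classical cuspidal period map. The Eichler--Shimura isomorphism $H_1(X_{\Gamma}, \mathbf{Z}) \otimes \mathbf{R} \xrightarrow{\sim} \Hom_{\mathbf{C}}(S_2(\Gamma), \mathbf{C})$ (both of real dimension $2g(\Gamma)$), composed with a lift of $H_1(X_{\Gamma})$ to $\tilde{\mathcal{M}}_{\Gamma} \otimes \mathbf{R}$ along the surjection $\pi_{\Gamma}$, shows that the cuspidal projection of $\tilde{p}_{\Gamma} \otimes \mathbf{R}$ is already surjective. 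By Hecke-equivariance and the disjointness of the cuspidal and Eisenstein Hecke eigensystems, the image of $\tilde{p}_{\Gamma} \otimes \mathbf{R}$ is the direct sum of its cuspidal and Eisenstein projections, so it remains only to prove surjectivity onto $\Hom_{\mathbf{C}}(\Eis_2(\Gamma), \mathbf{C})$.

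For the Eisenstein summand I would use the boundary exact sequence $0 \rightarrow H_1(Y_{\Gamma}, \mathbf{Z}) \rightarrow \tilde{\mathcal{M}}_{\Gamma} \xrightarrow{\partial_{\Gamma}} \mathbf{Z}[\mathcal{C}_{\Gamma}]^0 \rightarrow 0$ (valid up to finite index by the discussion preceding Proposition~\ref{rank_computation}) together with the special value formula $\tilde{p}_{\Gamma}(\{g, gS\}) = L(f \mid g, 1)$. Since $p_{\Gamma} \otimes \mathbf{R}$ is injective, the restriction of $\tilde{p}_{\Gamma} \otimes \mathbf{R}$ to $H_1(Y_{\Gamma}) \otimes \mathbf{R}$ already contributes a real subspace of dimension $c(\Gamma) - 1$ to $\Hom_{\mathbf{C}}(\Eis_2(\Gamma), \mathbf{C})$, the remaining $2g(\Gamma)$ dimensions of its $\bigl(2g(\Gamma) + c(\Gamma) - 1\bigr)$-dimensional image lying in the cuspidal part. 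Surjectivity onto the $2\bigl(c(\Gamma)-1\bigr)$-dimensional Eisenstein part is therefore equivalent to the statement that the classes $\{g, gS\}$ with nonzero boundary, evaluated through the $L$-values $L(f \mid g, 1)$ for $f$ ranging over a basis of $\Eis_2(\Gamma)$, span a complement of the image of $H_1(Y_{\Gamma})$. Concretely this reduces to the nonvanishing of a regulator-type determinant built from the Eisenstein $L$-values $L(f \mid g, 1)$ as $g$ runs over a set of coset representatives indexing the cusps.

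The main obstacle is precisely this nonvanishing, and it is here that the hypothesis on the level enters. For level $p^n$ the space $\Eis_2(\Gamma)$ and its Hecke action are governed by Dirichlet characters of $p$-power conductor, and the values $L(f \mid g, 1)$ factor through products of Dirichlet $L$-values at $s = 1$ and $s = 0$ (equivalently, generalized Bernoulli numbers) attached to such characters. The required determinant is then essentially a product of these values over a clean indexing set, and I would establish its nonvanishing from Dirichlet's theorem $L(\chi, 1) \neq 0$ together with the vanishing pattern of $L(\chi, 0) = -B_{1,\chi}$, using that for a prime-power modulus the relevant characters occur without the cancellations or trivial zeros that arise when several distinct primes ramify. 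This both isolates the arithmetic input of the theorem and explains the failure recorded in the introduction for principal congruence subgroups of level divisible by $6$, where such cancellations make the Eisenstein regulator degenerate.
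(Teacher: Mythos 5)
Your outline reproduces the skeleton of the paper's proof of Theorem \ref{thm_iso_tilde_p}: equality of dimensions via Proposition \ref{rank_computation}; surjectivity of the cuspidal projection because it factors through $\pi_{\Gamma}$ and the classical period map on $H_1(X_{\Gamma},\mathbf{Z})$ is an isomorphism onto $\Hom_{\mathbf{C}}(S_2(\Gamma),\mathbf{C})$ (the paper's Lemma \ref{restriction_Eisenstein_surjective}); a splitting of the Eisenstein dual into a ``residue'' half that is hit by $H_1(Y_{\Gamma},\mathbf{Z})$ through the small circles $\{g,gT\}$, with no hypothesis on the level (the paper's Lemma \ref{surjectivity_part1}, phrased there via the decomposition $\mathcal{E}_2(\Gamma)=\mathcal{E}\oplus i\mathcal{E}$ into real and imaginary parts, which is equivalent to your $H_1(Y_{\Gamma})$-versus-boundary split); and finally a regulator determinant of Eisenstein special values whose nonvanishing at prime-power level is the arithmetic heart (the paper's Lemma \ref{surjectivity_part2}). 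Your diagnosis of the failure for composite level --- vanishing Euler factors $1-\chi(q)$ for primes $q$ dividing the level but not the conductor of $\chi$ --- is also the correct explanation of Remark \ref{rem_period_non_iso}.

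Two points, however, are genuine gaps. First, your reduction ``by Hecke-equivariance and the disjointness of the cuspidal and Eisenstein Hecke eigensystems, the image is the direct sum of its projections'' is not available: the paper defines a Hecke action on $\tilde{\mathcal{M}}_{\Gamma}$ only for $\Gamma=\Gamma_0(N)$ or $\Gamma_1(N)$ (and only after inverting $2N$), whereas the theorem concerns an arbitrary congruence subgroup of level $p^n$ and the proof reduces to $\Gamma(p^n)$, where no such action has been constructed. The step is also unnecessary: since $\rho_{\Cusp,\Gamma}\circ(\tilde{p}_{\Gamma}\otimes\mathbf{R})$ factors through $\pi_{\Gamma}$, elementary linear algebra already reduces the statement to surjectivity of $\rho_{\Eis,\Gamma}\circ(\tilde{p}_{\Gamma}\otimes\mathbf{R})$, which is how the paper proceeds. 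Second, the crux --- nonvanishing of your determinant --- is asserted rather than proved, and your description of it (products of $L(\chi,1)$ and $L(\chi,0)=-B_{1,\chi}$) does not match what must actually be computed: after reducing to $\Gamma(p^n)$, using Stevens' basis $2i\pi\phi_{(a,b)}$ and his explicit evaluation formula, and taking real parts, the matrix entries are $-\log\,\abs{1-e^{2i\pi x^{-1}y/p^n}}$ (only \emph{even} characters survive the quotient by $\pm 1$, so no $s=0$ values appear). The prime-power hypothesis produces a block-diagonal structure over cosets of the Borel subgroup, and each block is a group determinant which, by Washington's Lemma 5.26 and the class number formula, factors as $\prod_{\chi}c_{\chi}\cdot L(\chi,1)$ with $c_{\chi}\neq 0$, nonzero by Dirichlet. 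Supplying this explicit evaluation is the substance of the theorem; without it your argument identifies the right arithmetic input but does not yet constitute a proof.
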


If $\Gamma=\Gamma_0(N)$ or $\Gamma = \Gamma_1(N)$ for some integer $N \geq 1$, let $\mathbb{T}$ be the Hecke algebra over $\mathbf{Z}$ acting faithfully on $M_2(\Gamma)$. We define in \S \ref{paragraph_Hecke} an action of $\mathbb{T}$ on $\tilde{\mathcal{M}}_{\Gamma} \otimes_{\mathbf{Z}} \mathbf{Z}[\frac{1}{2N}]$. More precisely, if $n \geq 1$ is an integer prime to $2N$, then the Hecke operator $T_n$ stabilises $\tilde{\mathcal{M}}_{\Gamma}$. However, it turns out that if $p$ is a prime dividing $N$ (resp. $2$) then $U_p$ (resp. $T_2$ or $U_2$) sends $\tilde{\mathcal{M}}_{\Gamma}$ into $\frac{1}{p}\cdot \tilde{\mathcal{M}}_{\Gamma}$, so it may not stabilises $\tilde{\mathcal{M}}_{\Gamma}$ in general. The reason for this non-integrality phenomenon is that there is a priori no direct way to define a general double coset action on $\tilde{\mathcal{M}}_{\Gamma}$: while $\GL_2^+(\mathbf{Q})$ acts naturally on $\mathbf{P}^1(\mathbf{Q})$, it does not acts naturally on $\SL_2(\mathbf{Z})$. To resolve this issue, we embed $\tilde{\mathcal{M}}_{\Gamma}$ as a lattice inside a $\mathbf{Q}$-vector space on which there is a natural action of the double coset operators (this vector space is defined in a similar way as $\tilde{\mathcal{M}}_{\Gamma}$, replacing $\SL_2(\mathbf{Z})$ by $\GL_2^+(\mathbf{Q})$). We also define similarly an action of the Atkin--Lehner involution $W_N$ on $\tilde{\mathcal{M}}_{\Gamma} \otimes_{\mathbf{Z}} \mathbf{Z}[\frac{1}{N}]$. In \S \ref{paragraph_complex_conjugation}, we define an action of the complex conjugation on $\tilde{\mathcal{M}}_{\Gamma}$. The action of Hecke operators and the complex conjugation are compatible with the projection $\pi_{\Gamma}$ and with the embedding $H_1(Y_{\Gamma}, \mathbf{Z}) \hookrightarrow \tilde{\mathcal{M}}_{\Gamma}$.

\subsection{Manin symbols}\label{intro_Manin_symbols}
In \S \ref{paragraph_Manin}, we define Manin symbols in $\tilde{\mathcal{M}}_{\Gamma}$. Let $$\tilde{\xi}_{\Gamma} : \mathbf{Z}[\Gamma\backslash \PSL_2(\mathbf{Z})]\rightarrow \tilde{\mathcal{M}}_{\Gamma} $$ be the map defined by $\tilde{\xi}_{\Gamma}(\Gamma g) = \{g,gS\}$ for all $g\in \PSL_2(\mathbf{Z})$. This is the Manin map, and the element $\tilde{\xi}_{\Gamma}(\Gamma g)$ is the Manin symbol in $\tilde{\mathcal{M}}_{\Gamma}$ associated to $\Gamma g$. We give a simple and concrete description of the image of $\tilde{\xi}_{\Gamma}$. In particular, we prove the following.
\begin{thm}\label{intro_thm_Manin}
 Let $p\geq 5$ be a prime and $n \in \mathbf{N}$. If $\Gamma=\Gamma_1(p^n)$ or $\Gamma = \Gamma_0(p^n)$, then the image of $\tilde{\xi}_{\Gamma}$ has index dividing $3$, the divisibility being strict of and only if $p \equiv 1 \text{ (modulo 3}\text{)}$ and $\Gamma = \Gamma_0(p^n)$.
\end{thm}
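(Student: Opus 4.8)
The plan is to compute the finite cokernel $C := \tilde{\mathcal{M}}_{\Gamma}/\mathrm{im}(\tilde{\xi}_{\Gamma})$ and to show that it is cyclic of order $\gcd(3,\mu_{\Gamma})$, where $\mu_{\Gamma}=[\PSL_2(\mathbf{Z}):\overline{\Gamma}]$. First I would reduce everything to the cusps. Writing $s(g)=\{g,gS\}=\tilde{\xi}_{\Gamma}(\Gamma g)$ and $t(g)=\{g,gT\}$, relations (1)–(3) show that $\tilde{\mathcal{M}}_{\Gamma}$ is generated by the $s(g)$ and the $t(g)$, that $\mathrm{im}(\tilde{\xi}_{\Gamma})$ is exactly the subgroup generated by the $s(g)$, and that $\pi_{\Gamma}(t(g))=0$ while $t(gT)=t(g)$; hence $t$ factors through a map $\mathbf{Z}[\mathcal{C}_{\Gamma}]\to\Ker(\pi_{\Gamma})$, which by Proposition \ref{rank_computation} is surjective with kernel generated by $\tfrac{1}{d_{\Gamma}}\sum_c e_c[c]$. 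Since the classical Manin symbols $\pi_{\Gamma}(s(g))$ already generate $\mathcal{M}_{\Gamma}$ (Manin's theorem), applying $\pi_{\Gamma}$ gives $[\tilde{\mathcal{M}}_{\Gamma}:\mathrm{im}(\tilde{\xi}_{\Gamma})]=|C|$ and exhibits $C$ as a quotient of $\Ker(\pi_{\Gamma})$ generated by the images of the $t(c)$, $c\in\mathcal{C}_{\Gamma}$.

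Next I would produce the defining relations. Set $\tau=TS$, of order $3$ in $\PSL_2(\mathbf{Z})$. Expanding the path relation $\{g,g\tau\}+\{g\tau,g\tau^2\}+\{g\tau^2,g\tau^3\}=0$ by means of the identity $\{x,x\tau\}=t(x)+s(xT)$ and reducing modulo $\mathrm{im}(\tilde{\xi}_{\Gamma})$ yields the triangle relation
\[
t(g)+t(g\tau)+t(g\tau^2)\equiv 0 \pmod{\mathrm{im}(\tilde{\xi}_{\Gamma})}\qquad(g\in\PSL_2(\mathbf{Z})).
\]
Together with Proposition \ref{rank_computation} this presents $C$ as a quotient of
\[
Q=\mathbf{Z}[\mathcal{C}_{\Gamma}]\big/\big\langle\,[g]+[g\tau]+[g\tau^2],\ \tfrac{1}{d_{\Gamma}}\textstyle\sum_c e_c[c]\,\big\rangle,
\]
where $[g]$ is the cusp of $g$ and the first family ranges over all $g$. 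For the divisibility by $3$ I would work modulo $3$: over $\mathbf{F}_3$ one has $1+\tau+\tau^2=(1-\tau)^2$, and each triangle relation has augmentation $3\equiv 0$, so these relations lie in the augmentation ideal of $\mathbf{F}_3[\mathcal{C}_{\Gamma}]$. The key combinatorial point is that they fill it: using that $\PSL_2(\mathbf{Z})=\langle S\rangle*\langle\tau\rangle$ acts transitively on $P=\Gamma\backslash\PSL_2(\mathbf{Z})$, I would show that $\mathbf{Z}[\mathcal{C}_{\Gamma}]$ modulo the triangle relations is cyclic and detected by the augmentation, giving $Q'\cong\mathbf{Z}/3$. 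Hence $Q$, and therefore $C$, is cyclic of order dividing $3$; moreover $C=0$ as soon as the total relation is a generator of $Q'$.

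To determine the exact order, note that the augmentation of the total relation is $\tfrac{1}{d_{\Gamma}}\sum_c e_c=\mu_{\Gamma}/d_{\Gamma}$, and for $\Gamma=\Gamma_0(p^n)$ or $\Gamma_1(p^n)$ the cusp $\infty$ has width $1$, so $d_{\Gamma}=1$. Under the augmentation isomorphism $Q'\cong\mathbf{Z}/3$ the total relation maps to $\mu_{\Gamma}\bmod 3$, so $Q=0$ when $3\nmid\mu_{\Gamma}$ and $Q\cong\mathbf{Z}/3$ when $3\mid\mu_{\Gamma}$. Since $\tau$ acts on $P$ with $\nu_3(\Gamma)$ fixed points and all remaining orbits of size $3$, one has $\mu_{\Gamma}\equiv\nu_3(\Gamma)\pmod 3$. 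For $p\geq 5$, $n\geq 1$, the group $\Gamma_1(p^n)$ is torsion-free, so $\nu_3=0$ and $3\mid\mu_{\Gamma}$; while $\nu_3(\Gamma_0(p^n))=1+\legendre{-3}{p}$ equals $2$ if $p\equiv 1$ and $0$ if $p\equiv 2\pmod 3$. Thus $3\mid\mu_{\Gamma}$ precisely outside the case $\Gamma=\Gamma_0(p^n)$ with $p\equiv 1\pmod 3$. For the matching lower bound in the cases $\nu_3=0$ I would use the homomorphism $\lambda\colon\PSL_2(\mathbf{Z})\twoheadrightarrow\mathbf{Z}/3$ with $\lambda(S)=0$, $\lambda(\tau)=1$: the cocycle $\{g,g'\}\mapsto\lambda(g')-\lambda(g)$ respects (1)–(3), kills every $s(g)$ and sends $t(g)\mapsto 1$, and when $\nu_3=0$ (so $\Gamma$ has no order-$3$ torsion) it descends to a nonzero homomorphism $C\to\mathbf{Z}/3$, forcing $|C|=3$.

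The main obstacle is the combinatorial heart of the second step: proving that the triangle relations exactly fill the mod-$3$ augmentation ideal, equivalently $Q'\cong\mathbf{Z}/3$, and that $C$ has no relations beyond the triangle and total ones (so that $C=Q$). This is where the explicit coset geometry enters — the identification $P\cong\mathbf{P}^1(\mathbf{Z}/p^n)$ with its $S$- and $\tau$-actions for $\Gamma_0(p^n)$, and the degree $(p-1)$ cover for $\Gamma_1(p^n)$ — together with the precise count of order-$3$ fixed points through $\legendre{-3}{p}$ and the delicate verification that $\lambda$ descends exactly in the torsion-free case. Handling $\Gamma_1(p^n)$ and general $n$ uniformly, where the number of cusps grows while the answer stays constant, is the part requiring the most care.
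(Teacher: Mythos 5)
Your reduction is exactly the paper's: the passage to the cusp lattice via Manin's theorem and Proposition \ref{rank_computation} is part (i) of Theorem \ref{generalized_Manin_thm}, your triangle relation (note $\tau=TS=U$, so $[g\tau]=\Gamma g1$ and $[g\tau^2]=\Gamma g0$, and your relations are the elements $[\Gamma g\infty]+[\Gamma g1]+[\Gamma g0]$) is Lemma \ref{lemma_Manin_relations_cusps}, and your endgame via $\mu_\Gamma \bmod 3$ and $\nu_3$ matches the paper's criterion that the index is one iff there is a degree-one $U$-invariant or $3\nmid\sum_c e_c$. But the proposal leaves unproved the one step carrying all the arithmetic content: the claim $Q'\cong\mathbf{Z}/3$, i.e.\ that the triangle relations generate the \emph{entire} subgroup of elements of $\mathbf{Z}[\mathcal{C}_\Gamma]$ of degree divisible by $3$. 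You flag this as the main obstacle, but the mechanism you hint at --- transitivity of the $\PSL_2(\mathbf{Z})=\langle S\rangle * \langle\tau\rangle$-action on $\Gamma\backslash\PSL_2(\mathbf{Z})$ --- cannot suffice, because transitivity holds for every finite-index $\Gamma$ while the filling statement is sharply level-dependent: for $\Gamma = \Gamma_0(2)$ every triangle relation equals $[\infty]+2[0]$, so the relations span a rank-one subgroup of the rank-two cusp lattice and the cokernel is \emph{infinite} (consistent with Remark \ref{Manin_rmk}, where the index is infinite for all even $N$, and with the fact that the case of odd composite $N$ is open). The paper's Lemma \ref{lemma_g0_g1_goo} supplies exactly this content by a hands-on additive argument with cusp symbols on $\Gamma_1(p^n)$: one shows $[0,x]+[0,y]+[0,x\pm y]$ lies in the image when the entries are units, deduces $[0,x]-[0,y]\in\mathrm{Im}$ and $3[0,1]\in\mathrm{Im}$ (using $p>3$ at the step $x=2$, $y=1$, and $p$ odd for halving), and then reaches the ramified cusps $\mathcal{C}_\Gamma^{(r)}$, $r\geq 1$, by an explicit matrix choice. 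Some such computation in $(\mathbf{Z}/p^n\mathbf{Z})^{\times}$ is unavoidable, and without it your argument establishes only that the index divides some unspecified quantity.

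Two secondary points. First, your $Q$ replaces the degree-one relations coming from $U$-fixed cosets (order-$3$ elliptic classes) by their triples $3[g]$, so a priori $C$ is only a quotient of $Q$; this turns out to be harmless because $\mu_\Gamma\equiv\nu_3 \pmod 3$, so in the elliptic case ($\Gamma_0(p^n)$, $p\equiv 1 \bmod 3$) the total relation already has augmentation $\not\equiv 0$ and kills $Q$, while when $\nu_3=0$ Manin's Theorem \ref{Manin_thm} shows the relations of $C$ are precisely your triangle and total relations, giving $C=Q$ --- but this needs to be said. Second, your $\lambda$-cocycle for the lower bound has a genuine subtlety you only half-acknowledge: the map $\{g,g'\}\mapsto\lambda(g')-\lambda(g)$ respects relations (1)--(3) and is $\SL_2(\mathbf{Z})$-invariant, hence factors through the $\Gamma$-coinvariants, but $\tilde{\mathcal{M}}_\Gamma$ is the largest \emph{torsion-free} quotient, and a homomorphism to $\mathbf{Z}/3$ factors through it only if it kills all $3$-torsion of the coinvariants; the classes $\{1,\gamma\}$ with $\gamma^3=\pm 1$ are $3$-torsion on which your cocycle takes the value $\lambda(\gamma)=\pm 1$, so descent fails exactly in the elliptic case, and in the torsion-free case you would still have to verify there is no other $3$-torsion. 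Once the filling lemma is in hand this detour is redundant anyway: the degree-mod-$3$ map on $\mathbf{Z}[\mathcal{C}_\Gamma]$, which descends to $\mathbf{Z}[\mathcal{C}_\Gamma]_0$ precisely when $3\mid\mu_\Gamma/d_\Gamma$, already furnishes the lower bound, which is how the paper concludes.
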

If $\Gamma = \Gamma_1(N)$ or $\Gamma = \Gamma_0(N)$ for some odd $N \geq 1$, we were not able to determine when the image of $\tilde{\xi}_{\Gamma}$ has finite index in $\tilde{\mathcal{M}}_{\Gamma}$ (we know it is not of finite index when $N$ is even). The question seems to be related to additive number theory in $(\mathbf{Z}/N\mathbf{Z})^{\times}$. See Remark \ref{Manin_rmk} for more details. 

We also give a description of the Manin relations, \ie of the group $\Ker(\tilde{\xi}_{\Gamma})$. It turns out that the $2$-terms Manin relations $\tilde{\xi}(\Gamma g)+\tilde{\xi}(\Gamma gS)=0$ are satisfied, but the $3$-terms Manin relations $\tilde{\xi}(\Gamma g)+\tilde{\xi}(\Gamma gU)+\tilde{\xi}(\Gamma gU^2)=0$, where $U = \begin{pmatrix} 1 & -1 \\ 1 & 0 \end{pmatrix}$, are not satisfied in general. Instead, we have to replace them with a subgroup of relations which is described in Theorem \ref{generalized_Manin_thm}.

\subsection{Duality theory and the generalized Petersson product}\label{intro_duality}
In \S \ref{paragraph_Petersson}, we study an anti-symmetric bilinear pairing $\langle \cdot , \cdot \rangle : \tilde{\mathcal{M}}_{\Gamma}^* \times \tilde{\mathcal{M}}_{\Gamma}^* \rightarrow \frac{1}{6}\cdot \mathbf{Z}$ defined by the formula
\begin{align*}
\langle \varphi_1, \varphi_2 \rangle =  \frac{1}{6}\cdot \sum_{g \in \Gamma \backslash \SL_2(\mathbf{Z})} & \varphi_1(\{gS,g\}) \cdot \varphi_2(\{gTS,gT\}) - \varphi_1(\{gTS,gT\})\cdot \varphi_2(\{gS,g\})   \\&-4\cdot \varphi_1(\{g,gT\})\cdot\varphi_2(\{g,gS\})+4\cdot\varphi_1(\{g,gS\})\cdot \varphi_2(\{g,gT\}) \text{ ,}
\end{align*}
where  $\tilde{\mathcal{M}}_{\Gamma}^* = \Hom_{\mathbf{Z}}(\tilde{\mathcal{M}}_{\Gamma}, \mathbf{Z})$. 
By tensoring with $\mathbf{C}$, $\langle \cdot , \cdot \rangle$ extends to an anti-symmetric bilinear pairing on $\Hom_{\mathbf{Z}}(\tilde{\mathcal{M}}_{\Gamma}, \mathbf{C})$. The map $\tilde{p}_{\Gamma}$ induces a map $ \tilde{p}_{\Gamma}^* : M_2(\Gamma) \rightarrow \Hom_{\mathbf{Z}}(\tilde{\mathcal{M}}_{\Gamma}, \mathbf{C})$. Following Zagier, Pasol and Popa \cite{Popa_Haberland} extended the Pertersson pairing on $M_2(\Gamma)$, using a procedure of renormalization of divergent integrals. They generalize a formula of Klaus Haberland and Lo\"ic Merel as follows: for any $f_1$, $f_2$ $\in M_2(\Gamma)$, we have
$$
-8 i \pi^2 (f_1, f_2) = \frac{1}{[\SL_2(\mathbf{Z}):\Gamma]}\cdot  \langle \tilde{p}_{\Gamma}^*(f_1) , \overline{\tilde{p}_{\Gamma}^*(f_2)} \rangle \text{ .}
$$

We expect that $\langle \cdot , \cdot \rangle $ is $\mathbf{Z}$-valued and non-degenerate. More precisely, we expect that the determinant of the pairing $\langle \cdot , \cdot \rangle $ is $\frac{1}{d_{\Gamma}}\cdot \prod_{c \in \mathcal{C}_{\Gamma}} e_c$. In this direction, we prove the following result.

\begin{thm}\label{intro_thm_Petersson}
Assume that the cusps of $X_{\Gamma}$ are fixed by the complex conjugation (if $\Gamma = \Gamma_1(N)$, this is the case if and only if $N$ divides $2p$ for some prime $p$). Then $\langle \cdot , \cdot \rangle $ is perfect after inverting $2$ and the $\lcm$ of the widths of the cusps of $X_{\Gamma}$.
\end{thm}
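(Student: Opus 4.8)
The plan is to use the weight filtration on $\tilde{\mathcal{M}}_{\Gamma}$ together with the complex conjugation to reduce the perfectness of $\langle\cdot,\cdot\rangle$ to two independent block computations: a purely \emph{abelian} block (the intersection pairing on $H_1(X_{\Gamma},\mathbf{Z})$) and a purely \emph{cuspidal} block (a pairing governed by the widths $e_c$). Throughout, set $R = \mathbf{Z}[\tfrac12, \tfrac{1}{\lcm(e_c)}]$. First I would record the behaviour of $\langle\cdot,\cdot\rangle$ under the complex conjugation $\iota$ of \S\ref{paragraph_complex_conjugation}. From the explicit action of $\iota$ on the generators $\{g,g'\}$ and the defining formula one checks the anti-invariance $\langle \iota\varphi_1,\iota\varphi_2\rangle = -\langle\varphi_1,\varphi_2\rangle$, reflecting the fact that complex conjugation reverses the orientation underlying the intersection product. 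After inverting $2$ I decompose $\tilde{\mathcal{M}}_{\Gamma}^{*}\otimes R = M^{+}\oplus M^{-}$ into the $\pm1$-eigenspaces of $\iota$; anti-invariance forces $M^{+}$ and $M^{-}$ to be isotropic, so $\langle\cdot,\cdot\rangle$ restricts to a pairing $M^{+}\times M^{-}\to R$, and perfectness of $\langle\cdot,\cdot\rangle$ over $R$ is equivalent to perfectness of this restricted pairing.

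Next I bring in the weight filtration $W_{\bullet}$ on $\tilde{\mathcal{M}}_{\Gamma}$, and dually on $\tilde{\mathcal{M}}_{\Gamma}^{*}$, whose graded pieces are the toric part $\Ker(\pi_{\Gamma})$ of Proposition \ref{rank_computation}, the abelian part $H_1(X_{\Gamma},\mathbf{Z})$, and the lattice part $\mathbf{Z}[\mathcal{C}_{\Gamma}]^{0}$ (cut out by $\pi_{\Gamma}$ and $\partial_{\Gamma}$), of respective ranks $c-1$, $2g$, $c-1$. The defining formula shows that $\langle\cdot,\cdot\rangle$ has weight $-2$: the only possibly nonzero graded pairings are $\mathrm{gr}_{-2}\times\mathrm{gr}_{0}$ and $\mathrm{gr}_{-1}\times\mathrm{gr}_{-1}$, every other pairing of graded pieces vanishing for weight reasons. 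Since the cusps are fixed by $\iota$, the conjugation acts by $+1$ on the lattice piece $\mathbf{Z}[\mathcal{C}_{\Gamma}]^{0}$ and by $-1$ on the toric piece $\Ker(\pi_{\Gamma})$ (the Tate twist), while it acts with signature $(g,g)$ on $H_1(X_{\Gamma},\mathbf{Z})$. Consequently over $R$ the restricted pairing $M^{+}\times M^{-}$ is an orthogonal direct sum of a \emph{cuspidal block} $\mathbf{Z}[\mathcal{C}_{\Gamma}]^{0}\times\Ker(\pi_{\Gamma})\to R$ and an \emph{abelian block} $H_1(X_{\Gamma})^{+}\times H_1(X_{\Gamma})^{-}\to R$; it remains to prove each block is perfect over $R$.

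For the abelian block, the terms $\varphi_1(\{gS,g\})\varphi_2(\{gTS,gT\}) - \varphi_1(\{gTS,gT\})\varphi_2(\{gS,g\})$ reproduce, through the Manin-symbol presentation, the topological intersection pairing on $H_1(X_{\Gamma},\mathbf{Z})$; this is the Haberland--Merel formula underlying the generalized Petersson identity quoted in \S\ref{intro_duality}, and the resulting form is unimodular by Poincaré duality on the compact Riemann surface $X_{\Gamma}$. Because $\iota$ reverses its sign, $H_1(X_{\Gamma})^{+}$ and $H_1(X_{\Gamma})^{-}$ are complementary Lagrangians for this symplectic form, so the induced pairing $H_1(X_{\Gamma})^{+}\times H_1(X_{\Gamma})^{-}\to\mathbf{Z}[\tfrac12]$ is perfect. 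For the cuspidal block I would compute the terms $\pm 4\,\varphi_1(\{g,gT\})\varphi_2(\{g,gS\})$ explicitly: using the description of $\Ker(\pi_{\Gamma})$ as the cokernel of $\mathbf{Z}\to\mathbf{Z}[\mathcal{C}_{\Gamma}]$, $1\mapsto\tfrac{1}{d_{\Gamma}}\sum_c e_c\,[c]$, together with the formula for $\partial_{\Gamma}$, the block is identified with the natural width-weighted pairing between $\mathbf{Z}[\mathcal{C}_{\Gamma}]^{0}$ and $\mathbf{Z}[\mathcal{C}_{\Gamma}]/\langle\tfrac{1}{d_{\Gamma}}\sum_c e_c[c]\rangle$, whose determinant is $\pm\tfrac{1}{d_{\Gamma}}\prod_c e_c$. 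This is a unit in $R$, so the cuspidal block is perfect over $R$, and combining the two blocks proves the theorem.

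The main obstacle, I expect, lies in the second paragraph: verifying rigorously that $\langle\cdot,\cdot\rangle$ is genuinely of weight $-2$ for $W_{\bullet}$ — that is, that the abelian--cuspidal and toric--toric cross-terms in the explicit sum really vanish — and pinning down the normalizing constant. In particular one must show that the raw sum $\sum_{g}(\cdots)$ is always divisible by $3$, which I would deduce from the order-$3$ symmetry furnished by $U=TS$ in the Manin presentation, so that the prefactor $\tfrac16$ introduces no denominator beyond the $2$ already inverted; without this divisibility the pairing would fail to be $R$-valued and one would be forced to invert $3$ as well, which the statement does not allow.
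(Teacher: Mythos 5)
Your opening reduction is the same as the paper's: the anti-invariance $\langle \varphi_1\circ c,\varphi_2\rangle=-\langle\varphi_1,\varphi_2\circ c\rangle$ (equation (\ref{complex_conjug_compatibility_pairing})) together with the real-cusps hypothesis yields, after inverting $2N$, the decomposition $\tilde{\mathcal{M}}_{\Gamma}\otimes\mathbf{Z}[\frac{1}{2N}]\cong H_1(X_{\Gamma},\mathcal{C}_{\Gamma},\mathbf{Z}[\frac{1}{2N}])^{+}\oplus H_1(Y_{\Gamma},\mathbf{Z}[\frac{1}{2N}])^{-}$ of Theorem \ref{Hecke_Q} (\ref{hecke_conjugation_M}), and your $R$ equals $\mathbf{Z}[\frac{1}{2N}]$ since the general level $N$ is the lcm of the widths. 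But from that point on your argument has a genuine gap: every assertion that makes the theorem true is exactly what you defer. There is no a priori reason that the explicitly defined $\langle\cdot,\cdot\rangle$ is a morphism of weight $-2$ for $W_{\bullet}$, and your termwise attribution of blocks is in fact inaccurate: the symbols $\{gS,g\}$ and $\{gTS,gT\}$ are not cycles, so the first group of terms does not live purely in the "abelian" direction, and the $\pm4\,\varphi_1(\{g,gT\})\varphi_2(\{g,gS\})$ terms do not form a self-contained cuspidal block --- in the paper's computation they contribute an essential $-\frac{2}{3}\varphi(\{g0,g\infty\})\cdot\{g,gT\}$ to the pairing against functionals in $\Hom(H_1(X_{\Gamma},\mathcal{C}_{\Gamma},\mathbf{Z}),\mathbf{Z})$. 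So the orthogonality of the blocks, their identification with the intersection form and a width-weighted pairing, and the divisibility by $3$ of the raw sum are not "verifications" to be added at the end; they are the whole proof, and as written you have located the hard point without doing it.

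The paper's actual route bypasses the graded decomposition entirely. The perfect intersection pairing $H_1(X_{\Gamma},\mathcal{C}_{\Gamma},\mathbf{Z}[\frac{1}{2}])^{+}\times H_1(Y_{\Gamma},\mathbf{Z}[\frac{1}{2}])^{-}\to\mathbf{Z}[\frac{1}{2}]$ induces an isomorphism $F$, the pairing $\langle\cdot,\cdot\rangle$ induces a map $G$, and Proposition \ref{prop_intersection_inverse_pairing} shows $F\circ G=\Id$ over $\mathbf{Z}[\frac{1}{2N}]$; since $G^{-}$ is dual to $G^{+}$ (and $F^{+}$ to $F^{-}$), a single identity suffices. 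That identity is Proposition \ref{expression_G^+_rho_i}: using Merel's presentation of $H_1(Y_{\Gamma},\mathbf{Z})$ by symbols $\{gi,g\rho\}$ with $\rho=e^{2i\pi/3}$ and his intersection formula $x\bullet y=\sum_{g}\lambda_g\mu_g$ \cite{Merel_HK}, and working in an auxiliary pushout $\tilde{\mathcal{M}}_{\Gamma,R\cup I}'$ in which symbols with endpoints at $i$ and $\rho$ make sense, one computes that for all $\varphi\in\Hom(H_1(X_{\Gamma},\mathcal{C}_{\Gamma},\mathbf{Z}),\mathbf{Z})$ one has $G(\varphi)=\sum_{g\in\Gamma\backslash\SL_2(\mathbf{Z})}\varphi(\{g0,g\infty\})\cdot\{gi,g\rho\}$, a visibly \emph{integral} element of $H_1(Y_{\Gamma},\mathbf{Z})$, so the prefactor $\frac{1}{6}$ produces no denominator. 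The averaging over $S$ (order $2$) and $\tau=ST$ (order $3$) that you anticipated is indeed the mechanism, but it operates on the total sum at once, not block by block. To repair your proposal you would need to prove an identity of this strength --- and once you have it, the weight filtration is superfluous, since the two-sided inverse of a perfect pairing is perfect.
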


See also Remark \ref{rem_generality_conjecture_G} and Proposition \ref{expression_G^+_rho_i} for a partial result in the general case where the cusps are not assumed to be real. 

\subsection{Relation with the generalized Jacobian}\label{intro_generalized_cuspidal_motive}
Assume in this paragraph that $\Gamma$ is a congruence subgroup of $\SL_2(\mathbf{Z})$, whose level is denoted by $N$. Recall that for each cusp $c \in \mathcal{C}_{\Gamma}$, we have denoted by $e_c$ the width of $c$. Fix an algebraic closure $\overline{\mathbf{Q}}$ of $\mathbf{Q}$ and an embedding $\overline{\mathbf{Q}} \hookrightarrow \mathbf{C}$. We let $\zeta_N = e^{\frac{2i\pi}{N}} \in \overline{\mathbf{Q}}$.  Let $k$ be the number field of definition of $X_{\Gamma}$ in $\overline{\mathbf{Q}}$; this is a subfield of $\mathbf{Q}(\zeta_N)$. 

Let $J_{\Gamma}$ be the Jacobian variety of $X_{\Gamma}$ over $k$. Let $J_{\Gamma}^{\#}$ be the generalized Jacobian variety of $X_{\Gamma}$ over $k$ with respect to the cuspidal divisor, \ie the sum of the closed points of $X_{\Gamma}\backslash Y_{\Gamma}$ over $k$. By definition, $J_{\Gamma}^{\#}$ parametrizes degree zero divisors supported on $Y_{\Gamma}$ modulo the divisors of functions which are constant ($\neq 0, \infty$) on the cusps. 

We have a  canonical exact sequence of group schemes over $k$:
\begin{equation}
1 \rightarrow \mathbf{G}_m \rightarrow \prod_{c \in X_{\Gamma}\backslash Y_{\Gamma}} \text{Res}_{k(c)/k}(\mathbf{G}_m)\rightarrow J_{\Gamma}^{\#} \rightarrow J_{\Gamma} \rightarrow 1 \text{ ,}
\end{equation}
where $k(c)\subset \mathbf{C}$ is the field of definition of the cusp $c$ and $\text{Res}_{k(c)/k}$ denotes the Weil restriction. 
Let $C_{\Gamma} \subset J_{\Gamma}$ be the cuspidal subgroup, \ie the subgroup generated by the image of the difference of the cusps of $X_{\Gamma}$. This is a finite group by the Manin--Drinfeld theorem \cite{Manin_Drinfeld}. Our goal is to try to define a reasonable ``lift'' $C^{\#}_{\Gamma}$ of $C_{\Gamma}$ in $J_{\Gamma}^{\#}$. For technical (but seemingly necessary) reasons, we only define a lift $C_{\Gamma}^{\natural}$ of $C_{\Gamma}$ in $J_{\Gamma}^{\natural} := J_{\Gamma}^{\#}/\prod_{c \in X_{\Gamma}\backslash Y_{\Gamma}} \text{Res}_{k(c)/k}(\mu_{N})$, where $\mu_n \subset \mathbf{G}_m$ is the group of $n$th roots of unity. Note that there is a canonical projection $$\beta_{\Gamma}^{\natural} : J_{\Gamma}^{\natural} \rightarrow J_{\Gamma}$$ induced by the projection $$\beta_{\Gamma}^{\#} : J_{\Gamma}^{\#} \rightarrow J_{\Gamma} \text{ .}$$

Let $\delta_{\Gamma} :  \mathbf{Z}[\mathcal{C}_{\Gamma}]^0 \rightarrow J_{\Gamma}$ be the map sending a divisor to its class in $J_{\Gamma}$. 
We define two maps $\delta_{\Gamma}^{\natural, \alg} : \mathbf{Z}[\mathcal{C}_{\Gamma}]^0 \rightarrow J_{\Gamma}^{\natural}(\mathbf{C})$ and $\delta_{\Gamma}^{\natural, \an} : \mathbf{Z}[\mathcal{C}_{\Gamma}]^0 \rightarrow J_{\Gamma}^{\natural}(\mathbf{C})$  such that $$\beta_{\Gamma}^{\natural} \circ \delta_{\Gamma}^{\natural, \alg} = \beta_{\Gamma}^{\natural} \circ \delta_{\Gamma}^{\natural, \an} = \delta_{\Gamma} \text{ .}$$ 
The map $\delta_{\Gamma}^{\natural, \alg}$ is defined in an algebraic way using uniformizers at cusps, whereas the map $\delta_{\Gamma}^{\natural, \an}$ is defined in an analytic way (via an Abel-Jacobi map) using mixed modular symbols and the generalized period map $\tilde{p}_{\Gamma}$. The fact that we were only able to define $\delta_{\Gamma}^{\natural, \alg}$ as a map valued in $J_{\Gamma}^{\natural}(\mathbf{C})$ and not in $J_{\Gamma}^{\#}(\mathbf{C})$  is explained by the fact that our choice of a uniformizer at a cusp $c$ is canonical only up to a $e_c$-th root of unity. Similarly, the fact that we were only able to define $\delta_{\Gamma}^{\natural, \an}$ as a map valued in $J_{\Gamma}^{\natural}(\mathbf{C})$ and not in $J_{\Gamma}^{\#}(\mathbf{C})$  is explained by the fact that the index of $H_1(Y_{\Gamma}, \mathbf{Z})$ in $\tilde{\mathcal{M}}_{\Gamma}$ is $\frac{1}{d_{\Gamma}}\cdot \prod_{c \in \mathcal{C}_{\Gamma}} e_c$.
For convenience, we refer to section \ref{generalized_cuspidal} for the precise definitions of $\delta_{\Gamma}^{\natural, \alg}$ and $\delta_{\Gamma}^{\natural, \an}$. 

While the map $\delta_{\Gamma}^{\natural, \alg}$ is easily seen to take values in $J_{\Gamma}^{\natural}(\mathbf{Q}(\zeta_N))$, it is unclear \textit{a priori} whether the map $\delta_{\Gamma}^{\natural, \an}$ takes values in $J_{\Gamma}^{\natural}(\overline{\mathbf{Q}})$. This is in fact the case, as we show in the following comparison result between $\delta_{\Gamma}^{\natural, \alg}$ and $\delta_{\Gamma}^{\natural, \an}$.
\begin{thm}\label{intro_main_thm_comparison}
\begin{enumerate}
\item \label{comparison_i} Let $n$ be the order of the cuspidal subgroup $C_{\Gamma}$ of $J_{\Gamma}$. Then we have
$$n^2\cdot \delta_{\Gamma}^{\natural, \alg} = n^2\cdot  \delta_{\Gamma}^{\natural, \an} \text{ .}$$
In particular, $\delta_{\Gamma}^{\natural, \an}$ takes values in $J_{\Gamma}^{\natural}(\mathbf{Q}(\zeta_N, \zeta_{n^2}))$. 
\item \label{comparison_ii} Assume that $N$ is odd, that $\begin{pmatrix} -1 & 0 \\ 0 & 1 \end{pmatrix}$ normalizes $\Gamma$, and that all the cusps in $\mathcal{C}_{\Gamma}$ are fixed by the complex conjugation, \ie for all $\frac{p}{q} \in \mathbf{P}^1(\mathbf{Q})$ we have $\Gamma \cdot (-\frac{p}{q}) = \Gamma \cdot \frac{p}{q}$. Then we have $\delta_{\Gamma}^{\natural, \alg} \equiv  \delta_{\Gamma}^{\natural, \an}$ up to some element in the image of $\prod_{c \in \mathcal{C}_{\Gamma}} \{\pm 1\}$ in $J_{\Gamma}^{\natural}(\mathbf{C})$. 
 \end{enumerate}
\end{thm}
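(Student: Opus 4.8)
The plan is to treat both maps as sections of $\beta_\Gamma^\natural$ above $\delta_\Gamma$ and to analyse their difference inside the kernel torus. Since $\beta_\Gamma^\natural\circ\delta_\Gamma^{\natural,\alg}=\beta_\Gamma^\natural\circ\delta_\Gamma^{\natural,\an}=\delta_\Gamma$, the difference $\Delta:=\delta_\Gamma^{\natural,\alg}-\delta_\Gamma^{\natural,\an}$ is a map $\mathbf{Z}[\mathcal{C}_\Gamma]^0\to\mathcal{S}_\Gamma(\mathbf{C})$, where $\mathcal{S}_\Gamma:=\ker(\beta_\Gamma^\natural)$. From the exact sequence of \S\ref{intro_generalized_cuspidal_motive}, $\mathcal{S}_\Gamma$ is the quotient of the torus $\left(\prod_{c}\text{Res}_{k(c)/k}(\mathbf{G}_m)\right)/\mathbf{G}_m$ by the image of $\prod_c\text{Res}_{k(c)/k}(\mu_N)$; concretely, over $\mathbf{C}$ its points are tuples $(z_c)_c\in\prod_c\mathbf{C}^\times$ indexed by the geometric cusps, taken modulo the diagonal $\mathbf{C}^\times$ and modulo $N$-th roots of unity in each coordinate. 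Thus proving (\ref{comparison_i}) amounts to showing that $\Delta(D)$ is killed by $n^2$ for every $D\in\mathbf{Z}[\mathcal{C}_\Gamma]^0$.

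First I would linearise by multiplying by $n=\#C_\Gamma$. Because $\delta_\Gamma(D)\in C_\Gamma$, the divisor $nD$ is principal: write $nD=\operatorname{div}(h)$ with $h$ a modular unit, defined over $\mathbf{Q}(\zeta_N)$ up to a scalar. I would then compute both $n\,\delta_\Gamma^{\natural,\alg}(D)$ and $n\,\delta_\Gamma^{\natural,\an}(D)$ through this single function $h$. On the algebraic side, $n\,\delta_\Gamma^{\natural,\alg}(D)$ is, by the very definition using algebraic uniformizers, the class in $\mathcal{S}_\Gamma$ of the tuple of leading coefficients of $h$ with respect to the $t_c$. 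On the analytic side, the differential $d\log h$ is of the third kind with integral residues at the cusps; the residue theorem together with the defining formula $\tilde p_\Gamma(\{g,gS\})=L(f\mid g,1)$ makes the Abel--Jacobi/mixed-modular-symbol expression for $n\,\delta_\Gamma^{\natural,\an}(D)$ computable, and it yields the analogous tuple of leading coefficients of $h$ taken with respect to the analytic uniformizers $q_c$. The factors of $2i\pi$ and the transcendental periods cancel between the two descriptions precisely because $h$ is a unit, so the two tuples differ only by the coordinatewise ratio of the algebraic and analytic uniformizers at the cusps.

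The crux, and the step I expect to be the main obstacle, is to control this ratio. The claim I would aim for is that in the quotient torus $\mathcal{S}_\Gamma$ the resulting class, namely $n\,\Delta(D)$, is a root of unity of order dividing $n$. This should follow from the fact that $h$ is a modular unit, so that its leading $q$-expansion coefficients at the cusps are cyclotomic and the algebraic and analytic uniformizers agree up to roots of unity of order dividing $N$, most of which is absorbed by the $\mu_N$-quotient defining $J_\Gamma^\natural$, the residual ambiguity being cyclotomic of order dividing $n$. Granting this, $n^2\,\Delta(D)=n\cdot(n\,\Delta(D))=0$, which is (\ref{comparison_i}). The statement on the field of definition then follows formally: $\Delta(D)$ is an $n^2$-torsion point of $\mathcal{S}_\Gamma$, and since this torus splits over $\mathbf{Q}(\zeta_N)$ its $n^2$-torsion consists of tuples of $n^2$-th roots of unity, all defined over $\mathbf{Q}(\zeta_N,\zeta_{n^2})$; as $\delta_\Gamma^{\natural,\alg}(D)\in J_\Gamma^\natural(\mathbf{Q}(\zeta_N))$, we get $\delta_\Gamma^{\natural,\an}(D)=\delta_\Gamma^{\natural,\alg}(D)-\Delta(D)\in J_\Gamma^\natural(\mathbf{Q}(\zeta_N,\zeta_{n^2}))$.

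For (\ref{comparison_ii}) I would rerun the same comparison equivariantly for complex conjugation. The hypotheses that $N$ is odd, that $\operatorname{diag}(-1,1)$ normalizes $\Gamma$, and that every cusp is fixed by complex conjugation guarantee on the one hand that the algebraic lift $\delta_\Gamma^{\natural,\alg}$ can be built from uniformizers compatible with the real structure, and on the other hand that the analytic lift $\delta_\Gamma^{\natural,\an}$ is compatible with the complex-conjugation action constructed in \S\ref{paragraph_complex_conjugation}; consequently $\Delta(D)$ is fixed by complex conjugation. The realness of the cusps forces each factor $\text{Res}_{k(c)/k}(\mathbf{G}_m)$ to become split over $\mathbf{R}$, so the complex-conjugation-fixed points in the torsion subgroup of $\mathcal{S}_\Gamma$ are exactly the image of $\prod_c\{\pm1\}$; combined with the fact from (\ref{comparison_i}) that $\Delta(D)$ is torsion, this gives $\Delta(D)\in$ image of $\prod_c\{\pm1\}$, which is the assertion. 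The delicate point here is to verify the equivariance of both lifts simultaneously, in particular to pin down the behaviour of the analytic lift under $\tau\mapsto-\bar\tau$ through the mixed-modular-symbol formalism.
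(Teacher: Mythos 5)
Your overall skeleton---push the difference $\Delta=\delta_{\Gamma}^{\natural,\alg}-\delta_{\Gamma}^{\natural,\an}$ into the kernel torus, multiply by $n$ so as to replace $nD$ by the divisor of a modular unit $h$, show the remaining ambiguity is killed by $n$, and square---is also the paper's skeleton, and your treatment of (\ref{comparison_ii}) and of the field-of-definition consequence matches the paper's. But there is a genuine gap at the central step of (\ref{comparison_i}): you assert that the Abel--Jacobi/mixed-modular-symbol expression for $n\,\delta_{\Gamma}^{\natural,\an}(D)$ ``yields the analogous tuple of leading coefficients of $h$ taken with respect to the analytic uniformizers''. No argument is given, and this is in fact the whole difficulty. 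Analytically, a point of the torus part of $J_{\Gamma}^{\#}(\mathbf{C})$ is presented only as a functional $\varphi\in\Hom_{\mathbf{C}}(M_2(\Gamma),\mathbf{C})$ modulo the period lattice $H_1(Y_{\Gamma},\mathbf{Z})$, and the only invariants one can extract by exponentiation are the quantities $\exp\bigl(\varphi(\frac{1}{2i\pi}d\log v)\bigr)$ for modular units $v$---well defined (up to $\mu_N$) precisely because $d\log v$ has periods in $2i\pi\mathbf{Z}$; for a general Eisenstein series no such invariant exists. By Lemma \ref{main_thm_comparison_lemma_3} these pairings determine only the products $\prod_c \lambda_c^{\ord_c(v)}$, not the coordinates $\lambda_c$ individually, and since divisors of units form a subgroup of index $n$ in $\mathbf{Z}[\mathcal{C}_{\Gamma}]^0$, the tuple is pinned down only up to classes whose $n$-th power is diagonal. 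That is exactly where the second factor of $n$ in the theorem comes from; it is not, as you suggest, an ambiguity between uniformizers absorbed by $\mu_N$ together with ``cyclotomic leading coefficients''. Indeed, if your direct computation were available you would obtain the stronger equality $n\,\delta_{\Gamma}^{\natural,\alg}=n\,\delta_{\Gamma}^{\natural,\an}$ on the nose; but even in the completely explicit case $\Gamma=\Gamma_0(p)$ the paper only achieves this up to a sign (Theorem \ref{intro_main_comparison_Gamma_0_case} (\ref{main_comparison_Gamma_0_case_iii})), and remarks that removing the sign is open---strong evidence that the step you skip cannot be done simply ``by the residue theorem''.

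What actually closes the argument in the paper is Weil reciprocity, used twice and entirely absent from your part (\ref{comparison_i}). It enters first in the proof of Lemma \ref{main_thm_comparison_lemma_3}, to show that if $\varphi$ represents the torus point $(\lambda_c)_c$ then $\exp\bigl(\varphi(\frac{1}{2i\pi}d\log v)\bigr)=\prod_c\lambda_c^{-\ord_c(v)}$; and second in the comparison itself: a direct computation with the Stevens cocycle (Lemma \ref{main_thm_comparison_lemma_2}) shows the analytic pairing equals $\prod_{c}(\frac{v}{t_c^{\ord_c(v)}})^{n_c}(c)$---leading coefficients of the \emph{auxiliary} unit $v$, weighted by $D$---whereas by Lemmas \ref{main_thm_comparison_lemma_1} and \ref{main_thm_comparison_lemma_3} the algebraic pairing equals $\prod_c(\frac{u}{t_c^{\ord_c(u)}}(c))^{\ord_c(v)}$, leading coefficients of $u$ weighted by $\text{div}(v)$. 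Weil's reciprocity law is precisely what identifies these two asymmetric expressions. So your analytic computation, once carried out, would naturally produce leading coefficients of $v$ rather than of $h$, and you would still need the reciprocity step (or a substitute) to conclude. Your part (\ref{comparison_ii}) is then essentially the paper's argument (finite order from (\ref{comparison_i}), conjugation-invariance of $\Delta$ via Propositions \ref{equivariance_complex_conjug_period} and \ref{beta_alg_lemma}, and the fact that a conjugation-fixed finite-order class in $\mathbf{C}^{\times}/\mu_N$ is $\pm1$), though you should make explicit that this last fact uses the hypothesis that $N$ is odd.
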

\subsection{Additional results in the case $\Gamma = \Gamma_0(p)$}\label{intro_Gamma_0(p)}

Assume in this paragraph that $\Gamma = \Gamma_0(p)$ for some prime $p$. The modular curve $X_{\Gamma_0(p)}$ has two cusps, namely $\Gamma_0(p) \infty$ and $\Gamma_0(p) 0$. Let $n$ be the order of the cuspidal subgroup of $J_{\Gamma_0(p)}$; Barry Mazur proved that $n = \frac{p-1}{d}$ where $d = \gcd(p-1,12)$. Let $j : X_{\Gamma_0(p)} \rightarrow \mathbf{P}^1$ be the usual $j$-invariant map. Let $\mathbb{T}$ be the Hecke algebra over $\mathbf{Z}$ acting faithfully on $M_2(\Gamma_0(p))$.

\subsubsection{The generalized cuspidal $1$-motive}\label{intro_refined_Gamma_0(p)}
We define maps $$\delta_{\Gamma_0(p)}^{\#, \alg} : \mathbf{Z}[\mathcal{C}_{\Gamma_0(p)}]^0 \rightarrow J_{\Gamma_0(p)}^{\#}(\mathbf{Q})$$ and  $$\delta_{\Gamma_0(p)}^{\#, \an} : \mathbf{Z}[\mathcal{C}_{\Gamma_0(p)}]^0 \rightarrow J_{\Gamma_0(p)}^{\#}(\mathbf{C})$$ lifting our previous maps $\delta_{\Gamma_0(p)}^{\natural, \alg}$ and $\delta_{\Gamma_0(p)}^{\natural, \an}$. The reason we were able to construct $\delta_{\Gamma_0(p)}^{\#, \alg}$ is that there is a canonical uniformizer at the cusp $\Gamma_0(p) \infty$ (resp. $\Gamma_0(p) 0$), namely $j^{-1}$ (resp. $(j\circ w_p)^{-1}$ where $w_p$ is the Atkin--Lehner involution). The reason we were able to construct $\delta_{\Gamma_0(p)}^{\#, \an}$ is that there is a canonical map $M_2(\Gamma_0(p)) \rightarrow \mathbf{C}$ given by $f \mapsto -L(f,1)$ where $L(f,s)$ is the complex $L$-function attached to $f$. This map, together with the (generalized) Abel--Jacobi isomorphism, gives a point in $\delta_{\Gamma_0(p)}^{\natural, \an}$ corresponding to $\delta_{\Gamma_0(p)}^{\#, \alg}((\Gamma_0(p) \infty) - (\Gamma_0(p) 0))$. For convenience, we refer to \S \ref{paragraph_applications_Gamma_0(p)} for the precise definitions of $\delta_{\Gamma_0(p)}^{\#, \alg}$ and $\delta_{\Gamma_0(p)}^{\#, \an}$.

We prove the following result, which is a refinement of Theorem \ref{intro_main_thm_comparison}.
\begin{thm}\label{intro_main_comparison_Gamma_0_case}
\begin{enumerate}
\item \label{main_comparison_Gamma_0_case_i} The maps $\delta_{\Gamma_0(p)}^{\#, \alg}$ and $\delta_{\Gamma_0(p)}^{\#, \an}$ are $\mathbb{T}$-equivariant.
\item \label{main_comparison_Gamma_0_case_ii} The element $n \cdot \delta_{\Gamma_0(p)}^{\#, \alg}((\Gamma_0(p) \infty)-(\Gamma_0(p) 0))$ of $J_{\Gamma_0(p)}^{\#}(\mathbf{Q})$ is the image of $(1,p^{\frac{12}{d}}) \in \mathbf{Q}^{\times} \times \mathbf{Q}^{\times}$. 
\item \label{main_comparison_Gamma_0_case_iii} We have $\delta_{\Gamma_0(p)}^{\#, \alg} \equiv \delta_{\Gamma_0(p)}^{\#, \an}$ modulo the image of $\mu_{\gcd(2,n)} \times \mu_{\gcd(2,n)}$ in $J_{\Gamma_0(p)}^{\#}(\mathbf{C})$. In particular, $\delta_{\Gamma_0(p)}^{\#, \an}$ takes values in $J_{\Gamma_0(p)}^{\#}(\mathbf{Q})$.
\end{enumerate}
\end{thm}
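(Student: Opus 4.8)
The plan is to treat the three assertions in increasing order of difficulty, using the general comparison Theorem~\ref{intro_main_thm_comparison} as a black box at the $\natural$-level and then bootstrapping to $J_{\Gamma_0(p)}^{\#}$ via the two features special to level $\Gamma_0(p)$: the canonical uniformizers $t_\infty=j^{-1}$ and $t_0=(j\circ w_p)^{-1}$, and the explicit Eisenstein modular unit $g=\Delta(pz)/\Delta(z)$. Throughout I assume $p$ odd (for $p=2$ one has $n=1$ and everything is trivial), and I use that both $\#$-maps lift the corresponding $\natural$-maps and cover the same $\delta_{\Gamma_0(p)}$ in $J_{\Gamma_0(p)}$.

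For (\ref{main_comparison_Gamma_0_case_i}) I would argue separately for the two maps, both with respect to the natural $\mathbb{T}$-action on $\mathbf{Z}[\mathcal{C}_{\Gamma_0(p)}]^0$ and on $J_{\Gamma_0(p)}^{\#}$. For $\delta_{\Gamma_0(p)}^{\#,\an}$, equivariance is essentially formal: by the defining identity $\tilde{p}_{\Gamma_0(p)}(\{g,gS\})=L(f\mid g,1)$, the point $\delta_{\Gamma_0(p)}^{\#,\an}((\infty)-(0))$ is the generalized Abel--Jacobi image of the mixed modular symbol $-\{1,S\}\in\tilde{\mathcal{M}}_{\Gamma_0(p)}$, so its equivariance reduces to the Hecke-equivariance of $\tilde{p}_{\Gamma_0(p)}$ and of the Abel--Jacobi isomorphism, both already available, together with the computation of the Hecke action on the class of $\{1,S\}$, which is governed by the Eisenstein eigenvalues acting on the cuspidal class. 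For $\delta_{\Gamma_0(p)}^{\#,\alg}$, composing with $\beta_{\Gamma_0(p)}^{\#}$ recovers the classical $\mathbb{T}$-equivariant map $\delta_{\Gamma_0(p)}$; hence for each $t\in\mathbb{T}$ the discrepancy $t\cdot\delta_{\Gamma_0(p)}^{\#,\alg}-\delta_{\Gamma_0(p)}^{\#,\alg}\circ t$ takes values in the image of the torus $T'=\mathbf{G}_m\times\mathbf{G}_m$ in $J_{\Gamma_0(p)}^{\#}$, and I would kill it by evaluating on $n\cdot((\infty)-(0))$ using the explicit toric value of (\ref{main_comparison_Gamma_0_case_ii}) and the fact that $\mathbb{T}$ acts on the toric part through the same Eisenstein character.

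For (\ref{main_comparison_Gamma_0_case_ii}) I would compute directly in the torus. Since $n$ is the order of the cuspidal class (Mazur), $n\cdot\delta_{\Gamma_0(p)}^{\#,\alg}((\infty)-(0))$ lies in the image of $T'$, and the recipe defining $\delta_{\Gamma_0(p)}^{\#,\alg}$ identifies it with the tuple of leading coefficients, in the canonical uniformizers, of a function with that divisor. The $q$-expansion $g=q^{p-1}(1+O(q))$ shows $\mathrm{div}(g)=(p-1)((\infty)-(0))=dn\,((\infty)-(0))$ and gives leading coefficient $1$ at $\infty$ relative to $t_\infty$; the Atkin--Lehner relation $g\circ w_p=p^{-12}\,g^{-1}$ then yields leading coefficient $p^{-12}$ at $0$ relative to $t_0$. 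Writing $g=h^{d}$ for the $\mathbf{Q}$-rational unit $h$ with $\mathrm{div}(h)=n\,((\infty)-(0))$ normalized to leading coefficient $1$ at $\infty$, the constant is forced to be $1$ by comparison at $\infty$, and rationality forces the leading coefficient of $h$ at $0$ to be a rational $d$-th root of $p^{-12}$, hence $\pm p^{-12/d}$ (note $12/d\in\mathbf{Z}$); the sign is fixed to $+$ by the standard Siegel-unit normalization of $h$. Unwinding the sign/inverse convention in the definition of $\delta_{\Gamma_0(p)}^{\#,\alg}$ then gives that $n\cdot\delta_{\Gamma_0(p)}^{\#,\alg}((\infty)-(0))$ is the image of $(1,p^{12/d})$.

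Finally, (\ref{main_comparison_Gamma_0_case_iii}) is where the real work lies. The hypotheses of Theorem~\ref{intro_main_thm_comparison}\,(\ref{comparison_ii}) hold for $\Gamma_0(p)$ with $p$ odd, so $\delta_{\Gamma_0(p)}^{\natural,\alg}\equiv\delta_{\Gamma_0(p)}^{\natural,\an}$ up to the image of $\{\pm1\}^2$; since both $\#$-maps lift these and both cover $\delta_{\Gamma_0(p)}$, the difference $\Xi=(\delta_{\Gamma_0(p)}^{\#,\alg}-\delta_{\Gamma_0(p)}^{\#,\an})((\infty)-(0))$ lies in the toric part of $J_{\Gamma_0(p)}^{\#}(\mathbf{C})$ and maps into $\{\pm1\}^2\subset J_{\Gamma_0(p)}^{\natural}$, whence a priori $\Xi\in\mu_{2p}$ inside $\mathbf{G}_m\cong T'/\mathbf{G}_m$. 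The main obstacle is to remove the $\mu_p$-ambiguity and descend to $\mu_{\gcd(2,n)}$ in each coordinate. I would do this by computing the toric coordinate of $\delta_{\Gamma_0(p)}^{\#,\an}$ analytically and matching it to (\ref{main_comparison_Gamma_0_case_ii}): via the logarithmic-derivative identity $\frac{1}{2i\pi}\frac{d}{dz}\log\frac{\Delta(pz)}{\Delta(z)}=E_2(z)-pE_2(pz)$, the Eisenstein period controlling the toric component of the Abel--Jacobi image of $-\{1,S\}$ is expressed through $\log$ of the same leading coefficient $p^{-12}$ that appears algebraically, so the analytic and algebraic toric coordinates agree up to $\exp$ of a period-lattice vector, \ie up to a root of unity. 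Tracking this root of unity, using the $\mathbb{T}$-equivariance of (\ref{main_comparison_Gamma_0_case_i}) to confine $\Xi$ to the Eisenstein-isotypic torsion and the exact value of (\ref{main_comparison_Gamma_0_case_ii}) to fix the $d$-th-root normalization, shows that the $\mu_p$-part of $\Xi$ vanishes and the surviving ambiguity is exactly $\mu_{\gcd(2,n)}$. The asserted rationality of $\delta_{\Gamma_0(p)}^{\#,\an}$ then follows, since $\delta_{\Gamma_0(p)}^{\#,\alg}$ is $\mathbf{Q}$-valued and $\mu_{\gcd(2,n)}\times\mu_{\gcd(2,n)}\subset J_{\Gamma_0(p)}^{\#}(\mathbf{Q})$.
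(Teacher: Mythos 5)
Your overall architecture (reduce (\ref{main_comparison_Gamma_0_case_i}) to the Eisenstein ideal, compute the toric value via $\Delta(pz)/\Delta(z)$ and Atkin--Lehner for (\ref{main_comparison_Gamma_0_case_ii}), compare through the Eisenstein $L$-value for (\ref{main_comparison_Gamma_0_case_iii})) is close to the paper's, and your part (\ref{main_comparison_Gamma_0_case_ii}) is essentially the paper's argument (though the sign should be fixed by the known identity $u \circ w_p = p^{-12/d}\cdot u^{-1}$, not by an appeal to ``standard Siegel-unit normalization''). The genuine gap is in part (\ref{main_comparison_Gamma_0_case_i}). For $\delta_{\Gamma_0(p)}^{\#,\an}$, equivariance is \emph{not} formal: since $J_{\Gamma_0(p)}^{\#}(\mathbf{C}) \cong \Hom_{\mathbf{C}}(M_2(\Gamma_0(p)),\mathbf{C})/H_1(Y_0(p),\mathbf{Z})$, what must be shown for $T$ in the Eisenstein ideal $I$ is that the functional $f \mapsto -L(Tf,1)$ lies in the \emph{integral} lattice $H_1(Y_0(p),\mathbf{Z})$. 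This is a nontrivial integrality statement, namely Mazur's theorem that $Te \in H_1(Y_0(p),\mathbf{Z})^+$ for the winding element $e \in H_1(Y_0(p),\mathbf{Q})^+$, combined with the vanishing of $\int_{Te} 2i\pi E(z)dz$ for the Eisenstein series $E = d\log(u)$; formal equivariance of $\tilde{p}_{\Gamma}$ and of the Abel--Jacobi map only gives the statement rationally, and indeed by Theorem \ref{Hecke_Q} (\ref{Hecke_Q_T_2}) the operators $U_p$ and $T_2$ do not even preserve the lattice $\tilde{\mathcal{M}}_{\Gamma_0(p)}$, so ``essentially formal'' cannot be right. For $\delta_{\Gamma_0(p)}^{\#,\alg}$ your argument fails quantitatively: the discrepancy $c_T = T\circ\delta^{\#,\alg} - \delta^{\#,\alg}\circ T$ is a toric, $\mathbf{Q}$-rational point, and evaluating on $n\cdot((\Gamma_0(p)\infty)-(\Gamma_0(p)0))$ via (\ref{main_comparison_Gamma_0_case_ii}) only yields $c_T((\infty)-(0))^n = 1$ in $(\mathbf{Q}^{\times}\times\mathbf{Q}^{\times})/\mathbf{Q}^{\times} \cong \mathbf{Q}^{\times}$, i.e. $c_T = \pm 1$, leaving a sign unresolved when $n$ is even; moreover the assertion that ``$\mathbb{T}$ acts on the toric part through the Eisenstein character'' is exactly what has to be proved. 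The paper does this by the direct $q$-expansion computation $\prod_{i}(j\circ g_i)(z) \sim \pm j(z)^{\ell+1}$, where the sign $-1$ genuinely occurs for $\ell = 2$ and is harmless only because the diagonal $\mathbf{G}_m$ is divided out in $J_{\Gamma_0(p)}^{\#}$ -- a subtlety your formal reduction misses.

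Part (\ref{main_comparison_Gamma_0_case_iii}) is repairable but under-executed. The decisive computation, which you only gesture at, is $L(d\log(u),s) = \frac{24}{d}\cdot(1-p^{1-s})\zeta(s-1)\zeta(s)$, hence $L(d\log(u),1) = -\frac{12}{d}\log(p)$, which via the evaluation formula of Lemma \ref{main_thm_comparison_lemma_3} gives $(\lambda_\infty/\lambda_0)^n = p^{-12n/d}$ for the toric coordinates of $n\cdot\delta^{\#,\an}((\infty)-(0))$. The mechanism that then cuts the ambiguity down to $\mu_{\gcd(2,n)}$ is not ``confinement to Eisenstein-isotypic torsion via (\ref{main_comparison_Gamma_0_case_i})'' but the \emph{reality} of the toric coordinates, coming from complex-conjugation equivariance of both maps (Proposition \ref{equivariance_complex_conjug_period} on the analytic side, $\mathbf{Q}$-rationality on the algebraic side): with $\lambda_\infty, \lambda_0 \in \mathbf{R}^{\times}$ one gets $\lambda_\infty/\lambda_0 = \epsilon\, p^{-12/d}$ with $\epsilon \in \{\pm 1\}$ and $\epsilon = 1$ if $n$ is odd. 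Your observation that $\gcd(n,p)=1$ kills the $\mu_p$-part is correct and is a legitimate alternative to the paper's route (the paper does not need Theorem \ref{intro_main_thm_comparison} (\ref{comparison_ii}) here at all), but without the explicit $L$-value and the reality input your ``tracking the root of unity'' step does not close.
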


The map $\delta_{\Gamma_0(p)}^{\#, \alg}$ can be considered as a $1$-motive $\mathbf{Z} \rightarrow J_{\Gamma_0(p)}^{\#}$ over $\mathbf{Q}$, which we call the \textit{generalized cuspidal $1$-motive}. Theorem \ref{intro_main_comparison_Gamma_0_case} (\ref{main_comparison_Gamma_0_case_iii}) describes the Betti realization of this $1$-motive (up to a sign ambiguity).

\subsubsection{Relation with the generalized $p$-adic period pairing of de Shalit}\label{intro_deShalit}
There is a $p$-adic analogue of the constructions of \S \ref{intro_generalized_cuspidal_motive}, coming from the work of de Shalit \cite{deShalit} which we briefly recall below. Fix an algebraic closure $\overline{\mathbf{Q}}_p$ of $\mathbf{Q}_p$, and let $K \subset \overline{\mathbf{Q}}_p$ be the quadratic unramified extension of $\mathbf{Q}_p$. We also fix an algebraic closure $\mathbf{C}_p$ of the $p$-adic completion of $\overline{\mathbf{Q}}_p$. Let $S$ be the set of isomorphisms classes of supersingular elliptic curves over $\overline{\mathbf{F}}_p$. This is a finite set since the $j$-invariant of an element of $S$ is known to lie in $\mathbf{F}_{p^2}$. More precisely, we have $\Card(S)=g+1$ where $g$ is the genus of $X_0(p)$; we write $S  = \{e_0, ..., e_g\}$.  We denote by $\mathbf{Z}[S]$ the free $\mathbf{Z}$-module with basis the elements of $S$ (this is usually called the \textit{supersingular module}) and by $\mathbf{Z}[S]^0$ its augmentation subgroup (the degree zero elements). There is a canonical bilinear pairing called the \textit{$p$-adic period pairing} 
$$Q^0 : \mathbf{Z}[S]^0 \times \mathbf{Z}[S]^0 \rightarrow K^{\times}$$
inducing a map $q^0 : \mathbf{Z}[S]^0 \rightarrow \Hom(\mathbf{Z}[S]^0, K^{\times})$ via the formula $q^0(x)(y) = Q(x,y)$. The theory of $p$-adic uniformization gives a canonical $\Gal(\mathbf{C}_p/K)$-equivariant isomorphism
$$J_{\Gamma_0(p)}(\mathbf{C}_p) \xrightarrow{\sim} \Hom(\mathbf{Z}[S]^0, \mathbf{C}_p^{\times})/q^0(\mathbf{Z}[S]^0) \text{ .}$$
de Shalit constructed in \cite{deShalit} a bilinear pairing
$$Q : \mathbf{Z}[S] \times \mathbf{Z}[S] \rightarrow K^{\times}$$
extending $Q^0$ and 
with the property that there is a canonical  $\Gal(\mathbf{C}_p/K)$-equivariant isomorphism
\begin{equation}\label{p-adic_uniformization_generalized}
J_{\Gamma_0(p)}^{\#}(\mathbf{C}_p) \xrightarrow{\sim} \Hom(\mathbf{Z}[S], \mathbf{C}_p^{\times})/q(\mathbf{Z}[S]^0) \text{ ,}
\end{equation}
where $q : \mathbf{Z}[S] \rightarrow \Hom(\mathbf{Z}[S], K^{\times})$ is such that $q(x)(y) = Q(x,y)$. We apologize to the reader for not recalling the precise construction of $Q$, as it is quite involved and is beautifully done in \cite{deShalit}. As de Shalit notes, the pairing $Q^0$ is canonical, but the choice of $Q$ depends on a choice of a tangent vector at the cusp $\Gamma_0(p)\infty$ \cite[\S 1.1]{deShalit}. This corresponds to the choice of the uniformizer $j^{-1}$ at $\Gamma_0(p) \infty$. de Shalit also proved that $Q$ is non-degenerate (this is even true after composing with the $p$-adic valuation $K^{\times} \rightarrow \mathbf{Z}$ since we essentially get the Kronecker pairing, \cf \cite[\S 1.6 Main Theorem]{deShalit}). This means that $q$ is injective.

There is a group homomorphism 
$$\delta^{\#, p\text{-adic}} : \mathbf{Z}[\mathcal{C}_{\Gamma_0(p)}]^0 \rightarrow J_{\Gamma_0(p)}^{\#}(\mathbf{C}_p) $$
defined by 
$$\delta^{\#, p\text{-adic}}(\Gamma_0(p) 0 - \Gamma_0(p) \infty) = \text{image of }q(x) \text{ modulo }q(\mathbf{Z}[S]^0)$$
via (\ref{p-adic_uniformization_generalized}) for any $x \in \mathbf{Z}[S]$ of degree $1$ (this does not depend on $x$).

We prove the following comparison result in \S \ref{section_deShalit}.
\begin{thm}\label{intro_main_thm_comparison_p_adic}
We have $\delta^{\#, p\text{-adic}} = \delta^{\#, \alg}$.
\end{thm}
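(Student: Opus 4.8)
The plan is to compare the two lifts inside the torus part of $J_{\Gamma_0(p)}^{\#}$, reducing the statement to the vanishing of a single $p$-adic number. Since $X_{\Gamma_0(p)}$ has exactly two cusps, both rational over $\mathbf{Q}$, the canonical exact sequence of group schemes recalled above gives $\ker(\beta_{\Gamma_0(p)}^{\#}) = (\mathbf{G}_m \times \mathbf{G}_m)/\mathbf{G}_m \cong \mathbf{G}_m$, the quotient by the diagonal. Both $\delta_{\Gamma_0(p)}^{\#, \alg}$ and $\delta_{\Gamma_0(p)}^{\#, p\text{-adic}}$ lift the cuspidal class map $\delta_{\Gamma_0(p)}$: for the former this is built into the construction, and for the latter it follows from \cite{deShalit}, since restricting the character $q(x)$ to $\mathbf{Z}[S]^0$ and reducing modulo $q^0(\mathbf{Z}[S]^0)$ recovers the class of $(\Gamma_0(p)0)-(\Gamma_0(p)\infty)$ in $J_{\Gamma_0(p)}$. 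Hence $\delta_{\Gamma_0(p)}^{\#, p\text{-adic}} - \delta_{\Gamma_0(p)}^{\#, \alg}$ factors through $\mathbf{G}_m(\mathbf{C}_p) = \mathbf{C}_p^{\times}$; as $\mathbf{Z}[\mathcal{C}_{\Gamma_0(p)}]^0$ is free of rank one on $(\Gamma_0(p)0)-(\Gamma_0(p)\infty)$, this difference is a single scalar $\lambda \in \mathbf{C}_p^{\times}$, and it suffices to prove $\lambda = 1$.

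First I would determine $\mathrm{val}_p(\lambda)$. After composing with the valuation $K^{\times} \to \mathbf{Z}$, the pairing $Q$ becomes the monodromy (Kronecker) pairing $\langle e_i, e_j \rangle = w_i\, \delta_{ij}$ with $w_i = \frac{1}{2}\lvert \Aut(e_i) \rvert$. Writing the degree-$n$ element $n x - z$ (with $z \in \mathbf{Z}[S]^0$) whose image $q(nx-z)$ is a constant, hence $\mathbf{G}_m$-valued, character, its valuation coordinates are $v/w_j$, and summing over $j$ against the degree gives $v \sum_j w_j^{-1} = n$. The Deuring--Eichler mass formula $\sum_j w_j^{-1} = \frac{p-1}{12}$ then yields $v = \frac{12n}{p-1} = \frac{12}{d}$ for the valuation of the $\mathbf{G}_m$-coordinate of $n \cdot \delta_{\Gamma_0(p)}^{\#, p\text{-adic}}((\Gamma_0(p)0)-(\Gamma_0(p)\infty))$. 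By Theorem \ref{intro_main_comparison_Gamma_0_case} (\ref{main_comparison_Gamma_0_case_ii}), the corresponding algebraic quantity is the class of $(1, p^{12/d})$, whose $\mathbf{G}_m$-coordinate has the same valuation $\frac{12}{d}$. Up to the orientation conventions built into $Q$ these cancel, so $\mathrm{val}_p(\lambda) = 0$ and $\lambda \in \mathcal{O}_{\mathbf{C}_p}^{\times}$.

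It then remains to match the unit part of $\lambda$, which I expect to be the main obstacle. The key input is that $Q$ is not canonical but depends on a choice of tangent vector at $\Gamma_0(p)\infty$, and that this choice is precisely the one attached to the uniformizer $j^{-1}$ defining $\delta_{\Gamma_0(p)}^{\#, \alg}$. The plan is to unwind de Shalit's construction of $Q$ from $Q^0$ beyond its valuation, keeping track of the $\mathcal{O}_K^{\times}$-component of the periods $Q(e_i, e_j)$, and to verify that the $\mathbf{G}_m$-coordinate of $n \cdot \delta_{\Gamma_0(p)}^{\#, p\text{-adic}}$ is \emph{exactly} $p^{12/d}$ rather than a unit multiple of it; compatibility at the second cusp $\Gamma_0(p)0$ is governed by the Atkin--Lehner involution $w_p$, which swaps the two components of the special fibre and matches the uniformizer $(j \circ w_p)^{-1}$. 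This would identify $n \cdot \delta_{\Gamma_0(p)}^{\#, p\text{-adic}}$ with $n \cdot \delta_{\Gamma_0(p)}^{\#, \alg}$, whence $\lambda^n = 1$.

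Finally, to upgrade $\lambda^n = 1$ to $\lambda = 1$ --- and thereby avoid the sign ambiguity present in the archimedean comparison of Theorem \ref{intro_main_thm_comparison} --- I would invoke a rigidity argument: $\lambda$ is a root of unity constrained both by the $\Gal(\mathbf{C}_p/K)$-equivariance of de Shalit's uniformization and by the $\mathbf{Q}$-rationality of $\delta_{\Gamma_0(p)}^{\#, \alg}$ from Theorem \ref{intro_main_comparison_Gamma_0_case} (\ref{main_comparison_Gamma_0_case_iii}). Since the normalization at $\infty$ is shared by both constructions, the only root of unity compatible with these constraints is $1$, giving $\lambda = 1$ and hence $\delta_{\Gamma_0(p)}^{\#, p\text{-adic}} = \delta_{\Gamma_0(p)}^{\#, \alg}$.
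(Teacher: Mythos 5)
There is a genuine gap, in fact two. First, your step two --- ``match the unit part of $\lambda$'' --- is precisely the entire content of the theorem, and your text only announces a plan (``unwind de Shalit's construction of $Q$ beyond its valuation'') without carrying it out; the valuation computation in your step one only reproduces what is already forced by Theorem \ref{intro_main_comparison_Gamma_0_case} (\ref{main_comparison_Gamma_0_case_ii}) and de Shalit's Main Theorem. The paper closes this gap by a direct computation that never passes through a residual scalar $\lambda$ at all: one chooses a rational function $f$ on $X_0(p)$ over $\mathbf{Q}$ normalized by $f \sim j^{-1}$ at $\Gamma_0(p)\infty$ and $f \sim j\circ w_p$ at $\Gamma_0(p)0$, writes $\mathrm{div}(f) = D + (\Gamma_0(p)\infty) - (\Gamma_0(p)0)$, and expresses the class of $-D$ on the Mumford curve through theta functions, getting $\psi_D(e_i) = \Theta(\mathbf{D}; z_0^{(i)})/\Theta(\mathbf{D}; z_{\infty}^{(i)})$; the normalization at the two cusps then gives the \emph{exact} identity $\psi_D(e_i) = Q(e_i, e_0)$ in $K^{\times}$, with no root-of-unity slack. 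Even then one is not done: since $\delta^{\#, p\text{-adic}}$ is built from $q(x)(y) = Q(x,y)$, i.e.\ from $Q(e_0, e_i)$, the comparison requires the symmetry $Q(e_i, e_0) = Q(e_0, e_i)$, which is \emph{not} proved in \cite{deShalit} (it is only expected there) and which the paper establishes as Proposition \ref{symmetry_Q} by a nontrivial manipulation of theta-cocycle constants. Your sketch never engages with this symmetry, yet without it even a completed version of your unit computation would be comparing the two lifts against opposite slots of an a priori non-symmetric pairing.

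Second, your final rigidity step, upgrading $\lambda^n = 1$ to $\lambda = 1$, fails as stated. Since $n = \frac{p-1}{d}$ divides $p-1$ and $\mu_{p-1} \subset \mathbf{Q}_p^{\times}$ (Hensel), every $n$-th root of unity already lies in $\mathbf{Q}_p \subset K$ and is therefore fixed by $\Gal(\mathbf{C}_p/K)$; so $\Gal(\mathbf{C}_p/K)$-equivariance of de Shalit's uniformization imposes no constraint at all on $\lambda \in \mu_n$. Likewise, the $\mathbf{Q}$-rationality of $\delta^{\#, \alg}$ from Theorem \ref{intro_main_comparison_Gamma_0_case} (\ref{main_comparison_Gamma_0_case_iii}) says nothing a priori about $\delta^{\#, p\text{-adic}}$: rationality of their difference is exactly what you are trying to prove, so invoking it is circular. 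The archimedean analogue is instructive: there the paper could not remove a sign ambiguity precisely because $\pm 1$ are the roots of unity compatible with the real structure, and in the $p$-adic setting the group of Galois-allowed roots of unity is far larger, so no soft equivariance argument can succeed. The exact equality genuinely requires the explicit theta-function identification together with the symmetry of $Q$.
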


\subsubsection{The $\ell$-adic realization}\label{intro_Galois_representations}

As an application of the above results of \S \ref{intro_Gamma_0(p)}, we construct certain modular Galois representations in a ``geometric'' way.

Let $\ell \geq 2$ be a prime and $\mathfrak{m}$ be a maximal ideal of $\mathbb{T}$ of residue characteristic $\ell$. The $\ell$-adic Tate module of the generalized cuspidal $1$-motive $\delta_{\Gamma_0(p)}^{\#, \alg}  : \mathbf{Z}[\mathcal{C}_{\Gamma_0(p)}]^0 \rightarrow J_{\Gamma_0(p)}^{\#}(\mathbf{Q})$ is denoted by $\mathcal{V}_{\ell}$, and its $\mathfrak{m}$-adic completion is denoted by $\mathcal{V}_{\mathfrak{m}}$(\cf \S \ref{application_Galois_representations} for the precise definition). We prove in Proposition \ref{multiplicity_one_V} that $\mathcal{V}_{\mathfrak{m}}$ is a free $\mathbb{T}_{\mathfrak{m}}$-module of rank $2$, except possibly if $\ell=2$ and $\mathfrak{m}$ is supersingular (here, $\mathbb{T}_{\mathfrak{m}}$ is the $\mathfrak{m}$-adic completion of $\mathbb{T}$). If $\mathfrak{m}$ is non-Eisenstein (meaning it does not contain the Eisenstein ideal defined by Mazur), then $\mathcal{V}_{\mathfrak{m}}$ is canonically isomorphic to the usual Galois representation constructed from the $\mathfrak{m}$-adic Tate module of $J_{\Gamma_0(p)}$. We prove in \S \ref{application_Galois_representations} the following result if $\mathfrak{m}$ is Eisenstein.

\begin{thm}\label{intro_thm_galois_rep}
Assume that $\mathfrak{m}$ is Eisenstein (in particular, $\ell$ divides the numerator of $\frac{p-1}{12}$).
We can choose a basis of $\mathcal{V}_{\mathfrak{m}}$ as a $\mathbb{T}_{\mathfrak{m}}$-module such that the following properties hold for the associated representation $\rho : \Gal(\overline{\mathbf{Q}}/\mathbf{Q}) \rightarrow \GL_2(\mathbb{T}_{\mathfrak{m}})$. 
\begin{enumerate}
\item\label{intro_galois_prop1} The reduction of $\rho$ modulo $\mathfrak{m}$ is the residual representation $\overline{\rho} :  \Gal(\overline{\mathbf{Q}}/\mathbf{Q}) \rightarrow \GL_2(\mathbf{F}_{\ell})$ given by $ \overline{\rho} = \begin{pmatrix} \overline{\chi}_{\ell} & \overline{b} \\ 0 & 1 \end{pmatrix}$, where $\chi_{\ell} : \Gal(\overline{\mathbf{Q}}/\mathbf{Q}) \rightarrow \mathbf{Z}_{\ell}^{\times}$ is the $\ell$-adic cyclotomic character, $\overline{\chi}_{\ell}$ is the reduction of $\chi_{\ell}$ modulo $\ell$ and $\overline{b} : \Gal(\overline{\mathbf{Q}}/\mathbf{Q}) \rightarrow \mathbf{F}_{\ell}$ is a Kummer cocycle in $Z^1( \Gal(\overline{\mathbf{Q}}/\mathbf{Q}), \overline{\chi}_{\ell})$ whose kernel cut out a number field isomorphic to $\mathbf{Q}(p^{\frac{1}{\ell}})$.
\item \label{intro_galois_prop2} The representation $\rho$ is unramified outside $p$ and $\ell$, has determinant $\chi_{\ell}$ and is finite flat at $\ell$.
\item \label{intro_galois_prop3} There is a free $\mathbb{T}_{\mathfrak{m}}$-submodule of rank one in $\mathcal{V}_{\mathfrak{m}}$ which is pointwise fixed by the inertia subgroup at $p$. 
\item \label{intro_galois_prop4} For all primes $q \neq \ell, p$, the trace of $\rho(\Frob_q)$ is the Hecke operator $T_q$, where $\Frob_q$ is any (arithmetic) Frobenius element at $q$.
\item \label{intro_galois_prop5} If $\ell \geq 5$ then the representation $\rho$ is universal for the above properties, so we have an isomorphism $R \xrightarrow{\sim} \mathbb{T}_{\mathfrak{m}}$ where $R$ is the universal deformation rings with the prescribed above properties.
\end{enumerate}
\end{thm}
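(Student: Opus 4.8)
The plan is to read off all five assertions from the weight filtration of the generalized cuspidal $1$-motive $M=[\,\mathbf{Z}\to J^{\#}_{\Gamma_0(p)}\,]$ attached to $\delta^{\#,\alg}_{\Gamma_0(p)}$, where $J^{\#}=J^{\#}_{\Gamma_0(p)}$ sits in $1\to\mathbf{G}_m\to J^{\#}\to J\to 1$ (the toric part is $\mathbf{G}_m$ precisely because $X_0(p)$ has two rational cusps). Write $\mathcal{V}_\ell=T_\ell(M)$. It carries a $\Gal(\overline{\mathbf{Q}}/\mathbf{Q})$- and Hecke-stable weight filtration $0\subset W_{-2}\subset W_{-1}\subset W_0=\mathcal{V}_\ell$ with graded pieces $\mathrm{gr}_{-2}=\mathbf{Z}_\ell(1)$ (the torus), $\mathrm{gr}_{-1}=T_\ell(J)$, and $\mathrm{gr}_0=\mathbf{Z}_\ell$ (the lattice). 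After completing at $\mathfrak{m}$, Proposition \ref{multiplicity_one_V} gives $\mathcal{V}_\mathfrak{m}\cong\mathbb{T}_\mathfrak{m}^{2}$, which is the source of $\rho$. I would first dispatch (\ref{intro_galois_prop2}) and (\ref{intro_galois_prop4}): since $X_0(p)$ has good reduction away from $p$, the pieces $T_\ell(J)$ and $\mathbf{G}_m$ are unramified outside $p\ell$ and finite flat at $\ell$, and the lattice contributes a trivial (hence finite flat, unramified) summand; this gives (\ref{intro_galois_prop2}) except for the determinant. For (\ref{intro_galois_prop4}), Eichler--Shimura identifies $\Frob_q$ on $\mathrm{gr}_{-1}$ with $T_q$, while on $\mathrm{gr}_{-2}$ and $\mathrm{gr}_0$ the Hecke action is the Eisenstein one $T_q\mapsto 1+q$; assembling these over $\mathbb{T}_\mathfrak{m}$ yields $\mathrm{tr}\,\rho(\Frob_q)=T_q$. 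The determinant $\det\rho=\chi_\ell$ I would obtain from the auto-duality of $M$: its Cartier dual is again of the form $[\mathbf{Z}\to J^{\#'}]$, and the resulting perfect alternating pairing $\mathcal{V}_\mathfrak{m}\times\mathcal{V}_\mathfrak{m}\to\mathbb{T}_\mathfrak{m}(1)$ --- essentially the integral incarnation of the de Shalit / generalized Petersson pairing of \S\ref{intro_duality} --- forces $\det\rho=\chi_\ell$.

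For (\ref{intro_galois_prop1}), reduce modulo $\mathfrak{m}$: because the weight filtration is Hecke-stable, $W_{-2}$ is the cyclic $\mathbb{T}_\mathfrak{m}$-submodule through which Hecke acts by the Eisenstein character, and its reduction is the unique $\Gal(\overline{\mathbf{Q}}/\mathbf{Q})$-stable line of $\overline{\rho}$, on which Galois acts by $\overline{\chi}_\ell$; dually $\mathrm{gr}_0$ gives the trivial quotient, so $\overline{\rho}=\begin{pmatrix}\overline{\chi}_\ell&\overline{b}\\0&1\end{pmatrix}$. The class $\overline{b}\in H^1(\Gal(\overline{\mathbf{Q}}/\mathbf{Q}),\mathbf{F}_\ell(1))=\mathbf{Q}^\times/(\mathbf{Q}^\times)^\ell$ is exactly the extension class of the sub-$1$-motive $[\mathbf{Z}\to\mathbf{G}_m]$ obtained by pushing $M$ to its toric part, i.e. the Kummer class of the image of $\delta^{\#,\alg}_{\Gamma_0(p)}((\Gamma_0(p)\infty)-(\Gamma_0(p)0))$ in $\mathbf{G}_m(\mathbf{Q})=\mathbf{Q}^\times$. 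By Theorem \ref{intro_main_comparison_Gamma_0_case}(\ref{main_comparison_Gamma_0_case_ii}) this image is a power of $p^{12/d}$; since $\ell\geq 5$ is prime to $12/d$, the class is a nonzero multiple of $p$ in $\mathbf{Q}^\times/(\mathbf{Q}^\times)^\ell$ and its kernel cuts out $\mathbf{Q}(p^{1/\ell})$. For (\ref{intro_galois_prop3}) I would use the $p$-adic uniformization recalled in \S\ref{intro_deShalit}: over $\mathbf{Q}_p$ the semi-abelian variety $J^{\#}$ is a quotient of a $p$-adic torus (de Shalit \cite{deShalit}), so $\mathcal{V}_\ell$ restricted to the decomposition group at $p$ is an extension of an unramified module by a cyclotomic one, and the cyclotomic (toric) part is pointwise fixed by the inertia subgroup $I_p$ (as $\chi_\ell$ is unramified at $p\neq\ell$). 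This part is the free rank-one $\mathbb{T}_\mathfrak{m}$-submodule lifting the $\overline{\chi}_\ell$-line, giving (\ref{intro_galois_prop3}).

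For (\ref{intro_galois_prop5}), note first that $\overline{b}\neq 0$ and $\overline{\chi}_\ell\neq 1$ (as $\ell\geq 5$) force $\mathrm{End}_{\Gal(\overline{\mathbf{Q}}/\mathbf{Q})}(\overline{\rho})=\mathbf{F}_\ell$, so by Mazur the functor of deformations of $\overline{\rho}$ that are unramified outside $p\ell$, finite flat at $\ell$, of determinant $\chi_\ell$, and carrying a rank-one $I_p$-fixed direct summand is representable by a complete local $\mathbf{Z}_\ell$-algebra $R$. Properties (\ref{intro_galois_prop2}) and (\ref{intro_galois_prop3}) show that $\rho$ is such a deformation, producing a canonical $\mathbf{Z}_\ell$-algebra map $\varphi:R\to\mathbb{T}_\mathfrak{m}$. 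Surjectivity of $\varphi$ is immediate: $\mathbb{T}_\mathfrak{m}$ is topologically generated over $\mathbf{Z}_\ell$ by the $T_q$ for $q\neq p,\ell$, and each $T_q=\mathrm{tr}\,\rho(\Frob_q)$ lies in the image of $\varphi$ by (\ref{intro_galois_prop4}).

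The crux, and the step I expect to be the main obstacle, is injectivity of $\varphi$, which I would reduce to the tangent-space inequality $\dim_{\mathbf{F}_\ell}\mathfrak{t}_R\leq\dim_{\mathbf{F}_\ell}\mathfrak{t}_{\mathbb{T}_\mathfrak{m}}=1$, the last equality being the known fact that $\mathbb{T}_\mathfrak{m}$ is generated by one element over $\mathbf{Z}_\ell$. For the source, $\mathfrak{t}_R=H^1_{\mathcal{D}}(\Gal(\overline{\mathbf{Q}}/\mathbf{Q}),\mathrm{ad}^0\overline{\rho})$, the Selmer group cut out by the local conditions of the deformation problem. I would filter $\mathrm{ad}^0\overline{\rho}$ by the Galois pieces $\overline{\chi}_\ell$, $\mathbf{F}_\ell$, $\overline{\chi}_\ell^{-1}$, compute the local conditions at $p$ (the $I_p$-fixed-line condition), at $\ell$ (finite flatness) and at $\infty$, and apply the Greenberg--Wiles global Euler characteristic formula to express $\dim_{\mathbf{F}_\ell} H^1_{\mathcal{D}}$ in terms of a dual Selmer group $H^1_{\mathcal{D}^\ast}$ plus explicit local terms. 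The remaining work is to bound $H^1_{\mathcal{D}^\ast}$ by $0$, which amounts to a vanishing statement for a Kummer-type Selmer group over $\mathbf{Q}(\zeta_\ell)$ attached to $p$; here the hypotheses $\ell\geq 5$ and $p\equiv 1\pmod{\ell}$ (forced by $\mathfrak{m}$ being Eisenstein), together with the precise nature of $\overline{b}$ from (\ref{intro_galois_prop1}), are what make the computation go through. Once the bound $\dim_{\mathbf{F}_\ell}\mathfrak{t}_R\leq 1$ is established, combined with the surjectivity of $\varphi$ and the known presentation of $\mathbb{T}_\mathfrak{m}$ as a quotient of $\mathbf{Z}_\ell[[x]]$ by a single relation, a standard commutative-algebra argument upgrades $\varphi$ to an isomorphism $R\xrightarrow{\sim}\mathbb{T}_\mathfrak{m}$, proving (\ref{intro_galois_prop5}).
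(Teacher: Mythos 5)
Your geometric skeleton largely matches the paper's: $\mathcal{V}_{\mathfrak{m}}$ free of rank $2$ via Proposition \ref{multiplicity_one_V}, the $I_p$-fixed rank-one submodule coming from de Shalit's $p$-adic uniformization (the exact sequence (\ref{ordinary_exact_sequence_supersingular}), available thanks to Theorem \ref{intro_main_thm_comparison_p_adic}), and the residual cocycle $\overline{b}$ read off from the Kummer class of $p^{\frac{12}{d}}$ via Theorem \ref{intro_main_comparison_Gamma_0_case} (\ref{main_comparison_Gamma_0_case_ii}). But there are two genuine gaps. First, your derivation of $\det\rho=\chi_{\ell}$ from auto-duality of the $1$-motive is unfounded: the paper explicitly leaves self-duality of $\mathbf{Z}\rightarrow J_{\Gamma_0(p)}^{\#}$ as an expectation (Remark \ref{intro_rem_galois_rep_duality}), and the pairing it does prove (Proposition \ref{galois_rep_duality}) is constructed \emph{after} the theorem, using the chosen basis and $\rho$ itself --- so invoking it here is circular. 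Relatedly, your ``assembling'' of $\mathrm{tr}\,\rho(\Frob_q)=T_q$ from the weight-graded pieces does not typecheck: $\mathrm{gr}_{-2}$ and $\mathrm{gr}_0$ carry the Hecke action through $\mathbb{T}_{\mathfrak{m}}/I\cong\mathbf{Z}_{\ell}$ while $\mathrm{gr}_{-1}$ carries it through $\mathbb{T}^0_{\mathfrak{m}}$, and none of these is a free $\mathbb{T}_{\mathfrak{m}}$-module, so the traces on graded pieces do not sum to an identity in $\mathbb{T}_{\mathfrak{m}}$. The missing idea is the paper's Lemma \ref{lemma_galois_rep}: with $T_0$ generating $\Ker(\mathbb{T}\rightarrow\mathbb{T}^0)$ and $T_0-n\in I$, one shows $I\mathcal{V}_{\mathfrak{m}}\cap T_0\mathcal{V}_{\mathfrak{m}}=0$, so $\mathcal{V}_{\mathfrak{m}}$ embeds with finite index into the fiber product $\mathcal{V}^{\Eis}_{\mathfrak{m}}\times_{\overline{\mathcal{V}}}\mathcal{V}^{0}_{\mathfrak{m}}$; since $\mathbb{T}_{\mathfrak{m}}\hookrightarrow(\mathbb{T}_{\mathfrak{m}}/I)\times\mathbb{T}^0_{\mathfrak{m}}$, trace and determinant identities verified on $\mathcal{V}^{\Eis}_{\mathfrak{m}}$ (which equals $T_0\cdot\mathcal{V}_{\mathfrak{m}}$, the Tate module of $[\mathbf{Z}\rightarrow\mathbf{G}_m,\,1\mapsto p^{\frac{12}{d}}]$, hence $\begin{pmatrix}\chi_{\ell}&b\\0&1\end{pmatrix}$) and on $\mathcal{V}^0_{\mathfrak{m}}$ (Eichler--Shimura) glue to identities in $\mathbb{T}_{\mathfrak{m}}$. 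Note also that $\delta^{\#,\alg}_{\Gamma_0(p)}((\Gamma_0(p)\infty)-(\Gamma_0(p)0))$ does not lie in the toric part, so ``pushing $M$ to its torus'' only makes sense after applying $T_0$ (equivalently, multiplying by $n$); and your restriction $\ell\geq 5$ in part (\ref{intro_galois_prop1}) is both unnecessary and weaker than the statement, since $\gcd(12/d,\,n)=1$ holds for every $\ell$ dividing $n$, including $\ell\in\{2,3\}$.

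Second, for part (\ref{intro_galois_prop5}) you propose proving $R\xrightarrow{\sim}\mathbb{T}_{\mathfrak{m}}$ from scratch by a Greenberg--Wiles tangent-space computation, but you explicitly defer the decisive step (``bound $H^1_{\mathcal{D}^{\ast}}$ by $0$'') --- that vanishing \emph{is} the hard content, and it is exactly what the pseudo-deformation machinery of Wake--Wang-Erickson supplies; there is no indication your na\"ive Selmer computation closes without it. The paper's proof is much shorter: it constructs $u:R\rightarrow\mathbb{T}_{\mathfrak{m}}$ from properties (a)--(e) and then deduces that $u$ is an isomorphism because it is a factor of the isomorphism $v:R'\xrightarrow{\sim}\mathbb{T}_{\mathfrak{m}}$ of \cite[Corollary 7.1.3]{WWE} through the natural surjection from the universal pseudodeformation ring $R'$ (this citation is also the sole source of the hypothesis $\ell\geq 5$). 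Your surjectivity argument for $\varphi$ moreover presupposes part (\ref{intro_galois_prop4}), which, as explained above, your proposal has not actually established. So as written the proposal omits the paper's central gluing lemma and replaces its one genuinely cited input by an unexecuted program.
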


Theorem \ref{intro_thm_galois_rep} is similar to a result of Frank Calegari and Matthew Emerton \cite[Theorem 1.5]{Calegari_Emerton}, although the residual Galois representation they consider is $ \begin{pmatrix} \overline{\chi}_{\ell} & 0 \\ 0 & 1 \end{pmatrix}$. Property (\ref{intro_galois_prop5}) is an immediate consequence of a result of Preston Wake and Carl Wang-Erickson \cite[Corollary 7.1.3]{WWE} (the restriction $\ell \geq 5$ comes from there, but we expect that the result still holds for $\ell \in \{2,3\}$). Our contribution here is really to the construction of the Galois representation $\rho$ satisfying the above properties. While we could maybe prove the existence of $\rho$ using a gluing argument using Ribet's Lemma, the construction here is more geometric in nature since it is related to the generalized Jacobian of $X_0(p)$.

\begin{rem}\label{intro_rem_galois_rep_duality}
There exists a perfect pairing of Galois modules $\mathcal{V}_{\ell} \times \mathcal{V}_{\ell} \rightarrow \mathbf{Z}_{\ell}(1)$, where as usual $\mathbf{Z}_{\ell}(1)$ is $\mathbf{Z}_{\ell}$ with the Galois action given by $\chi_{\ell}$ (\cf Proposition \ref{galois_rep_duality}). It seems therefore reasonable to expect that the $1$-motive $\mathbf{Z} \rightarrow J_{\Gamma_0(p)}^{\#}$ itself is self-dual.

If $\Gamma$ is a congruence subgroup of level $N$ and $\ell$ is a prime not dividing $N$, then we can define  similarly a canonical Hecke and Galois module by considering the $\ell$-adic realization of the $1$-motive $\delta_{\Gamma}^{\natural, \alg}$. We do not know whether this module is self-dual up to twist. An interesting case would be $\Gamma = \Gamma_1(p)$, since it would simplify some of our arguments in \cite{Lecouturier_MT}.
\end{rem}

\subsection{Acknowledgements}
I would like to thank my former Phd advisor Lo\"ic Merel for his helpful suggestions and continuous support. I would also like to thank Takao Yamazaki for answering some questions about generalized Jacobians, and pointing out the usefulness of the Weil reciprocity law in \S \ref{subsection_comparison}. This research was funded by Tsinghua University and the Yau Mathematical Sciences Center.

\section{Mixed modular symbols}
We keep the notation of \S \ref{section_introduction}.

\subsection{First general properties}\label{section_first_properties}

There is a group homomorphism $i_{\Gamma} : \Gamma \rightarrow \tilde{\mathcal{M}}_{\Gamma}$ given by $\gamma \mapsto \{1, \gamma\}$. We have a surjective group homomorphism $$\Pi_{\Gamma} : \Gamma \rightarrow H_1(Y_{\Gamma}, \mathbf{Z})$$ sending $\gamma \in \Gamma$ to the image in $Y_{\Gamma}$ of the geodesic path in the upper-half plane between $z_0$ and $\gamma(z_0)$ (for any $z_0$ in the upper-half plane). The kernel of $\Pi_{\Gamma}$ is generated by commutators and elliptic elements. Thus, the map $i_{\Gamma}$ factors through $\Pi_{\Gamma}$, so induces a group homomorphism $$\iota_{\Gamma} : H_1(Y_{\Gamma}, \mathbf{Z}) \rightarrow \tilde{\mathcal{M}}_{\Gamma} \text{ .}$$

Recall the following construction due to Stevens \cite[\S 2.3]{Stevens_book}. Let $M_2$ be the $\mathbf{C}$-vector space of weight $2$ modular forms with arbitrary level (in particular, this includes modular forms for non-congruence subgroups). Recall that there is a right action of $\GL_2^+(\mathbf{Q})$ on $M_2$, given by $(f \mid g)(z) = \det(g)\cdot (cz+d)^{-2}\cdot f(\frac{az+b}{cz+d})$ where $g = \begin{pmatrix} a & b \\ c & d \end{pmatrix}$ and $f \in M_2$. For any $f \in M_2$, we denote by $a_0(f)$ the Fourier coefficient at infinity of $f$. 

Fix $z_0$ in the upper-half plane. Following Stevens \cite[Definition 2.3.1 p. 51]{Stevens_book}, we define a map $$\mathcal{S} : \GL_2^+(\mathbf{Q}) \rightarrow \Hom_{\mathbf{C}}(M_2, \mathbf{C})$$
by
$$\mathcal{S}(g)(f) = 2i\pi \cdot \int_{z_0}^{g(z_0)} f(z)dz - 2i\pi\cdot  z_0\cdot (a_0(f\mid g)-a_0(f)) + 2i\pi\cdot \int_{z_0}^{i\infty} \left((f\mid g)(z)-a_0(f\mid g)\right) - \left(f(z) - a_0(f)\right) dz \text{ .}$$

\begin{prop}[Stevens]\label{Stevens_trivial_lemma}
\begin{enumerate}
\item \label{Stevens_trivial_lemma_1} The map $\mathcal{S}$ does not depend on the choice of $z_0$.
\item \label{Stevens_trivial_lemma_2} If $g \in \Gamma$ and $f$ is modular of level $\Gamma$, then 
$$\mathcal{S}(g)(f) = 2i\pi\int_{z_0}^{g(z_0)} f(z)dz \text{ .}$$
\item The map $\mathcal{S}$ is a cocycle, \ie for all $(g,g') \in \GL_2^+(\mathbf{Q})^2$ and $f \in M_2$, we have
$$\mathcal{S}(gg')(f) = \mathcal{S}(g)(f) + \mathcal{S}(g')(f\mid g) \text{ .}$$
\item \label{Stevens_trivial_lemma_3} For all $f \in M_2$, we have $\mathcal{S}\begin{pmatrix} a & b \\ 0 & d \end{pmatrix}(f) = \frac{b}{d}\cdot 2i\pi\cdot a_0(f)$.
\item \label{Stevens_trivial_lemma_4} For all $f \in M_2$, we have $\mathcal{S}\begin{pmatrix} 0 & -1 \\ 1 & 0 \end{pmatrix}(f) = L(f,1)$.
\end{enumerate}
\end{prop}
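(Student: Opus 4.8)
The plan is to reduce all five assertions to a single functional equation for the canonical primitive of the differential $2i\pi f(z)\,dz$ normalised at the cusp $\infty$, which absorbs all the constant-term bookkeeping built into the definition of $\mathcal{S}$. For $f \in M_2$ set
\[
\Phi_f(z) = 2i\pi\, a_0(f)\, z - 2i\pi \int_z^{i\infty}\bigl(f(w) - a_0(f)\bigr)\,dw,
\]
the integral converging because $f - a_0(f)$ decays at $i\infty$; thus $\Phi_f' = 2i\pi f$ and $\Phi_f(z) = 2i\pi\, a_0(f)\, z + o(1)$ as $z\to i\infty$. This is legitimate for $f\mid g$ as well, since $M_2$ is stable under $\GL_2^+(\mathbf{Q})$, so $\Phi_{f\mid g}$ makes sense. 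The first and essential step is to verify, by a direct rearrangement of the three terms in the definition, the identity
\[
\mathcal{S}(g)(f) = \Phi_f(g z_0) - \Phi_{f\mid g}(z_0).
\]
One rewrites the regularised integral as $\bigl(2i\pi a_0(f\mid g) z_0 - \Phi_{f\mid g}(z_0)\bigr) - \bigl(2i\pi a_0(f) z_0 - \Phi_f(z_0)\bigr)$, so that its $z_0$-linear part cancels the middle term of $\mathcal{S}(g)(f)$ and the leading integral $2i\pi\int_{z_0}^{g z_0} f$ combines with $\Phi_f(z_0)$ to produce $\Phi_f(g z_0)$.

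Given this identity, the independence of $z_0$ (Part (\ref{Stevens_trivial_lemma_1})) and the cocycle property become formal. From the weight-$2$ chain rule $(f\mid g)(z) = f(gz)\,g'(z)$, valid because $g'(z) = \det(g)(cz+d)^{-2}$, both $\Phi_{f\mid g}(z)$ and $\Phi_f(gz)$ have derivative $2i\pi(f\mid g)(z)$, so their difference is a constant $\kappa(g,f)$ on the connected upper half-plane; that is, $\Phi_{f\mid g} = \Phi_f\circ g + \kappa(g,f)$. Substituting yields $\mathcal{S}(g)(f) = -\kappa(g,f)$, which no longer involves $z_0$, proving Part (\ref{Stevens_trivial_lemma_1}). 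Expanding $\Phi_{f\mid gg'}$ in two ways—directly, and through $f\mid gg' = (f\mid g)\mid g'$ with the functional equation applied twice—gives $\kappa(gg',f) = \kappa(g,f) + \kappa(g', f\mid g)$, which is exactly the cocycle relation $\mathcal{S}(gg')(f) = \mathcal{S}(g)(f) + \mathcal{S}(g')(f\mid g)$. Part (\ref{Stevens_trivial_lemma_2}) is immediate: for $g\in\Gamma$ and $f$ of level $\Gamma$ one has $f\mid g = f$, hence $a_0(f\mid g) = a_0(f)$ and the last two terms of $\mathcal{S}(g)(f)$ vanish.

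For Part (\ref{Stevens_trivial_lemma_3}) take $g = \begin{pmatrix} a & b \\ 0 & d\end{pmatrix}$, which I may assume has $a, d > 0$. Then $gz = (az+b)/d\to i\infty$ as $z\to i\infty$ and $a_0(f\mid g) = \tfrac{a}{d}\,a_0(f)$, so using $\Phi_h(z) = 2i\pi\, a_0(h)\, z + o(1)$ I compute
\[
\kappa(g,f) = \lim_{z\to i\infty}\bigl(\Phi_{f\mid g}(z) - \Phi_f(gz)\bigr) = \lim_{z\to i\infty}\Bigl(2i\pi\tfrac{a}{d}a_0(f)\,z - 2i\pi\, a_0(f)\tfrac{az+b}{d}\Bigr) = -\tfrac{b}{d}\,2i\pi\, a_0(f),
\]
whence $\mathcal{S}(g)(f) = -\kappa(g,f) = \tfrac{b}{d}\,2i\pi\, a_0(f)$.

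Part (\ref{Stevens_trivial_lemma_4}) is where the real work lies, and I expect it to be the main obstacle. Since $S$ fixes $z_0 = i$, the identity gives $\mathcal{S}(S)(f) = \Phi_f(i) - \Phi_{f\mid S}(i)$. Using $(f\mid S)(w)\,dw = f(-1/w)\,d(-1/w)$, the substitution $w\mapsto -1/w$ turns the regularised integral inside $\Phi_{f\mid S}(i)$ into an integral of $f$ over the segment from $0$ to $i$; combined with $\Phi_f(i)$, and after the constant-term boundary contributions cancel, the two pieces assemble into the regularised Mellin integral of $f$ at $s=1$. For a cusp form this is simply $-2i\pi\int_0^{i\infty}f(z)\,dz$, and in all cases Hecke's integral formula evaluates it to $\sum_{n\geq 1} a_n(f)/n = L(f,1)$. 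The delicate points are (i) that the divergent boundary term $a_0(f\mid S)\cdot iY$ from the truncated integral cancels against the pole of $f$ at the cusp $0$, where $f(u)\sim a_0(f\mid S)\,u^{-2}$, and (ii) the sign, where careful bookkeeping produces $+L(f,1)$ rather than $-L(f,1)$.
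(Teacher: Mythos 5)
Your proposal is correct, and it is worth noting that the paper itself contains no argument here: its proof of this proposition is the single line ``This follows from \cite[p.~51--52]{Stevens_book}''. What you have written is therefore a reconstruction of the cited source, and your mechanism --- the normalised primitive $\Phi_f$ with $\Phi_f'=2i\pi f$ and $\Phi_f(z)=2i\pi a_0(f)z+o(1)$, the identity $\mathcal{S}(g)(f)=\Phi_f(gz_0)-\Phi_{f\mid g}(z_0)$ (which I checked: the $z_0$-linear terms cancel and the tail integrals recombine exactly as you describe), and the resulting coboundary formula $\mathcal{S}(g)(f)=-\kappa(g,f)$ with $\Phi_{f\mid g}=\Phi_f\circ g+\kappa(g,f)$ --- is precisely the one underlying Stevens' construction. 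Parts (i)--(iii) are then formal, and your computation of $\kappa$ for triangular $g$ is complete; the only thing to add in part (iv) is the one-line justification of ``I may assume $a,d>0$'': from the definition, $\mathcal{S}(-g)=\mathcal{S}(g)$, since $\mathcal{S}(g)(f)$ depends on $g$ only through $g(z_0)$ and $f\mid g$, both invariant under $g\mapsto -g$, and $b/d$ is likewise unchanged.

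Concerning part (v), which you leave as a sketch: both delicate points you flag do work out, and the computation closes. With $z_0=i$ and $u=-1/w$ one has $(f\mid S)(w)\,dw=f(u)\,du$, the divergent truncation term $-a_0(f\mid S)(iY-i)$ is cancelled exactly by $a_0(f\mid S)\int_i^{i/Y}u^{-2}\,du$, and one lands on
$$\mathcal{S}(S)(f)=2\pi\left(\int_1^{\infty}\bigl(f(iy)-a_0(f)\bigr)dy+\int_0^1\bigl(f(iy)+a_0(f\mid S)\,y^{-2}\bigr)dy+a_0(f\mid S)-a_0(f)\right) ,$$
where the bracket is precisely the value at $s=1$ of the continuation of $\int_0^{\infty}(f(iy)-a_0(f))y^{s-1}dy=(2\pi)^{-s}\Gamma(s)L(f,s)$ (in Stevens' normalisation for general cusp width) obtained by splitting at $y=1$: the pole terms $-a_0(f)/s-a_0(f\mid S)/(s-2)$ contribute $a_0(f\mid S)-a_0(f)$ at $s=1$, so the sign comes out as $+L(f,1)$, as required. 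The one genuine slip in your write-up is the phrase ``evaluates it to $\sum_{n\geq 1}a_n(f)/n=L(f,1)$'': for an Eisenstein series the coefficients have size $\sigma(n)$ and that series diverges, so $L(f,1)$ must be read throughout as the analytically continued Mellin value displayed above (which is how Stevens defines it), not as a convergent Dirichlet series. With that reading, your argument is a complete proof of all five assertions.
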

\begin{proof}
This follows from \cite[p. 51-52]{Stevens_book}.
\end{proof}
Using Proposition \ref{Stevens_trivial_lemma}, we check that there exists a unique group homomorphism
$$\tilde{p}_{\Gamma} : \tilde{\mathcal{M}}_{\Gamma} \rightarrow \Hom_{\mathbf{C}}(M_2(\Gamma), \mathbf{C}) \text{ ,}$$
such that $\tilde{p}_{\Gamma}(\{g,g'\})(f)= \mathcal{S}(g^{-1}g')(f\mid g) = \mathcal{S}(g')(f)-\mathcal{S}(g)(f)$ for all $(g,g') \in \SL_2(\mathbf{Z})^2$ and $f \in M_2(\Gamma)$.

If $c \in \mathcal{C}_{\Gamma}$ is a cusp, let $e_c$ be the ramification index at the cusp $c$ of the map $X_{\Gamma}\rightarrow X_{\SL_2(\mathbf{Z})}$. Let $N$ be the l.c.m of the indices $e_c$; following \cite{general_level} we call $N$ the \textit{general level of $\Gamma$}. If $\Gamma$ is a congruence subgroup of general level $N$, then $\Gamma$ has (usual) level $N$ or $2N$ \cite[Proposition 3]{general_level}.

\begin{prop}\label{rank_computation}
\begin{enumerate}
\item \label{rank_computation_1} The map $\iota_{\Gamma} : H_1(Y_{\Gamma}, \mathbf{Z}) \rightarrow \tilde{\mathcal{M}}_{\Gamma}$ is injective. Thus, we can (and do) indentify $H_1(Y_{\Gamma}, \mathbf{Z})$ with a subgroup of $\tilde{\mathcal{M}}_{\Gamma}$.
\item \label{rank_computation_2} The element $\{g,gT\}$ of $\tilde{\mathcal{M}}_{\Gamma}$ only depends of the cusp $c=\Gamma g\infty$ of $X_{\Gamma}$, and $e_c \cdot \{g,gT\}$ is the image by $\iota_{\Gamma}$ of a small oriented circle around the cusp $c$. In particular, we have in $\tilde{\mathcal{M}}_{\Gamma}$:
$$\sum_{c=\Gamma g\infty \in \mathcal{C}_{\Gamma}} e_c \cdot \{g, gT\}=0 \text{ .}$$
\item \label{rank_computation_3} Consider the boundary map $\partial_{\Gamma} : \tilde{\mathcal{M}}_{\Gamma} \rightarrow \mathbf{Z}[\mathcal{C}_{\Gamma}]^0$ given by $\partial_{\Gamma}(\{g,g'\}) = [\Gamma g'\infty]-[\Gamma g\infty]$. The kernel of $\partial_{\Gamma}$ is spanned by $H_1(Y_{\Gamma}, \mathbf{Z})$ and the elements $\{g,gT\}$ for $g \in \SL_2(\mathbf{Z})$. In particular, the torsion subgroup of the coimage of $\iota_{\Gamma}$ has exponent $N$, and we have an exact sequence
\begin{equation}\label{exact_sequence_1}
0 \rightarrow H_1(Y_{\Gamma}, \mathbf{Z}) \otimes_{\mathbf{Z}} \mathbf{Z}[\frac{1}{N}] \xrightarrow{\iota_{\Gamma} \otimes 1} \tilde{\mathcal{M}}_{\Gamma} \otimes_{\mathbf{Z}} \mathbf{Z}[\frac{1}{N}] \xrightarrow{\partial_{\Gamma} \otimes 1} \mathbf{Z}[\mathcal{C}_{\Gamma}]^0 \otimes_{\mathbf{Z}} \mathbf{Z}[\frac{1}{N}]\rightarrow 0
\end{equation}
\item \label{rank_computation_4} The kernel of $\pi_{\Gamma} : \tilde{\mathcal{M}}_{\Gamma} \rightarrow \mathcal{M}_{\Gamma}$ is spanned by the elements $\{g,gT\}$ for $g \in \SL_2(\mathbf{Z})$. Thus, we have an exact sequence
\begin{equation}\label{exact_sequence_2}
0 \rightarrow \mathbf{Z} \rightarrow \mathbf{Z}[\mathcal{C}_{\Gamma}]\rightarrow \tilde{\mathcal{M}}_{\Gamma} \xrightarrow{\pi_{\Gamma}} H_1(X_{\Gamma}, \mathcal{C}_{\Gamma}, \mathbf{Z}) \rightarrow 0 \text{ .}
\end{equation}
Here, the map $\mathbf{Z} \rightarrow \mathbf{Z}[\mathcal{C}_{\Gamma}]$ is given by $1 \mapsto \frac{1}{d_{\Gamma}}\sum_{c \in \mathcal{C}_{\Gamma}} e_c\cdot [c]$, and the map $\mathbf{Z}[\mathcal{C}_{\Gamma}] \rightarrow \tilde{\mathcal{M}}_{\Gamma}$ is given by $[c] \mapsto \{g,gT\}$ where $g\in SL_2(\mathbf{Z})$ is such that $c=\Gamma g\infty$.
\end{enumerate}
\end{prop}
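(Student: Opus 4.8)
The plan is to establish the four parts in the order (2), (1), then (4) and (3) jointly, reading off the ranks along the way and finishing with the precise cuspidal kernel; the whole argument is driven by one identity proved in part (2), namely $e_c\{g,gT\}=\iota_\Gamma(\ell_c)$ with $\ell_c$ the class of a small loop around the cusp $c$. For part (2) I would first show $\{g,gT\}$ depends only on $c=\Gamma g\infty$. Since $\Gamma g\infty=\Gamma g'\infty$ forces $g'=\gamma g(\pm T^m)$ with $\gamma\in\Gamma$, $m\in\mathbf Z$, relation (2) and $\Gamma$-invariance reduce this to $\{gT^m,gT^{m+1}\}=\{g,gT\}$, which is immediate from additivity (1) and relation (3): $\{gT^m,gT^{m+1}\}=\{g,gT^{m+1}\}-\{g,gT^m\}=(m+1)\{g,gT\}-m\{g,gT\}$. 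Writing $\gamma_c=gT^{e_c}g^{-1}$ for a generator of the stabiliser of $c$ in $\pm\Gamma$, the same manipulation (rewrite $gT^{e_c}=\gamma_c g$ and apply $\gamma_c^{-1}\in\Gamma$) gives $e_c\{g,gT\}=\{g,gT^{e_c}\}=\{1,\gamma_c\}=\iota_\Gamma(\Pi_\Gamma(\gamma_c))=\iota_\Gamma(\ell_c)$, using $i_\Gamma=\iota_\Gamma\circ\Pi_\Gamma$ and $i_\Gamma(\gamma)=\{1,\gamma\}$. Summing and using $\sum_c\ell_c=0$ in $H_1(Y_\Gamma,\mathbf Z)$ (the total boundary of the punctured surface is nullhomologous) yields $\sum_c e_c\{g,gT\}=0$.

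For part (1), composing with the period map does it: by the defining formula for $\tilde p_\Gamma$ and Proposition \ref{Stevens_trivial_lemma}(\ref{Stevens_trivial_lemma_2}), for $\gamma\in\Gamma$ and $f\in M_2(\Gamma)$ we have $\tilde p_\Gamma(\{1,\gamma\})(f)=\mathcal S(\gamma)(f)=2i\pi\int_{z_0}^{\gamma z_0}f\,dz$, hence $\tilde p_\Gamma\circ\iota_\Gamma\circ\Pi_\Gamma=p_\Gamma\circ\Pi_\Gamma$; surjectivity of $\Pi_\Gamma$ gives $\tilde p_\Gamma\circ\iota_\Gamma=p_\Gamma$. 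Since $p_\Gamma\otimes\mathbf R$ is injective (period injectivity for the open curve) and $H_1(Y_\Gamma,\mathbf Z)$ is torsion-free, $\iota_\Gamma$ is injective.

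For parts (4) and (3), the key is the short exact sequence of $\SL_2(\mathbf Z)$-modules $0\to\mathbf Z[\mathbf P^1(\mathbf Q)]\to\tilde{\mathcal M}\xrightarrow{\Phi}\mathcal M\to0$, where $\Phi(\{g,g'\})=\{g\infty,g'\infty\}$ and the first map sends $[g\infty]$ to $\{g,gT\}$. I would prove exactness from the presentation of $\tilde{\mathcal M}$: each $\langle T\rangle$-coset contributes a free rank-two summand spanned by $\{1,g\}$ and $\{g,gT\}$, so the first map is injective with $\tilde{\mathcal M}/\langle\{g,gT\}\rangle\cong\mathbf Z[\mathbf P^1(\mathbf Q)]^0=\mathcal M$. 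Taking $\Gamma$-homology and applying Shapiro's lemma to the permutation module $\mathbf Z[\mathbf P^1(\mathbf Q)]=\mathrm{Ind}_{\langle\pm T\rangle}^{\SL_2(\mathbf Z)}\mathbf Z$ (so $H_i(\Gamma,\mathbf Z[\mathbf P^1(\mathbf Q)])=\bigoplus_{c}H_i(\Gamma_c,\mathbf Z)$ with $\Gamma_c\cong\mathbf Z$) gives the exact sequence $H_1(\Gamma,\mathcal M)\xrightarrow{\delta}\mathbf Z[\mathcal C_\Gamma]\xrightarrow{\kappa_*}(\tilde{\mathcal M})_\Gamma\xrightarrow{\Phi_*}(\mathcal M)_\Gamma\to0$, in which $\kappa_*([c])=\{g,gT\}$. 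Passing to largest torsion-free quotients identifies $\Phi_*$ with $\pi_\Gamma$ and $\kappa_*$ with the map $\kappa\colon\mathbf Z[\mathcal C_\Gamma]\to\tilde{\mathcal M}_\Gamma$ of the statement, yielding $\Ker(\pi_\Gamma)=\mathrm{im}(\kappa)$, which is (4). Then (3) follows formally: with $\partial_\Gamma=\bar\partial\circ\pi_\Gamma$, where $\bar\partial\colon H_1(X_\Gamma,\mathcal C_\Gamma)\to\mathbf Z[\mathcal C_\Gamma]^0$ has kernel $H_1(X_\Gamma)$, and with $\pi_\Gamma\circ\iota_\Gamma=j_*\colon H_1(Y_\Gamma)\twoheadrightarrow H_1(X_\Gamma)$, any $x\in\Ker\partial_\Gamma$ satisfies $\pi_\Gamma(x)\in H_1(X_\Gamma)=j_*(H_1(Y_\Gamma))$, so $x-\iota_\Gamma(y)\in\Ker\pi_\Gamma=\mathrm{im}(\kappa)$ for suitable $y$, giving $\Ker\partial_\Gamma=\iota_\Gamma(H_1(Y_\Gamma))+\mathrm{im}(\kappa)$.

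From $e_c\kappa([c])=\iota_\Gamma(\ell_c)$ the image of $\kappa$ lies rationally in $\iota_\Gamma(H_1(Y_\Gamma))$, so $\Ker\partial_\Gamma$ has rank $2g(\Gamma)+c(\Gamma)-1$ and, since $\partial_\Gamma$ is onto $\mathbf Z[\mathcal C_\Gamma]^0$, $\tilde{\mathcal M}_\Gamma$ has rank $2g(\Gamma)+2c(\Gamma)-2=2\dim_\mathbf C M_2(\Gamma)$; hence $\mathrm{im}(\kappa)=\Ker\pi_\Gamma$ has rank $c(\Gamma)-1$. As $\mathrm{im}(\kappa)$ is torsion-free, $\Ker(\kappa)$ is saturated of rank $1$ in $\mathbf Z[\mathcal C_\Gamma]$ and contains $\sum_c e_c[c]$, so it equals $\mathbf Z\cdot\frac1{d_\Gamma}\sum_c e_c[c]$, giving the exact sequence (2). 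The torsion of the coimage of $\iota_\Gamma$ lies in $\mathrm{im}(\kappa)/(\mathrm{im}(\kappa)\cap\iota_\Gamma H_1(Y_\Gamma))$ and is killed by $N=\lcm(e_c)$ because $e_c\kappa([c])\in\iota_\Gamma H_1(Y_\Gamma)$; inverting $N$ absorbs $\mathrm{im}(\kappa)$ into $\iota_\Gamma(H_1(Y_\Gamma))$ and produces the exact sequence (1). The main obstacle is the torsion bookkeeping in the last step of the homological argument: passing from the $\Gamma$-coinvariants $(\tilde{\mathcal M})_\Gamma$, $(\mathcal M)_\Gamma$ — which may carry torsion coming from elliptic elements of $\Gamma$ — to the largest torsion-free quotients, one must verify that the equality $\Ker(\Phi_*)=\mathrm{im}(\kappa_*)$ survives, i.e.\ that no rational class of $\mathrm{im}(\kappa_*)$ is annihilated; establishing the exactness of the initial module sequence (that the $\{g,gT\}$ satisfy no hidden relation before taking coinvariants) is the other delicate point.
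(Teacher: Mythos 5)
Your proofs of parts (\ref{rank_computation_1}) and (\ref{rank_computation_2}) are correct and coincide with the paper's (composing $\iota_{\Gamma}$ with $\tilde{p}_{\Gamma}$ and invoking Proposition \ref{Stevens_trivial_lemma} (\ref{Stevens_trivial_lemma_2}); the telescoping computation with $T^n$). Your route to (\ref{rank_computation_3})--(\ref{rank_computation_4}) is genuinely different --- the paper never takes group coinvariants --- but it has a gap at exactly the point you flag and leave open. Of your two ``delicate points'', the first is harmless: exactness of $0\to\mathbf{Z}[\mathbf{P}^1(\mathbf{Q})]\to\tilde{\mathcal{M}}\xrightarrow{\Phi}\mathcal{M}\to0$ does follow from your coset decomposition, since modulo the defining relations each $\{1,g\}$ with $g$ in a fixed coset $g_0\langle T\rangle$ of $\PSL_2(\mathbf{Z})$ equals $\{1,g_0\}+n\{g_0,g_0T\}$, so the elements $\{g_0,g_0T\}$ form part of a basis of $\tilde{\mathcal{M}}$ and satisfy no hidden relation. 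The real gap is the torsion bookkeeping. From the coinvariants sequence $\mathbf{Z}[\mathcal{C}_{\Gamma}]\xrightarrow{\kappa_*}(\tilde{\mathcal{M}})_{\Gamma}\xrightarrow{\Phi_*}(\mathcal{M})_{\Gamma}\to0$ one can only conclude, after passing to largest torsion-free quotients, that $\Ker(\pi_{\Gamma})$ is the \emph{saturation} of $\mathrm{im}(\kappa)$ in $\tilde{\mathcal{M}}_{\Gamma}$: if $\pi_{\Gamma}(\bar{x})=0$ then $\Phi_*(x)$ is torsion, hence only $m\bar{x}\in\mathrm{im}(\kappa)$ for some $m\geq 1$, and concluding $\bar{x}\in\mathrm{im}(\kappa)$ requires a divisibility in the torsion-free group $\mathbf{Z}[\mathcal{C}_{\Gamma}]/\Ker(\kappa)$ that is not formal. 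The obstruction is real: $(\mathcal{M})_{\Gamma}$ receives $H_1(\Gamma,\mathbf{Z})=\Gamma^{\ab}$ through the boundary map attached to $0\to\mathcal{M}\to\mathbf{Z}[\mathbf{P}^1(\mathbf{Q})]\to\mathbf{Z}\to0$, and $\Gamma^{\ab}$ has torsion (coming from elliptic elements) in general; closing the gap amounts to showing that every torsion class of $(\mathcal{M})_{\Gamma}$ lifts to a torsion class of $(\tilde{\mathcal{M}})_{\Gamma}$, which is essentially the statement to be proved. Since you derive (\ref{rank_computation_3}) from (\ref{rank_computation_4}), the ambiguity propagates: as written you obtain (\ref{rank_computation_4}), the exactness of \eqref{exact_sequence_2} at $\tilde{\mathcal{M}}_{\Gamma}$, and (\ref{rank_computation_3}) only up to a finite group.

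The paper fills exactly this hole by reversing your logical order. It proves (\ref{rank_computation_3}) first, by a direct computation inside the torsion-free quotient $\tilde{\mathcal{M}}_{\Gamma}$ itself: writing $x=\sum_g\lambda_g\{1,g\}\in\Ker(\partial_{\Gamma})$, noting $\sum_{\Gamma g\infty=c}\lambda_g=0$ for each cusp $c\neq\Gamma\infty$, and reducing cusp by cusp via $\{1,g\}=\{1,\gamma\}+\{1,g_c\}+n\{g_c,g_cT\}$ --- no coinvariant torsion ever enters. Then (\ref{rank_computation_4}) follows from (\ref{rank_computation_2}) and (\ref{rank_computation_3}): an element of $\Ker(\pi_{\Gamma})$ lies in $\iota_{\Gamma}(H_1(Y_{\Gamma},\mathbf{Z}))+\mathrm{im}(\kappa)$, its $H_1$-component is killed by $j_*\colon H_1(Y_{\Gamma},\mathbf{Z})\to H_1(X_{\Gamma},\mathbf{Z})$, and $\Ker(j_*)$ is generated by the cusp loops $\ell_c=e_c\cdot\kappa([c])\in\mathrm{im}(\kappa)$; this topological input is what replaces your unproved saturation claim. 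Your remaining steps (the rank counts, the identification $\Ker(\kappa)=\mathbf{Z}\cdot\frac{1}{d_{\Gamma}}\sum_{c}e_c[c]$ from torsion-freeness of $\tilde{\mathcal{M}}_{\Gamma}$, and the localized sequence \eqref{exact_sequence_1}) are correct once (\ref{rank_computation_3}) and (\ref{rank_computation_4}) are secured, so the simplest repair is to prove (\ref{rank_computation_3}) directly as above and then run your own formal deduction in the opposite direction.
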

\begin{proof}
Proof of (\ref{rank_computation_1}). It suffices to show that the map $\tilde{p}_{\Gamma} \circ \iota_{\Gamma} : H_1(Y_{\Gamma}, \mathbf{Z}) \rightarrow  \Hom_{\mathbf{C}}(M_2, \mathbf{C})$ is injective. This follows from Lemma \ref{Stevens_trivial_lemma} (\ref{Stevens_trivial_lemma_2}) and the fact that the period map is injective.

Proof of (\ref{rank_computation_2}). Let $(g,g') \in \SL_2(\mathbf{Z})^2$ such that $\Gamma g\infty = \Gamma g'\infty$. There exists $\gamma\in \Gamma$ and $n \in \mathbf{Z}$ such that $g' = \pm \gamma gT^n$. We have, in $\tilde{\mathcal{M}}_{\Gamma}$: 
\begin{align*}
\{g',g'T\}&=\{\gamma gT^n, \gamma g T^{n+1}\} \\&= \{gT^n, gT^{n+1}\} \\& = \{gT^n, g\}+\{g, gT^{n+1}\} \\&=\{g, gT^{n+1}\}-\{g,gT^n\} \\& = (n+1)\cdot \{g,gT\}-n\cdot \{g,gT\} \\&= \{g,gT\} \text{ .}  
\end{align*}
This proves the first claim. For the second claim, note that the image by $\iota_{\Gamma}$ of a small oriented circle around $c$ is $i_{\Gamma}(gT^{e_c}g^{-1}) = \{1, gT^{e_c}g^{-1}\}$. Furthermore, we have in $\tilde{\mathcal{M}}_{\Gamma}$:
\begin{align*}
e_c\cdot \{g,gT\} &= \{g,gT^{e_c}\} \\& = \{g,\gamma g\} \\& = \{g,1\}+\{1,\gamma\}+ \{\gamma, \gamma g\} \\& =\{g,1\}+\{1,\gamma\}+\{1,g\} \\&=\{1,\gamma\} \\&= \{1, gT^{e_c}g^{-1}\}
\end{align*}
where $\gamma=gT^{e_c}g^{-1} \in \Gamma$. This proves the second claim. The equality $$\sum_{c=\Gamma g\infty \in \mathcal{C}_{\Gamma}} e_c \cdot \{g, gT\}=0 $$
is known to be true in $H_1(Y_{\Gamma}, \mathbf{Z})$, and hence in $\tilde{\mathcal{M}}_{\Gamma}$.

Proof of (\ref{rank_computation_3}). It is clear that $\Ker(\partial_{\Gamma})$ contains $H_1(Y_{\Gamma}, \mathbf{Z})$ and $\{g,gT\}$ for all $g \in \SL_2(\mathbf{Z})$. Note that $\tilde{\mathcal{M}}_{\Gamma}$ is spanned by elements of the form $\{1,g\}$ for $g \in \SL_2(\mathbf{Z})$. Let $\sum_{g \in \SL_2(\mathbf{Z})} \lambda_g \cdot \{1,g\}$ be an element of $\Ker(\partial_{\Gamma})$. For each cusp $c \in \mathcal{C}_{\Gamma}$, fix a $g_c \in \SL_2(\mathbf{Z})$ such that $c = \Gamma g_c \infty$. For each $c \neq \Gamma \infty$, we have $\sum_{g \in \SL_2(\mathbf{Z}), c=\Gamma g\infty} \lambda_g=0$. If $c = \Gamma g\infty$, then we have $g = \pm \gamma g_c T^n$ for some $\gamma \in \Gamma$ and $n \in \mathbf{Z}$. Thus $\{1,g\}=\{1,\gamma\}+\{\gamma, \gamma g_c T^n\}=\{1,\gamma\}+\{1,g_c\}+n\cdot \{g_c, g_c T\}$. Since $\{1,\gamma\} \in H_1(Y_{\Gamma}, \mathbf{Z})$ and $\sum_{g \in \SL_2(\mathbf{Z}), c=\Gamma g\infty} \lambda_g=0$, the element $\sum_{g \in \SL_2(\mathbf{Z}), c=\Gamma g\infty} \lambda_g \cdot \{1,g\}$ is in the span of $H_1(Y_{\Gamma}, \mathbf{Z})$ and the elements $\{g,gT\}$.

Proof of (\ref{rank_computation_4}). By (\ref{rank_computation_2}) and (\ref{rank_computation_3}), the kernel of $\pi_{\Gamma}$ is equal to the image of $\mathbf{Z}[\mathcal{C}_{\Gamma}]$ in $\tilde{\mathcal{M}}_{\Gamma}$. Furthermore, the exact sequence (\ref{exact_sequence_1}) shows that the kernel of $\mathbf{Z}[\mathcal{C}_{\Gamma}]\rightarrow \tilde{\mathcal{M}}_{\Gamma}$  must be free of rank one over $\mathbf{Z}$. Since the kernel of $\mathbf{Z}[\mathcal{C}_{\Gamma}]\rightarrow \tilde{\mathcal{M}}_{\Gamma}$ contains $\sum_{c  \in \mathcal{C}_{\Gamma}} e_c\cdot [c]$ by (\ref{rank_computation_2}) and $\tilde{\mathcal{M}}_{\Gamma}$ is torsion-free, this kernel must be spanned by $\frac{1}{d_{\Gamma}}\sum_{c  \in \mathcal{C}_{\Gamma}} e_c\cdot [c]$.
\end{proof}

\subsection{Image of the period map $\tilde{p}_{\Gamma} \otimes \mathbf{R}$}\label{section_period_iso}
By Proposition \ref{rank_computation}, $\tilde{\mathcal{M}}_{\Gamma}$ is a free $\mathbf{Z}$-module of rank $\dim_{\mathbf{R}} M_2(\Gamma) = 2\cdot g(\Gamma) + 2 \cdot (c(\Gamma)-1)$. It thus makes sense to ask whether the map $\tilde{p}_{\Gamma}\otimes \mathbf{R} : \tilde{\mathcal{M}}_{\Gamma} \otimes_{\mathbf{Z}} \mathbf{R} \rightarrow \Hom_{\mathbf{C}}(M_2(\Gamma), \mathbf{C})$ is an isomorphism (or equivalently a surjective map) of $\mathbf{R}$-vector spaces. Note that if $\Gamma'$ is a subgroup of $\Gamma$, then we have a commutative diagram of $\mathbf{R}$-vector spaces
\[\begin{tikzcd}
\tilde{\mathcal{M}}_{\Gamma'} \otimes_{\mathbf{Z}} \mathbf{R} \arrow{r}{\tilde{p}_{\Gamma'} \otimes \mathbf{R}} \arrow[swap]{d}{} & \Hom_{\mathbf{C}}(M_2(\Gamma'), \mathbf{C}) \arrow{d}{} \\
\tilde{\mathcal{M}}_{\Gamma} \otimes_{\mathbf{Z}} \mathbf{R} \arrow{r}{\tilde{p}_{\Gamma} \otimes \mathbf{R}} &  \Hom_{\mathbf{C}}(M_2(\Gamma), \mathbf{C}) 
\end{tikzcd}
\]
where the two vertical maps are the canonical ones, and are surjective. In particular, if $\tilde{p}_{\Gamma'} \otimes \mathbf{R}$ is surjective, then $\tilde{p}_{\Gamma} \otimes \mathbf{R}$ is also surjective. Thus, if $\Gamma$ is a congruence subgroup containing the principal congruence subgroup $\Gamma(N)$, the map $\tilde{p}_{\Gamma} \otimes \mathbf{R}$ is an isomorphism if the map $\tilde{p}_{\Gamma(N)} \otimes \mathbf{R}$ is surjective. During the rest of this paragraph, we assume that $\Gamma$ is a congruence subgroup.

We have a decomposition of $\mathbf{C}$-vector spaces $M_2(\Gamma) = S_2(\Gamma) \oplus \mathcal{E}_2(\Gamma)$, where $S_2(\Gamma)$ is the subspace of cuspidal modular forms and $\mathcal{E}_2(\Gamma)$ is the subspace of Eisenstein series. Let $\rho_{\Cusp, \Gamma} : \Hom_{\mathbf{C}}(M_2(\Gamma), \mathbf{C}) \rightarrow \Hom_{\mathbf{C}}(S_2(\Gamma), \mathbf{C})$ and $\rho_{\Eis, \Gamma} : \Hom_{\mathbf{C}}(M_2(\Gamma), \mathbf{C}) \rightarrow \Hom_{\mathbf{C}}(\mathcal{E}_2(\Gamma), \mathbf{C})$ be the maps induced by the inclusions $S_2(\Gamma) \hookrightarrow M_2(\Gamma)$ and $\mathcal{E}_2(\Gamma) \hookrightarrow M_2(\Gamma)$ respectively.

\begin{lem}\label{restriction_Eisenstein_surjective}
The map $\rho_{\Cusp, \Gamma}\circ (\tilde{p}_{\Gamma}\otimes \mathbf{R})$ is surjective. Thus, $\tilde{p}_{\Gamma} \otimes \mathbf{R}$ is an isomorphism if and only if $\rho_{\Eis, \Gamma} \circ (\tilde{p}_{\Gamma}\otimes \mathbf{R})$ is surjective. 
\end{lem}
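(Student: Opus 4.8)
The plan is to treat the two assertions in turn, reducing everything to the classical period map on ordinary homology and to a single rationality input coming from Manin--Drinfeld.

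For the first assertion I would use Proposition \ref{Stevens_trivial_lemma}(\ref{Stevens_trivial_lemma_2}): the composite $(\tilde{p}_{\Gamma}\otimes\mathbf{R})\circ(\iota_{\Gamma}\otimes\mathbf{R})$ on $H_1(Y_{\Gamma},\mathbf{Z})\otimes\mathbf{R}$ is exactly the classical period map $c\mapsto(f\mapsto 2i\pi\int_c f\,dz)$. Post-composing with $\rho_{\Cusp,\Gamma}$ remembers only the pairing against cusp forms; since a cusp form has vanishing constant term at every cusp, its integral around a small loop about a cusp vanishes, so this composite factors through the surjection $H_1(Y_{\Gamma},\mathbf{R})\twoheadrightarrow H_1(X_{\Gamma},\mathbf{R})$. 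The induced map $H_1(X_{\Gamma},\mathbf{R})\to\Hom_{\mathbf{C}}(S_2(\Gamma),\mathbf{C})$ is the Eichler--Shimura period isomorphism, hence surjective. As $\iota_{\Gamma}$ lands in $\tilde{\mathcal{M}}_{\Gamma}\otimes\mathbf{R}$, already the restriction of $\rho_{\Cusp,\Gamma}\circ(\tilde{p}_{\Gamma}\otimes\mathbf{R})$ to this subspace is surjective, so the map itself is.

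For the second assertion, Proposition \ref{rank_computation} gives that the source and target of $\tilde{p}_{\Gamma}\otimes\mathbf{R}$ have the same finite dimension $2\dim_{\mathbf{C}}M_2(\Gamma)$, so ``isomorphism'' is equivalent to ``surjective''. Write $A=\Hom_{\mathbf{C}}(S_2(\Gamma),\mathbf{C})$ and $B=\Hom_{\mathbf{C}}(\mathcal{E}_2(\Gamma),\mathbf{C})$, so the target is $A\oplus B$ with $\rho_{\Cusp,\Gamma},\rho_{\Eis,\Gamma}$ the two projections, and let $V$ be the image of $\tilde{p}_{\Gamma}\otimes\mathbf{R}$. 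I want to stress that the reduction is \emph{not} formal: two complementary projections of a subspace can both be surjective without the subspace being everything (take the diagonal in $\mathbf{R}\oplus\mathbf{R}$), even under the dimension constraint. The key extra input I would establish is that $A\subseteq V$. Granting this, every $v\in V$ satisfies $\rho_{\Cusp,\Gamma}(v)\in A\subseteq V$, hence $\rho_{\Eis,\Gamma}(v)=v-\rho_{\Cusp,\Gamma}(v)\in V\cap B$; thus $V=A\oplus(V\cap B)$ and $\rho_{\Eis,\Gamma}(V)=V\cap B$, so $\rho_{\Eis,\Gamma}\circ(\tilde{p}_{\Gamma}\otimes\mathbf{R})$ is surjective if and only if $V\cap B=B$, i.e.\ $V=A\oplus B$, i.e.\ $\tilde{p}_{\Gamma}\otimes\mathbf{R}$ is surjective.

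It remains to prove $A\subseteq V$, and this is where the real content lies; I would obtain it already for $W:=(\tilde{p}_{\Gamma}\otimes\mathbf{R})(\iota_{\Gamma}(H_1(Y_{\Gamma},\mathbf{R})))\subseteq V$. By the first assertion $\rho_{\Cusp,\Gamma}(W)=A$, and since $p_{\Gamma}$ is injective, $\dim_{\mathbf{R}}W=\dim_{\mathbf{R}}H_1(Y_{\Gamma},\mathbf{R})=2g(\Gamma)+c(\Gamma)-1$. As $\ker(\rho_{\Eis,\Gamma}|_W)=W\cap A$, one has $\dim(W\cap A)=\dim W-\dim\rho_{\Eis,\Gamma}(W)$, so $A\subseteq W$ (i.e.\ $\dim(W\cap A)=2g(\Gamma)=\dim A$) follows as soon as I check the upper bound $\dim_{\mathbf{R}}\rho_{\Eis,\Gamma}(W)\le c(\Gamma)-1$, the lower bound then being automatic from $W\cap A\subseteq A$. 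This upper bound is precisely the ``missing copy of Eisenstein series'' of the introduction: concretely, each weight-$2$ Eisenstein series corresponds to a differential of the third kind on $X_{\Gamma}$ which, by the Manin--Drinfeld theorem \cite{Manin_Drinfeld}, is a $\mathbf{Q}$-linear combination of logarithmic differentials $\tfrac{dh}{h}$ of modular units. Hence every period $\int_{\gamma}$ of such a differential over $\gamma\in H_1(Y_{\Gamma},\mathbf{Z})$ lies in $2i\pi\mathbf{Q}$, so on the rational structure of $\mathcal{E}_2(\Gamma)$ the functionals in $\rho_{\Eis,\Gamma}(W)$ are $2i\pi\mathbf{Q}$-valued; therefore $\rho_{\Eis,\Gamma}(W)$ lies in the real subspace $2i\pi\cdot\Hom_{\mathbf{R}}(\mathcal{E}_2(\Gamma)_{\mathbf{R}},\mathbf{R})$ of $B$, which has real dimension $c(\Gamma)-1$. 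The main obstacle is exactly this half-dimensionality/rationality statement for the Eisenstein periods; once it is in hand, the proof is pure bookkeeping with the dimension count of Proposition \ref{rank_computation} together with the Eichler--Shimura isomorphism, and one should simply be careful not to assume the (false in general) formal implication that surjectivity of both projections forces surjectivity of $\tilde{p}_{\Gamma}\otimes\mathbf{R}$.
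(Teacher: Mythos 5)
Your proof is correct, and it is in fact more complete than the paper's own. For the first assertion you argue exactly as the paper does: the paper factors $\rho_{\Cusp, \Gamma}\circ (\tilde{p}_{\Gamma}\otimes \mathbf{R})$ through $\pi_{\Gamma} \otimes \mathbf{R}$ via Proposition \ref{rank_computation} (\ref{rank_computation_4}) and invokes surjectivity of $H_1(X_{\Gamma}, \mathcal{C}_{\Gamma}, \mathbf{R}) \rightarrow \Hom_{\mathbf{C}}(S_2(\Gamma), \mathbf{C})$ from the Eichler--Shimura isomorphism on $H_1(X_{\Gamma}, \mathbf{R})$; your variant (factoring through $H_1(Y_{\Gamma},\mathbf{R}) \twoheadrightarrow H_1(X_{\Gamma},\mathbf{R})$ because cusp forms have vanishing residues) is the same argument. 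The genuine difference is the second assertion: the paper's proof establishes only the first sentence and passes to the ``if and only if'' with a bare ``Thus'', implicitly treating ``both projections of the image are surjective, hence the map is surjective'' as formal --- which, as your diagonal example shows, it is not, even with matching dimensions. You correctly isolate the missing ingredient, namely $\Hom_{\mathbf{C}}(S_2(\Gamma),\mathbf{C}) \subseteq \operatorname{Im}(\tilde{p}_{\Gamma}\otimes\mathbf{R})$, and supply it by the half-dimensionality of Eisenstein periods over $H_1(Y_{\Gamma},\mathbf{Z})$: by Manin--Drinfeld these periods are, on the rational structure $\mathcal{E}_2(\Gamma,\mathbf{Q})$, winding numbers of modular units, so $\rho_{\Eis,\Gamma}(W)$ lies in a real subspace of $\Hom_{\mathbf{C}}(\mathcal{E}_2(\Gamma),\mathbf{C})$ of dimension $c(\Gamma)-1$, forcing $W \cap \Hom_{\mathbf{C}}(S_2(\Gamma),\mathbf{C})$ to have full dimension $2g(\Gamma)$. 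This is precisely the $\mathcal{E}$ versus $\mathcal{E}' = i\cdot\mathcal{E}$ dichotomy that the paper only exploits later, inside the proof of Theorem \ref{thm_iso_tilde_p}, so your patch is in the spirit of the paper but is added content at the level of this lemma. Two minor points: with the paper's normalization $\tilde{p}_{\Gamma}(\gamma)(f) = 2i\pi\int_{\gamma} f(z)dz$, the values on $\mathcal{E}_2(\Gamma,\mathbf{Q})$ lie in $(2i\pi)^2\mathbf{Q} \subset \mathbf{R}$, so $\rho_{\Eis,\Gamma}(W)$ sits in $\Hom_{\mathbf{R}}(\mathcal{E}_2(\Gamma,\mathbf{R}),\mathbf{R})$ rather than in $2i\pi\cdot\Hom_{\mathbf{R}}(\mathcal{E}_2(\Gamma,\mathbf{R}),\mathbf{R})$ as you wrote --- immaterial for the dimension count; and note that your rationality step (unlike the first assertion) genuinely uses that $\Gamma$ is a congruence subgroup, which is indeed the standing hypothesis of \S\ref{section_period_iso}.
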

\begin{proof}
By Proposition \ref{rank_computation} (\ref{rank_computation_4}), $\rho_{\Cusp, \Gamma}\circ (\tilde{p}_{\Gamma}\otimes \mathbf{R})$ factors through $\pi_{\Gamma} \otimes_{\mathbf{Z}} \mathbf{R} : \tilde{M}_{\Gamma} \otimes_{\mathbf{Z}} \mathbf{R} \rightarrow H_1(X_{\Gamma}, \mathcal{C}_{\Gamma}, \mathbf{Z}) \otimes_{\mathbf{Z}} \mathbf{R}$. The map $H_1(X_{\Gamma}, \mathcal{C}_{\Gamma}, \mathbf{Z}) \otimes_{\mathbf{Z}} \mathbf{R} \rightarrow \Hom_{\mathbf{C}}(S_2(\Gamma), \mathbf{C})$ is surjective, since its restriction to $H_1(X_{\Gamma}, \mathbf{Z}) \otimes_{\mathbf{Z}} \mathbf{R}$ is known to be an isomorphism.
\end{proof}

We thus need to understand the image of $\rho_{\Eis, \Gamma} \circ (\tilde{p}_{\Gamma}\otimes \mathbf{R})$. We first recall some facts about Eisenstein series. We refer the reader to \cite[\S 1]{Stevens_TAMS} for details. The $\mathbf{C}$-vector space $\mathcal{E}_2$ is spanned by those $f \in M_2(\Gamma)$ such that $f(z)dz$ induces a meromorphic differential form on $X_{\Gamma}$ with integer residues at the cusps -- such a differential is called a \textit{differential of the third kind}. An example of differential of the third kind is the logarithmic derivative $d\log(u)$ of a \textit{modular unit} $u$, \ie a meromorphic function on $X_{\Gamma}$ whose divisor is supported on $\mathcal{C}_{\Gamma}$. The set of $f \in \mathcal{E}_2(\Gamma)$ such that $f(z)dz = d\log(u)$ for some modular unit $u$ is denoted by $\mathcal{E}_2(\Gamma, \mathbf{Z})$; this is a $\mathbf{Z}$-module of rank $\dim_{\mathbf{C}} \mathcal{E}_2(\Gamma)$ by the Manin-Drinfeld theorem. If $M$ is a subgroup of $\mathbf{C}$, we let $\mathcal{E}_2(\Gamma, M)$ be the subgroup of $\mathcal{E}_2(\Gamma)$ generated by the elements $\lambda\cdot E$ where $\lambda \in M$ and $E \in \mathcal{E}_2(\Gamma, \mathbf{Z})$. We have $\mathcal{E}_2(\Gamma) = \mathcal{E}_2(\Gamma, \mathbf{C})$. 

If $\Gamma = \Gamma(N)$ for some $N \geq 1$, Stevens gave a spanning family for the $\mathbf{Q}$-vector space $\mathcal{E}_2(\Gamma, \mathbf{Q})$, together with the set of all possible relations. We refer to \cite[\S 2.4]{Stevens_book} for details. If $(x,y) \in (\mathbf{Q}/\mathbf{Z})^2 \backslash \{(0,0)\}$ has order dividing $N$, then Stevens defined an Eisenstein series $\phi_{(x,y)} \in \mathcal{E}_2(\Gamma(N))$, whose $q$-expansion at the cusp $\Gamma(N)\cdot \infty$ is
$$\phi_{(x,y)}(z)=\frac{1}{2}\cdot\B_2(x) - \sum_{ k \equiv x \text{(modulo }1\text{)} \atop k \in \mathbf{Q}_{>0} } k \cdot \sum_{m=1}^{\infty} q(m(kz+y)) - \sum_{ k \equiv -x \text{(modulo }1\text{)} \atop k \in \mathbf{Q}_{>0}} k \cdot \sum_{m=1}^{\infty} q(m(kz-y))$$
where $q(z) = e^{2i\pi z}$ and $\B_2(x) = (x-E(x))^2-(x-E(x))+\frac{1}{6}$ is the second periodic Bernoulli polynomial function. Note that $\phi_{(-x,-y)} = \phi_{(x,y)}$.
We have $2i\pi \phi_{(x,y)}(z)dz = d\log( g_{(x,y)})$ where $g_{(x,y)}$ is a Siegel unit defined by Kubert-Lang in \cite[Chapter 2 \S1 Formula K$4$]{Kubert_Lang}. By \cite[Chapter 2, Theorem 1.2]{Kubert_Lang}, the function $g_{(x,y)}^{12N}$ is modular of level $\Gamma(N)$. Thus we have $2i\pi \phi_{(x,y)} \in \frac{1}{12N} \cdot \mathcal{E}_2(\Gamma, \mathbf{Z})$, and in particular $2i\pi \phi_{(x,y)} \in \mathcal{E}_2(\Gamma, \mathbf{Q})$. By \cite[Chapter 2, \S 2.4]{Stevens_book}, the Eisenstein series $2i\pi \phi_{(x,y)}$ span $\mathcal{E}_2(\Gamma, \mathbf{Q})$, and the linear relations between them are the so called \textit{distribution relations} \cite[Chapter 2, Remark 2.4.4]{Stevens_book}. 

\begin{thm}\label{thm_iso_tilde_p}
If $\Gamma(p^n) \subset \Gamma$ for some prime $p \geq 2$ and some integer $n \geq 1$, then $\tilde{p}_{\Gamma} \otimes \mathbf{R}$ is an isomorphism. 
\end{thm}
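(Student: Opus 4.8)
The plan is to reduce everything to the Eisenstein part of the period map and then to realize its image as the sum of a ``classical'' half and a ``regularized'' half, the complementarity of which is governed by special values of Dirichlet $L$-functions; the prime-power hypothesis will enter only through a non-vanishing (regulator) statement for those values.

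\textbf{Step 1 (Reductions).} By the commutative square displayed just before the theorem, whose two vertical maps are surjective, surjectivity of $\tilde p_{\Gamma(p^n)} \otimes \mathbf{R}$ forces surjectivity of $\tilde p_{\Gamma} \otimes \mathbf{R}$. Since source and target have equal dimension by Proposition \ref{rank_computation}, surjectivity is the same as being an isomorphism, so I may assume $\Gamma = \Gamma(N)$ with $N = p^{n}$. Lemma \ref{restriction_Eisenstein_surjective} then reduces the claim to the surjectivity of $\rho_{\Eis, \Gamma}\circ(\tilde p_{\Gamma} \otimes \mathbf{R})$ onto $\Hom_{\mathbf{C}}(\mathcal{E}_2(\Gamma), \mathbf{C})$, a real vector space of dimension $2(c(\Gamma)-1)$.

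\textbf{Step 2 (The classical half).} Because $\tilde p_{\Gamma}$ restricts to the usual period map on $H_1(Y_{\Gamma}, \mathbf{Z})$ (Proposition \ref{Stevens_trivial_lemma} (\ref{Stevens_trivial_lemma_2})), the image of $\rho_{\Eis, \Gamma}\circ(\tilde p_{\Gamma} \otimes \mathbf{R})$ contains the real subspace $L := \rho_{\Eis, \Gamma}\big(p_{\Gamma}(H_1(Y_{\Gamma})\otimes \mathbf{R})\big)$. This is exactly the ``copy'' of the Eisenstein dual seen by the classical period map; by the Hodge-theoretic discussion of the introduction it has dimension $c(\Gamma)-1$, i.e. half of $\dim_{\mathbf{R}}\Hom_{\mathbf{C}}(\mathcal{E}_2(\Gamma),\mathbf{C})$. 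By the exact sequence (\ref{exact_sequence_1}) the quotient $\big(\tilde{\mathcal{M}}_{\Gamma}\otimes\mathbf{R}\big)/\big(H_1(Y_{\Gamma})\otimes\mathbf{R}\big)\cong \mathbf{R}[\mathcal{C}_{\Gamma}]^{0}$ is spanned by the classes of paths $\{g,g'\}$ joining distinct cusps, so I must show that the Eisenstein periods of such paths produce a real subspace $L'$ with $L\oplus L' = \Hom_{\mathbf{C}}(\mathcal{E}_2(\Gamma),\mathbf{C})$.

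\textbf{Step 3 (The regularized half).} Here I would use Stevens' spanning family $2i\pi\,\phi_{(x,y)}$ of $\mathcal{E}_2(\Gamma,\mathbf{Q})$ together with $\tilde p_{\Gamma}(\{g,gS\})(f) = L(f\mid g,1)$ (Proposition \ref{Stevens_trivial_lemma} (\ref{Stevens_trivial_lemma_4}) and the introduction), since $\{g,gS\}$ is precisely a path between the cusps $\Gamma g\infty$ and $\Gamma g\,0$. Using $2i\pi\,\phi_{(x,y)}dz = d\log g_{(x,y)}$, Stevens' regularized value $L(\phi_{(x,y)}\mid g,1)$ can be expressed through the leading coefficients of the Siegel units $g_{(x,y)}$ at the cusps, hence as a combination of logarithms of cyclotomic units of $\mathbf{Q}(\zeta_{p^{n}})$ together with rational multiples of $i\pi$. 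These logarithmic terms are transcendental ``real'' directions absent from $L$, whose coordinates on the rational basis involve only the rational constant terms $\tfrac12\B_2(x)$ (Proposition \ref{Stevens_trivial_lemma} (\ref{Stevens_trivial_lemma_3})); they are thus the natural candidates to span the complementary subspace $L'$.

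\textbf{Step 4 (Main obstacle).} The crux is the non-degeneracy statement $L\cap L' = 0$, equivalently that the family of special values $L(\phi_{(x,y)}\mid g,1)$ is maximally independent modulo $L$. I expect this to reduce to the $\mathbf{R}$-linear independence of the numbers $\log\lvert 1-\zeta_{p^{n}}^{a}\rvert$ subject only to the distribution relations, i.e. to the non-vanishing of the regulator of the group of cyclotomic (Siegel) units, or equivalently to the non-vanishing of a determinant of values $L(\chi,1)$ over the Dirichlet characters modulo $p^{n}$. For prime-power level this holds because the cyclotomic units of $\mathbf{Q}(\zeta_{p^{n}})$ have finite index in the full unit group (Kummer, via the analytic class number formula), furnishing the maximal regulator rank; this is exactly the input that degenerates for general composite level, in agreement with the failure of $p_{\Gamma}$ recorded in the introduction. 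Granting this independence, $L$ and $L'$ are two real subspaces of dimension $c(\Gamma)-1$ meeting only in $0$ inside the $2(c(\Gamma)-1)$-dimensional space $\Hom_{\mathbf{C}}(\mathcal{E}_2(\Gamma),\mathbf{C})$, so $L\oplus L'$ is everything and the argument concludes. The genuine difficulty, and the only place the hypothesis is used, is establishing this regulator non-vanishing.
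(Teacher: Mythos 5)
Your proposal is correct and is essentially the paper's own argument: the same reduction to $\Gamma(p^n)$ via Lemma \ref{restriction_Eisenstein_surjective}, the same splitting of the Eisenstein dual into a ``classical'' half (your $L$, which the paper realizes as $\Hom_{\mathbf{R}}(\mathcal{E}',\mathbf{R})$ with $\mathcal{E}' = i\cdot\mathcal{E}_2(\Gamma,\mathbf{R})$, already filled by the circle classes in $H_1(Y_{\Gamma},\mathbf{Z})$ as in Lemma \ref{surjectivity_part1}) and a ``regularized'' half spanned by the cusp-to-cusp symbols $\{g,gS\}$, with the prime-power hypothesis entering only through cyclotomic $L$-value non-vanishing. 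The single step you leave as an expectation in Step 4 is exactly what the paper's Lemma \ref{surjectivity_part2} carries out: Stevens' evaluation $\varphi(\{g,g'\})(2i\pi\phi_{(a,b)}) = F((a,b)\cdot g') - F((a,b)\cdot g)$ with $F(a,b) = -\delta_a\log\lvert 1-e^{2i\pi b/p^n}\rvert$ makes the period matrix block-diagonal along Borel cosets with identical blocks $M' = (-\log\lvert 1-e^{2i\pi x^{-1}y/p^n}\rvert)$, and the Dedekind determinant identity gives $\det(M') = \frac{p}{2}\prod_{\chi \neq 1}\frac{f_{\chi}}{2\tau(\chi)}L(\chi,1) \neq 0$ --- a statement equivalent to the non-vanishing of the cyclotomic-unit regulator you invoke (and which degenerates at composite level exactly as you predict), so your diagnosis of the crux and of where the hypothesis is used matches the paper precisely.
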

\begin{proof}
Let $\mathcal{E} = \mathcal{E}_2(\Gamma, \mathbf{R}) \subset \mathcal{E}_2(\Gamma)$ and $\mathcal{E}' = \mathcal{E}_2(\Gamma, i\cdot \mathbf{R}) = i \cdot \mathcal{E}\subset \mathcal{E}_2(\Gamma)$. The $\mathbf{R}$-vector space $\mathcal{E}_2(\Gamma)$ is the direct sums its two $\mathbf{R}$-vector subspaces $\mathcal{E}$ and $\mathcal{E}'$. We get a canonical isomorphism of $\mathbf{R}$-vector spaces 
$$\Hom_{\mathbf{C}}(\mathcal{E}_2(\Gamma(p^n)), \mathbf{C}) \xrightarrow{\sim} \Hom_{\mathbf{R}}(\mathcal{E}, \mathbf{R}) \times \Hom_{\mathbf{R}}(\mathcal{E}', \mathbf{R})$$ given by taking restrictions and real parts. We denote by $p : \Hom_{\mathbf{C}}(\mathcal{E}_2(\Gamma(p^n)), \mathbf{C}) \rightarrow \Hom_{\mathbf{R}}(\mathcal{E}, \mathbf{R}) $ and $p' : \Hom_{\mathbf{C}}(\mathcal{E}_2(\Gamma(p^n)), \mathbf{C}) \rightarrow \Hom_{\mathbf{R}}(\mathcal{E}', \mathbf{R})$ the associated projections. By Lemma \ref{restriction_Eisenstein_surjective}, $\tilde{p}_{\Gamma} \otimes \mathbf{R}$ is an isomorphism if and only if $p\circ (\rho_{\Eis, \Gamma} \circ (\tilde{p}_{\Gamma} \otimes \mathbf{R})) : \tilde{\mathcal{M}}_{\Gamma} \otimes_{\mathbf{Z}} \mathbf{R} \rightarrow  \Hom_{\mathbf{R}}(\mathcal{E}, \mathbf{R}) $ and $p'\circ (\rho_{\Eis, \Gamma} \circ (\tilde{p}_{\Gamma} \otimes \mathbf{R})):  \tilde{\mathcal{M}}_{\Gamma} \otimes_{\mathbf{Z}} \mathbf{R} \rightarrow  \Hom_{\mathbf{R}}(\mathcal{E}', \mathbf{R}) $ are surjective $\mathbf{R}$-linear maps. To conclude the proof of Theorem \ref{thm_iso_tilde_p}, it thus suffices to prove the following two lemmas.

\begin{lem}\label{surjectivity_part1}
The map $p'\circ (\rho_{\Eis, \Gamma} \circ (\tilde{p}_{\Gamma} \otimes \mathbf{R}))$ is surjective (we need not assume anything on $\Gamma$, except that it is a congruence subgroup).
\end{lem}
\begin{proof}
We have a $\mathbf{R}$-linear isomorphism $\mathcal{E}' \xrightarrow{\sim} \Div^0(\mathcal{C}_{\Gamma} , \mathbf{R})$ given by $E \mapsto \sum_{c \in \mathcal{C}_{\Gamma}} 2i\pi \cdot \Res_c(E)\cdot (c)$, where $\Res_c(E)$ is the residue at $c$ of the differential form on $X_{\Gamma}$ induced by $E(z)dz$. Here, we have denoted by $\Div^0(\mathcal{C}_{\Gamma} , \mathbf{R})$ the group of degree zero divisors with coefficients in $\mathbf{R}$ supported on the set $\mathcal{C}_{\Gamma}$. We thus get a $\mathbf{R}$-linear isomorphism $f : \Hom_{\mathbf{R}}(\mathcal{E}', \mathbf{R}) \xrightarrow{\sim} \Hom_{\mathbf{R}}(\Div^0(\mathcal{C}_{\Gamma} , \mathbf{R}), \mathbf{R})$. We need to prove that the map $f\circ p'\circ (\rho_{\Eis, \Gamma} \circ (\tilde{p}_{\Gamma} \otimes \mathbf{R})) : \tilde{\mathcal{M}}_{\Gamma} \otimes_{\mathbf{Z}} \mathbf{R} \rightarrow \Hom_{\mathbf{R}}(\Div^0(\mathcal{C}_{\Gamma} , \mathbf{R}), \mathbf{R})$ is surjective. The image of a little oriented circle around the cusp $c$ by the latter map is the restriction to $\Div^0(\mathcal{C}_{\Gamma})$ of the element of $\Hom_{\mathbf{R}}(\Div(\mathcal{C}_{\Gamma} , \mathbf{R}), \mathbf{R})$ sending $[c']$ to $0$ if $c'\neq c$ and $[c]$ to $1$. This concludes the proof of Lemma \ref{surjectivity_part1}.
\end{proof}

\begin{lem}\label{surjectivity_part2}
The map $p\circ (\rho_{\Eis, \Gamma} \circ (\tilde{p}_{\Gamma} \otimes \mathbf{R}))$ is surjective.
\end{lem}
\begin{proof}
For notational simplicity, denote by $\varphi$ the map $p\circ (\rho_{\Eis, \Gamma} \circ (\tilde{p}_{\Gamma} \otimes \mathbf{R}))$.
By the discussion at the beginning of \S \ref{section_period_iso}, we can assume without loss of generality that $\Gamma = \Gamma(p^n)$. The distribution relations show that a spanning family of the $\mathbf{R}$-vector space $\mathcal{E}$ is given by the Eisenstein series $2i\pi \phi_{(\frac{a}{p^n},\frac{b}{p^n})}$ where $(a,b) \in (\mathbf{Z}/p^n\mathbf{Z})^2/\pm 1$ is such that $\gcd(a,b,p)=1$. For simplicity, we write $2i\pi \phi_{(a,b)}$ for $2i\pi \phi_{(\frac{a}{p^n},\frac{b}{p^n})}$. The set of such $(a,b)$ is denoted by $S$. The only linear relation between these Eisenstein series is 
$$\sum_{(a,b) \in S} 2i\pi \phi_{(a,b)}=0 \text{ .}$$
There is a right action of $\SL_2(\mathbf{Z}/p^n\mathbf{Z})$ on $S$, given by $g \mapsto (a,b)\cdot g$. Note that $\SL_2(\mathbf{Z}/p^n\mathbf{Z}) \simeq \Gamma(p^n) \backslash \SL_2(\mathbf{Z})$ also acts on $\mathcal{E}$ via the slash operation, and we have \cite[Chapter 2, Remark 2.4.4]{Stevens_book}:
$$(2i\pi \phi_{(a,b)})\mid g = 2i\pi \phi_{(a,b)\cdot g} \text{ .}$$
By \cite[Chapter 2, Proposition 2.5.4 (b)]{Stevens_book}, for all $(g,g') \in \SL_2(\mathbf{Z})$ and $(a,b)\in S$, we have:
\begin{equation}\label{Stevens_formula_evalution_cusps}
\varphi(\{g,g'\} \otimes 1)(2i\pi \phi_{(a,b)}) = F((a,b)\cdot g') - F((a,b)\cdot g)
\end{equation}
where $F : S \rightarrow \mathbf{R}$ is given by
$$F(a,b) = -\delta_{a}\cdot \log \mid 1-e^{\frac{2i\pi b}{p^n}} \mid \text{ .}$$
Here, $\delta_{a}=0$ if $a \neq 0$ and $\delta_{a}=1$ otherwise. Consider the matrix $M$ whose rows are indexed by $S$ and columns are indexed by $\mathcal{C}_{\Gamma}$, and such that the coefficient of $M$ at position $((a,b), \Gamma g\infty)$ is $F((a,b)\cdot g)$. Note that $M$ is a square matrix. We fix an ordering of $S$ and $\mathcal{C}_{\Gamma}$ as follows. Let $B$ be the subgroup of $\SL_2(\mathbf{Z}/p^n\mathbf{Z})$ consisting of upper-triangular matrices. Write $\SL_2(\mathbf{Z}/p^n\mathbf{Z})/B = \bigcup_{i=1}^k g_i B$ for some fixed elements $g_1$, ..., $g_k$. If $x \in (\mathbf{Z}/p^n\mathbf{Z})^{\times}$, let $\gamma_x = \begin{pmatrix} x & 0 \\ 0 & x^{-1} \end{pmatrix} \in B$. We then write $$\mathcal{C}_{\Gamma} = \bigsqcup_{i=1}^{k} \{g_i\gamma_x\infty, x \in (\mathbf{Z}/p^n\mathbf{Z})^{\times}/\pm1 \} \text{ .}$$ We also write $$S = \bigsqcup_{i=1}^{k} \{(0,1)\gamma_x^{-1}g_i^{-1}, x \in (\mathbf{Z}/p^n\mathbf{Z})^{\times}/\pm1 \} \text{ .}$$ This gives a decomposition of $M$ as a block matrix with $k^2$ blocks of size $\Card((\mathbf{Z}/p^n\mathbf{Z})^{\times}/\pm 1)$. We easily check that the non-diagonal blocks are zero, and the diagonal blocks are all equal to the matrix $M':=(-\log \mid 1 - e^{\frac{2i\pi x^{-1}y}{p^n}} \mid)_{(x,y)\in (\mathbf{Z}/p^n\mathbf{Z})^{\times}/\pm1}$. To conclude the proof of Lemma \ref{surjectivity_part2}, it suffices to prove that the matrices $M'$ and $$M'':=(-\log \mid 1 - e^{\frac{2i\pi x^{-1}y}{p^n}}  \mid + \log \mid 1 - e^{\frac{2i\pi x^{-1}}{p^n}} \mid )_{(x,y)\in (\mathbf{Z}/p^n\mathbf{Z})^{\times}/\pm1\atop x,y \neq \pm1}$$ are invertible. By \cite[Lemma 5.26 (a), (b)]{Washington}, we have
$$\det(M') = \prod_{\chi} \sum_{x \in  (\mathbf{Z}/p^n\mathbf{Z})^{\times}/\pm1} -\chi(x)\cdot \log \mid 1 - e^{\frac{2i\pi {x^{-1}}}{p^n}}\mid $$
$$(\text{resp. } \det(M'') = \prod_{\chi \neq 1} \sum_{x \in  (\mathbf{Z}/p^n\mathbf{Z})^{\times}/\pm1} -\chi(x)\cdot \log \mid 1 - e^{\frac{2i\pi {x^{-1}}}{p^n}}\mid \text{ )}$$
where $\chi$ goes through the (resp. non-trivial) even Dirichlet characters of level $p^n$. By the well-known formula for $L$ functions of primitive even Dirichlet characters \cite[Theorem 4.9]{Washington}, we get:
$$\det(M') = \frac{p}{2} \cdot \prod_{\chi \neq 1}  \frac{f_{\chi}}{2\cdot \tau(\chi)} \cdot L(\chi,1)$$
and
$$\det(M'') = \prod_{\chi \neq 1} \frac{f_{\chi}}{2\cdot \tau(\chi)}\cdot  L(\chi,1) $$
where $f_{\chi}$ is the conductor of $\chi$ and $\tau(\chi)$ is the Gauss sum attached to the primitive Dirichlet character associated to $\chi$.
These two quantities are known to be non-zero \cite[Corollary 4.4]{Washington}.
\end{proof}
\end{proof}
\begin{rem}\label{rem_period_non_iso}
In general, $\tilde{p}_{\Gamma} \otimes \mathbf{R}$ is not an isomorphism, as one can check numerically for instance if $\Gamma = \Gamma(6)$ using the method of the proof of Theorem \ref{thm_iso_tilde_p}. Consequently, for any integer $N \geq 1$, $\tilde{p}_{\Gamma} \otimes \mathbf{R}$ is not an isomorphism if $\Gamma = \Gamma(6N)$.
\end{rem}

\subsection{Hecke operators and the complex conjugation}

\textit{In the rest of this paragraph, we assume that $\Gamma = \Gamma_1(N)$ or $\Gamma=\Gamma_0(N)$ for some integer $N \geq 1$. }

Let $\mathbb{T}$ be the Hecke algebra acting faithfully on $M_2(\Gamma)$, generated by the Hecke operator $T_n$ for $n \geq 1$ and by the diamond operators. The abelian groups $\mathcal{M}_{\Gamma} \simeq H_1(X_{\Gamma}, \mathcal{C}_{\Gamma}, \mathbf{Z})$ and $H_1(Y_{\Gamma}, \mathbf{Z})$ both carry a faithfull action of $\mathbb{T}$ and of the complex conjugation (\cf for instance \cite{Merel_Universal}). The goal of this paragraph is to define a natural action of $\mathbb{T}$ and of the complex conjugation on $\tilde{\mathcal{M}}_{\Gamma}$.

\subsubsection{The complex conjugation}\label{paragraph_complex_conjugation}
If $M$ is an abelian group equipped with an action of an involution $m \mapsto \overline{m}$, we denote by $M(1)$ the abelian group $M$ equipped with the involution $m \mapsto -\overline{m}$. We also let $M^+ = \{m\in M, \overline{m}=m\}$ and $M^-=\{m\in M, \overline{m}=-m\}$. 

The action of the complex conjugation (denoted by a bar) on $\mathcal{M}_{\Gamma}$ and $H_1(Y_{\Gamma}, \mathbf{Z})$ is induced by the map $z \mapsto -\bar{z}$ in the upper-half plane. Thus, we have $\overline{\{\alpha, \beta\}} = \{-\alpha, -\beta\}$ in $\mathcal{M}_{\Gamma}$ for all $(\alpha, \beta) \in \mathbf{P}^1(\mathbf{Q})^2$.  If $g=\begin{pmatrix} a & b \\ c & d \end{pmatrix} \in \GL_2^+(\mathbf{Q})$, we let $\overline{g} = \begin{pmatrix} a & -b \\ -c & d \end{pmatrix} \in \GL_2^+(\mathbf{Q})$. Note that we have $\overline{g\cdot g'} = \overline{g}\cdot \overline{g'}$ if $(g,g') \in \GL_2^+(\mathbf{Q})$. The complex conjugation acts on $\Gamma$ as via $g \mapsto \overline{g}$, and this induces an action on $H_1(Y_{\Gamma}, \mathbf{Z})$ via $\Pi_{\Gamma} : \Gamma \rightarrow H_1(Y_{\Gamma}, \mathbf{Z})$. 

There are canonical exact sequences of abelian groups, which are equivariant for the action of complex conjugation:
\begin{equation}\label{boundary_exact_seq_1}
0 \rightarrow H_1(X_{\Gamma}, \mathbf{Z}) \rightarrow \mathcal{M}_{\Gamma} \rightarrow \mathbf{Z}[\mathcal{C}_{\Gamma}]^0 \rightarrow 0
\end{equation}
and
\begin{equation}\label{boundary_exact_seq_2}
0 \rightarrow \mathbf{Z}(1) \rightarrow \mathbf{Z}[\mathcal{C}_{\Gamma}](1) \rightarrow H_1(Y_{\Gamma}, \mathbf{Z}) \rightarrow H_1(X_{\Gamma}, \mathbf{Z}) \rightarrow 0 \text{ .}
\end{equation}
Here, the action of the complex conjugation on $\mathbf{Z}[\mathcal{C}_{\Gamma}]$ and $\mathbf{Z}[\mathcal{C}_{\Gamma}]^0$ is induced by the natural action on $\mathcal{C}_{\Gamma}$, and the action on $\mathbf{Z}$ is trivial.

The action of the complex conjugation on $\tilde{\mathcal{M}}_{\Gamma}$ is defined by $\overline{\{g,g'\}}:=\{\overline{g}, \overline{g}'\}$. One easily checks that this is well-defined, and that the exact sequences (\ref{exact_sequence_1}) and (\ref{exact_sequence_2}) are equivariant with respect to the complex conjugation.

There is a natural action of the complex conjugation on $\Hom_{\mathbf{C}}(M_2(\Gamma), \mathbf{C})$, given by $\varphi \mapsto \overline{\varphi}$, where $\overline{\varphi} : f \mapsto \overline{\varphi(\overline{f})}$. Here, $\overline{f} \in M_2(\Gamma)$ is defined by $\overline{f}(z) = \overline{f(-\overline{z})}$. Equivalently, if the $q$-expansion of $f$ at $\Gamma\infty$ is $\sum_{n \geq 0} a_nq^n$, then the $q$-expansion of $\overline{f}$ at $\Gamma \infty$ is $\sum_{n\geq 0} \overline{a}_nq^n$.

\begin{prop}\label{equivariance_complex_conjug_period}
The map $\tilde{p}_{\Gamma} : \tilde{\mathcal{M}}_{\Gamma} \rightarrow \Hom_{\mathbf{C}}(M_2(\Gamma), \mathbf{C})$ commutes with the action of the complex conjugation.
\end{prop}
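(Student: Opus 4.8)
The plan is to reduce the statement to an identity at the level of Stevens' cocycle $\mathcal{S}$ and then verify that identity term by term. Unwinding the two definitions of complex conjugation, one has $\tilde{p}_{\Gamma}(\overline{\{g,g'\}})(f) = \mathcal{S}(\overline{g'})(f) - \mathcal{S}(\overline{g})(f)$, whereas $\overline{\tilde{p}_{\Gamma}(\{g,g'\})}(f) = \overline{\mathcal{S}(g')(\overline{f})} - \overline{\mathcal{S}(g)(\overline{f})}$, where $\overline{f}(z) = \overline{f(-\overline{z})}$. Hence the Proposition follows once I prove the cocycle identity
\[
\mathcal{S}(\overline{g})(f) = \overline{\mathcal{S}(g)(\overline{f})} \qquad \text{for all } g \in \GL_2^+(\mathbf{Q}),\ f \in M_2 \text{ .}
\]

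First I would record the three elementary compatibilities that drive everything. Writing $\sigma(z) = -\overline{z}$ for the anti-holomorphic involution fixing the imaginary axis, a direct computation gives the intertwining relation $\overline{g}\circ \sigma = \sigma \circ g$, i.e. $\overline{g}(-\overline{z}) = -\overline{g(z)}$. Next, since $\det(\overline{g}) = \det(g)$ and the automorphy factor of $\overline{g} = \begin{pmatrix} a & -b \\ -c & d\end{pmatrix}$ is $(-cz+d)$, the same computation yields the slash-conjugation rule $\overline{f\mid g} = \overline{f}\mid \overline{g}$; equivalently $f \mid \overline{g} = \overline{\,\overline{f}\mid g\,}$. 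Finally, the description of $\overline{f}$ on $q$-expansions gives $a_0(\overline{f}) = \overline{a_0(f)}$ and hence $a_0(f\mid\overline{g}) = \overline{a_0(\overline{f}\mid g)}$. Using independence of the base point (Proposition \ref{Stevens_trivial_lemma}(\ref{Stevens_trivial_lemma_1})), I would compute both sides of the desired identity with $z_0$ chosen on the imaginary axis, so that $\sigma(z_0) = z_0$ and $\overline{z_0} = -z_0$.

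With these choices the identity splits across the three summands of $\mathcal{S}$. For the path integral $2i\pi\int_{z_0}^{g(z_0)} \overline{f}(z)\,dz$, conjugating and substituting $w = \sigma(z) = -\overline{z}$ turns the integrand into $f(w)$, reverses the orientation of the differential (producing a sign that cancels the sign from $\overline{2i\pi} = -2i\pi$) and moves the endpoints to $z_0$ and $\overline{g}(z_0)$ via the intertwining relation; this recovers $2i\pi\int_{z_0}^{\overline{g}(z_0)} f(z)\,dz$. For the base-point term, $\overline{z_0} = -z_0$ together with $a_0(f\mid\overline{g}) = \overline{a_0(\overline{f}\mid g)}$ and $a_0(f) = \overline{a_0(\overline{f})}$ reproduces $-2i\pi z_0(a_0(f\mid\overline{g}) - a_0(f))$. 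For the convergent integral along the imaginary axis, the same change of variables, now using that $\sigma$ fixes the axis pointwise together with the slash-conjugation rule and $a_0(\overline{f}) = \overline{a_0(f)}$, turns the integrand $[(\overline{f}\mid g) - a_0(\overline{f}\mid g)] - [\overline{f} - a_0(\overline{f})]$ into $[(f\mid\overline{g}) - a_0(f\mid\overline{g})] - [f - a_0(f)]$. Summing the three contributions gives the cocycle identity, and with it the Proposition. The main obstacle is purely bookkeeping: keeping track of the signs coming from $\overline{2i\pi} = -2i\pi$, from $\overline{dz}$, and from the orientation reversal induced by $\sigma$, and checking that they conspire to cancel in each of the three terms; the choice of a purely imaginary base point is what makes the base-point term come out cleanly.
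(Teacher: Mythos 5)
Your proof is correct and is essentially the paper's own argument: both reduce the Proposition to the cocycle identity $\overline{\mathcal{S}(g)(\overline{f})} = \mathcal{S}(\overline{g})(f)$, isolate the same elementary compatibilities ($\overline{f\mid g} = \overline{f}\mid\overline{g}$ and the intertwining $-\overline{g(z)} = \overline{g}(-\overline{z})$, which is exactly the paper's Lemma \ref{trivial_lemma_cplx_conjug}), and then conjugate the three summands of Stevens' cocycle term by term via the substitution $z \mapsto -\overline{z}$. The only cosmetic difference is that you fix $z_0$ on the imaginary axis at the outset so each term matches at the same base point, whereas the paper computes with arbitrary $z_0$ and lands on $\mathcal{S}(\overline{g})(f)$ expressed at base point $-\overline{z_0}$, invoking base-point independence (Proposition \ref{Stevens_trivial_lemma} (\ref{Stevens_trivial_lemma_1})) there instead.
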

\begin{proof}
Recall that $\tilde{p}_{\Gamma}(\{g,g'\})(f)=\mathcal{S}(g')(f) - \mathcal{S}(g)(f)$ where
$$\mathcal{S}(g)(f) = 2i\pi \cdot \int_{z_0}^{g(z_0)} f(z)dz - 2i\pi\cdot  z_0\cdot (a_0(f\mid g)-a_0(f)) + 2i\pi\cdot \int_{z_0}^{i\infty} \left((f\mid g)(z)-a_0(f\mid g)\right) - \left(f(z) - a_0(f)\right) dz $$
for any $z_0 \in \mathfrak{h}$. It thus suffices to prove that for any $f \in M_2(\Gamma)$ and $g \in \SL_2(\mathbf{Z})$, we have $\overline{\mathcal{S}(g)(\overline{f})} = \mathcal{S}(\tilde{g})(f)$. We shall make use the following straightforward result.
\begin{lem}\label{trivial_lemma_cplx_conjug}
For any $f \in M_2$, $g \in \SL_2(\mathbf{Z})$ and $z \in \mathfrak{h}$, we have $\overline{f \mid g} = \overline{f} \mid \tilde{g}$ and $-\overline{g(z)} = \tilde{g}(-\overline{z})$.
\end{lem}
We analyse separately the three terms in the definition of $\overline{\mathcal{S}(g)(\overline{f})}$. We have:
\begin{align*}
\overline{2i\pi \cdot \int_{z_0}^{g(z_0)} \overline{f}(z)dz} &= -2i\pi \int_{z_0}^{g(z_0)} \overline{\overline{f(-\overline{z})}dz}
\\&=2i\pi \int_{-\overline{z_0}}^{-\overline{g(z_0)}} f(z)dz \\&=2i\pi \int_{-\overline{z_0}}^{\tilde{g}(-\overline{z_0})} f(z)dz \text{ .}
\end{align*}
We have:
\begin{align*}
\overline{2i\pi\cdot  z_0\cdot (a_0(\overline{f}\mid g)-a_0(\overline{f}))} &= -2i\pi \cdot \overline{z_0}\cdot (a_0(\overline{\overline{f}\mid g})-a_0(f)) \\& = 2i\pi \cdot (-\overline{z}_0)\cdot (a_0(f\mid \tilde{g})-a_0(f)) \text{ .}
\end{align*}
Finally, we have:
\begin{align*}
\overline{2i\pi\cdot \int_{z_0}^{i\infty} \left((\overline{f}\mid g)(z)-a_0(\overline{f}\mid g)\right) - \left(\overline{f}(z) - a_0(\overline{f})\right) dz } &= -2i\pi \cdot  \int_{z_0}^{i\infty} \left((f\mid \tilde{g})(-\overline{z})-a_0(f\mid \tilde{g})\right) - \left(f(-\overline{z}) - a_0(f)\right) d\overline{z} \\&= 2i\pi \cdot  \int_{-\overline{z_0}}^{i\infty} \left((f\mid \tilde{g})(z)-a_0(f\mid \tilde{g})\right) - \left(f(z) - a_0(f)\right) dz \text{ .}
\end{align*}
We have thus proved that $\overline{\mathcal{S}(g)(\overline{f})} = \mathcal{S}(\tilde{g})(f)$.
This concludes the proof of Proposition \ref{equivariance_complex_conjug_period}.
\end{proof}

\subsubsection{Hecke operators}\label{paragraph_Hecke}
We are going to define a natural action of $\mathbb{T}$ on $\tilde{\mathcal{M}}_{\Gamma}$. We first briefly recall how $\mathbb{T}$ acts on $\mathcal{M}_{\Gamma}$ and $H_1(Y_{\Gamma}, \mathbf{Z})$. Let $g \in \GL_2^+(\mathbf{Q})$ and consider the double-coset $\Gamma g\Gamma = \bigsqcup_{i \in I} \Gamma g_i$ for some finite set $I$ and $g_i \in  \GL_2^+(\mathbf{Q})$. The Hecke operator $T_g$ acts $X_{\Gamma}$ via the correspondance $(\Gamma z) \mapsto \sum_{i \in I}(\Gamma g_iz)$, where $z \in \mathfrak{H} \cup \mathbf{P}^1(\mathbf{Q})$ and $\mathfrak{H}$ is the upper-half plane. This does not depend on the choice of the elements $g_i$. If $g = \begin{pmatrix} 1&0\\ 0& p \end{pmatrix}$ for some prime $p$, the Hecke operator $T_{g}$ is denoted by $T_p$ if $p$ does not divide $N$ and $U_p$ otherwise. If $g=\begin{pmatrix} a & b \\ c& d\end{pmatrix} \in \Gamma_0(N)$, the Hecke operator $T_g$ is denoted by $\langle d \rangle$ (a so-called diamond operator).

This induces an action of $T_g$ on $H_1(Y_{\Gamma}, \mathbf{Z})$ and $\mathcal{M}_{\Gamma}$, which is explicitly given as follows. For any $\{\alpha, \beta\} \in \mathcal{M}_{\Gamma}$, we have $T_g\{\alpha,\beta\}=\sum_{i \in I} \{g_i\alpha, g_i\beta\}$. For $H_1(Y_{\Gamma}, \mathbf{Z})$, the action of $T_g$ is similarly given in terms of geodesic paths in $\mathfrak{H}$, but it will be more convenient to describe this action using the map $\Pi_{\Gamma}$. For all $i \in I$ and $\gamma \in \Gamma$, we have $$g_i\gamma = t_{i, \Gamma}(\gamma)g_{\sigma_{\gamma}(i)}$$ for unique $t_{i, \Gamma}(\gamma) \in \Gamma$ and $\sigma_{\gamma}(i) \in I$. Note that $t_{i, \Gamma} : \Gamma \rightarrow \Gamma$ and $\sigma_{\gamma} : I \rightarrow I$ are bijective maps. The following result is well-known. 
\begin{prop}\label{Hecke_group_theoretic}
For all $\gamma \in \Gamma$, we have
$$T_g \Pi_{\Gamma}(\gamma) = \sum_{i \in I} \Pi_{\Gamma}(t_{i, \Gamma}(\gamma)) \text{ .}$$
\end{prop}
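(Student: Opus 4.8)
The plan is to use the description of $T_g$ on $H_1(Y_{\Gamma}, \mathbf{Z})$ in terms of geodesic paths, exactly parallel to the formula $T_g\{\alpha,\beta\} = \sum_{i\in I}\{g_i\alpha, g_i\beta\}$ on $\mathcal{M}_{\Gamma}$. Fix $z_0 \in \mathfrak{H}$ and represent $\Pi_{\Gamma}(\gamma)$ by the projection to $Y_{\Gamma}$ of the geodesic $c$ from $z_0$ to $\gamma z_0$; then $T_g\Pi_{\Gamma}(\gamma)$ is represented by the $1$-cycle $\sum_{i\in I}[g_i c]$, where $[g_i c]$ is the image in $Y_{\Gamma}$ of the geodesic from $g_i z_0$ to $g_i\gamma z_0$. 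First I would observe, using $g_i\gamma = t_{i,\Gamma}(\gamma)\, g_{\sigma_{\gamma}(i)}$, that $g_i\gamma z_0 = t_{i,\Gamma}(\gamma)\, g_{\sigma_{\gamma}(i)} z_0$ projects to $[g_{\sigma_{\gamma}(i)} z_0]$ in $Y_{\Gamma}$, so that each $[g_i c]$ is a path from $[g_i z_0]$ to $[g_{\sigma_{\gamma}(i)} z_0]$. Since $\sigma_{\gamma}$ is a permutation of $I$, these paths assemble, orbit by orbit, into a disjoint union of loops whose total class is $T_g\Pi_{\Gamma}(\gamma)$.

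The key computation is to identify the class of the loop attached to a single orbit. Given an orbit $i_0, i_1 = \sigma_{\gamma}(i_0), \ldots, i_{r-1}, i_r = i_0$, I would lift the corresponding loop to $\mathfrak{H}$ step by step: starting the lift at $g_{i_0} z_0$, the first segment lifts to $g_{i_0} c$, ending at $g_{i_0}\gamma z_0 = t_{i_0,\Gamma}(\gamma)\, g_{i_1} z_0$; continuing the lift forces the next segment to be $t_{i_0,\Gamma}(\gamma)\cdot g_{i_1} c$, and so on. After telescoping, the lift runs from $g_{i_0} z_0$ to $\delta\, g_{i_0} z_0$ with $\delta = \prod_{j=0}^{r-1} t_{i_j,\Gamma}(\gamma) \in \Gamma$ (the crossed-homomorphism identity $t_{i,\Gamma}(\gamma\gamma') = t_{i,\Gamma}(\gamma)\, t_{\sigma_{\gamma}(i),\Gamma}(\gamma')$, which follows at once from the defining relation, guarantees this product equals $t_{i_0,\Gamma}(\gamma^r)$). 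Because $\mathfrak{H}$ is simply connected, the class of this loop depends only on its lifted endpoints, and by the base-point independence of $\Pi_{\Gamma}$ (taking $g_{i_0}z_0$ as base point) it equals $\Pi_{\Gamma}(\delta)$.

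To finish, I would use that $\Pi_{\Gamma}$ is a homomorphism into the abelian group $H_1(Y_{\Gamma}, \mathbf{Z})$, so $\Pi_{\Gamma}(\delta) = \sum_{j=0}^{r-1}\Pi_{\Gamma}(t_{i_j,\Gamma}(\gamma))$; summing over all orbits of $\sigma_{\gamma}$ then yields $T_g\Pi_{\Gamma}(\gamma) = \sum_{i\in I}\Pi_{\Gamma}(t_{i,\Gamma}(\gamma))$, as claimed. The main obstacle is the homological bookkeeping in the recombination step: one must justify carefully that the scattered paths $[g_i c]$ reassemble into genuine loops indexed by the orbits of $\sigma_{\gamma}$, and that each loop, although a concatenation of non-geodesic segments, still represents $\Pi_{\Gamma}$ of the telescoped product. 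Both points rest on the contractibility of $\mathfrak{H}$ together with the fact that $\Pi_{\Gamma}$ is defined independently of the chosen base point; everything else is formal manipulation with the cocycle relations for $t_{i,\Gamma}$ and $\sigma_{\gamma}$.
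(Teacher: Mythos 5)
Your proof is correct, but it takes a genuinely different route from the paper's. The paper avoids all orbit and lifting bookkeeping by enlarging the ambient group: it sets $S = \alpha(\GL_2^+(\mathbf{Q})\cdot z_0)$ and computes inside the relative homology group $H_1(Y_{\Gamma}, S, \mathbf{Z})$, where any geodesic symbol $\{z_1,z_2\}$ with endpoints in $\GL_2^+(\mathbf{Q})\cdot z_0$ makes sense. There, a single Chasles step gives $\{g_i z_0, t_{i,\Gamma}(\gamma) g_{\sigma_{\gamma}(i)} z_0\} = \{g_i z_0, g_{\sigma_{\gamma}(i)} z_0\} + \{g_{\sigma_{\gamma}(i)} z_0, t_{i,\Gamma}(\gamma) g_{\sigma_{\gamma}(i)} z_0\}$; the second term equals $\Pi_{\Gamma}(t_{i,\Gamma}(\gamma))$ by base-point independence, and the connector terms $\sum_{i}\{g_i z_0, g_{\sigma_{\gamma}(i)} z_0\}$ vanish purely formally because $\sigma_{\gamma}$ is a permutation --- no orbit decomposition, no concatenation into loops, no lifting. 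You instead stay in absolute homology: decompose $I$ into $\sigma_{\gamma}$-orbits, concatenate the translated geodesics into genuine loops, lift and telescope in $\mathfrak{H}$, and use that $\Pi_{\Gamma}$ is a homomorphism into an abelian group. Both arguments rest on the same two inputs (the identity $g_i\gamma = t_{i,\Gamma}(\gamma) g_{\sigma_{\gamma}(i)}$ and base-point independence of $\Pi_{\Gamma}$), but the paper converts the combinatorics into a two-line cancellation in relative homology, while your version keeps the geometry explicit and shows \emph{why} each orbit contributes $\Pi_{\Gamma}$ of the telescoped product $t_{i_0,\Gamma}(\gamma)\cdots t_{i_{r-1},\Gamma}(\gamma)$. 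One small caveat in your write-up: since $\Gamma$ may contain elliptic elements, $\mathfrak{H} \rightarrow Y_{\Gamma}$ is only a branched covering, so uniqueness of path lifting (``the lift is forced'') is not literally available; this is harmless, however, because your explicit concatenation $g_{i_0}c \ast t_{i_0,\Gamma}(\gamma)g_{i_1}c \ast \cdots$ is itself a path upstairs with the right endpoints projecting onto the loop, which --- together with simple connectedness of $\mathfrak{H}$ --- is all the argument needs.
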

\begin{proof}
Fix $z_0 \in \mathfrak{H}$. Let $\alpha : \mathfrak{H} \rightarrow Y_{\Gamma}$ be the quotient map. Let $S = \alpha(\GL_2^+(\mathbf{Q})\cdot z_0)$. We have an inclusion $H_1(Y_{\Gamma}, \mathbf{Z}) \subset H_1(Y_{\Gamma}, S, \mathbf{Z})$ where $H_1(Y_{\Gamma}, S, \mathbf{Z})$ is the relative homology group with respect to the pair $(Y_{\Gamma}, S)$. We will do our computations inside $H_1(Y_{\Gamma}, Z, \mathbf{Z})$. If $z_1,z_2 \in \GL_2^+(\mathbf{Q})\cdot z_0$, let $\{z_1,z_2\}$ be the image in $H_1(Y_{\Gamma}, S, \mathbf{Z})$ of the geodesic path between $z_1$ and $z_2$ in $\mathfrak{H}$. For all $\gamma \in \Gamma$ we have in $H_1(Y_{\Gamma}, S, \mathbf{Z})$:
\begin{align*}
T_g \Pi_{\Gamma}(\gamma) &= \sum_{i\in I} \{g_iz_0, g_i\gamma z_0\} \\&=  \sum_{i\in I} \{g_iz_0, t_{i, \Gamma}(\gamma)g_{\sigma_{\gamma}(i)}z_0\} \\& =  \sum_{i\in I} \{g_iz_0, g_{\sigma_{\gamma}(i)}z_0\} + \{g_{\sigma_{\gamma}(i)}z_0, t_{i, \Gamma}(\gamma)g_{\sigma_{\gamma}(i)}z_0\} \\&=\sum_{i\in I} \Pi_{\Gamma}(t_{i, \Gamma}(\gamma))+  \sum_{i\in I} \{g_iz_0, g_{\sigma_{\gamma}(i)}z_0\} \text{,}
\end{align*}
where in the last equality we have used that $$\{g_{\sigma_{\gamma}(i)}z_0, t_{i, \Gamma}(\gamma)g_{\sigma_{\gamma}(i)}z_0\} = \{z_0, t_{i, \Gamma}(\gamma)z_0\}=\Pi_{\Gamma}(t_{i, \Gamma}(\gamma)) \text{ .}$$
To conclude the proof of Proposition \ref{Hecke_group_theoretic}, we have to prove that $\sum_{i\in I} \{g_iz_0, g_{\sigma_{\gamma}(i)}z_0\}=0$. This follows formally from the fact that $\sigma_{\gamma}$ is a permutation of $I$ and the Chasles relations on symbols $\{., .\}$.
\end{proof}

One difficulty to define an action of $\mathbb{T}$ on $\tilde{\mathcal{M}}_{\Gamma}$ comes from the fact that while $\GL_2^+(\mathbf{Q})$ acts on $\mathfrak{H}\cup \mathbf{P}^1(\mathbf{Q})$, it does not act on $\SL_2(\mathbf{Z})$ so it does not make sense to talk about the symbols $\{g_ig,g_ig'\}$ in $\tilde{\mathcal{M}}_{\Gamma}$ for $(g,g') \in \SL_2(\mathbf{Z})^2$. We are going to solve this issue by introducing a $\mathbf{Q}$-vector space $\tilde{\mathcal{M}}_{\mathbf{Q}, \Gamma}$ containing $\tilde{\mathcal{M}}_{\Gamma}$ as a lattice and for which there is a natural action of double cosets Hecke operators. 

Consider the $\mathbf{Q}$-vector space $\tilde{\mathcal{M}}_{\mathbf{Q}}$ generated by the symbols $\{g,g'\}_{\mathbf{Q}}$ where $(g,g') \in \GL_2^+(\mathbf{Q})^2$ with the following relations:
\begin{enumerate}
\item $\{g,g'\}_{\mathbf{Q}}+\{g',g''\}_{\mathbf{Q}}+\{g'',g\}_{\mathbf{Q}}=0$ for all $(g,g',g'') \in \GL_2^+(\mathbf{Q})^3$;
\item $\{g,g'\}_{\mathbf{Q}}-\{\lambda_1g, \lambda_2 g'\}_{\mathbf{Q}}=0$ for all $(g,g') \in \GL_2^+(\mathbf{Q})^2$ and $(\lambda_1,\lambda_2)\in (\mathbf{Q}^{\times})^2$; 
\item $\{g,g\begin{pmatrix}a&b \\ 0 & d\end{pmatrix}\}_{\mathbf{Q}}-\frac{b}{d}\cdot \{g,gT\}_{\mathbf{Q}}=0$ for all $g\in \GL_2^+(\mathbf{Q})$ and $\begin{pmatrix}a&b \\ 0 & d\end{pmatrix} \in \GL_2^+(\mathbf{Q})$.
\end{enumerate}
There is a left action of $\GL_2^+(\mathbf{Q})$ on $\tilde{\mathcal{M}}_{\mathbf{Q}}$, given by $g \cdot \{g',g''\} = \{gg',gg''\}$ for all $(g,g',g'') \in \GL_2^+(\mathbf{Q})^3$. We let $\tilde{\mathcal{M}}_{\mathbf{Q}, \Gamma}$ be the largest quotient of $\tilde{\mathcal{M}}_{\mathbf{Q}}$ on which $\Gamma$ acts trivially. There is a canonical map $$\psi : \tilde{\mathcal{M}}_{\Gamma} \rightarrow \tilde{\mathcal{M}}_{\mathbf{Q}, \Gamma}$$
given by $\{g,g'\} \mapsto \{g,g'\}_{\mathbf{Q}}$ for $(g,g') \in \SL_2(\mathbf{Z})^2$. By Proposition \ref{Stevens_trivial_lemma}, there is a unique well-defined $\mathbf{Q}$-linear map
$$\tilde{p}_{\mathbf{Q}, \Gamma} : \tilde{\mathcal{M}}_{\mathbf{Q}, \Gamma} \rightarrow \Hom_{\mathbf{C}}(M_2(\Gamma), \mathbf{C})$$
such that $\tilde{p}_{\mathbf{Q}, \Gamma}(\{g,g'\}_{\mathbf{Q}}) = \mathcal{S}(g')(f) - \mathcal{S}(g)(f)$ for all $(g,g') \in \GL_2^+(\mathbf{Q})^2$. We have $\tilde{p}_{\Gamma} = \tilde{p}_{\mathbf{Q}, \Gamma} \circ \psi$.

\begin{prop}\label{lattice_Q}
The map $\psi$ induces an isomorphism of $\mathbf{Q}$-vector spaces $\psi \otimes \mathbf{Q} : \tilde{\mathcal{M}}_{\Gamma} \otimes_{\mathbf{Z}} \mathbf{Q} \xrightarrow{\sim} \tilde{\mathcal{M}}_{\mathbf{Q}, \Gamma}$. Thus, we can consider $\tilde{\mathcal{M}}_{\Gamma}$ as a lattice inside $\tilde{\mathcal{M}}_{\mathbf{Q}, \Gamma}$.
\end{prop}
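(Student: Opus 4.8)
The plan is to prove the isomorphism by exhibiting an explicit inverse to $\psi \otimes \mathbf{Q}$. The one structural input I would use throughout is the decomposition: every $g \in \GL_2^+(\mathbf{Q})$ can be written as $g = \gamma_g \, b_g$ with $\gamma_g \in \SL_2(\mathbf{Z})$ and $b_g = \begin{pmatrix} a_g & \beta_g \\ 0 & d_g \end{pmatrix}$ upper triangular of positive determinant (clear the lower-left entry of $g$ using the transitive action of $\SL_2(\mathbf{Z})$ on columns, after clearing denominators; since $\det g > 0$ the resulting $b_g$ has positive determinant). Writing $c(g) = \beta_g/d_g \in \mathbf{Q}$, this decomposition is unique up to replacing $\gamma_g$ by $\gamma_g(\pm T^n)$ and simultaneously $c(g)$ by $c(g) - n$.

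For surjectivity of $\psi \otimes \mathbf{Q}$, I would reduce a general generator via the Chasles relation, $\{g,g'\}_{\mathbf{Q}} = \{1,g'\}_{\mathbf{Q}} - \{1,g\}_{\mathbf{Q}}$, and then use relation (3) together with the left action to compute, for $h = \gamma b$,
\[
\{1,h\}_{\mathbf{Q}} = \{1,\gamma\}_{\mathbf{Q}} + \{\gamma,\gamma b\}_{\mathbf{Q}} = \{1,\gamma\}_{\mathbf{Q}} + c(h)\cdot \{\gamma,\gamma T\}_{\mathbf{Q}} \text{ .}
\]
Since $\{1,\gamma\}_{\mathbf{Q}} = \psi(\{1,\gamma\})$ and $\{\gamma,\gamma T\}_{\mathbf{Q}} = \psi(\{\gamma,\gamma T\})$ both lie in the image, this shows $\psi \otimes \mathbf{Q}$ is onto.

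For injectivity I would construct a left inverse $\phi \colon \tilde{\mathcal{M}}_{\mathbf{Q},\Gamma} \to \tilde{\mathcal{M}}_{\Gamma} \otimes_{\mathbf{Z}} \mathbf{Q}$ by the formula suggested by the displayed computation,
\[
\phi(\{g,g'\}_{\mathbf{Q}}) = \{\gamma_g,\gamma_{g'}\} + c(g')\cdot \{\gamma_{g'},\gamma_{g'}T\} - c(g)\cdot \{\gamma_g,\gamma_g T\} \text{ .}
\]
The work is to check that $\phi$ is well defined in three senses: (a) independence of the chosen decomposition; (b) compatibility with relations (1)--(3) of $\tilde{\mathcal{M}}_{\mathbf{Q}}$; and (c) $\Gamma$-invariance, so that it descends to the coinvariants $\tilde{\mathcal{M}}_{\mathbf{Q},\Gamma}$. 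For (b), the Chasles relation holds because the correction terms $\pm c(g)\{\gamma_g,\gamma_g T\}$ cancel telescopically in the sum of the three images; relation (2) holds because $c(\lambda g) = c(g)$ and $\gamma_{\lambda g} = \gamma_g$ for $\lambda \in \mathbf{Q}^{\times}$; and relation (3) reduces, for $u = \begin{pmatrix} a & \beta \\ 0 & d \end{pmatrix}$, to the identity $c(gu) - c(g) = \tfrac{a_g}{d_g}\cdot\tfrac{\beta}{d}$, which matches $\tfrac{\beta}{d}\cdot\phi(\{g,gT\}_{\mathbf{Q}}) = \tfrac{\beta}{d}\cdot\tfrac{a_g}{d_g}\cdot\{\gamma_g,\gamma_g T\}$. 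For (c) one uses $\gamma_{\delta g} = \delta \gamma_g$ and $c(\delta g) = c(g)$ for $\delta \in \Gamma$, together with the triviality of the $\Gamma$-action on $\tilde{\mathcal{M}}_{\Gamma}$. Choosing the decomposition with $b_g = \mathrm{Id}$ for $g \in \SL_2(\mathbf{Z})$ gives $\phi \circ (\psi \otimes \mathbf{Q}) = \mathrm{id}$, so $\psi \otimes \mathbf{Q}$ is injective, hence an isomorphism.

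The main obstacle is step (a), the independence of $\phi$ from the decomposition. Replacing $\gamma_g$ by $\gamma_g \epsilon T^n$ (with $\epsilon = \pm 1$) changes $c(g)$ to $c(g) - n$ and each symbol $\{\gamma_g,\,\cdot\,\}$ to $\{\gamma_g T^n,\,\cdot\,\}$, and verifying that the value of $\phi$ is unchanged requires precisely the translation identities $\{\gamma T^n,\gamma T^{n+1}\} = \{\gamma,\gamma T\}$ and $\{\gamma,\gamma T^n\} = n\cdot\{\gamma,\gamma T\}$ in $\tilde{\mathcal{M}}_{\Gamma}$, which come from relations (2), (3) and the computation in the proof of Proposition \ref{rank_computation}(\ref{rank_computation_2}). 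Once this bookkeeping is carried out, the remaining verifications in (b) and (c) are routine.
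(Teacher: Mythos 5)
Your proof is correct, but it takes a genuinely different route from the paper on the injectivity half. The surjectivity argument is the same in both: decompose $g = \alpha\begin{pmatrix} a & b \\ 0 & d\end{pmatrix}$ with $\alpha \in \SL_2(\mathbf{Z})$, apply Chasles and relation (3), and observe everything lands in the image of $\psi$. For injectivity, however, the paper does \emph{not} construct an inverse: it runs a dimension count, using the surjection $\tilde{\mathcal{M}}_{\mathbf{Q},\Gamma} \rightarrow \mathcal{M}_{\Gamma}\otimes_{\mathbf{Z}}\mathbf{Q}$, the rank $2g(\Gamma)+2(c(\Gamma)-1)$ of $\tilde{\mathcal{M}}_{\Gamma}$ from Proposition \ref{rank_computation}, and — crucially — the injectivity of $\tilde{p}_{\mathbf{Q},\Gamma}\circ(\psi\otimes\mathbf{Q})\circ(\iota_{\Gamma}\otimes\mathbf{Q})$ on $H_1(Y_{\Gamma},\mathbf{Q})$, which lets it identify the span of the $\{g,gT\}_{\mathbf{Q}}$ with the $(c(\Gamma)-1)$-dimensional subspace of small circles around the cusps. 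Your retraction $\phi$, built from the decomposition $\GL_2^+(\mathbf{Q}) = \SL_2(\mathbf{Z})\cdot B$ with $B$ the positive-determinant upper-triangular rationals, replaces that analytic input with pure algebra; I checked the key points and they go through: the ambiguity $\gamma_g \mapsto \gamma_g(\pm T^n)$, $c(g)\mapsto c(g)-n$ is absorbed by $\{\gamma T^n,\gamma T^{n+1}\}=\{\gamma,\gamma T\}$ and $\{\gamma,\gamma T^n\}=n\{\gamma,\gamma T\}$ (which hold already in $\tilde{\mathcal{M}}$ by Chasles and relation (3), with no need of $\Gamma$-invariance, so there is no circularity in verifying (a) before (c)); relation (3) reduces, via $b_g u = \begin{pmatrix} a_g a & a_g\beta + \beta_g d \\ 0 & d_g d\end{pmatrix}$, exactly to your identity $c(gu)-c(g) = \frac{a_g}{d_g}\cdot\frac{\beta}{d}$; and your surjectivity computation in fact shows $(\psi\otimes\mathbf{Q})\circ\phi = \mathrm{id}$ as well, so $\phi$ is a two-sided inverse and the separate surjectivity step is not even needed. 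What each approach buys: yours is self-contained and makes no use of the period map or of Proposition \ref{rank_computation}, so in particular it proves the statement for an arbitrary finite-index $\Gamma$ (congruence or not) and independently pins down $\dim_{\mathbf{Q}}\tilde{\mathcal{M}}_{\mathbf{Q},\Gamma}$; the paper's argument is shorter given the machinery it has already established, and its method of embedding $H_1(Y_{\Gamma},\mathbf{Q})$ into $\tilde{\mathcal{M}}_{\mathbf{Q},\Gamma}$ via periods is reused immediately afterward in setting up the Hecke action (Lemma \ref{Hecke_equivariance_+_-}).
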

\begin{proof}
We first show that $\psi \otimes \mathbf{Q}$ is surjective. Let $(g,g') \in (\GL_2(\mathbf{Q})^+)^2$. Write $g = \alpha \begin{pmatrix} a & b \\ 0 & d \end{pmatrix}$ and $g' = \alpha' \begin{pmatrix} a' & b' \\ 0 & d' \end{pmatrix}$ for some $(\alpha, \alpha') \in \SL_2(\mathbf{Z})^2$ and $\begin{pmatrix} a & b \\ 0 & d \end{pmatrix}, \begin{pmatrix} a' & b' \\ 0 & d' \end{pmatrix} \in \GL_2^+(\mathbf{Q})$. We then have, in $\tilde{\mathcal{M}}_{\mathbf{Q}, \Gamma}$:
\begin{align*}
\{g,g'\}_{\mathbf{Q}} &= \{g, \alpha\}_{\mathbf{Q}}+\{\alpha, \alpha'\}_{\mathbf{Q}}+\{\alpha', g'\}_{\mathbf{Q}} \\& = \frac{b}{d}\cdot \{\alpha T, \alpha\}_{\mathbf{Q}}+ \{\alpha, \alpha'\}_{\mathbf{Q}} + \frac{b'}{d'}\cdot \{\alpha', \alpha'T\} _{\mathbf{Q}} \\& =
\frac{b}{d}\cdot (\psi \otimes \mathbf{Q})(\{\alpha T, \alpha\}) + (\psi \otimes \mathbf{Q})(\{\alpha, \alpha'\}) + \frac{b'}{d'}\cdot (\psi \otimes \mathbf{Q})(\{\alpha', \alpha' T\}) \text{ .}
\end{align*}
This proves that $\psi \otimes \mathbf{Q}$ is surjective. To prove that $\psi \otimes \mathbf{Q}$ is an isomorphism, it suffices to prove that $\dim_{\mathbf{Q}} \tilde{\mathcal{M}}_{\mathbf{Q}, \Gamma} \geq \dim_{\mathbf{Q}}(\tilde{\mathcal{M}}_{\Gamma} \otimes_{\mathbf{Z}} \mathbf{Q}) = 2 g(\Gamma) +2\cdot (c(\Gamma)-1)$. There is a surjective $\mathbf{Q}$-linear map $$\tilde{\mathcal{M}}_{\mathbf{Q}, \Gamma} \rightarrow \mathcal{M}_{\Gamma} \otimes_{\mathbf{Z}} \mathbf{Q}\text{ ,}$$ given by $\{g,g'\}_{\mathbf{Q}} \mapsto \{g\infty, g'\infty\}$. Its kernel contains the elements $\{g,gT\}_{\mathbf{Q}}$ for all $g \in \SL_2(\mathbf{Z})$, so it suffices to show that the $\mathbf{Q}$-vector space $C$ spanned by these elements has dimension $\geq c(\Gamma)-1$. The map $(\psi \otimes \mathbf{Q}) \circ (\iota_{\Gamma} \otimes \mathbf{Q}) : H_1(Y_{\Gamma}, \mathbf{Z}) \otimes_{\mathbf{Z}} \mathbf{Q} \rightarrow \tilde{\mathcal{M}}_{\mathbf{Q}, \Gamma}$ is injective, since $$\tilde{p}_{\mathbf{Q}, \Gamma} \circ (\psi \otimes \mathbf{Q}) \circ (\iota_{\Gamma} \otimes \mathbf{Q}) : H_1(Y_{\Gamma}, \mathbf{Z}) \otimes_{\mathbf{Z}} \mathbf{Q} \rightarrow \Hom_{\mathbf{C}}(M_2(\Gamma), \mathbf{C})$$ is injective. Thus, we can identify $H_1(Y_{\Gamma}, \mathbf{Z}) \otimes_{\mathbf{Z}} \mathbf{Q}$ with a $\mathbf{Q}$-vector subspace of $\tilde{\mathcal{M}}_{\mathbf{Q}, \Gamma}$. We know that $C$ is the subspace of $H_1(Y_{\Gamma}, \mathbf{Z}) \otimes_{\mathbf{Z}} \mathbf{Q}$ of dimension $c(\Gamma)-1$ spanned by the little circles around the cusps. This concludes the proof of Proposition \ref{lattice_Q}.
 \end{proof}

Let $g \in \GL_2^+(\mathbf{Q})$, and write as before $\Gamma g \Gamma = \bigsqcup_{i=1}^n \Gamma g_i$. There is a well-defined double coset operator
$$T_{g, \mathbf{Q}} : \tilde{\mathcal{M}}_{\Gamma, \mathbf{Q}} \rightarrow \tilde{\mathcal{M}}_{\Gamma, \mathbf{Q}}$$
given by $\{h,h'\}_{\mathbf{Q}} \mapsto \sum_{i=1}^n \{g_ih, g_ih'\}_{\mathbf{Q}}$. If $p$ is a prime and $g = \begin{pmatrix} 1 & 0 \\ 0 & p \end{pmatrix}$, we denote $T_{g, \mathbf{Q}}$ by $T_{p, \mathbf{Q}}$ if $p \nmid N$ and by $U_{p, \mathbf{Q}}$ otherwise. If $d$ is an integer coprime to $N$ and $g \in \Gamma_0(N)$ is such that its lower-right coefficient is congruent to $d$ modulo $N$, we denote $T_{g, \mathbf{Q}}$ by $\langle d \rangle_{\mathbf{Q}}$. Finally, if $g = \begin{pmatrix} 0 & -1 \\ N & 0 \end{pmatrix}$, then we denote $T_{g, \mathbf{Q}}$ by $W_{N, \mathbf{Q}}$.

\begin{thm}\label{Hecke_Q}
There is a unique action of $\mathbb{T}$ on $\tilde{\mathcal{M}}_{\Gamma, \mathbf{Q}}$ such that the following hold.
\begin{enumerate}
\item\label{Hecke_Q_compatible} If $p$ is a prime dividing (resp. not dividing) $N$, then $U_p$ (resp. $T_p$) acts as $U_{p,\mathbf{Q}}$ (resp. $T_{p,\mathbf{Q}}$).  If $d$ is an integer coprime to $N$, then $\langle d \rangle$ acts as $\langle d \rangle_{\mathbf{Q}}$.
\item\label{Hecke_Q_periods} The map $\tilde{p}_{\mathbf{Q}, \Gamma} : \tilde{\mathcal{M}}_{\mathbf{Q}, \Gamma}\rightarrow \Hom_{\mathbf{C}}(M_2(\Gamma), \mathbf{C})$ is $\mathbb{T}$ and $W_{N, \mathbf{Q}}$-equivariant.
\item\label{Hecke_Q_exact_seq} The exact sequences (\ref{exact_sequence_1}) and (\ref{exact_sequence_2}) tensorized by $\mathbf{Q}$ are $\mathbb{T}$ and $W_{N, \mathbf{Q}}$-equivariant after identifying $\tilde{\mathcal{M}}_{\Gamma} \otimes_{\mathbf{Z}} \mathbf{Q}$ with $\tilde{\mathcal{M}}_{\mathbf{Q}, \Gamma}$ via $\psi \otimes \mathbf{Q}$.
\item\label{Hecke_Q_lattice_odd} For all integer $d$ coprime to $N$, the diamond operator $\langle d \rangle$ stabilizes the lattice $\tilde{\mathcal{M}}_{\Gamma}$. For all prime $p$ not dividing $2N$, $T_p$ stabilizes $\tilde{\mathcal{M}}_{\Gamma}$. More precisely, write $$\SL_2(\mathbf{Z}) \begin{pmatrix} 1 & 0 \\ 0 & p \end{pmatrix} \SL_2(\mathbf{Z}) = \SL_2(\mathbf{Z})g_{\infty} \bigcup_{i=-{\frac{p-1}{2}}}^{\frac{p-1}{2}} \SL_2(\mathbf{Z})g_i$$
where $g_{\infty} = \begin{pmatrix} p & 0 \\ 0 & 1 \end{pmatrix}$ and $g_i = \begin{pmatrix} 1 & i \\ 0 & p \end{pmatrix}$. Then we have in $\tilde{\mathcal{M}}_{\Gamma}$, for all $(g,g') \in \SL_2(\mathbf{Z})^2$:
$$T_p \{g,g'\} = \langle p \rangle \{t_{\infty, SL_2(\mathbf{Z})}(g), t_{\infty, SL_2(\mathbf{Z})}(g')\}+\sum_{i=-\frac{p-1}{2}}^{\frac{p-1}{2}}  \{t_{i, SL_2(\mathbf{Z})}(g), t_{i, SL_2(\mathbf{Z})}(g')\} \text{ .}$$
\item\label{Hecke_Q_T_2} If $p$ divides $2N$, the operator $T_p$ (or $U_p$ if $p \mid N$) sends $\tilde{\mathcal{M}}_{\Gamma}$ into $\tilde{\mathcal{M}}_{\Gamma}+\frac{1}{p}\cdot \sum_{c=\Gamma g\infty \in \mathcal{C}_{\Gamma}} \{g,gT\} \subset \frac{1}{p}\cdot \tilde{\mathcal{M}}_{\Gamma}$.
\item \label{Atkin_Lehner_M} The operator $W_{N, \mathbf{Q}}$ induces an endomorphism of $\tilde{\mathcal{M}}_{\Gamma} \otimes_{\mathbf{Z}} \mathbf{Z}[\frac{1}{N}]$, which we denote by $W_N$ and call the Atkin--Lehner involution.
\item\label{hecke_conjugation_M} Assume that all the cusps of $X_{\Gamma}$ are fixed by the complex conjugation. Then the exact sequences (\ref{exact_sequence_1}) and (\ref{exact_sequence_2}) induce a canonical $\mathbb{T}$ and $W_N$-equivariant isomorphism
$$\tilde{\mathcal{M}}_{\Gamma} \otimes_{\mathbf{Z}} \mathbf{Z}[\frac{1}{2N}] \xrightarrow{\sim} H_1(X_{\Gamma}, \mathcal{C}_{\Gamma}, \mathbf{Z}[\frac{1}{2N}])^+ \bigoplus H_1(Y_{\Gamma}, \mathbf{Z}[\frac{1}{2N}])^- \text{ .}$$
\end{enumerate}
\end{thm}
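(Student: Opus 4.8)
The plan is to group the seven assertions into the equivariance statements together with the construction of the $\mathbb{T}$-action, the integrality statements, and the eigenspace decomposition. I would begin with the equivariance of the period map (\ref{Hecke_Q_periods}), which is a direct consequence of the cocycle property in Proposition \ref{Stevens_trivial_lemma}. Writing the double coset as $\Gamma g\Gamma=\bigsqcup_i \Gamma g_i$, the cocycle relation gives $\mathcal{S}(g_i h')(f)=\mathcal{S}(g_i)(f)+\mathcal{S}(h')(f\mid g_i)$, so that upon summing over $i$ and subtracting the two endpoints of $\{h,h'\}_{\mathbf{Q}}$ the terms $\sum_i \mathcal{S}(g_i)(f)$ cancel, leaving $\tilde p_{\mathbf{Q},\Gamma}(T_{g,\mathbf{Q}}\{h,h'\}_{\mathbf{Q}})(f)=\mathcal{S}(h')\bigl(\sum_i f\mid g_i\bigr)-\mathcal{S}(h)\bigl(\sum_i f\mid g_i\bigr)=\tilde p_{\mathbf{Q},\Gamma}(\{h,h'\}_{\mathbf{Q}})(T_g f)$. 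Thus $T_{g,\mathbf{Q}}$ is intertwined with the transpose of the usual double coset action on $M_2(\Gamma)$; applying this to $W_{N,\mathbf{Q}}$ as well proves (\ref{Hecke_Q_periods}). The equivariance in (\ref{Hecke_Q_exact_seq}) is even more direct, since $\pi_\Gamma$, $\partial_\Gamma$ and $\iota_\Gamma$ are all induced by $g\mapsto g\infty$ or by geodesics and commute with $\{h,h'\}_{\mathbf{Q}}\mapsto \sum_i\{g_i h,g_i h'\}_{\mathbf{Q}}$ cusp by cusp.

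To construct the action of $\mathbb{T}$ itself (\ref{Hecke_Q_compatible}), I note that uniqueness is automatic because $T_n$ and the diamond operators generate $\mathbb{T}$. For existence, the operators $T_{g,\mathbf{Q}}$ arise from the $\GL_2^+(\mathbf{Q})$-action on $\tilde{\mathcal{M}}_{\mathbf{Q}}$ and therefore satisfy all relations of the abstract Hecke ring; what must be checked is that the additional relations imposed by the action on $M_2(\Gamma)$ continue to hold on $\tilde{\mathcal{M}}_{\mathbf{Q},\Gamma}$. For this I would use that the map $(\pi_\Gamma,\tilde p_{\mathbf{Q},\Gamma}):\tilde{\mathcal{M}}_{\Gamma}\otimes_{\mathbf{Z}}\mathbf{Q}\rightarrow (\mathcal{M}_\Gamma\otimes_{\mathbf{Z}}\mathbf{Q})\oplus \Hom_{\mathbf{C}}(M_2(\Gamma),\mathbf{C})$ is injective: if $\pi_\Gamma(x)=0$ then $x$ lies in the span of the $\{g,gT\}$, which by Proposition \ref{rank_computation}(\ref{rank_computation_2}) is contained in $H_1(Y_\Gamma,\mathbf{Z})\otimes\mathbf{Q}$, on which $\tilde p_{\mathbf{Q},\Gamma}$ is injective by Proposition \ref{rank_computation}(\ref{rank_computation_1}). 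Both targets carry a $\mathbb{T}$-action through which the abstract Hecke ring factors (the standard modular symbols action on $\mathcal{M}_\Gamma$, and the transpose of the action on $M_2(\Gamma)$), and the injection is equivariant by the previous step; hence the action on $\tilde{\mathcal{M}}_{\Gamma}\otimes\mathbf{Q}$ factors through $\mathbb{T}$ as well, which gives (\ref{Hecke_Q_compatible}).

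The substance of the theorem, and the main obstacle, lies in the integrality statements (\ref{Hecke_Q_lattice_odd})--(\ref{Atkin_Lehner_M}). Here I would compute the double coset action on the lattice $\tilde{\mathcal{M}}_{\Gamma}$ by imitating the proof of Proposition \ref{Hecke_group_theoretic}: writing $g_j g=t_{j,\SL_2(\mathbf{Z})}(g)\,g_{\sigma_g(j)}$ with $t_{j,\SL_2(\mathbf{Z})}(g)\in\SL_2(\mathbf{Z})$, I would break each $\{g_j g,g_j g'\}_{\mathbf{Q}}$ along the intermediate integral points using the Chasles relation and sum over $j$. The cross terms cancel because $\sigma_g$ and $\sigma_{g'}$ are permutations, and each coset contributes an upper-triangular tail of the form $\frac{b}{d}\{g,gT\}$. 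For $p\nmid 2N$ one chooses the symmetric representatives $g_i=\begin{pmatrix}1&i\\0&p\end{pmatrix}$ with $-\frac{p-1}{2}\le i\le \frac{p-1}{2}$, so that these tails pair off under $i\mapsto -i$ and cancel, leaving exactly the integral formula in (\ref{Hecke_Q_lattice_odd}); the coset $g_\infty=\begin{pmatrix}p&0\\0&1\end{pmatrix}$ yields the diamond $\langle p\rangle$ from its determinant. For $p\mid 2N$ this symmetrization is unavailable: in the $U_p$ case the representatives $\begin{pmatrix}1&i\\0&p\end{pmatrix}$, $0\le i\le p-1$, give tails summing to $\frac{1}{p}\sum_i i$ times a cuspidal circle, which is precisely the defect $\frac{1}{p}\sum_{c=\Gamma g\infty}\{g,gT\}$ of (\ref{Hecke_Q_T_2}), and the case $p=2$ produces the same denominator for lack of a symmetric set of representatives. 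Statement (\ref{Atkin_Lehner_M}) is the same bookkeeping for $w_N=\begin{pmatrix}0&-1\\N&0\end{pmatrix}$: writing $w_N g=\gamma\begin{pmatrix}a&b\\0&d\end{pmatrix}$ with $\frac{b}{d}\in\frac{1}{N}\mathbf{Z}$ shows $W_{N,\mathbf{Q}}$ stabilizes $\tilde{\mathcal{M}}_{\Gamma}\otimes\mathbf{Z}[\frac{1}{N}]$, and the fact that $w_N^2$ is a scalar matrix (acting trivially on $\tilde{\mathcal{M}}_{\mathbf{Q}}$) shows it is an involution there. The delicate point throughout is keeping track of exactly which tails survive, and this is where the distinction between $N$ and $2N$ and the precise $\frac{1}{p}$-defect genuinely arise.

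Finally, for (\ref{hecke_conjugation_M}) I would pass to the $\pm$-eigenspaces of the complex conjugation after inverting $2$. Under the hypothesis that all cusps are real, the conjugation acts trivially on $\mathbf{Z}[\mathcal{C}_\Gamma]$; moreover $\overline{\{g,gT\}}=\{\overline{g},\overline{g}T^{-1}\}=-\{\overline{g},\overline{g}T\}$ (using $\overline{T}=T^{-1}$ and the relation $\{h,hT^n\}=n\{h,hT\}$), so the kernel of $\pi_\Gamma$ carries the conjugation action twisted by $-1$, in agreement with the twist in (\ref{boundary_exact_seq_2}). Consequently, in (\ref{exact_sequence_1}) the target $\mathbf{Z}[\mathcal{C}_\Gamma]^0$ is purely $+$, so taking $(-)$-parts gives an isomorphism $H_1(Y_\Gamma,\mathbf{Z}[\frac{1}{2N}])^-\xrightarrow{\sim}(\tilde{\mathcal{M}}_\Gamma\otimes\mathbf{Z}[\frac{1}{2N}])^-$, while in (\ref{exact_sequence_2}) the left-hand terms $\mathbf{Z}$ and $\mathbf{Z}[\mathcal{C}_\Gamma]$ carry the $(-1)$-twisted conjugation and are hence purely $(-)$, so taking $(+)$-parts gives $(\tilde{\mathcal{M}}_\Gamma\otimes\mathbf{Z}[\frac{1}{2N}])^+\xrightarrow{\sim}H_1(X_\Gamma,\mathcal{C}_\Gamma,\mathbf{Z}[\frac{1}{2N}])^+$. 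Adding these two isomorphisms yields the stated decomposition. It is $\mathbb{T}$- and $W_N$-equivariant because all the maps in the two sequences are so by (\ref{Hecke_Q_exact_seq}) and (\ref{Atkin_Lehner_M}), and because $W_N$ commutes with the complex conjugation (as $\overline{w_N}=-w_N$ acts as $w_N$), so that it preserves the two eigenspaces.
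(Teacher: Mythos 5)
Your treatment of (\ref{Hecke_Q_compatible})--(\ref{Hecke_Q_exact_seq}) follows the same route as the paper (its Lemma \ref{Hecke_equivariance_+_-}): equivariance of $\tilde{p}_{\mathbf{Q},\Gamma}$ from the cocycle property of $\mathcal{S}$, and transport of the $\mathbb{T}$-action through the injective equivariant map $\tilde{p}_{\mathbf{Q},\Gamma}\times(\pi_{\Gamma}\otimes\mathbf{Q})$, with the same justification of injectivity via Proposition \ref{rank_computation}. Your eigenspace argument for (\ref{hecke_conjugation_M}), including the computation $\overline{\{g,gT\}}=-\{\overline{g},\overline{g}T\}$ and the use of real cusps, correctly fills in what the paper compresses into ``follows from (\ref{Hecke_Q_exact_seq})'', and your handling of (\ref{Hecke_Q_T_2}) and (\ref{Atkin_Lehner_M}) matches the paper's.

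However, your proof of the integrality statement (\ref{Hecke_Q_lattice_odd}) has a genuine gap: the claimed mechanism ``these tails pair off under $i\mapsto -i$ and cancel'' is not correct. The tail contributed by the coset $g_i$ is (up to sign) $\frac{\sigma_g(i)}{p}\cdot\{t_{i,\SL_2(\mathbf{Z})}(g)T,\,t_{i,\SL_2(\mathbf{Z})}(g)\}_{\mathbf{Q}}$, a rational multiple of a class that depends only on the cusp $\Gamma\, t_{i,\SL_2(\mathbf{Z})}(g)\infty$, and these cusps vary with $i$ a priori. Pairwise cancellation would require both $\sigma_g(-i)=-\sigma_g(i)$ and equality of the cusps attached to $i$ and $-i$; the first already fails, since for $g=\begin{pmatrix} a & b \\ c & d\end{pmatrix}$ with $p\mid c$ one computes $\sigma_g(i)\equiv a^{-1}b+a^{-1}d\cdot i \ (\mathrm{mod}\ p)$, which is affine but not odd in $i$ unless $p\mid b$. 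What actually makes the total tail vanish --- identity (\ref{lemma_integrality}) in the paper --- is the conjunction of two facts: (a) all the tail classes coincide, because the cusps $\Gamma\, t_{i,\SL_2(\mathbf{Z})}(g)\infty$ are independent of $i$ (equal to $\Gamma\frac{a+ic}{pc}=\Gamma\frac{a}{pc}$ when $p\mid c$, and, when $p\nmid c$, subject to the diamond-twisted identification $\langle p\rangle\Gamma\, t_{\infty,\SL_2(\mathbf{Z})}(g)\infty=\Gamma\, t_{i,\SL_2(\mathbf{Z})}(g)\infty$ for $i\neq j$, where $j$ is the unique index with $\sigma_g(j)=\infty$), which uses $p\nmid N$ and, crucially, that $\Gamma$ is $\Gamma_0(N)$ or $\Gamma_1(N)$; and (b) the coefficient identity $\sum_{i\in I}\sigma_g(i)=\sum_{i\in I} i=0$, which is the only place the symmetric representatives and $p\neq 2$ enter. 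Your sketch contains (b) implicitly but omits (a) entirely --- including the $\langle p\rangle$-twisted cusp identification needed when $\sigma_g(\infty)\neq\infty$ --- and the fact that the hypothesis $\Gamma\in\{\Gamma_0(N),\Gamma_1(N)\}$ never intervenes in your argument is the telltale sign that the proposed cancellation cannot be the right mechanism: without (a), the vanishing of $\sum_i\sigma_g(i)$ proves nothing, since the surviving tails live at different cusps.
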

\begin{rem}
If $\Gamma = \Gamma_1(N)$, the cusps of $X_{\Gamma}$ are fixed by the complex conjugation if and only if $N$ divides $2p$ for some prime number $p \geq 2$. If $\Gamma = \Gamma_0(N)$, the cusps of $X_{\Gamma}$ are fixed by the complex conjugation if and only if $N$ is squarefree or four times a squarefree integer.
\end{rem}
\begin{proof}

\begin{lem}\label{Hecke_equivariance_+_-}
\begin{enumerate}
\item \label{Hecke_equivariance_pi}The map $\pi_{\Gamma} \otimes \mathbf{Q} : \tilde{\mathcal{M}}_{\mathbf{Q},\Gamma} \rightarrow H_1(X_{\Gamma}, \mathcal{C}_{\Gamma}, \mathbf{Q})$ is Hecke equivariant. This means the following. If $p$ is a prime dividing (resp. not dividing) $N$, then $(\pi_{\Gamma} \otimes \mathbf{Q}) \circ U_{p, \mathbf{Q}} = U_p \circ (\pi_{\Gamma} \otimes \mathbf{Q})$ (resp.  $(\pi_{\Gamma} \otimes \mathbf{Q}) \circ T_{p, \mathbf{Q}} = T_p \circ (\pi_{\Gamma} \otimes \mathbf{Q})$). Similarly for the diamond operators and the Atkin-Lehner involution.
\item \label{Hecke_equivariance_iota} The map $\iota_{\Gamma} \otimes \mathbf{Q} : H_1(Y_{\Gamma}, \mathbf{Q}) \rightarrow \tilde{\mathcal{M}}_{\mathbf{Q},\Gamma} $ is Hecke equivariant.
\item \label{Hecke_equivariance_periods} The map $\tilde{p}_{\mathbf{Q}, \Gamma} :\tilde{\mathcal{M}}_{\mathbf{Q}, \Gamma} \rightarrow \Hom_{\mathbf{C}}(M_2(\Gamma), \mathbf{C})$ is Hecke equivariant.
\item\label{Hecke_equivariance_produit_injective} The map $$\tilde{p}_{\mathbf{Q}, \Gamma} \times (\tilde{\pi}_{\Gamma} \otimes \mathbf{Q}) : \tilde{\mathcal{M}}_{\mathbf{Q}, \Gamma} \rightarrow  \Hom_{\mathbf{C}}(M_2(\Gamma), \mathbf{C}) \times (\mathcal{M}_{\Gamma} \otimes_{\mathbf{Z}} \mathbf{Q})$$ is injective and Hecke equivariant.
\end{enumerate}
\end{lem}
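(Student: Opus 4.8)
The plan is to establish all four assertions for a general double coset operator $T_{g, \mathbf{Q}}$ attached to $g \in \GL_2^+(\mathbf{Q})$, since the statements for $T_p$, $U_p$, the diamond operators $\langle d \rangle_{\mathbf{Q}}$ and the Atkin--Lehner operator $W_{N, \mathbf{Q}}$ are all instances of this. Throughout I write $\Gamma g \Gamma = \bigsqcup_{i=1}^{n} \Gamma g_i$, so that $T_{g, \mathbf{Q}}\{h, h'\}_{\mathbf{Q}} = \sum_{i} \{g_i h, g_i h'\}_{\mathbf{Q}}$ and $T_g f = \sum_i f \mid g_i$ on $M_2(\Gamma)$, the action on $\Hom_{\mathbf{C}}(M_2(\Gamma), \mathbf{C})$ being the transpose of this, namely $(T_g \varphi)(f) = \varphi(T_g f)$. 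Part (\ref{Hecke_equivariance_pi}) is then immediate from the definitions: applying $\pi_{\Gamma}$ to $\sum_i \{g_i h, g_i h'\}_{\mathbf{Q}}$ gives $\sum_i \{g_i h \infty, g_i h' \infty\}$, which is exactly $T_g$ applied to $\{h\infty, h'\infty\} = \pi_{\Gamma}(\{h,h'\}_{\mathbf{Q}})$.

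For part (\ref{Hecke_equivariance_periods}) the key input is the cocycle property of Stevens' map (Proposition \ref{Stevens_trivial_lemma}), which gives $\mathcal{S}(g_i h')(f) - \mathcal{S}(g_i h)(f) = \mathcal{S}(h')(f \mid g_i) - \mathcal{S}(h)(f \mid g_i)$ once the common term $\mathcal{S}(g_i)(f)$ cancels. Summing over $i$ and using that each $\mathcal{S}(\cdot)$ is linear in $f$ yields $\tilde{p}_{\mathbf{Q}, \Gamma}(T_{g, \mathbf{Q}}\{h,h'\}_{\mathbf{Q}})(f) = \tilde{p}_{\mathbf{Q}, \Gamma}(\{h,h'\}_{\mathbf{Q}})(T_g f)$, which is precisely the desired equivariance. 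For part (\ref{Hecke_equivariance_iota}) I would combine the group-theoretic description of the Hecke action from Proposition \ref{Hecke_group_theoretic} with the defining relations of $\tilde{\mathcal{M}}_{\mathbf{Q}, \Gamma}$. Since $\Pi_{\Gamma}$ is surjective it suffices to check equivariance on the symbols $\{1, \gamma\}_{\mathbf{Q}} = \psi(\iota_{\Gamma}(\Pi_{\Gamma}(\gamma)))$ for $\gamma \in \Gamma$. Writing $g_i \gamma = t_{i, \Gamma}(\gamma) g_{\sigma_\gamma(i)}$ and applying the Chasles relation together with the $\Gamma$-invariance $\{t_{i,\Gamma}(\gamma) a, t_{i,\Gamma}(\gamma) b\}_{\mathbf{Q}} = \{a,b\}_{\mathbf{Q}}$ gives
\[
\{g_i, g_i \gamma\}_{\mathbf{Q}} = \{g_i, 1\}_{\mathbf{Q}} + \{1, t_{i,\Gamma}(\gamma)\}_{\mathbf{Q}} + \{1, g_{\sigma_\gamma(i)}\}_{\mathbf{Q}} \text{ .}
\]
Summing over $i$, the terms $\sum_i \{g_i, 1\}_{\mathbf{Q}} = -\sum_i \{1, g_i\}_{\mathbf{Q}}$ and $\sum_i \{1, g_{\sigma_\gamma(i)}\}_{\mathbf{Q}} = \sum_i \{1, g_i\}_{\mathbf{Q}}$ cancel, because $\sigma_\gamma$ is a permutation of $I$; what remains is $\sum_i \{1, t_{i,\Gamma}(\gamma)\}_{\mathbf{Q}}$, which by Proposition \ref{Hecke_group_theoretic} is exactly the image of $T_g \Pi_{\Gamma}(\gamma)$. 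I expect this cancellation --- controlling the spurious boundary terms $\{g_i,1\}$ and $\{1,g_{\sigma_\gamma(i)}\}$ via the permutation $\sigma_\gamma$ --- to be the one genuinely delicate point of the lemma, everything else being formal once the cocycle identity and Proposition \ref{Hecke_group_theoretic} are in hand.

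Finally, in part (\ref{Hecke_equivariance_produit_injective}) the Hecke equivariance is automatic, the map being the product of the equivariant maps from (\ref{Hecke_equivariance_pi}) and (\ref{Hecke_equivariance_periods}). For injectivity I would show that $\Ker(\pi_{\Gamma} \otimes \mathbf{Q})$ is contained in the image of $H_1(Y_{\Gamma}, \mathbf{Q})$ inside $\tilde{\mathcal{M}}_{\mathbf{Q}, \Gamma}$: by Proposition \ref{rank_computation} (\ref{rank_computation_4}) this kernel is spanned by the elements $\{g, gT\}_{\mathbf{Q}}$, and by Proposition \ref{rank_computation} (\ref{rank_computation_2}) the element $e_c \cdot \{g, gT\}$ is the image under $\iota_{\Gamma}$ of a small circle around the cusp $c = \Gamma g \infty$, so that $\{g,gT\}_{\mathbf{Q}}$ lies in the image of $H_1(Y_{\Gamma}, \mathbf{Q})$. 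Since $\tilde{p}_{\Gamma} \circ \iota_{\Gamma} = \tilde{p}_{\mathbf{Q}, \Gamma} \circ (\psi \circ \iota_{\Gamma})$ is injective (the content of the proof of Proposition \ref{rank_computation} (\ref{rank_computation_1})), the restriction of $\tilde{p}_{\mathbf{Q}, \Gamma}$ to this image is injective. Hence any $x$ killed by both factors lies in $H_1(Y_{\Gamma}, \mathbf{Q})$ and is annihilated by $\tilde{p}_{\mathbf{Q}, \Gamma}$, forcing $x = 0$.
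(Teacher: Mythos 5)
Your proof is correct and follows essentially the same route as the paper's: part (\ref{Hecke_equivariance_pi}) by definition, part (\ref{Hecke_equivariance_periods}) via the cocycle property of Stevens' map (the paper's cited formula $\tilde{p}_{\mathbf{Q},\Gamma}(\{g,g'\}_{\mathbf{Q}})(f)=\mathcal{S}(g^{-1}g')(f\mid g)$ is exactly that identity), part (\ref{Hecke_equivariance_iota}) by the same Chasles/$\Gamma$-invariance computation using Proposition \ref{Hecke_group_theoretic}, where your cancellation of $\sum_i\{g_i,1\}_{\mathbf{Q}}+\{1,g_{\sigma_\gamma(i)}\}_{\mathbf{Q}}$ is the paper's cancellation of $\sum_i\{g_i,g_{\sigma_\gamma(i)}\}_{\mathbf{Q}}$ after routing through $1$, and part (\ref{Hecke_equivariance_produit_injective}) by combining injectivity of the period map on $H_1(Y_{\Gamma},\mathbf{Q})$ with $\Ker(\pi_{\Gamma}\otimes\mathbf{Q})\subset H_1(Y_{\Gamma},\mathbf{Q})$. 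The only differences are presentational: you treat all operators uniformly as general double coset operators where the paper checks diamond operators separately before $T_p$ and $U_p$, and in (\ref{Hecke_equivariance_produit_injective}) you spell out, via Proposition \ref{rank_computation} (\ref{rank_computation_2}) and (\ref{rank_computation_4}), the containment of the kernel that the paper asserts without detail.
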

\begin{proof}
Point (\ref{Hecke_equivariance_pi}) follows by definition of Hecke operators. We now prove (\ref{Hecke_equivariance_iota}). We first check the compatibility of diamond operators. Let $d$ be an integer coprime to $N$. Let $g$ be a matrix whose lower right corner is congruent to $d$ modulo $N$. For all $\gamma \in \Gamma$, we have in $ \tilde{\mathcal{M}}_{\mathbf{Q}, \Gamma}$:
\begin{align*}
\iota_{\Gamma}(\langle d \rangle\Pi_{\Gamma}(\gamma)) &= \iota_{\Gamma}(\Pi_{\Gamma}(g\gamma g^{-1}))  \\& = \{1, g \gamma g^{-1}\}_{\mathbf{Q}} \\& = \{g,g\gamma\}_{\mathbf{Q}} \text{ .}
\end{align*}
The first equality follows from Proposition \ref{Hecke_group_theoretic}. The second equality is justified as follows:
\begin{align*}
\{g, g\gamma\} &= \{g, g\gamma g^{-1}g\} \\&= \{g,1\}+\{1,g\gamma g^{-1}\}+\{g\gamma g^{-1}, g\gamma g^{-1}g\} \\& =  \{g,1\}+\{1,g\gamma g^{-1}\}+\{1,g\} \\&= \{1,g\gamma g^{-1}\} \text{ ,}
\end{align*}
where we have used the fact that $g\gamma g^{-1}\in \Gamma$.
This proves the compatibility of $\iota_{\Gamma} \otimes \mathbf{Q}$ to diamond operators. We now consider the Hecke operator $T_p$ (or $U_p$) for a prime $p$. Write $\Gamma \begin{pmatrix} 1 & 0 \\ 0 & p \end{pmatrix} \Gamma = \bigcup_{i \in I} \Gamma g_i \Gamma$. By Proposition  \ref{Hecke_group_theoretic}, for all $\gamma \in \Gamma$, we have in $ \tilde{\mathcal{M}}_{\mathbf{Q}, \Gamma}$:
\begin{align*}
(\iota_{\Gamma} \otimes \mathbf{Q})(T_p \Pi_{\Gamma}(\gamma)) &= \sum_{i \in I} \Pi_{\Gamma}(t_{i, \Gamma}(\gamma)) 
\\&= \sum_{i \in I} \{1, t_{i, \Gamma}(\gamma)\}_{\mathbf{Q}} \text{ .}
\end{align*}
On the other hand, we have:
\begin{align*}
T_{p, \mathbf{Q}}(\iota_{\Gamma} \otimes \mathbf{Q})(\gamma)&=\sum_{i \in I} \{g_i, g_i\gamma\}_{\mathbf{Q}} \\& = \sum_{i \in I} \{g_i, t_{i,\Gamma}(\gamma)g_{\sigma_{\gamma}(i)}\}_{\mathbf{Q}} \\& = \sum_{i\in I} \{g_i, g_{\sigma_{\gamma}(i)}\}_{\mathbf{Q}}+\{g_{\sigma_{\gamma}(i)}, t_{i,\Gamma}(\gamma)g_{\sigma_{\gamma}(i)}\}_{\mathbf{Q}} \\& = \sum_{i\in I} \{g_i, g_{\sigma_{\gamma}(i)}\}_{\mathbf{Q}} + \sum_{i\in I} \{1, t_{i,\Gamma}(\gamma)\}_{\mathbf{Q}}
\end{align*}
where in the last equality we have used the fact that $t_{i,\Gamma}(\gamma) \in \Gamma$, so $\{g_{\sigma_{\gamma}(i)}, t_{i,\Gamma}(\gamma)g_{\sigma_{\gamma}(i)}\}_{\mathbf{Q}} = \{1, t_{i,\Gamma}(\gamma)\}_{\mathbf{Q}}$. Since $\sigma_{\gamma} : I \rightarrow I$ is a bijection, we have $\sum_{i\in I} \{g_i, g_{\sigma_{\gamma}(i)}\}_{\mathbf{Q}}=0$. This proves (\ref{Hecke_equivariance_iota}).

Lemma \ref{Hecke_equivariance_+_-} (\ref{Hecke_equivariance_periods}) is an immediate consequence of the definition of Hecke operators as double coset operators, and of the formula $\tilde{p}_{\mathbf{Q}, \Gamma}(\{g,g'\}_{\mathbf{Q}})(f) = \mathcal{S}(g^{-1}g')(f\mid g)$ for all $(g,g') \in \GL_2^+(\mathbf{Q})^2$ and $f \in M_2(\Gamma)$. 

We finally prove Lemma \ref{Hecke_equivariance_+_-} (\ref{Hecke_equivariance_produit_injective}). The map $\tilde{p}_{\mathbf{Q}, \Gamma} \times (\tilde{\pi}_{\Gamma} \otimes \mathbf{Q})$ is Hecke equivariant by (\ref{Hecke_equivariance_pi}) and (\ref{Hecke_equivariance_periods}), so we only need to prove that it is injective. This follows from the facts that the map $\tilde{p}_{\Gamma} \otimes \mathbf{Q} : H_1(Y_{\Gamma}, \mathbf{Z}) \otimes_{\mathbf{Z}} \mathbf{Q} \rightarrow \Hom_{\mathbf{C}}(M_2(\Gamma),\mathbf{C})$ is injective and that the kernel of $\tilde{\pi}_{\Gamma} \otimes \mathbf{Q}$ is contained in $H_1(Y_{\Gamma}, \mathbf{Z}) \otimes_{\mathbf{Z}} \mathbf{Q}$ (considered as a subspace of $\tilde{\mathcal{M}}_{\mathbf{Q}, \Gamma}$ via $\iota_{\Gamma} \otimes \mathbf{Q}$). This concludes the proof of Lemma \ref{Hecke_equivariance_+_-}.
\end{proof}

By Lemma \ref{Hecke_equivariance_+_-} (\ref{Hecke_equivariance_produit_injective}), there is an action of $\mathbb{T}$ on $\tilde{\mathcal{M}}_{\mathbf{Q}, \Gamma}$ satisfying Theorem \ref{Hecke_Q} (\ref{Hecke_Q_compatible}). This action is obviously unique. Theorem  \ref{Hecke_Q} (\ref{Hecke_Q_periods}) and (\ref{Hecke_Q_exact_seq}) also follow from Lemma \ref{Hecke_equivariance_+_-}.

We now prove Theorem \ref{Hecke_Q} (\ref{Hecke_Q_lattice_odd}) and (\ref{Hecke_Q_T_2}). The assertion about diamond operators is straightforward. Let $p$ be a prime. If $p \mid N$, then by convention the diamond operator $\langle p \rangle$ is zero. We have:
$$\SL_2(\mathbf{Z})\begin{pmatrix} 1 & 0 \\ 0 & p \end{pmatrix} \SL_2(\mathbf{Z}) = \SL_2(\mathbf{Z})g_{\infty} \bigcup_{i \in I} \SL_2(\mathbf{Z}) g_i \text{ ,}$$
where $I = \{-\frac{p-1}{2}, ..., \frac{p-1}{2}\}$, $g_{\infty} =  \begin{pmatrix} p & 0 \\ 0 & 1 \end{pmatrix}$ and $g_i =  \begin{pmatrix} 1 & i \\ 0 & p \end{pmatrix}$. Recall that for all $g \in \SL_2(\mathbf{Z})$, we have $g_ig = t_{i, \SL_2(\mathbf{Z})}(g)g_{\sigma_g(i)}$ for some $t_{i, \SL_2(\mathbf{Z})} : \SL_2(\mathbf{Z}) \rightarrow \SL_2(\mathbf{Z})$ and $\sigma_{g} : I \cup \{\infty\}\xrightarrow{\sim} I \cup \{\infty\}$.
For all $(g,g') \in \SL_2(\mathbf{Z})^2$, we have in $\tilde{\mathcal{M}}_{\mathbf{Q}, \Gamma}$:
\begin{align*}
T_p\{g,g'\}_{\mathbf{Q}} &= \langle p \rangle \{g_{\infty}g, g_{\infty}g'\}_{\mathbf{Q}}+\sum_{i \in I}  \{g_ig, g_ig'\}_{\mathbf{Q}} \\& 
= \langle p \rangle \{t_{\infty, \SL_2(\mathbf{Z})}(g)g_{\sigma_{g}(\infty)}, t_{\infty, \SL_2(\mathbf{Z})}(g)\}_{\mathbf{Q}} + \langle p \rangle \{t_{\infty, \SL_2(\mathbf{Z})}(g), t_{\infty, \SL_2(\mathbf{Z})}(g')\}_{\mathbf{Q}} \\& + \langle p \rangle \{t_{\infty, \SL_2(\mathbf{Z})}(g'), t_{\infty, \SL_2(\mathbf{Z})}(g')g_{\sigma_{g'}(\infty)}\}_{\mathbf{Q}} \\&+ \sum_{i \in I}\{t_{i, \SL_2(\mathbf{Z})}(g)g_{\sigma_g(i)}, t_{i, \SL_2(\mathbf{Z})}(g)\}_{\mathbf{Q}}+\{t_{i, \SL_2(\mathbf{Z})}(g), t_{i, \SL_2(\mathbf{Z})}(g')\}_{\mathbf{Q}}\\&+\{t_{i, \SL_2(\mathbf{Z})}(g'), t_{i, \SL_2(\mathbf{Z})}(g')g_{\sigma_{g'}(i)}\}_{\mathbf{Q}} \text{ .}
\end{align*}
For all $i \in I \cup \{\infty\}$, we have $\{t_{i, \SL_2(\mathbf{Z})}(g)g_{\sigma_g(i)}, t_{i, \SL_2(\mathbf{Z})}(g)\}_{\mathbf{Q}} \in \frac{1}{p}\mathbf{Z}\cdot \{t_{i, \SL_2(\mathbf{Z})}(g), t_{i, \SL_2(\mathbf{Z})}(g)T\}_{\mathbf{Q}}$ and $\{t_{i, \SL_2(\mathbf{Z})}(g'), t_{i, \SL_2(\mathbf{Z})}(g')g_{\sigma_{g'}(i)}\}_{\mathbf{Q}} \in \frac{1}{p}\mathbf{Z}\cdot \{t_{i, \SL_2(\mathbf{Z})}(g'), t_{i, \SL_2(\mathbf{Z})}(g')T\}_{\mathbf{Q}}$. This proves Theorem \ref{Hecke_Q} (\ref{Hecke_Q_T_2}). Assume now that $p \nmid 2N$. To conclude the proof of Theorem \ref{Hecke_Q} (\ref{Hecke_Q_lattice_odd}), it suffices to show that for all $g \in \SL_2(\mathbf{Z})$, we have
\begin{equation}\label{lemma_integrality}
\langle p \rangle \{t_{\infty, \SL_2(\mathbf{Z})}(g)g_{\sigma_{g}(\infty)}, t_{\infty, \SL_2(\mathbf{Z})}(g)\}_{\mathbf{Q}} + \sum_{i \in I}\{t_{i, \SL_2(\mathbf{Z})}(g)g_{\sigma_g(i)}, t_{i, \SL_2(\mathbf{Z})}(g)\}_{\mathbf{Q}} =0 \text{ .}
\end{equation}
Write $g = \begin{pmatrix} a & b \\ c & d \end{pmatrix}$. We consider two cases. First, assume that $p \mid c$. Then $\sigma_g(\infty)=\infty$ and $\sigma_g$ induces a permutation of $I$. We have:
\begin{align*}
\langle p \rangle \{t_{\infty, \SL_2(\mathbf{Z})}(g)g_{\sigma_{g}(\infty)}, t_{\infty, \SL_2(\mathbf{Z})}(g)\}_{\mathbf{Q}} &+ \sum_{i \in I}\{t_{i, \SL_2(\mathbf{Z})}(g)g_{\sigma_g(i)}, t_{i, \SL_2(\mathbf{Z})}(g)\}_{\mathbf{Q}} \\& = \frac{1}{p}\cdot \sum_{i \in I} \sigma_g(i) \cdot \{t_{i, \SL_2(\mathbf{Z})}(g)T, t_{i, \SL_2(\mathbf{Z})}(g)\}_{\mathbf{Q}}
\end{align*}
Since $p\neq 2$, we have $ \sum_{i \in I} \sigma_g(i) = \sum_{i \in I} i = 0$, so to prove (\ref{lemma_integrality}) it suffices to prove that $\{t_{i, \SL_2(\mathbf{Z})}(g)T, t_{i, \SL_2(\mathbf{Z})}(g)\}_{\mathbf{Q}}$ is independant of $i$, \ie that the cusp $\Gamma t_{i, \SL_2(\mathbf{Z})}(g) \infty$ is independant of $i$. We have $t_{i, \SL_2(\mathbf{Z})}(g) \infty = g_i g \infty = \frac{a+ic}{pc}$. Since $p \mid c$ and $\gcd(a,c)=1$, the fraction $\frac{a+ic}{pc}$ is irreducible. Since $\Gamma = \Gamma_1(N)$ or $\Gamma=\Gamma_0(N)$ and $p \nmid N$, we see that $\Gamma \frac{a+ic}{pc} = \Gamma \frac{a}{pc}$ is independant of $i$.

Assume now that $p \nmid c$. Let $j \in I$ such that $p \mid a+jc$. Then $\sigma_g(j) = \infty$ and $\sigma_g(\infty)=j'$ where $j' \in I$ is such that $j' \equiv dc^{-1}\text{ (modulo }p\text{)}$. We have:
\begin{align*}
\langle p \rangle \{t_{\infty, \SL_2(\mathbf{Z})}(g)g_{\sigma_{g}(\infty)}, t_{\infty, \SL_2(\mathbf{Z})}(g)\}_{\mathbf{Q}} &+ \sum_{i \in I}\{t_{i, \SL_2(\mathbf{Z})}(g)g_{\sigma_g(i)}, t_{i, \SL_2(\mathbf{Z})}(g)\}_{\mathbf{Q}} \\& = \frac{\sigma_g(\infty)}{p}\cdot \langle p \rangle \{t_{\infty, \SL_2(\mathbf{Z})}(g)T, t_{\infty, \SL_2(\mathbf{Z})}(g)\}_{\mathbf{Q}} \\&+\sum_{i\in I \atop i \neq j} \frac{\sigma_g(i)}{p}\cdot  \{t_{i, \SL_2(\mathbf{Z})}(g)T, t_{i, \SL_2(\mathbf{Z})}(g)\}_{\mathbf{Q}} \text{ .}
\end{align*}
Since $p \neq 2$, we have $$\sigma_{g}(\infty) + \sum_{i \in I \atop i \neq j} \sigma_g(i) = \sum_{i \in I} i = 0 \text{ .}$$
Thus, to prove (\ref{lemma_integrality}) it suffices to prove that the elements $\langle p \rangle \{t_{\infty, \SL_2(\mathbf{Z})}(g)T, t_{\infty, \SL_2(\mathbf{Z})}(g)\}_{\mathbf{Q}}$ and $\{t_{i, \SL_2(\mathbf{Z})}(g)T, t_{i, \SL_2(\mathbf{Z})}(g)\}_{\mathbf{Q}}$ for all $i \in I \backslash \{j\}$ coincide. Equivalently, it suffices to prove that the cusps $\langle p \rangle \Gamma t_{\infty, \SL_2(\mathbf{Z})}(g) \infty$ and $\Gamma t_{i, \SL_2(\mathbf{Z})}(g) \infty$ are the same, for all $i \in I \backslash \{j\}$. We have $\langle p \rangle \Gamma t_{\infty, \SL_2(\mathbf{Z})}(g) \infty = \langle p \rangle \Gamma \frac{pa}{c}$ and $\Gamma t_{i, \SL_2(\mathbf{Z})}(g) \infty = \Gamma \frac{a+ic}{pc}$. The facts that $p \nmid a+ic$ if $i \neq j$, $\Gamma = \Gamma_1(N)$ or $\Gamma=\Gamma_0(N)$ and $p \nmid N$ imply that these cusps are all the same. This concludes the proof of (\ref{lemma_integrality}).

Theorem \ref{Hecke_Q} (\ref{Atkin_Lehner_M}) follows from the fact that for any $g \in \SL_2(\mathbf{Z})$, we can write $\begin{pmatrix} 0 & -1 \\ N & 0 \end{pmatrix} g = g' \cdot \begin{pmatrix} a & b \\ 0 & d \end{pmatrix}$ for some $g' \in \SL_2(\mathbf{Z})$ and $a,b,d \in \mathbf{Z}$ with $ad=N$. Theorem \ref{Hecke_Q} (\ref{hecke_conjugation_M}) follows from (\ref{Hecke_Q_exact_seq}). This concludes the proof of Theorem \ref{Hecke_Q} .
\end{proof}

\subsection{Manin symbols}\label{paragraph_Manin}
Following Manin \cite{Manin_parabolic}, consider the map
$$\xi_{\Gamma} : \mathbf{Z}[\Gamma \backslash \PSL_2(\mathbf{Z})] \rightarrow \mathcal{M}_{\Gamma}$$
defined by $\Gamma g \mapsto \{g0, g\infty\}$. Let $S = \begin{pmatrix} 0 & -1 \\ 1 & 0 \end{pmatrix}$ and $U = \begin{pmatrix} 1 & -1 \\ 1 & 0 \end{pmatrix}$. We have $S^2=U^3=\begin{pmatrix} -1 & 0 \\ 0 & -1 \end{pmatrix}$. 

\begin{thm}[Manin]\label{Manin_thm}
\begin{enumerate}
\item\label{Manin_surjectivity} The map $\xi_{\Gamma}$ is surjective.
\item\label{Manin_kernel} We have $$\Ker(\xi_{\Gamma}) = \mathbf{Z}[\Gamma \backslash \PSL_2(\mathbf{Z})]^U + \mathbf{Z}[\Gamma \backslash \PSL_2(\mathbf{Z})]^S$$
where for any $g \in \SL_2(\mathbf{Z})$, $\mathbf{Z}[\Gamma \backslash \PSL_2(\mathbf{Z})]^g$ denotes the subgroup of elements fixed by the right multiplication by $g$.
\end{enumerate}
\end{thm}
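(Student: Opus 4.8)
The plan is to handle surjectivity and the kernel separately, reducing the kernel computation to the single assertion that the only relations among the Manin symbols $[g] := \xi_{\Gamma}(\Gamma g) = \{g0, g\infty\}$ are the \emph{two-term} and \emph{three-term} relations. For surjectivity, I would first use $\{\alpha,\beta\} = \{0,\beta\}-\{0,\alpha\}$ to see that $\mathcal{M}_{\Gamma}$ is generated by the symbols $\{0,\frac{p}{q}\}$ with $\frac{p}{q} \in \mathbf{P}^1(\mathbf{Q})$, and then invoke Manin's continued-fraction trick: writing the convergents of $\frac{p}{q}$ as $\frac{p_0}{q_0},\dots,\frac{p_n}{q_n}=\frac{p}{q}$ with $p_iq_{i-1}-p_{i-1}q_i=\pm1$, consecutive convergents are unimodular neighbours, so each symbol $\{\frac{p_{i-1}}{q_{i-1}},\frac{p_i}{q_i}\}$ equals $[g_i]$ for a suitable $g_i \in \SL_2(\mathbf{Z})$ with $g_i0=\frac{p_{i-1}}{q_{i-1}}$ and $g_i\infty=\frac{p_i}{q_i}$. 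Thus $\{0,\frac{p}{q}\}=\sum_i [g_i]$ telescopes into Manin symbols, proving surjectivity.

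For the kernel I would next record the two inclusions that are essentially formal. A direct computation from $S\cdot 0 = \infty$ gives $[gS] = \{g\infty, g0\} = -[g]$, and from the orbit $0 \mapsto \infty \mapsto 1 \mapsto 0$ one gets $[g]+[gU]+[gU^2] = \{g0,g\infty\}+\{g\infty,g1\}+\{g1,g0\} = 0$. Pairing up the orbits of right multiplication by $S$ (resp. $U$), and using that $\mathcal{M}_{\Gamma}$ is torsion-free to kill the contribution of the cosets fixed by $S$ (resp. $U$), one then checks that every $S$-fixed and every $U$-fixed vector maps to $0$, so $\mathbf{Z}[\Gamma\backslash\PSL_2(\mathbf{Z})]^U + \mathbf{Z}[\Gamma\backslash\PSL_2(\mathbf{Z})]^S \subseteq \Ker(\xi_{\Gamma})$. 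Conversely, writing $e_g$ for the basis vector $\Gamma g$, the relation vectors $e_g + e_{gS}$ and $e_g + e_{gU}+e_{gU^2}$ are themselves fixed by right multiplication by $S$ and by $U$ respectively, since $S^2 = U^3 = 1$ in $\PSL_2(\mathbf{Z})$; hence the subgroup $R$ they generate satisfies $R \subseteq \mathbf{Z}[\Gamma\backslash\PSL_2(\mathbf{Z})]^S + \mathbf{Z}[\Gamma\backslash\PSL_2(\mathbf{Z})]^U$.

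These observations reduce the theorem to the single nontrivial statement $\Ker(\xi_{\Gamma}) = R$, i.e. that the Manin symbols are presented by the two- and three-term relations alone; this is the main obstacle and the real content of Manin's theorem. I would prove it by identifying $\mathcal{M}_{\Gamma}$ with $H_1(X_{\Gamma}, \mathcal{C}_{\Gamma}; \mathbf{Z})$ and computing the latter through the Farey tessellation of $\mathfrak{H} \cup \mathbf{P}^1(\mathbf{Q})$ by the $\PSL_2(\mathbf{Z})$-translates of the ideal triangle with vertices $0, 1, \infty$. In the induced cellular structure on $X_{\Gamma}$ every vertex is a cusp, so the relative chain group in degree $0$ vanishes and $H_1(X_{\Gamma},\mathcal{C}_{\Gamma};\mathbf{Z})$ is the group of $1$-chains modulo boundaries of $2$-cells. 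The oriented edges form a torsor under $\PSL_2(\mathbf{Z})$ descending to $\Gamma\backslash\PSL_2(\mathbf{Z})$; passing from oriented to unoriented edges produces exactly the relations $e_g + e_{gS}$, and the boundary of each triangle produces exactly a relation $e_g + e_{gU} + e_{gU^2}$, whence $\Ker(\xi_{\Gamma}) = R$. Chaining the three inclusions then forces $\Ker(\xi_{\Gamma}) = R = \mathbf{Z}[\Gamma\backslash\PSL_2(\mathbf{Z})]^S + \mathbf{Z}[\Gamma\backslash\PSL_2(\mathbf{Z})]^U$. The one point requiring care is the presence of elliptic points of $\Gamma$, where edges or triangles acquire nontrivial stabilizers and the quotient is an orbifold; there the torsion-freeness of $\mathcal{M}_{\Gamma}$ already used above is precisely what guarantees that the degenerate orbits contribute no further relations.
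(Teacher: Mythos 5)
The paper offers no proof of this theorem --- it is quoted from Manin's work --- so your proposal stands or falls on its own. Your surjectivity argument (continued fractions) is the standard one and is correct, as are the two formal inclusions $R \subseteq \mathbf{Z}[\Gamma\backslash\PSL_2(\mathbf{Z})]^S + \mathbf{Z}[\Gamma\backslash\PSL_2(\mathbf{Z})]^U \subseteq \Ker(\xi_{\Gamma})$, where your use of torsion-freeness of $\mathcal{M}_{\Gamma}$ at $S$- and $U$-fixed cosets is exactly right. The fatal step is the reduction to the equality $\Ker(\xi_{\Gamma}) = R$: this equality is \emph{false} whenever $\Gamma$ contains elliptic elements, so no correct tessellation computation can establish it. Take $\Gamma = \Gamma_0(5)$ and identify $\Gamma\backslash\PSL_2(\mathbf{Z})$ with $\mathbf{P}^1(\mathbf{F}_5)$ via the bottom row $(c:d)$, written $z = c/d$. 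Right multiplication by $S$ sends $z$ to $-1/z$, so the $S$-fixed cosets are $z \in \{2,3\}$ and the free $S$-orbits are $\{0,\infty\}$ and $\{1,4\}$; the $U$-orbits are $\{0,\infty,4\}$ and $\{1,2,3\}$, with no $U$-fixed coset. Then $R$ is generated by $e_0+e_{\infty}$, $e_1+e_4$, $2e_2$, $2e_3$, $e_0+e_{\infty}+e_4$ and $e_1+e_2+e_3$; one extracts $e_4$, $e_1$ and $e_2+e_3$, and observes that every element of $R$ has \emph{even} coefficient sum on $\{e_2, e_3\}$. But $e_2 \in \mathbf{Z}[\Gamma\backslash\PSL_2(\mathbf{Z})]^S \subseteq \Ker(\xi_{\Gamma})$ (by your own first inclusion: $2\,\xi_{\Gamma}(e_2)=0$ and $\mathcal{M}_{\Gamma}$ is torsion-free), while $e_2 \notin R$. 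Indeed $\Ker(\xi_{\Gamma}) = \langle e_1, e_2, e_3, e_4, e_0+e_{\infty}\rangle$ here (the quotient by this lattice is $\mathbf{Z}$, torsion-free, matching $\mathcal{M}_{\Gamma_0(5)} \cong \mathbf{Z}$), so $\Ker(\xi_{\Gamma})/R \cong \mathbf{Z}/2\mathbf{Z}$.

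The topological argument breaks precisely where you wave at the elliptic points, and your closing sentence has the direction reversed: degenerate orbits contribute \emph{more} relations, not none. When $\Gamma g S = \Gamma g$, the corresponding edge of the Farey tessellation is folded onto itself by an order-$2$ elliptic element; its midpoint becomes a new vertex of the quotient (so it is false that every vertex of the induced structure on $X_{\Gamma}$ is a cusp), both endpoints $\Gamma g0 = \Gamma g\infty$ map to the same cusp, and the image chain of the full edge is a half-edge traversed forth and back, hence \emph{zero} in relative cellular chains. The honest computation thus yields the relation $e_g \mapsto 0$ at fixed cosets, not merely $2e_g \mapsto 0$ (and similarly $U$-fixed triangles fold to cones, making $e_g$ a relative boundary); that is exactly why Manin's kernel is the full fixed subgroup $\mathbf{Z}[\Gamma\backslash\PSL_2(\mathbf{Z})]^S + \mathbf{Z}[\Gamma\backslash\PSL_2(\mathbf{Z})]^U$ rather than $R$. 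Equivalently, $\mathbf{Z}[\Gamma\backslash\PSL_2(\mathbf{Z})]/R$ may have $2$- and $3$-torsion supported at elliptic cosets (as in the $\Gamma_0(5)$ example), and since $\mathcal{M}_{\Gamma}$ is by definition torsion-free, a correct proof must identify $\Ker(\xi_{\Gamma})/R$ with this torsion rather than show it vanishes. Your argument as written is valid only when $\Gamma$ is free of elliptic elements, in which case the fixed subgroups are generated by the orbit sums alone and $R = \mathbf{Z}[\Gamma\backslash\PSL_2(\mathbf{Z})]^S + \mathbf{Z}[\Gamma\backslash\PSL_2(\mathbf{Z})]^U$ automatically.
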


Recall that in \S \ref{intro_Manin_symbols} we defined a map $\tilde{\xi}_{\Gamma} : \mathbf{Z}[\Gamma\backslash \PSL_2(\mathbf{Z})]\rightarrow \tilde{\mathcal{M}}_{\Gamma}$ by $\tilde{\xi}_{\Gamma}(\Gamma g) = \{g,gS\}$ for all $g\in \PSL_2(\mathbf{Z})$. We have $\xi_{\Gamma} = \pi_{\Gamma} \circ \tilde{\xi}_{\Gamma}$. Let $\mathbf{Z}[\mathcal{C}_{\Gamma}]_0$ be the quotient of $\mathbf{Z}[\mathcal{C}_{\Gamma}]$ by the element $\frac{1}{d_{\Gamma}}\sum_{c \in \mathcal{C}_{\Gamma}} e_c\cdot [c]$ where $e_c$ is the width of $c$. If $c \in \mathcal{C}_{\Gamma}$, we denote by $(c)$ the image of $[c]$ in $\mathbf{Z}[\mathcal{C}_{\Gamma}]_0$. We have a canonical injective group homomorphism $\mathbf{Z}[\mathcal{C}_{\Gamma}]_0 \hookrightarrow \tilde{\mathcal{M}}_{\Gamma}$ coming from the exact sequence (\ref{exact_sequence_2}).

Theorem \ref{intro_thm_Manin} is a consequence of the following result.
\begin{thm}\label{generalized_Manin_thm}
Let $\varphi_{\Gamma} : \mathbf{Z}[\Gamma \backslash \PSL_2(\mathbf{Z})] \rightarrow \mathbf{Z}[\mathcal{C}_{\Gamma}]_0$ be the group homomorphism given by $\varphi_{\Gamma}([\Gamma g]) = (\Gamma g \infty)$.
\begin{enumerate}
\item \label{surjectivity_generalized_Manin_general} The coimage of $\tilde{\xi}_{\Gamma}$ is canonically isomorphic to the coimage of the restriction of $\varphi_{\Gamma}$ to $ \mathbf{Z}[\Gamma \backslash \PSL_2(\mathbf{Z})]^U$. 
\item \label{surjectivity_generalized_Manin_special} 
Let $p\geq 5$ be a prime and $n \in \mathbf{N}$. If $\Gamma=\Gamma_1(p^n)$ or $\Gamma = \Gamma_0(p^n)$, then the image of $\tilde{\xi}_{\Gamma}$ has index dividing $3$, the divisibility being strict of and only if $p \equiv 1 \text{ (modulo 3}\text{)}$ and $\Gamma = \Gamma_0(p^n)$.
\item \label{kernel_generalized_Manin} The kernel of $\tilde{\xi}_{\Gamma}$ is spanned by the following two subgroups of  $\mathbf{Z}[\Gamma \backslash \PSL_2(\mathbf{Z})]$:
\begin{itemize}
\item The subgroup $\mathbf{Z}[\Gamma \backslash \PSL_2(\mathbf{Z})]^S$.
\item The kernel of the restriction of $\varphi_{\Gamma}$ to $ \mathbf{Z}[\Gamma \backslash \PSL_2(\mathbf{Z})]^U$.
\end{itemize}
\end{enumerate}
\end{thm}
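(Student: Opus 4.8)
The plan is to deduce all three parts from the exact sequence (\ref{exact_sequence_2}), Manin's theorem (Theorem \ref{Manin_thm}), and one clean identity in $\tilde{\mathcal{M}}_{\Gamma}$. First I would record the structural input. Since $\xi_{\Gamma}=\pi_{\Gamma}\circ\tilde{\xi}_{\Gamma}$ is surjective by Theorem \ref{Manin_thm} (\ref{Manin_surjectivity}), we have $\mathrm{Im}(\tilde{\xi}_{\Gamma})+\Ker(\pi_{\Gamma})=\tilde{\mathcal{M}}_{\Gamma}$; and an element of $\mathrm{Im}(\tilde{\xi}_{\Gamma})$ lies in $\Ker(\pi_{\Gamma})$ exactly when it is $\tilde{\xi}_{\Gamma}$ of something in $\Ker(\xi_{\Gamma})$, so $\mathrm{Im}(\tilde{\xi}_{\Gamma})\cap\Ker(\pi_{\Gamma})=\tilde{\xi}_{\Gamma}(\Ker\xi_{\Gamma})$. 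Combining these with the identification $\Ker(\pi_{\Gamma})\cong\mathbf{Z}[\mathcal{C}_{\Gamma}]_0$ from Proposition \ref{rank_computation} (\ref{rank_computation_4}) gives a canonical isomorphism between the cokernel $\tilde{\mathcal{M}}_{\Gamma}/\mathrm{Im}(\tilde{\xi}_{\Gamma})$ (the coimage of the statement) and $\mathbf{Z}[\mathcal{C}_{\Gamma}]_0/\tilde{\xi}_{\Gamma}(\Ker\xi_{\Gamma})$. By Theorem \ref{Manin_thm} (\ref{Manin_kernel}), $\Ker\xi_{\Gamma}=\mathbf{Z}[\Gamma\backslash\PSL_2(\mathbf{Z})]^U+\mathbf{Z}[\Gamma\backslash\PSL_2(\mathbf{Z})]^S$.

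Next I would dispose of the $S$-part and identify the $U$-part. The two-term relation $\tilde{\xi}(\Gamma g)+\tilde{\xi}(\Gamma gS)=0$ (from $S^2=\pm 1$ and the sign relation (2)) shows $\tilde{\xi}_{\Gamma}$ kills $\mathbf{Z}[\Gamma\backslash\PSL_2(\mathbf{Z})]^S$: on an $S$-fixed coset it forces $2\,\tilde{\xi}(\Gamma g)=0$, hence $0$ since $\tilde{\mathcal{M}}_{\Gamma}$ is torsion-free. Thus $\tilde{\xi}_{\Gamma}(\Ker\xi_{\Gamma})=\tilde{\xi}_{\Gamma}(\mathbf{Z}[\Gamma\backslash\PSL_2(\mathbf{Z})]^U)$, and the crux of (\ref{surjectivity_generalized_Manin_general}) and (\ref{kernel_generalized_Manin}) is that under $\Ker\pi_{\Gamma}\cong\mathbf{Z}[\mathcal{C}_{\Gamma}]_0$ the restriction $\tilde{\xi}_{\Gamma}|_{\mathbf{Z}[\Gamma\backslash\PSL_2(\mathbf{Z})]^U}$ equals $\varphi_{\Gamma}|_{\mathbf{Z}[\Gamma\backslash\PSL_2(\mathbf{Z})]^U}$. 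I would check this on $U$-orbit sums using the matrix identities $U=TS$, $UT=T'$, $US=-T$, $U^2T=S$, $U^2S=-T'$, where $T'=\begin{pmatrix}1&0\\1&1\end{pmatrix}$. Since $\{h,hS\}-\{h,hT\}=\{hT,hS\}$, summing over $h=g,gU,gU^2$ turns $\tilde{\xi}_{\Gamma}(\mathrm{orbit})-\varphi_{\Gamma}(\mathrm{orbit})$ into $\{gT,gS\}+\{gS,gT'\}+\{gT',gT\}$, a cyclic triple that vanishes by the additivity relation (1). On a size-one $U$-orbit (an order-$3$ elliptic point) the same sum collapses to $3\{gT,gS\}=0$, so $\{gT,gS\}=0$ by torsion-freeness; hence the equality holds on all of $\mathbf{Z}[\Gamma\backslash\PSL_2(\mathbf{Z})]^U$. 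Part (\ref{surjectivity_generalized_Manin_general}) is then immediate, and (\ref{kernel_generalized_Manin}) follows by writing $x\in\Ker\tilde{\xi}_{\Gamma}$ as $x_U+x_S$ and reading off $0=\tilde{\xi}_{\Gamma}(x)=\varphi_{\Gamma}(x_U)$.

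For (\ref{surjectivity_generalized_Manin_special}), part (\ref{surjectivity_generalized_Manin_general}) reduces the index to $\bigl|\mathbf{Z}[\mathcal{C}_{\Gamma}]_0/\varphi_{\Gamma}(\mathbf{Z}[\Gamma\backslash\PSL_2(\mathbf{Z})]^U)\bigr|$. I would split this image into the subgroup generated by the triangle sums $(\Gamma g\infty)+(\Gamma g1)+(\Gamma g0)=\varphi_{\Gamma}(N_U[\Gamma g])$, where $N_U=1+U+U^2$, together with the bare classes $(\Gamma g\infty)$ contributed by $U$-fixed cosets; Tate cohomology of $\langle U\rangle\cong\mathbf{Z}/3$ on the permutation module $\mathbf{Z}[\Gamma\backslash\PSL_2(\mathbf{Z})]$ gives $\mathbf{Z}[\Gamma\backslash\PSL_2(\mathbf{Z})]^U/\mathrm{Im}(N_U)\cong(\mathbf{Z}/3)^{\epsilon_3(\Gamma)}$, with $\epsilon_3(\Gamma)$ the number of order-$3$ elliptic points. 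Using the explicit cusps of $\Gamma_0(p^n)$ and $\Gamma_1(p^n)$ (representatives $a/p^i$ with the standard widths) and the right $U$-action on $\Gamma\backslash\PSL_2(\mathbf{Z})$, one shows the triangle sums already cut out a subgroup of index exactly $3$ whenever $\epsilon_3(\Gamma)=0$; the transparent model is $\Gamma_0(p)$, where $\mathbf{Z}[\mathcal{C}_{\Gamma}]_0\cong\mathbf{Z}\cdot(0)$ with $(\infty)=-p\,(0)$, the triangle sums are $3(0)$ and $(2-p)(0)$, and the index is $\gcd(3,2-p)=3$ precisely when $p\not\equiv 1\pmod 3$. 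When $\epsilon_3(\Gamma)>0$ — which for $p\geq 5$ occurs exactly for $\Gamma=\Gamma_0(p^n)$ with $\left(\frac{-3}{p}\right)=1$, i.e.\ $p\equiv 1\pmod 3$ — each elliptic coset contributes a bare class $(\Gamma g\infty)$, and since $\gcd(3,p)=1$ this fills in the remaining factor of $3$ and trivializes the cokernel, giving index $1$.

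The main obstacle is the uniform statement in the last paragraph: proving that the triangle-sum lattice has index exactly $3$ (rather than $1$) in $\mathbf{Z}[\mathcal{C}_{\Gamma}]_0$ for every $\Gamma_1(p^n)$ and for $\Gamma_0(p^n)$ with $p\equiv 2\pmod 3$, uniformly in $n$. This is a genuine finite computation requiring careful bookkeeping of how $U$ permutes coset representatives relative to the cusp-and-width stratification; the rank-one case $\Gamma_0(p)$ is a reliable template, but the passage to general $p^n$ (several distinct widths) and to $\Gamma_1$ (a cusp lattice of higher rank) is where the real work concentrates, and is also the place where I would expect the link to additive number theory in $(\mathbf{Z}/N\mathbf{Z})^{\times}$ alluded to in Remark \ref{Manin_rmk} to appear for composite levels.
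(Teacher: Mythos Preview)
Your treatment of parts (\ref{surjectivity_generalized_Manin_general}) and (\ref{kernel_generalized_Manin}) is correct and is essentially the paper's argument: the paper also reduces to $\Ker(\xi_{\Gamma})=\mathbf{Z}[\Gamma\backslash\PSL_2(\mathbf{Z})]^U+\mathbf{Z}[\Gamma\backslash\PSL_2(\mathbf{Z})]^S$, kills the $S$-part, and proves the identity $\tilde{\xi}_{\Gamma}(N_U[\Gamma g])=\{g,gT\}+\{gU,gUT\}+\{gU^2,gU^2T\}=\varphi_{\Gamma}(N_U[\Gamma g])$. Your verification of that identity (subtract, use $US=-T$, $UT=T'$, $U^2T=S$, $U^2S=-T'$, and collapse the cyclic triple) is just a rearrangement of the paper's Lemma~\ref{lemma_Manin_relations_cusps}; the torsion-freeness argument on size-one $U$-orbits is exactly what the paper is using implicitly.

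For part (\ref{surjectivity_generalized_Manin_special}) there is a genuine gap, which you yourself flag as ``the main obstacle''. You correctly reduce to computing $\mathbf{Z}[\mathcal{C}_{\Gamma}]_0/\varphi_{\Gamma}(\mathbf{Z}[\Gamma\backslash\PSL_2(\mathbf{Z})]^U)$ and split the image into triangle sums plus contributions from $U$-fixed cosets, but you only carry out the triangle-sum computation for $\Gamma_0(p)$ and leave $\Gamma_1(p^n)$ and $\Gamma_0(p^n)$ with $n\geq 2$ as ``careful bookkeeping'' to be done. The paper does \emph{not} proceed case-by-case through the width stratification as you propose. Instead it proves a single uniform lemma (Lemma~\ref{lemma_g0_g1_goo}): the map $f_{\Gamma}:[\Gamma g]\mapsto [\Gamma g0]+[\Gamma g1]+[\Gamma g\infty]$ has image in $\mathbf{Z}[\mathcal{C}_{\Gamma}]$ equal to the full sublattice of elements of degree divisible by $3$. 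This is proved by an elementary additive argument on the cusps $\begin{Bmatrix}0\\x\end{Bmatrix}$ for $x\in(\mathbf{Z}/p^n\mathbf{Z})^{\times}/\pm1$ (using that $p\geq 5$ to get $3\cdot[\begin{Bmatrix}0\\1\end{Bmatrix}]$ and all differences $[\begin{Bmatrix}0\\x\end{Bmatrix}]-[\begin{Bmatrix}0\\1\end{Bmatrix}]$), and then reaching the higher strata $\mathcal{C}_{\Gamma}^{(r)}$ by choosing one suitable coset. Once you know $\mathrm{Im}(f_{\Gamma})=\{3\mid\deg\}$, the index in $\mathbf{Z}[\mathcal{C}_{\Gamma}]_0$ is governed purely by $\frac{1}{d_{\Gamma}}\sum_c e_c \pmod 3$ and the existence of a $U$-fixed coset; the paper then checks these two invariants for $\Gamma_1(p^n)$ and $\Gamma_0(p^n)$. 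This is the missing ingredient you need, and it also clarifies a minor wrinkle in your sketch: when $p\equiv 1\pmod 3$ and $\Gamma=\Gamma_0(p^n)$ the triangle sums \emph{already} give index $1$ (because $\sum_c e_c\not\equiv 0\pmod 3$), so the elliptic classes are not ``filling in'' anything there---both criteria happen to fire simultaneously.
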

\begin{proof}
We prove (\ref{surjectivity_generalized_Manin_general}). By Theorem \ref{Manin_thm}, $\pi_{\Gamma}\circ \tilde{\xi}_{\Gamma}$ is surjective and the coimage of $\tilde{\xi}_{\Gamma}$ in $\tilde{\mathcal{M}}_{\Gamma}$ equals the coimage in $\Ker(\pi_{\Gamma})$ of the restriction of $\tilde{\xi}_{\Gamma}$ to $\mathbf{Z}[\Gamma \backslash \SL_2(\mathbf{Z})]^U + \mathbf{Z}[\Gamma \backslash \SL_2(\mathbf{Z})]^S$. Note that the restriction of $\tilde{\xi}_{\Gamma}$ to $\mathbf{Z}[\Gamma \backslash \SL_2(\mathbf{Z})]^S$ is zero. 
\begin{lem}\label{lemma_Manin_relations_cusps}
For all $g \in \SL_2(\mathbf{Z})$, we have in $\tilde{\mathcal{M}}_{\Gamma}$:
$$\tilde{\xi}_{\Gamma}([\Gamma g]+ [\Gamma gU]+[\Gamma gU^2]) = \{g,gT\}+\{gU,gUT\}+\{gU^2,gU^2T\} \text{ .}$$
\end{lem}
\begin{proof}
Using the following equalities: $US=-T$, $U^2S=-UT$ and $U^2T=S$, we have:
\begin{align*}
\tilde{\xi}_{\Gamma}([\Gamma g]+ [\Gamma gU]+[\Gamma gU^2]) &= \{g,gS\}+\{gU,gUS\}+\{gU^2, gU^2S\} \\& 
= \{g,gS\}+\{gU,gT\}+\{gU^2, gUT\} \\& = \{g,gT\}+\{gT,gS\}+\{gU,gUT\}+\{gUT,gT\}+\{gU^2, gU^2T\}+\{gU^2T, gUT\} \\&
=\{g,gT\}+\{gU,gUT\}+\{gU^2,gU^2T\}+\{gT,gS\}+\{gUT,gT\}+\{gS,gUT\} \\& = \{g,gT\}+\{gU,gUT\}+\{gU^2,gU^2T\} \text{ .}
\end{align*}
\end{proof}
By Lemma \ref{lemma_Manin_relations_cusps}, the restrictions of $\tilde{\xi}_{\Gamma}$ and $\varphi_{\Gamma}$ to $\mathbf{Z}[\Gamma \backslash \SL_2(\mathbf{Z})]^U$ are equal. This proves (\ref{surjectivity_generalized_Manin_general}). We now prove (\ref{surjectivity_generalized_Manin_special}). Let $p \geq 5$ be a prime and $n \in \mathbf{N}$. 
\begin{lem}\label{lemma_g0_g1_goo}
Assume $\Gamma = \Gamma_1(p^n)$ or $\Gamma = \Gamma_0(p^n)$. Consider the map 
$f_{\Gamma} : \mathbf{Z}[\Gamma \backslash \PSL_2(\mathbf{Z})] \rightarrow \mathbf{Z}[\mathcal{C}_{\Gamma}]$ sending $[\Gamma g]$ to $[\Gamma g 0]+[\Gamma g 1]+[\Gamma g \infty]$. The image of $f_{\Gamma}$ consists of the elements of degree divisible by $3$ in $\mathbf{Z}[\mathcal{C}_{\Gamma}]$.
\end{lem}
\begin{proof}
Since $f_{\Gamma}$ is functorial in $\Gamma$, it suffices to prove Lemma \ref{lemma_g0_g1_goo} when $\Gamma = \Gamma_1(p^n)$. Recall that if $(a,b,a',b') \in \mathbf{Z}^4$ and $\gcd(a,b)=\gcd(a',b')=1$, then we have $\Gamma \cdot \frac{a}{b} = \Gamma \cdot \frac{a'}{b'}$ if and only if $\begin{pmatrix} a' \\ b'\end{pmatrix} \equiv \pm \begin{pmatrix} a+kb \\ b\end{pmatrix} \text{ (modulo }p^n\text{)}$ for some $k \in \mathbf{Z}$. The class of the vector $\begin{pmatrix} a \\ b\end{pmatrix}$ modulo the previous equivalence relation is denoted by $\begin{Bmatrix} a  \\ b \end{Bmatrix}$, and such a symbol is uniquely identified with an element of $\mathcal{C}_{\Gamma}$. If $r\geq 0$ is an integer, we denote by $\mathcal{C}_{\Gamma}^{(r)} \subset \mathcal{C}_{\Gamma}$ the set of $\begin{Bmatrix} a  \\ b \end{Bmatrix}$ with $\gcd(b,p^n)=p^r$. We have $\mathcal{C}_{\Gamma} = \bigsqcup_{r=0}^n \mathcal{C}_{\Gamma}^{(r)}$.

We first show that the elements of $\mathbf{Z}[\mathcal{C}_{\Gamma}^{(0)}]$ of degree divisible by $3$ are contained in $\text{Im}(f_{\Gamma})$. First note that there is a bijection $(\mathbf{Z}/p^n\mathbf{Z})^{\times}/\pm1 \xrightarrow{\sim} \mathcal{C}_{\Gamma}^{(0)}$ given by $x \mapsto \begin{Bmatrix} 0  \\ x \end{Bmatrix}$. For any $x,y \in (\mathbf{Z}/p^n\mathbf{Z})^{\times}$ such that $x+y$ is prime to $p$, we have $[\begin{Bmatrix} 0  \\ x \end{Bmatrix}]+[\begin{Bmatrix} 0  \\ y \end{Bmatrix}]+[\begin{Bmatrix} 0  \\ x+y \end{Bmatrix}] \in \text{Im}(f_{\Gamma})$. Note in particular that $2\cdot [\begin{Bmatrix} 0  \\ x \end{Bmatrix}]+[\begin{Bmatrix} 0  \\ 2x \end{Bmatrix}] \in \text{Im}(f_{\Gamma})$. If $x-y$ is prime to $p$, we also have $[\begin{Bmatrix} 0  \\ x \end{Bmatrix}]+[\begin{Bmatrix} 0  \\ y \end{Bmatrix}]+[\begin{Bmatrix} 0  \\ x-y \end{Bmatrix}] \in \text{Im}(f_{\Gamma})$. Thus, for any  $x,y \in (\mathbf{Z}/p^n\mathbf{Z})^{\times}$ with $x \pm y$ prime to $p$, we have $[\begin{Bmatrix} 0  \\ x+y \end{Bmatrix}] - [\begin{Bmatrix} 0  \\ x-y \end{Bmatrix}] \in \text{Im}(f_{\Gamma})$. Since $p$ is odd, for any  $x,y \in (\mathbf{Z}/p^n\mathbf{Z})^{\times}$ with $x \pm y$ prime to $p$ we have $[\begin{Bmatrix} 0  \\ x \end{Bmatrix}] - [\begin{Bmatrix} 0  \\ y \end{Bmatrix}] \in \text{Im}(f_{\Gamma})$. Since $p>3$, by letting $x=2$ and $y=1$ we get $[\begin{Bmatrix} 0  \\ 2 \end{Bmatrix}] - [\begin{Bmatrix} 0  \\ 1 \end{Bmatrix}] \in \text{Im}(f_{\Gamma})$. We also have $2\cdot [\begin{Bmatrix} 0  \\ 1\end{Bmatrix}]+ [\begin{Bmatrix} 0  \\ 2 \end{Bmatrix}] = f_{\Gamma}(\Gamma \begin{pmatrix} 1 & 0 \\ 1 & 1 \end{pmatrix}) \in \text{Im}(f_{\Gamma})$. We thus get $3\cdot [\begin{Bmatrix} 0  \\ 1 \end{Bmatrix}] \in \text{Im}(f_{\Gamma})$. It remains to show that for all $x \in (\mathbf{Z}/p^n\mathbf{Z})^{\times}$, we have $[\begin{Bmatrix} 0  \\ x \end{Bmatrix}] - [\begin{Bmatrix} 0  \\ 1 \end{Bmatrix}] \in \text{Im}(f_{\Gamma})$. We already know this if $x \not\equiv \pm 1 \text{ (modulo }p\text{)}$. If $x \equiv \pm 1 \text{ (modulo }p\text{)}$ then since $p>3$ we have $\frac{x}{2} \equiv \pm \frac{1}{2} \not\equiv \pm 1 \text{ (modulo }p\text{)}$, so we get $[\begin{Bmatrix} 0  \\ \frac{x}{2} \end{Bmatrix}] - [\begin{Bmatrix} 0  \\ 1 \end{Bmatrix}] \in \text{Im}(f_{\Gamma})$. Since we know that $[\begin{Bmatrix} 0  \\ x \end{Bmatrix}] + 2\cdot [\begin{Bmatrix} 0  \\ \frac{x}{2}\end{Bmatrix}] \in \text{Im}(f_{\Gamma})$, we get $[\begin{Bmatrix} 0  \\ x \end{Bmatrix}] -  [\begin{Bmatrix} 0  \\ 1 \end{Bmatrix}] \in \text{Im}(f_{\Gamma})$.

We now conclude the proof of Lemma \ref{lemma_g0_g1_goo}. Let $r$ be an integer such that $1 \leq r \leq n$. Let $\begin{Bmatrix} v  \\ up^r \end{Bmatrix} \in \mathcal{C}_{\Gamma}^{(r)}$, where $(v,u) \in \mathbf{Z}^2$ is such that $\gcd(v,up^r)=1$ and $u \not\equiv 0 \text{ (modulo }p\text{)}$. Let $g = \begin{pmatrix} a & b \\ c & d \end{pmatrix} \in \SL_2(\mathbf{Z})$ be such that $\gcd(c,p)=\gcd(d,p)=1$, $c+d \equiv up^r\text{ (modulo }p^n\text{)}$ and $d \equiv v^{-1}\text{ (modulo }p^r\text{)}$. Then we have $f_{\Gamma}([\Gamma g]) = [\begin{Bmatrix} 0 \\ c\end{Bmatrix}]+[\begin{Bmatrix} 0 \\ d\end{Bmatrix}]+[\begin{Bmatrix} v \\ up^r\end{Bmatrix}]$. Thus, we have $[\begin{Bmatrix} v  \\ up^r \end{Bmatrix}] - [\begin{Bmatrix} 0  \\ 1\end{Bmatrix}] \in \text{Im}(f_{\Gamma})$. 
\end{proof}

By Theorem \ref{generalized_Manin_thm} (\ref{surjectivity_generalized_Manin_general}) and Lemma \ref{lemma_g0_g1_goo}, the index of the image of $\tilde{\xi}_{\Gamma}$ in $\mathbf{Z}[\mathcal{C}_{\Gamma}]_0$ is divisible by $3$. Moreover, this index is one if and only if either there exists an element of $\mathbf{Z}[\Gamma \backslash \PSL_2(\mathbf{Z})]^U$ of degree $1$ or $\sum_{c \in \mathcal{C}_{\Gamma}} e_c$ is relatively prime to $3$. If $\Gamma = \Gamma_1(p^n)$, we check that these conditions never hold. If $\Gamma = \Gamma_0(p^n)$, we check that these two conditions hold if $p\equiv 1 \text{ (modulo }3\text{)}$ and none of them hold otherwise. This proves Theorem \ref{generalized_Manin_thm} (\ref{surjectivity_generalized_Manin_special})

Theorem \ref{generalized_Manin_thm} (\ref{Manin_kernel}) follows from Theorem \ref{Manin_thm} (\ref{Manin_kernel}) and Lemma \ref{lemma_Manin_relations_cusps}. This concludes the proof of Theorem \ref{generalized_Manin_thm}.
\end{proof}

\begin{rem}\label{Manin_rmk}
It would be interesting to compute the index of $\tilde{\xi}_{\Gamma}$ if $\Gamma = \Gamma_0(N)$ or $\Gamma = \Gamma_1(N)$ for any $N\geq 1$. If $N$ is even, then this index is infinite (it suffices to check that for $N=2$). If $N$ is odd, we do not know if this index finite (the numerical computations seem to indicate that this index divides $3$). The proof would rely on a generalization of Lemma \ref{lemma_g0_g1_goo}, which seems to be hard if $N$ is highly composite.
\end{rem}

\subsection{Self-duality and Zagier's Petersson inner product}\label{paragraph_Petersson}
In this paragraph, we do not assume anything about $\Gamma$, which could in particular be a non-congruence subgroup. We denote by $N$ the general level of $\Gamma$.

By taking duals, the map $\tilde{p}_{\Gamma} : \tilde{\mathcal{M}}_{\Gamma} \rightarrow \Hom_{\mathbf{C}}(M_2(\Gamma), \mathbf{C})$ induces a map $\tilde{p}_{\Gamma}^*  : M_2(\Gamma) \rightarrow \Hom_{\mathbf{Z}}(\tilde{M}_{\Gamma} , \mathbf{C})$. If $f \in M_2(\Gamma)$ and $g \in \SL_2(\mathbf{Z})$, recall that by construction we have $\tilde{p}_{\Gamma}^*(f)(\{g,gS\}) = L(f\mid g, 1)$ and $\tilde{p}_{\Gamma}^*(f)(\{g,gT\}) = 2i\pi \cdot a_0(f \mid g)$, where $S = \begin{pmatrix} 0 & -1 \\ 1 & 0 \end{pmatrix}$ and $T = \begin{pmatrix}1 & 1 \\ 0 & 1 \end{pmatrix}$. 

Following Zagier \cite{Zagier_Rankin} we can extend the usual Petersson Hermitian pairing on cuspidal forms to the whole space of modular forms $M_2(\Gamma)$. We now recall the definition. For any real number $T>1$, define the truncated fundamental domain $\mathcal{F}_T = \{z \in \mathfrak{h} \text{ such that } \abs{z} \geq 1, \abs{ \Re(z) }\leq \frac{1}{2} \text{ and } \Im(z)<T\}$, where $\mathfrak{h}$ is the upper-half plane. Then according to Zagier, if $f_1,f_2 \in M_2(\Gamma)$, we define the Petersson pairing of $f_1$ and $f_2$ to be
$$(f_1, f_2) := \frac{1}{[\SL_2(\mathbf{Z}) : \Gamma]} \cdot \lim_{T \rightarrow +\infty} \sum_{g \in \Gamma \backslash \SL_2(\mathbf{Z})} \int_{\mathcal{F}_T} (f_1\mid g)(z) \cdot \overline{(f_2\mid g)(z)}dxdy - T\cdot a_0(f_1\mid g)\cdot \overline{a_0(f_2 \mid g)} \text{ .}$$
We get a Hermitian bilinear pairing on $M_2(\Gamma)$. Although the restriction of the Petersson pairing to cuspidal modular forms is non-degenerate, it can be degenerate on $M_2(\Gamma)$. Nevertheless, Pasol and Popa proved the following result \cite[Theorems 4.4 and 4.5]{Popa_Petersson}, which seems related to Remark \ref{rem_period_non_iso}.

\begin{thm}[Pasol and Popa]\label{non_degeneracy_petersson} 
\begin{enumerate}
\item Assume $\Gamma = \Gamma_0(N)$ or $\Gamma = \Gamma_1(N)$. The Petersson pairing $M_2(\Gamma) \times M_2(\Gamma) \rightarrow \mathbf{C}$ is non degenerate if $N$ is a prime power. 
\item The Petersson pairing is degenerate if $\Gamma=\Gamma_1(N)$ and $N$ is divisible by $pq$ with $p\neq q$ primes such that $q$ is not a primitive residue modulo $p$ (\eg if $N$ is divisible by $6$). It is also degenerate if $\Gamma=\Gamma_1(N)$ and $N$ is divisible by $p^2q$ with $p\neq q$ primes.
\item The Petersson pairing is degenerate if $\Gamma=\Gamma_0(N)$ and $N$ is square-free and not prime.
\end{enumerate}
\end{thm}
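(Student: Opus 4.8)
The plan is to reduce the (non-)degeneracy of Zagier's regularized pairing to an explicit Gram-determinant computation on the Eisenstein subspace, and then to analyze that determinant arithmetically. First I would record the reduction to Eisenstein series. For $f_1$ or $f_2$ cuspidal the constant-term correction $T\cdot a_0(f_1\mid g)\overline{a_0(f_2\mid g)}$ vanishes, the remaining integral converges, and one checks (a standard property of the regularized pairing) that $S_2(\Gamma)$ is orthogonal to $\mathcal{E}_2(\Gamma)$. Since the restriction of $(\cdot,\cdot)$ to $S_2(\Gamma)$ is the usual positive-definite Petersson norm, the Hermitian form is block-diagonal for the decomposition $M_2(\Gamma)=S_2(\Gamma)\oplus\mathcal{E}_2(\Gamma)$ with the cuspidal block non-degenerate. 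Hence the radical of $(\cdot,\cdot)$ lies in $\mathcal{E}_2(\Gamma)$, and the whole pairing is non-degenerate if and only if its restriction to $\mathcal{E}_2(\Gamma)$ is.

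Next I would make that restriction fully explicit using the generalized Haberland formula $-8i\pi^2(f_1,f_2)=\frac{1}{[\SL_2(\mathbf{Z}):\Gamma]}\langle \tilde p_\Gamma^*(f_1),\overline{\tilde p_\Gamma^*(f_2)}\rangle$ together with the known values of $\tilde p_\Gamma^*$ on Eisenstein series. In the Siegel-unit basis $\phi_{(x,y)}$, these period covectors are computed exactly as in the proof of Theorem \ref{thm_iso_tilde_p}: their values on $\{g,gT\}$ are the residues at the cusps (the ``imaginary'' part, governed by Lemma \ref{surjectivity_part1}), while their values on $\{g,gS\}$ are the special values modelled on $F(a,b)=-\delta_a\log\lvert 1-e^{2\pi i b/N}\rvert$ and the $L$-values $L(\phi_{(x,y)}\mid g,1)$ (the ``real'' part, governed by Lemma \ref{surjectivity_part2}). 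Thus the Eisenstein Gram matrix splits, up to an invertible residue contribution and elementary factors (Gauss sums), into blocks built from the matrices $M'$ and $M''$ of Lemma \ref{surjectivity_part2}, and its invertibility reduces to the invertibility of those matrices.

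For the prime-power case $N=p^n$ I would then invoke precisely the block-diagonalization of Lemma \ref{surjectivity_part2}: after ordering the cusps and the index set $S$ by the Bruhat decomposition, the diagonal blocks are $M'$ (and $M''$), whose determinants are, by \cite[Lemma 5.26]{Washington}, the products $\frac{p}{2}\prod_{\chi\neq 1}\frac{f_\chi}{2\tau(\chi)}L(\chi,1)$ and $\prod_{\chi\neq 1}\frac{f_\chi}{2\tau(\chi)}L(\chi,1)$ over even Dirichlet characters of level $p^n$. Since $L(\chi,1)\neq 0$ for every such $\chi$ by \cite[Corollary 4.4]{Washington}, the Eisenstein Gram matrix is invertible and the pairing is non-degenerate; the cases $\Gamma_0(p^n)$ and $\Gamma_1(p^n)$ follow simultaneously by the functoriality in $\Gamma$ discussed at the beginning of \S\ref{section_period_iso}. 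This is the same non-vanishing input that makes $\tilde p_\Gamma\otimes\mathbf{R}$ an isomorphism, which explains why the author notes the result ``seems related to Remark \ref{rem_period_non_iso}.''

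Finally, for the degenerate composite cases I would exhibit explicit null vectors in $\mathcal{E}_2(\Gamma)$. The idea is that at composite level the distribution (norm-compatibility) relations among the $\phi_{(x,y)}$, combined with the proper subgroup $\langle q\rangle\subsetneq(\mathbf{Z}/p\mathbf{Z})^\times$ when $q$ fails to be a primitive residue modulo $p$, allow one to assemble a nonzero Eisenstein series whose period covector lies in the annihilator of the entire conjugate space, i.e. a vector on which the Gram matrix vanishes; concretely one compares the level-$pq$ series attached to a suitable character $\chi$ with $\chi(q)=1$ against its oldform images from level $p$, forcing a linear dependence among the corresponding $L$-value entries. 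The analogous constructions using the $p^2$-structure, respectively the two degeneracy maps in the squarefree $\Gamma_0(N)$ setting, handle the remaining cases. I expect this last step to be the main obstacle: it is the genuinely arithmetic heart of the theorem, requiring a precise analysis of the linear relations among the values $\log\lvert 1-\zeta_N^a\rvert$ (equivalently among the $L(\chi,1)$ and Gauss sums), and it is exactly the kind of additive-number-theoretic question in $(\mathbf{Z}/N\mathbf{Z})^\times$ that the author flags as delicate in Remark \ref{Manin_rmk}.
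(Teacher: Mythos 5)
First, a framing point: the paper contains no proof of this statement at all — Theorem \ref{non_degeneracy_petersson} is imported verbatim from \cite[Theorems 4.4 and 4.5]{Popa_Petersson}, so you were in effect trying to reprove Pasol and Popa's theorem from the machinery of this paper. Your reduction to the Eisenstein block (orthogonality of $S_2(\Gamma)$ and $\mathcal{E}_2(\Gamma)$, positive-definiteness on the cuspidal block) is fine, but the central step of your part (i) is asserted rather than proved: the claim that the Eisenstein Gram matrix ``splits into blocks built from $M'$ and $M''$'' does not follow from anything you write. Theorem \ref{Haberland} expresses $(f_1,f_2)$ as a coset sum of \emph{bilinear combinations} of $L$-values and constant-term data, and invertibility of $M'$ and $M''$ — which is exactly the content of Theorem \ref{thm_iso_tilde_p}, i.e. surjectivity of $\tilde{p}_{\Gamma}\otimes\mathbf{R}$ — does not by itself give non-degeneracy of the Hermitian form. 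To run your reduction you would additionally need (a) non-degeneracy of $\langle\cdot,\cdot\rangle$ over $\mathbf{R}$ and (b) a Riemann-bilinear-type companion identity $\langle\tilde{p}_{\Gamma}^*(f_1),\tilde{p}_{\Gamma}^*(f_2)\rangle=0$, so that the perfect pairing is precisely between the image of $\tilde{p}_{\Gamma}^*$ and its conjugate. Neither is available here without circularity: the paper deduces non-degeneracy of $\langle\cdot,\cdot\rangle$ at prime-power level \emph{from} Pasol--Popa together with Theorem \ref{thm_iso_tilde_p} (see the discussion following (\ref{reformulation_Haberland})), and Theorem \ref{intro_thm_Petersson} cannot substitute since its hypothesis that all cusps be real fails for $\Gamma_1(p^n)$, $n\geq 2$. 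Indeed the author explicitly cautions that $\Ker$ of the Petersson pairing contains $\Ker(\tilde{p}_{\Gamma}^*)$ with equality unknown — so even he does not claim the equivalence your reduction presumes.

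Parts (ii) and (iii) are not proved at all: no null vector is constructed, and the decisive arithmetic hypotheses (that $q$ is not a primitive residue modulo $p$, the $p^2q$ configuration, the squarefree $\Gamma_0(N)$ case) never actually enter your argument — you acknowledge this is ``the main obstacle,'' but it is the entire content of those items. There is also a concrete obstruction to the naive plan of producing $E\in\mathcal{E}_2(\Gamma)$ with $\tilde{p}_{\Gamma}^*(E)=0$: the values $\tilde{p}_{\Gamma}^*(E)(\{g,gT\})=2i\pi\cdot a_0(E\mid g)$ vanish for all $g$ only if every constant term of $E$ vanishes, which forces $E=0$ inside $\mathcal{E}_2(\Gamma)$; so degeneracy at composite level is not witnessed by an Eisenstein element of $\Ker(\tilde{p}_{\Gamma}^*)$ in this naive sense, and one must instead analyze how the logarithm matrix (the composite-level analogue of $M'$) degenerates against the residue data — which is exactly where the condition $\chi(q)=1$ for characters induced from level $p$ does its work in Pasol--Popa. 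In summary, your outline is close in spirit to the actual Pasol--Popa method (explicit evaluation of regularized Petersson products of Eisenstein series against $L(\chi,1)$-data), but as written both the prime-power non-degeneracy and all three degeneracy assertions carry genuine gaps.
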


It is natural to ask whether the Petersson pairing can be expressed in terms of a pairing on $\Hom_{\mathbf{Z}}(\tilde{M}_{\Gamma}, \mathbf{C})$ \textit{via} the map $\tilde{p}_{\Gamma}^*$. This was done by Merel when restricting to cuspidal modular forms \cite[Th\'eor\`eme 2]{Merel_Manin_L}. In the general case, this was done by Pasol and Popa. More precisely, the following result is a reformulation of \cite[Theorem 8.6 a)]{Popa_Haberland}.

\begin{thm}[Pasol and Popa] \label{Haberland}
For any $f_1, f_2 \in M_2(\Gamma)$, we have:
\begin{align*}
48 i \pi^2\cdot (f_1, f_2) = \frac{1}{[\SL_2(\mathbf{Z}) : \Gamma]} \sum_{g \in \Gamma \backslash \SL_2(\mathbf{Z})} &\tilde{p}_{\Gamma}^*(f_1)(\{gTS,gT\})\cdot \overline{\tilde{p}_{\Gamma}^*(f_2)(\{gS,g\})} \\& - \tilde{p}_{\Gamma}^*(f_1)(\{gS,g\}) \cdot \overline{\tilde{p}_{\Gamma}^*(f_2)(\{gTS,gT\})} \\& + 4\cdot \tilde{p}_{\Gamma}^*(f_1)(\{g,gT\})\cdot \overline{ \tilde{p}_{\Gamma}^*(f_2)(\{g,gS\})} \\& - 4\cdot \tilde{p}_{\Gamma}^*(f_1)(\{g,gS\})\cdot \overline{ \tilde{p}_{\Gamma}^*(f_2)(\{g,gT\})} \text{ .}
\end{align*}
\end{thm}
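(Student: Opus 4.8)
The plan is to deduce this formula directly from \cite[Theorem 8.6 a)]{Popa_Haberland}, which already expresses Zagier's extended Petersson product in terms of period-type data attached to $f_1$ and $f_2$. Since the statement is explicitly a \emph{reformulation} of that result, the only real work is to translate the period data appearing in the Pasol--Popa formula into the mixed-modular-symbol evaluations $\tilde{p}_{\Gamma}^*(f)(\{\cdot,\cdot\})$, and then to verify that the numerical constant $48i\pi^2$ and the normalizing factor $\frac{1}{[\SL_2(\mathbf{Z}):\Gamma]}$ survive the translation unchanged.

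First I would compute the four families of evaluations that occur on the right-hand side. Using the Chasles relation $\{h,h'\}=-\{h',h\}$ in $\tilde{\mathcal{M}}_{\Gamma}$ (which follows from the defining relation (1) and $\{h,h\}=0$) together with the identities $\tilde{p}_{\Gamma}^*(f)(\{g,gS\})=L(f\mid g,1)$ and $\tilde{p}_{\Gamma}^*(f)(\{g,gT\})=2i\pi\,a_0(f\mid g)$ recalled at the start of this paragraph, I obtain
\begin{align*}
\tilde{p}_{\Gamma}^*(f)(\{gS,g\}) &= -L(f\mid g,1), & \tilde{p}_{\Gamma}^*(f)(\{g,gS\}) &= L(f\mid g,1), \\
\tilde{p}_{\Gamma}^*(f)(\{gTS,gT\}) &= -L(f\mid gT,1), & \tilde{p}_{\Gamma}^*(f)(\{g,gT\}) &= 2i\pi\,a_0(f\mid g).
\end{align*}
Here the third identity uses that $\{gTS,gT\}=\{(gT)S,gT\}=-\{gT,(gT)S\}$, so that the $S$-evaluation applies with $gT$ in place of $g$. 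After this substitution the entire right-hand side becomes a sum over $g\in\Gamma\backslash\SL_2(\mathbf{Z})$ of products of the special values $L(f_1\mid g,1)$, $L(f_1\mid gT,1)$, the constant terms $a_0(f_1\mid g)$, and their complex-conjugated counterparts for $f_2$.

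Next I would check that this is exactly the quantity appearing in Pasol--Popa's Haberland-type formula specialized to weight $2$. Their formula is assembled from the same building blocks: the value at $1$ of the twisted $L$-function (equivalently the regularized period integral of $f\mid g$) and the constant term at the relevant cusp, which is the Eisenstein renormalization contribution. The coset sum, the antisymmetric pairing structure, and the factor $4$ distinguishing the $S$-type from the $T$-type terms all match their Theorem 8.6 a) once their period symbols are identified with the evaluations above; the complex conjugation on $f_2$ is transported to $\overline{\tilde{p}_{\Gamma}^*(f_2)}$ via Proposition \ref{equivariance_complex_conjug_period}.

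The main obstacle is the final bookkeeping of constants. Pasol and Popa work in general even weight and with their own normalization of period polynomials and of the extended Petersson product, so the delicate point is to align conventions — in particular the placement of the factors of $2i\pi$ built into the definition of $\mathcal{S}$ (hence into $\tilde{p}_{\Gamma}^*$), the sign conventions for $S$ and $T$, and the handling of $\overline{f_2}$ versus $\overline{\tilde{p}_{\Gamma}^*(f_2)}$ — so that the two formulas agree on the nose rather than merely up to a scalar and the constant collapses precisely to $48i\pi^2$. Once the dictionary between the two sets of period data and the two normalizations is pinned down, the identity follows immediately.
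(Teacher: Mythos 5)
Your proposal takes exactly the paper's route: the paper offers no independent proof of this theorem, presenting it precisely as a notational reformulation of \cite[Theorem 8.6 a)]{Popa_Haberland}, so the whole content is the dictionary you spell out, namely $\tilde{p}_{\Gamma}^*(f)(\{g,gS\})=L(f\mid g,1)$, $\tilde{p}_{\Gamma}^*(f)(\{g,gT\})=2i\pi\,a_0(f\mid g)$, and the antisymmetry $\{h,h'\}=-\{h',h\}$ giving $\tilde{p}_{\Gamma}^*(f)(\{gTS,gT\})=-L(f\mid gT,1)$, all of which you compute correctly. This matches the paper's treatment, so nothing further is needed.
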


Recall that in \S \ref{intro_duality}, we have defined an anti-symmetric pairing $\langle \cdot, \cdot \rangle$ on the $\mathbf{Z}$-dual $\tilde{\mathcal{M}}_{\Gamma}^*$ of $\tilde{\mathcal{M}}_{\Gamma}$. We easily check that $\langle \cdot, \cdot \rangle$ is anti-invariant for the complex conjugation $c$ acting on $\tilde{\mathcal{M}}_{\Gamma}$, namely for all $\varphi_1, \varphi_2 \in \tilde{\mathcal{M}}_{\Gamma}$, we have
\begin{equation}\label{complex_conjug_compatibility_pairing}
\langle \varphi_1 \circ c , \varphi_2 \rangle = -\langle \varphi_1 , \varphi_2 \circ c \rangle \text{ .}
\end{equation}

By tensoring with $\mathbf{C}$, $\langle \cdot , \cdot \rangle$ extends to an anti-symmetric bilinear pairing on $\Hom_{\mathbf{Z}}(\tilde{\mathcal{M}}_{\Gamma}, \mathbf{C})$. If $\varphi \in \Hom_{\mathbf{Z}}(\tilde{\mathcal{M}}_{\Gamma}, \mathbf{C})$, we define $\overline{\varphi} \in \Hom_{\mathbf{Z}}(\tilde{\mathcal{M}}_{\Gamma}, \mathbf{C})$ by the formula $\overline{\varphi}(m) = \overline{\varphi(m)}$ for all $m \in \tilde{\mathcal{M}}_{\Gamma}$.

We can restate Theorem \ref{Haberland} as
\begin{equation}\label{reformulation_Haberland}
-8 i \pi^2 (f_1, f_2) = \frac{1}{[\SL_2(\mathbf{Z}):\Gamma]}\cdot  \langle \tilde{p}_{\Gamma}^*(f_1) , \overline{\tilde{p}_{\Gamma}^*(f_2)} \rangle \text{ .}
 \end{equation}

We expect that two properties hold:
 \begin{enumerate}
 \item\label{property_degenerate_pairing} The pairing $\langle \cdot , \cdot \rangle$ takes values in $\mathbf{Z}$ and has determinant $\frac{1}{d_{\Gamma}} \prod_{c \in \mathcal{C}_{\Gamma}} e_c$.
\item\label{property_Hecke_pairing}  The pairing $\langle \cdot , \cdot \rangle$ exchanges the Hecke operators with their duals, \ie for all $\varphi_1, \varphi_2 \in \tilde{\mathcal{M}}_{\Gamma}^*$ and $T \in \mathbb{T}$, we have $$\langle T\varphi_1, \varphi_2 \rangle = \langle \varphi_1, W_N T W_N^{-1} \varphi_2 \rangle$$
after inverting $2N$. 
\end{enumerate}

If $N$ is a prime power, then by Theorem \ref{thm_iso_tilde_p} and Theorem \ref{non_degeneracy_petersson} the pairing $\langle \cdot, \cdot \rangle$ is non-degenerate and (\ref{property_Hecke_pairing}) is satisfied. Even if in general the Petersson pairing $(\cdot , \cdot)$ can be degenerate, this does not contradicts the conjecture that $\langle \cdot, \cdot \rangle$ should be non-degenerate. Indeed, we have seen that the map $\tilde{p}_{\Gamma}^*$ can have a kernel (when $N$ is not a prime power), and by Theorem \ref{Haberland}, the kernel of the Petersson pairing $(\cdot, \cdot)$ contains $\Ker(\tilde{p}_{\Gamma}^*)$. We do not know whether this inclusion is always an equality.

We now prove Theorem \ref{intro_thm_Petersson} 
\begin{proof}
Recall that by Theorem \ref{Hecke_Q} (\ref{hecke_conjugation_M}), we have a canonical $\mathbb{T}$-equivariant isomorphism
\begin{equation}\label{complex_conjug_dec_duality}
\tilde{\mathcal{M}}_{\Gamma} \otimes_{\mathbf{Z}} \mathbf{Z}[\frac{1}{2N}] \xrightarrow{\sim} H_1(X_{\Gamma}, \mathcal{C}_{\Gamma}, \mathbf{Z}[\frac{1}{2N}])^+ \bigoplus H_1(Y_{\Gamma}, \mathbf{Z}[\frac{1}{2N}])^- \text{ .}
\end{equation}
There is a canonical perfect bilinear pairing, namely the \textit{intersection pairing}
$$H_1(X_{\Gamma}, \mathcal{C}_{\Gamma}, \mathbf{Z}[\frac{1}{2}])^+ \times H_1(Y_{\Gamma}, \mathbf{Z}[\frac{1}{2}])^-  \rightarrow \mathbf{Z}[\frac{1}{2}] \text{ .}$$
This pairing exchanges the Hecke operators with their duals. Using (\ref{complex_conjug_dec_duality}), we get a canonical perfect bilinear pairing
$$ \cdot \bullet \cdot : \tilde{\mathcal{M}}_{\Gamma} \otimes_{\mathbf{Z}} \mathbf{Z}[\frac{1}{2N}] \times \tilde{\mathcal{M}}_{\Gamma} \otimes_{\mathbf{Z}} \mathbf{Z}[\frac{1}{2N}] \rightarrow \mathbf{Z}[\frac{1}{2N}] $$
exchanging the Hecke operators with their duals. This gives us a linear isomorphism
$$F : \tilde{\mathcal{M}}_{\Gamma} \otimes_{\mathbf{Z}} \mathbf{Z}[\frac{1}{6N}]  \rightarrow \tilde{\mathcal{M}}_{\Gamma}^*  \otimes_{\mathbf{Z}} \mathbf{Z}[\frac{1}{6N}] \text{ .} $$
The pairing $\langle \cdot, \cdot \rangle$ induces a linear map
$$G :  \tilde{\mathcal{M}}_{\Gamma}^*  \otimes_{\mathbf{Z}} \mathbf{Z}[\frac{1}{2N}]  \rightarrow  \tilde{\mathcal{M}}_{\Gamma}  \otimes_{\mathbf{Z}} \mathbf{Z}[\frac{1}{6N}] \text{ .}$$
Theorem \ref{intro_thm_Petersson} thus follows from the following result.
\begin{prop}\label{prop_intersection_inverse_pairing}
We have $F \circ G = \Id_{ \tilde{\mathcal{M}}_{\Gamma}^*  \otimes_{\mathbf{Z}} \mathbf{Z}[\frac{1}{2N}]}$.
\end{prop}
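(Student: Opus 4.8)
The plan is to exploit that, by construction, $\bullet$ is a perfect pairing over $\mathbf{Z}[\tfrac{1}{2N}]$, so $F$ is an isomorphism; hence $F\circ G=\Id$ is equivalent to $G\circ F=\Id$, and it suffices to prove this after inverting $6N$. Since $\tilde{\mathcal{M}}_\Gamma$ is free, $G\circ F=\Id$ is in turn equivalent to the bilinear identity $\langle \varphi,F(m)\rangle=-\varphi(m)$ for all $\varphi\in\tilde{\mathcal{M}}_\Gamma^*$ and $m\in\tilde{\mathcal{M}}_\Gamma$, the sign coming from the antisymmetry of $\bullet$ and the definition of $G$ via $\psi(G(\varphi))=\langle\varphi,\psi\rangle$. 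Because $F(m)$ is the functional $x\mapsto m\bullet x$, plugging $\varphi_2=F(m)$ into the defining formula for $\langle\cdot,\cdot\rangle$ and reading off the coefficient of $\varphi$ rewrites the left-hand side as $\varphi(\Xi(m))$, where $\Xi(m)\in\tilde{\mathcal{M}}_\Gamma\otimes\mathbf{Z}[\tfrac{1}{6N}]$ is the explicit sum over $g\in\Gamma\backslash\SL_2(\mathbf{Z})$ obtained by replacing each factor $\varphi_2(\{\,\cdot\,\})$ by the intersection number $m\bullet\{\,\cdot\,\}$. Thus everything reduces to the single identity $\Xi(m)=-m$ in $\tilde{\mathcal{M}}_\Gamma\otimes\mathbf{Z}[\tfrac{1}{6N}]$.

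Next I would verify $\Xi(m)=-m$ on a generating family, using bilinearity; the convenient generators are the Manin-type symbols $\{h,hS\}$ and the cuspidal symbols $\{h,hT\}$, since these are exactly the classes appearing (up to $S,T$-twists) in the formula for $\langle\cdot,\cdot\rangle$. Under the hypothesis that the cusps are real I would first split $m$ through the decomposition (\ref{complex_conjug_dec_duality}) into its $H_1(X_\Gamma,\mathcal{C}_\Gamma)^+$ and $H_1(Y_\Gamma)^-$ components and invoke the anti-invariance (\ref{complex_conjug_compatibility_pairing}) of $\langle\cdot,\cdot\rangle$ under complex conjugation; this reduces the computation to pairings across the two complementary summands, where $\bullet$ is given by the intersection pairing and is otherwise zero.

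The computational core is then to evaluate the intersection numbers $m\bullet\{gS,g\}$, $m\bullet\{gTS,gT\}$, $m\bullet\{g,gS\}$ and $m\bullet\{g,gT\}$ appearing in $\Xi(m)$. For the interior ($H_1$) generators these are supplied by the classical combinatorial formula for the intersection pairing of modular (Manin) symbols, counting coincidences among the cosets $\Gamma g$, $\Gamma gS$, $\Gamma gU$, while for the new cuspidal generators $\{g,gT\}$ the relevant pairing is the residue/winding pairing encoded in the exact sequences (\ref{exact_sequence_1}) and (\ref{exact_sequence_2}) through $\partial_\Gamma$. Substituting these values into $\Xi(m)$ and simplifying with the Chasles relation, the $T$-relation and $\Gamma$-invariance in $\tilde{\mathcal{M}}_\Gamma$, the sum over $\Gamma\backslash\SL_2(\mathbf{Z})$ should collapse to $-m$: the normalizing $\tfrac{1}{6}$ absorbs the three-fold overcount coming from $U$-orbits, and the coefficients $4$ together with the alternating signs are exactly what cancels the cross-terms between the interior and cuspidal parts.

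The main obstacle is this last step: pinning down the intersection numbers involving the genuinely new boundary symbols $\{g,gT\}$, which have no analogue in the classical theory of modular symbols, and then carrying out the bookkeeping that matches the four-term Haberland-type kernel to the \emph{inverse} of the intersection form, with the correct normalization rather than some other rational multiple. Tracking the $\pm$-eigenspace projections is where sign and constant errors are most likely, and it is precisely here that the real-cusp hypothesis is used, since it is what makes the splitting (\ref{complex_conjug_dec_duality}) available and the intersection pairing block-diagonal across the two summands.
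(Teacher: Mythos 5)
Your global strategy has the same skeleton as the paper's proof: reduce, via the perfectness of $\bullet$ over $\mathbf{Z}[\frac{1}{2N}]$ and the splitting (\ref{complex_conjug_dec_duality}), to one explicit identity matching the Haberland-type kernel against the (inverse of the) intersection pairing, then verify it by direct computation. Your preliminary reductions are sound modulo sign conventions: $F$ is indeed invertible, $\tilde{\mathcal{M}}_{\Gamma}$ is indeed generated by the symbols $\{h,hS\}$ and $\{h,hT\}$ (by Theorem \ref{Manin_thm} and Proposition \ref{rank_computation} (\ref{rank_computation_4})), and the real-cusp hypothesis is used exactly where you say. But there is a genuine gap precisely at what you yourself call ``the main obstacle'': the collapse $\Xi(m)=-m$ \emph{is} the entire content of the proposition, and you supply no mechanism for proving it — ``should collapse'' is the whole theorem. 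In the paper this step is Proposition \ref{expression_G^+_rho_i}: for $\varphi \in \Hom(H_1(X_{\Gamma},\mathcal{C}_{\Gamma},\mathbf{Z}),\mathbf{Z})$ one proves $G(\varphi)=\sum_{g\in\Gamma\backslash\SL_2(\mathbf{Z})}\varphi(\{g0,g\infty\})\cdot\{gi,g\rho\}$, which is the intersection-dual of $\varphi$ by Merel's two inputs \cite{Merel_HK}: the description of $H_1(Y_{\Gamma},\mathbf{Z})$ by the symbols $\{gi,g\rho\}$ and the diagonal formula $x\bullet y=\sum_g \lambda_g\mu_g$. The computation itself requires a tool absent from your plan: the auxiliary module $\tilde{\mathcal{M}}_{\Gamma,R\cup I}'$, in which symbols with endpoints at the elliptic points $i$ and $\rho$ make sense, so that Chasles relations together with the stabilizer identities $Si=i$ and $\tau\rho=\rho$ absorb the factor $\frac{1}{6}$ and produce the cancellations. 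The remaining block is then free, since $G^-$ and $F^+$ are the duals of $G^+$ and $F^-$ — a dualization your transposed check of $G\circ F$ on generators of $\tilde{\mathcal{M}}_{\Gamma}$ would not exploit, forcing you to treat both blocks by hand.

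Two of your concrete computational claims would also fail as stated. First, there is no ``classical combinatorial formula'' that evaluates $m\bullet\{gS,g\}$, $m\bullet\{gTS,gT\}$, etc.\ directly: the pairing $\bullet$ is only defined through the decomposition (\ref{complex_conjug_dec_duality}), so evaluating $F(m)$ on a mixed symbol requires knowing the components of that symbol in $H_1(X_{\Gamma},\mathcal{C}_{\Gamma})^+$ and $H_1(Y_{\Gamma})^-$, a projection you never compute and which is not given by counting coset coincidences. Second, your appeal to a separate ``residue/winding pairing'' for the boundary symbols $\{g,gT\}$ misidentifies them: by Proposition \ref{rank_computation} (\ref{rank_computation_2}), $e_c\cdot\{g,gT\}$ is already an interior class (a small circle around the cusp $c$), so these terms are handled inside the same intersection formalism, not through $\partial_{\Gamma}$ — and the denominators $e_c$ this introduces are exactly why a direct verification on the generators $\{g,gT\}$ is delicate, and why the paper instead computes $G$ on the functionals coming from $H_1(X_{\Gamma},\mathcal{C}_{\Gamma},\mathbf{Z})$ and concludes by duality. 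Finally, note that your sign prediction $\langle\varphi,F(m)\rangle=-\varphi(m)$ is convention-dependent; the paper's computation yields $F\circ G=+\Id$ with its normalizations, so the signs cannot be fixed a priori but only by carrying out the computation you deferred.
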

\begin{proof}
By (\ref{complex_conjug_dec_duality}), $G$ is the direct sum of two maps 
$$G^+ : \Hom(H_1(X_{\Gamma}, \mathcal{C}_{\Gamma}, \mathbf{Z}[\frac{1}{2N}])^+, \mathbf{Z}[\frac{1}{2N}]) \rightarrow  H_1(Y_{\Gamma}, \mathbf{Z}[\frac{1}{6N}])^- $$
and
$$G^- : \Hom(H_1(Y_{\Gamma}, \mathbf{Z}[\frac{1}{2N}])^-, \mathbf{Z}[\frac{1}{2N}]) \rightarrow  H_1(X_{\Gamma}, \mathcal{C}_{\Gamma}, \mathbf{Z}[\frac{1}{6N}])^+ \text{ .}$$
Similarly $F$ is the direct sum of
$$F^+ : H_1(X_{\Gamma}, \mathcal{C}_{\Gamma}, \mathbf{Z}[\frac{1}{6N}])^+ \rightarrow \Hom(H_1(Y_{\Gamma}, \mathbf{Z}[\frac{1}{6N}])^-, \mathbf{Z}[\frac{1}{6N}])$$
and
$$F^- : H_1(Y_{\Gamma}, \mathbf{Z}[\frac{1}{6N}])^-\rightarrow  \Hom(H_1(X_{\Gamma}, \mathcal{C}_{\Gamma}, \mathbf{Z}[\frac{1}{2N}])^+, \mathbf{Z}[\frac{1}{6N}]) \text{ .}$$
To prove Proposition \ref{prop_intersection_inverse_pairing}, it suffices to prove that 
\begin{equation}\label{eq1_prop_intersection_inverse_pairing}
F^- \circ G^+ = \Id_{\Hom(H_1(X_{\Gamma}, \mathcal{C}_{\Gamma}, \mathbf{Z}[\frac{1}{2N}])^+, \mathbf{Z}[\frac{1}{2N}])}
\end{equation}
and
\begin{equation}\label{eq2_prop_intersection_inverse_pairing}
F^+ \circ G^- = \Id_{\Hom(H_1(Y_{\Gamma}, \mathbf{Z}[\frac{1}{2N}])^-,\mathbf{Z}[\frac{1}{2N}])} \text{ .}
\end{equation}
Since $G^-$ (resp. $F^+$) is the dual of $G^+$ (resp. $F^-$), it suffices to prove (\ref{eq2_prop_intersection_inverse_pairing}).

We use an explicit formula to compute the intersection product, due to Merel \cite{Merel_HK}. Let $\rho=e^{\frac{2i\pi}{3}} \in \mathfrak{h}$ and $i = e^{\frac{i\pi}{2}} \in \mathfrak{h}$. Let $R$ (resp. $I$) be the image in $Y_{\Gamma}$ of $SL_2(\mathbf{Z})\cdot \rho$ (resp. $\SL_2(\mathbf{Z})\cdot i$). If $g \in \SL_2(\mathbf{Z})$, we denote by $\{gi, g\rho\}$ the image in $H_1(Y_{\Gamma}, R\cup I, \mathbf{Z})$ of the geodesic path in $\mathfrak{h}$ between $gi$ and $g\rho$. Merel proved \cite[Th\'eor\`emes 2 and 3]{Merel_HK} that $H_1(Y_{\Gamma}, R\cup I, \mathbf{Z})$ is generated by the element $\{gi, g\rho\}$ and that $$H_1(Y_{\Gamma}, \mathbf{Z}) = \big\{\sum_{\Gamma g \in \SL_2(\mathbf{Z})} \lambda_g \cdot \{gi, g\rho\} \text{ such that } \lambda_g \in \mathbf{Z} \text{ and } \lambda_{g}+\lambda_{gS}=\lambda_g+\lambda_{g\tau}+\lambda_{g\tau^2}=0 \text{ for all }g\in \SL_2(\mathbf{Z})\big\}$$
where $\tau = \begin{pmatrix} 0 & -1 \\ 1 & 1 \end{pmatrix} = ST$ stabilizes $\rho$. Furthermore, Merel proved the following formula for the intersection product $\bullet : H_1(X_{\Gamma}, \mathcal{C}_{\Gamma}, \mathbf{Z})\times H_1(Y_{\Gamma}, \mathbf{Z}) \rightarrow \mathbf{Z}$. If $x = \sum_{g \in \SL_2(\mathbf{Z})} \mu_g\cdot \{g0, g\infty\} \in H_1(X_{\Gamma}, \mathcal{C}_{\Gamma}, \mathbf{Z})$ and $y = \sum_{g \in \SL_2(\mathbf{Z})} \lambda_g\cdot \{gi, g\rho\} \in H_1(Y_{\Gamma}, \mathbf{Z})$
then \cite[Corollaire 3]{Merel_HK} we have 
\begin{equation}\label{Merel_formula_intersection}
x\bullet y =  \sum_{g \in \Gamma\backslash \SL_2(\mathbf{Z})} \lambda_g\cdot \mu_g \text{ .}
\end{equation}
To prove (\ref{eq1_prop_intersection_inverse_pairing}), it thus suffices to prove the following result, whose statement and proof are generalization of the first formula in \cite[Th\'eor\`eme 2]{Merel_Manin_L}.
\begin{prop}\label{expression_G^+_rho_i}
We do not make any assumption on $\Gamma$ (which can thus be a non-congruence subgroup). For all $\varphi \in \Hom(H_1(X_{\Gamma}, \mathcal{C}_{\Gamma}, \mathbf{Z}), \mathbf{Z}) \subset \tilde{\mathcal{M}}_{\Gamma}^*$, we have in $H_1(Y_{\Gamma}, \mathbf{Z}) \subset \tilde{\mathcal{M}}_{\Gamma}$:
$$G(\varphi) = \sum_{g \in \Gamma\backslash \SL_2(\mathbf{Z})} \varphi(\{g0, g\infty\})\cdot \{gi, g\rho\} \text{ .}$$
In other words, the pairing $\langle \cdot, \cdot \rangle$ induces by restriction a map $\Hom(H_1(X_{\Gamma}, \mathcal{C}_{\Gamma}, \mathbf{Z}), \mathbf{Z})  \rightarrow H_1(Y_{\Gamma}, \mathbf{Z})$ which is inverse to the intersection pairing.
\end{prop}
\begin{proof}
Let $N$ be the general level of $\Gamma$. We let $\tilde{\mathcal{M}}_{\Gamma, R\cup I}$ be the pushout of the two (injective) morphisms $H_1(Y_{\Gamma}, \mathbf{Z}[\frac{1}{6N}]) \rightarrow \tilde{\mathcal{M}}_{\Gamma} \otimes_{\mathbf{Z}} \mathbf{Z}[\frac{1}{6N}]$ and $H_1(Y_{\Gamma}, \mathbf{Z}[\frac{1}{6N}]) \rightarrow H_1(Y_{\Gamma}, R\cup I, \mathbf{Z}[\frac{1}{6N}])$. Let $\tilde{\mathcal{M}}_{\Gamma, R\cup I}' = \tilde{\mathcal{M}}_{\Gamma, R\cup I} \oplus \mathbf{Z}[\frac{1}{6N}]\cdot \{1, i\}$ (where $\{1, i\}$ is a formal symbol). For any $(g,g') \in \SL_2(\mathbf{Z})$, we define the symbol $\{g,g'i\}$ in $\tilde{\mathcal{M}}_{\Gamma, R\cup I}'$ to be $\{g,1\}+\{1,i\}+\{i,g'i\} \in\tilde{\mathcal{M}}_{\Gamma, R\cup I}'$. Similarly, we define the symbol $\{g,g'\rho\}$ in $\tilde{\mathcal{M}}_{\Gamma, R\cup I}'$ to be $\{g,1\}+\{1,i\}+\{i,g'\rho\} \in\tilde{\mathcal{M}}_{\Gamma, R\cup I}'$. Finally, we define $\{g'i,g\}$ to be $-\{g,g'i\}$ and $\{g'\rho,g\}$ to be $-\{g,g'\rho\}$. We can thus talk about symbols $\{\alpha, \beta\}$ in $\tilde{\mathcal{M}}_{\Gamma, R\cup I}'$, where $\alpha, \beta \in \{\SL_2(\mathbf{Z}), \SL_2(\mathbf{Z})\cdot i,\SL_2(\mathbf{Z})\cdot \rho\}$. We easily check that the Chasles relation are satisfied and that for any $\gamma \in \Gamma$, we have $\{\gamma \alpha, \gamma \beta\} = \{\alpha, \beta\}$. Since $\tilde{\mathcal{M}}_{\Gamma} \otimes_{\mathbf{Z}} \mathbf{Z}[\frac{1}{6N}]$ embeds into $\tilde{\mathcal{M}}_{\Gamma, R\cup I}'$, it is legitimate to do all our computations in $\tilde{\mathcal{M}}_{\Gamma, R\cup I}'$.

On the one hand, by definition of $G$, we have in $\tilde{\mathcal{M}}_{\Gamma}$:
\begin{align*}
6\cdot G(\varphi) &= \sum_{g \in \Gamma \backslash \SL_2(\mathbf{Z})} \varphi(\{gS\infty,g\infty\})\cdot \{gTS, gT\}-  \varphi(\{gTS\infty, gT\infty\}) \cdot \{gS,g\} \\& + 4\cdot  \varphi(\{g\infty,gS\infty\}) \cdot \{g,gT\} 
\\&= \sum_{g \in \Gamma \backslash \SL_2(\mathbf{Z})} \varphi(\{g\infty,g0\})\cdot \{gSTS, gST\}-\varphi(\{gSTS\infty, gST\infty\}) \cdot \{g,gS\} \\&- 4\cdot  \varphi(\{g0,g\infty\}) \cdot \{g,gT\} 
\\&=\sum_{g \in \Gamma \backslash \SL_2(\mathbf{Z})} \varphi(\{g\infty,g0\})\cdot \{g\tau S, g\tau\} - \varphi(\{g\tau S\infty, g\tau\infty\}) \cdot \{g,gS\}\\& - 4\cdot  \varphi(\{g0,g\infty\}) \cdot \{g,gT\} \\&
=\sum_{g \in \Gamma \backslash \SL_2(\mathbf{Z})} \varphi(\{g\tau0,g\tau\infty\})\cdot \{g\tau^2, g\tau^2S\}-\varphi(\{g\tau 0, g\tau\infty\}) \cdot \{g,gS\} \\& - 4\cdot  \varphi(\{g0,g\infty\}) \cdot \{g,gT\} \text{ .}
\end{align*}
On the other hand, we have in $\tilde{\mathcal{M}}_{\Gamma, R\cup I}'$:
\begin{align*}
\sum_{g \in \Gamma \backslash \SL_2(\mathbf{Z})} \varphi(\{g0, g\infty\})\cdot \{gi, g\rho\}&=
\sum_{g \in \Gamma \backslash \SL_2(\mathbf{Z})} \varphi(\{g0, g\infty\})\cdot \left(\{gi, g\}+\{g, g\rho\}\right) \text{ .}
\end{align*}
We have, using the fact that $Si=i$:
\begin{align*}
\sum_{g \in \Gamma \backslash \SL_2(\mathbf{Z})} \varphi(\{g0, g\infty\})\cdot \{gi, g\}&=
\frac{1}{2}\cdot \sum_{g \in \Gamma \backslash \SL_2(\mathbf{Z})} \varphi(\{g0, g\infty\})\cdot \{gi, g\} + \varphi(\{gS0, gS\infty\})\cdot \{gSi, gS\} \\&= \frac{1}{2}\cdot \sum_{g \in \Gamma \backslash \SL_2(\mathbf{Z})} \varphi(\{g0, g\infty\})\cdot \{gS,g\} \text{ .}
\end{align*}

We have:
\begin{align*}
\sum_{g \in \Gamma \backslash \SL_2(\mathbf{Z})} \varphi(\{g0, g\infty\})\cdot \{g, g\rho\}&=
\frac{1}{3}\cdot \sum_{g \in \Gamma \backslash \SL_2(\mathbf{Z})} \varphi(\{g0, g\infty\})\cdot \{g, g\rho\} \\& + \varphi(\{g\tau0, g\tau\infty\})\cdot \{g\tau, g\tau\rho\} +\varphi(\{g\tau^20, g\tau^2\infty\})\cdot \{g\tau^2, g\tau^2\rho\} \text{ .}
\end{align*}
Since $\tau \rho = \rho$ and $ \varphi(\{g0, g\infty\}) = - \varphi(\{g\tau0, g\tau\infty\})-\varphi(\{g\tau^20, g\tau^2\infty\})$, we get
\begin{align*}
\sum_{g \in \Gamma \backslash \SL_2(\mathbf{Z})} \varphi(\{g0, g\infty\})\cdot \{g, g\rho\}&=
\frac{1}{3}\cdot \sum_{g \in \Gamma \backslash \SL_2(\mathbf{Z})} \varphi(\{g\tau0, g\tau\infty\})\cdot \{g\tau, g\} \\& + \varphi(\{g\tau^20, g\tau^2\infty\})\cdot \{g\tau^2, g\} \text{ .}
\end{align*}
Note that 
\begin{align*}
\{g\tau^2, g\} = \{g\tau^2, g\tau^2\tau\}= \{g\tau^2, g\tau^2ST\} = \{g\tau^2, g\tau^2S\} +  \{g\tau^2S, g\tau^2ST\} \text{ .}\end{align*}
Thus, we have:
\begin{align*}
\sum_{g \in \Gamma \backslash \SL_2(\mathbf{Z})} \varphi(\{g0, g\infty\})\cdot \{g, g\rho\}&=
\frac{1}{3}\cdot \sum_{g \in \Gamma \backslash \SL_2(\mathbf{Z})} \varphi(\{g\tau0, g\tau\infty\})\cdot \{g\tau, g\} \\& + \varphi(\{g0, g\infty\})\cdot \{g, gS\}+ \varphi(\{g0, g\infty\})\cdot \{gS,gST\} \\&= \frac{1}{3}\cdot \sum_{g \in \Gamma \backslash \SL_2(\mathbf{Z})} \varphi(\{g\tau0, g\tau\infty\})\cdot \{g\tau, g\} \\& + \varphi(\{g0, g\infty\})\cdot \{g, gS\}- \varphi(\{g0, g\infty\})\cdot \{g,gT\}  \text{ .}
\end{align*}
Combining the two computations above, we get:
\begin{align*}
\sum_{g \in \Gamma \backslash \SL_2(\mathbf{Z})} \varphi(\{g0, g\infty\})\cdot \{gi, g\rho\}&=\sum_{g \in \Gamma \backslash \SL_2(\mathbf{Z})} \frac{1}{6}\cdot \varphi(\{g0, g\infty\})\cdot \{gS,g\}+\frac{1}{3}\cdot  \varphi(\{g\tau0, g\tau\infty\})\cdot \{g\tau, g\} \\& - \frac{1}{3}\cdot \varphi(\{g0, g\infty\})\cdot \{g,gT\}  
\\&= \sum_{g \in \Gamma \backslash \SL_2(\mathbf{Z})} \frac{1}{6}\cdot \varphi(\{g0, g\infty\})\cdot \{gS,g\}\\&+\frac{1}{3}\cdot  \varphi(\{g\tau0, g\tau\infty\})\cdot (\{gST, gS\} +\{gS,g\})\\& - \frac{1}{3}\cdot \varphi(\{g0, g\infty\})\cdot \{g,gT\}  \\&=
\sum_{g \in \Gamma \backslash \SL_2(\mathbf{Z})} \frac{1}{6}\cdot \varphi(\{g0, g\infty\})\cdot \{gS,g\}\ +\frac{1}{3}\cdot  \varphi(\{g\tau0, g\tau\infty\})\cdot\{gS,g\} \\& - \frac{2}{3}\cdot \varphi(\{g0, g\infty\})\cdot \{g,gT\}
\end{align*}
where in the last equality, we have used the fact that
$\{gST,gS\} = \{g\tau, g\tau T^{-1}\} = -\{g\tau, g\tau T\}$.
Using again the fact that  $ \varphi(\{g0, g\infty\}) + \varphi(\{g\tau0, g\tau\infty\})+\varphi(\{g\tau^20, g\tau^2\infty\})=0$, we have:
\begin{align*}
\sum_{g \in \Gamma \backslash \SL_2(\mathbf{Z})} \varphi(\{g0, g\infty\})\cdot \{gi, g\rho\}&=\sum_{g \in \Gamma \backslash \SL_2(\mathbf{Z})} \frac{1}{6}\cdot \varphi(\{g\tau0, g\tau\infty\})\cdot \{gS,g\}-\frac{1}{6}\cdot  \varphi(\{g\tau^20, g\tau^2\infty\})\cdot \{gS, g\} \\& - \frac{2}{3}\cdot \varphi(\{g0, g\infty\})\cdot \{g,gT\}  
\\&=\sum_{g \in \Gamma \backslash \SL_2(\mathbf{Z})} \frac{1}{6}\cdot \varphi(\{g\tau0, g\tau\infty\})\cdot \{gS,g\}-\frac{1}{6}\cdot  \varphi(\{g\tau0, g\tau\infty\})\cdot \{g\tau^2S, g\tau^2\} \\& - \frac{2}{3}\cdot \varphi(\{g0, g\infty\})\cdot \{g,gT\}  
\\&=G(\varphi) \text{ .}
\end{align*}
This concludes the proof of Proposition \ref{expression_G^+_rho_i}, and thus the proof of Proposition \ref{prop_intersection_inverse_pairing} and Theorem \ref{intro_thm_Petersson}.
\end{proof}

\end{proof}
\end{proof}
\begin{rem}\label{rem_generality_conjecture_G}
Proposition \ref{expression_G^+_rho_i} is the strongest general result we were able to prove toward Properties (\ref{property_degenerate_pairing}) and (\ref{property_Hecke_pairing}). Note that the pairing $\langle \cdot,  \cdot \rangle$ must have determinant divisible by the index of $H_1(Y_{\Gamma}, \mathbf{Z})$ in $\tilde{\mathcal{M}}_{\Gamma}$, \ie $\frac{1}{d_{\Gamma}}\prod_{c \in \mathcal{C}_{\Gamma}} e_c$.
\end{rem}

\section{Relation with the generalized Jacobian}\label{generalized_cuspidal}
We remind the reader that we keep the notation of \S \ref{intro_generalized_cuspidal_motive}. We first define the two maps  $\delta_{\Gamma}^{\natural, \alg} : \mathbf{Z}[\mathcal{C}_{\Gamma}]^0 \rightarrow J_{\Gamma}^{\natural}$ and $\delta_{\Gamma}^{\natural, \an} : \mathbf{Z}[\mathcal{C}_{\Gamma}]^0 \rightarrow J_{\Gamma}^{\natural}$ such that $$\beta_{\Gamma}^{\natural} \circ \delta_{\Gamma}^{\natural, \alg} = \beta_{\Gamma}^{\natural} \circ \delta_{\Gamma}^{\natural, \an} = \delta_{\Gamma} \text{ .}$$

\subsection{Definitions and comparison result}

\subsubsection{Algebraic definition}\label{subsection_alg_def}
We first recall an alternative algebraic description of $J_{\Gamma}^{\#}$, which the author has learned in \cite[\S 2.2]{Yamazaki}. Let $F$ be a field extension of $k$ containing the compositum of the fields $k(c)$ for $c \in X_{\Gamma}\backslash Y_{\Gamma}$. Let $K$ be the function field of $X_{\Gamma} \times_k F$. If $P$ is a closed point in $X_{\Gamma} \times_k F$, let $K_P$ be the completion of $K$ at $P$ and $U_P \subset K_P^{\times}$ be the group of principal unit. Let $$\Div(X_{\Gamma}, \mathcal{C}_{\Gamma})(F):=\Div(Y_{\Gamma})(F) \oplus \bigoplus_{c \in \mathcal{C}_{\Gamma}} K_{c}^{\times}/U_c^{\times}\text{ ,}$$
where $\Div(Y_{\Gamma})(F)$ is the group of divisors of $Y_{\Gamma}$ defined over $F$. We denote by $\Div^0(X_{\Gamma}, \mathcal{C}_{\Gamma})(F)$ the kernel of the degree map $\Div(X_{\Gamma}, \mathcal{C}_{\Gamma})(F) \rightarrow \mathbf{Z}$ given by $$D \oplus (f_c \text{ modulo }U_c)_{c \in \mathcal{C}_{\Gamma}} \mapsto \deg(D) + \sum_{c \in \mathcal{C}_{\Gamma}} \ord_c(f_c) \text{ .}$$

There is a canonical map $K^{\times} \rightarrow \Div^0(X_{\Gamma}, \mathcal{C}_{\Gamma})(F)$, given by
$$f \mapsto \text{div}_{Y_{\Gamma}}(f) \oplus (f \text{ modulo }U_{c})_{c \in \mathcal{C}_{\Gamma}}$$
where $\text{div}_{Y_{\Gamma}}(f)$ is the divisor of the restriction of $f$ to $Y_{\Gamma}$.
Then there is a canonical $\Gal(F/k)$-equivariant group isomorphism
\begin{equation}\label{iso_gen_jac_alg}
 J_{\Gamma}^{\#}(F)  \xrightarrow{\sim} \Div^0(X_{\Gamma}, \mathcal{C}_{\Gamma})(F)/K^{\times} 
\end{equation}
sending the class of a divisor $D$ supported on $Y_{\Gamma}$ to the image of $(D \oplus 0)$ in  $\Div^0(X_{\Gamma}, \mathcal{C}_{\Gamma})(F)/K^{\times}$.
Under this identification, the map $J_{\Gamma}^{\#}(F) \rightarrow J_{\Gamma}(F)$ corresponds to the map $\Div^0(X_{\Gamma}, \mathcal{C}_{\Gamma})(F)/K^{\times} \rightarrow J_{\Gamma}(F)$ given by
$$D \oplus (f_c \text{ modulo }U_c)_{c \in \mathcal{C}_{\Gamma}} \mapsto  \text{class of the divisor }D + \sum_{c \in \mathcal{C}_{\Gamma}} \ord_c(f_c) \cdot (c) \text{ .}$$

We now define the map $\delta_{\Gamma}^{\natural, \alg} : \mathbf{Z}[\mathcal{C}_{\Gamma}]^0 \rightarrow J_{\Gamma}^{\natural}(\mathbf{C})$. We apply the discussion above to $F = \mathbf{C}$. For any $c \in \mathcal{C}_{\Gamma}$, let $\frac{p_c}{q_c} \in \mathbf{Q}$ such that $c = \Gamma \cdot \frac{p_c}{q_c}$ and let $g_c \in \SL_2(\mathbf{Z})$ such that $g_c( \frac{p_c}{q_c}) = \infty$. Then the function $t_c : z\mapsto j(\frac{g_c z}{e_c})^{-1}$ of the upper-half plane induces a uniformizer at $c$, still denoted by $t_c \in K_c^{\times}$ (we recall that $e_c$ is the width of $c$). The map 
$\delta_{\Gamma}^{\natural, \alg}$ sends a degree zero divisor $\sum_{c \in \mathcal{C}_{\Gamma}} n_c \cdot (c)$ to the image of $0 \oplus (t_c^{n_c} \text{ modulo }U_c)_{c \in \mathcal{C}_{\Gamma}}$ in $J_{\Gamma}^{\natural}(\mathbf{C})$ via the identification (\ref{iso_gen_jac_alg}). 

The following result motivates the definition of $J_{\Gamma}^{\natural}$.
\begin{prop}\label{beta_alg_lemma} 
\begin{enumerate}
\item \label{i_commutative_beta}We have $\beta_{\Gamma}^{\natural} \circ \delta_{\Gamma}^{\natural, \alg} = \delta_{\Gamma}$.
\item \label{ii_well_defined_beta}The map $\delta_{\Gamma}^{\natural, \alg}$ is independant of the choice of the elements $\frac{p_c}{q_c}$ and of $g_c$.
\item \label{iii_rationality_alg_beta} The map $\delta_{\Gamma}^{\natural, \alg}$ takes values in $J_{\Gamma}^{\natural}(\mathbf{Q}(\zeta_N))$.
\item \label{iv_galois_equivariance} Assume that $\Gamma = \Gamma_0(N)$ or $\Gamma = \Gamma_1(N)$ (in which case we have $k=\mathbf{Q}$). Then the map $\delta_{\Gamma}^{\natural, \alg}$ is $\Gal(\mathbf{Q}(\zeta_N)/\mathbf{Q})$-equivariant.
\end{enumerate}
\end{prop}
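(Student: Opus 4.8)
All four assertions come down to understanding the local datum $t_c \bmod U_c$ inside $K_c^\times/U_c \cong \mathbf{Z}\times \kappa(c)^\times$ (which records the order and the leading coefficient at $c$) together with the $\Gal$-equivariance of the identification (\ref{iso_gen_jac_alg}). So the first thing I would record is the local shape of $t_c$. Writing $q_{e_c}=e^{2i\pi g_c z/e_c}$ for the analytic parameter attached to $c$, the normalization $j(\tau)^{-1}=e^{2i\pi\tau}\bigl(1+O(e^{2i\pi\tau})\bigr)$ together with the fact that $j$ has $q$-expansion coefficients in $\mathbf{Q}$ gives $t_c\equiv q_{e_c}\pmod{U_c}$; in particular $\ord_c(t_c)=1$ and the leading coefficient of $t_c$ is $1$. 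Everything else is extracted from how this datum behaves under change of choices and under Galois.

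For (\ref{i_commutative_beta}), the key point is that $\beta_\Gamma^\natural$ is the map induced by $\beta_\Gamma^\#$ on the quotient $J_\Gamma^\natural=J_\Gamma^\#/\prod_c\text{Res}_{k(c)/k}(\mu_N)$; this descent is legitimate because $\prod_c\text{Res}_{k(c)/k}(\mu_N)$ lies in the torus $\ker(\beta_\Gamma^\#)$. Under (\ref{iso_gen_jac_alg}) the projection to $J_\Gamma$ reads off only the orders $\ord_c$, so $\beta_\Gamma^\natural\bigl(\delta_\Gamma^{\natural,\alg}(\sum_c n_c(c))\bigr)$ is the class of $\sum_c \ord_c(t_c^{n_c})\cdot(c)=\sum_c n_c(c)$, which is exactly $\delta_\Gamma(\sum_c n_c(c))$.

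For (\ref{ii_well_defined_beta}) I would analyze a second choice $(p_c'/q_c',g_c')$. Then $p_c'/q_c'=\gamma(p_c/q_c)$ for some $\gamma\in\Gamma$ and $g_c'=\pm T^m g_c\gamma^{-1}$ for some $m\in\mathbf{Z}$. Replacing $g_c$ by $g_c\gamma^{-1}$ replaces the function $z\mapsto j(g_cz/e_c)^{-1}$ by its $\gamma$-translate, hence defines the same element of $K_c$; replacing $g_c$ by $T^m g_c$ sends $g_cz/e_c$ to $g_cz/e_c+m/e_c$, hence $q_{e_c}$ to $\zeta_{e_c}^m q_{e_c}$, and so multiplies the leading coefficient of $t_c$ by $\zeta_{e_c}^m\in\mu_{e_c}\subseteq\mu_N$. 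Thus two choices differ, in $\Div^0(X_\Gamma,\mathcal{C}_\Gamma)(\mathbf{C})$, by an element of the torus part $\bigoplus_c\mu_N$ coming from $\prod_c\text{Res}_{k(c)/k}(\mu_N)$, which is killed in $J_\Gamma^\natural$; hence $\delta_\Gamma^{\natural,\alg}$ is well defined.

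Parts (\ref{iii_rationality_alg_beta}) and (\ref{iv_galois_equivariance}) are the arithmetic heart, and I would treat them together through the $\Gal$-equivariance of (\ref{iso_gen_jac_alg}). The key input, and the main obstacle, is to pin down the Galois action on $t_c\bmod U_c$. Via the theory of the Tate curve at the cusps (equivalently, via the explicit $q$-expansions of the Siegel units of Kubert--Lang used above), the analytic parameter $q_{e_c}$ agrees, up to an element of $\mu_N$ and a principal unit, with a canonical algebraic uniformizer at $c$ defined over $\mathbf{Q}(\zeta_N)$. Combined with the rationality of the $q$-expansion of $t_c$, this shows at once that the leading coefficient of $t_c$ with respect to an algebraic uniformizer lies in $\mu_N\subset\overline{\mathbf{Q}}^\times$, so $\delta_\Gamma^{\natural,\alg}$ is valued in $J_\Gamma^\natural(\overline{\mathbf{Q}})$, and that for $\sigma\in\Gal(\overline{\mathbf{Q}}/\mathbf{Q})$ one has $\sigma(t_c\bmod U_c)\equiv t_{\sigma c}\bmod U_{\sigma c}$ modulo $\mu_N$, where $\sigma c$ is the Galois-conjugate cusp. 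For (\ref{iii_rationality_alg_beta}), any $\sigma\in\Gal(\overline{\mathbf{Q}}/\mathbf{Q}(\zeta_N))$ fixes every cusp and every element of $\mu_N$, so it fixes $\delta_\Gamma^{\natural,\alg}(D)$ in $J_\Gamma^\natural$ and the values land in $J_\Gamma^\natural(\mathbf{Q}(\zeta_N))$. For (\ref{iv_galois_equivariance}), with $\Gamma=\Gamma_0(N)$ or $\Gamma_1(N)$ and $k=\mathbf{Q}$, I would feed in the explicit $\Gal(\mathbf{Q}(\zeta_N)/\mathbf{Q})$-action on the cusps (for $\Gamma_1(N)$, the action of $(\mathbf{Z}/N\mathbf{Z})^\times$ on cusp labels) to identify $\sigma c$ with the label-theoretic conjugate, whence $\sigma\bigl(\delta_\Gamma^{\natural,\alg}(D)\bigr)=\delta_\Gamma^{\natural,\alg}(\sigma D)$ in $J_\Gamma^\natural$, the $\mu_N$-ambiguity again being absorbed by the quotient. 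The technical crux throughout is precisely the normalization lemma comparing $q_{e_c}$ with the canonical parameter up to $\mu_N$; once it is in place, the four statements follow formally from the equivariance of (\ref{iso_gen_jac_alg}).
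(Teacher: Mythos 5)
Your proposal is correct, and on points (\ref{i_commutative_beta}) and (\ref{ii_well_defined_beta}) it essentially coincides with the paper's proof (your analysis of the change of base point $\frac{p_c}{q_c}$ via $g_c' = \pm T^m g_c \gamma^{-1}$ is in fact spelled out more fully than in the paper, which dismisses that part as obvious and only computes the effect of $g_c \mapsto T^n g_c$). Where you genuinely diverge is on the arithmetic points (\ref{iii_rationality_alg_beta}) and (\ref{iv_galois_equivariance}): you run both through a single normalization lemma --- that the analytic parameter $q_{e_c}$ agrees, up to $\mu_N$ and a principal unit, with a canonical algebraic uniformizer defined over $\mathbf{Q}(\zeta_N)$, compatibly with the Galois permutation of the cusps --- which you justify by appeal to the Tate curve. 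That lemma is true (it is the standard description of the formal completion of the modular curve along its cusps, as in Deligne--Rapoport or Katz--Mazur), so your argument goes through, but you leave unproved precisely the step you yourself call the crux. The paper avoids needing it in this strength by two more elementary devices: for (\ref{iii_rationality_alg_beta}) it shows the function $t_c$ itself is defined over $\mathbf{Q}(\zeta_N)$, by viewing $j(\frac{g_c z}{e_c})$ as a modular function for $\Gamma' = \Gamma \cap \Gamma(e_c)$, noting that the covering $X_{\Gamma'} \rightarrow X_{\Gamma}$ is unramified at the relevant cusp (so the completed local field is unchanged) and pulling rationality back from $X_{\Gamma(N)}$; and for (\ref{iv_galois_equivariance}) it uses the $e_c$-th power trick: $t_c^{e_c}$ is congruent modulo $U_c$ to (the inverse of) the $j$-map, and since $j : X_{\Gamma} \rightarrow X_{\SL_2(\mathbf{Z})}$ is defined over $\mathbf{Q}$ and $e_{g(c)} = e_c$, one gets $g \cdot t_c \equiv \zeta \cdot t_{g(c)}$ with $\zeta \in \mu_{e_c} \subseteq \mu_N$, killed in $J_{\Gamma}^{\natural}$ --- so no comparison of the analytic parameter with an algebraic model is ever needed. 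Your route buys a uniform and slightly stronger statement (Galois acts on $t_c \bmod U_c$ exactly as on cusp labels, up to $\mu_N$) at the cost of importing the Tate-curve description of the cusps; the paper's route is self-contained given only the $\mathbf{Q}$-rationality of $j$ and the $\mathbf{Q}(\zeta_N)$-rationality of the tower over $X_{\Gamma(N)}$. If you keep your version, you should either prove the normalization lemma or give a precise citation for it, since everything else in your argument is formal once it is in place.
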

\begin{proof}
Point (\ref{i_commutative_beta}) is straightforward. We prove point (\ref{ii_well_defined_beta}). First, fix the choice of $\frac{p_c}{q_c}$. Recall that locally at $\infty$, we have $j(z)^{-1} \sim q(z) = e^{2i\pi z}$ modulo principal units. Thus, if we replace $g_c$ by $T^ng_c$ for some $n \in \mathbf{Z}$, then $t_c$ is replaced by $e^{\frac{2i \pi n}{e_c}}\cdot t_c$. By construction, the image of $\prod_{c \in \mathcal{C}_{\Gamma}} \mu_{e_c}$ in $J_{\Gamma}^{\natural}(\mathbf{C})$ is trivial, so our construction does not depend on the choice of the elements $g_c$. The fact that $t_c$ does not depend on the choice of $\frac{p_c}{q_c}$ is obvious. Point (\ref{iii_rationality_alg_beta}) follows from the following result.
\begin{lem}
For all $c \in \mathcal{C}_{\Gamma}$, $t_c$ is defined over $\mathbf{Q}(\zeta_N)$.
\end{lem}
\begin{proof}
Let $c \in \mathcal{C}_{\Gamma}$ and $\Gamma' = \Gamma \cap \Gamma(e_c)$, where $\Gamma(e_c)$ is the principal congruence subgroup of level $e_c$. Note that $\Gamma(N)$ is a subgroup of $\Gamma'$ since $e_c$ divides $N$ and $\Gamma(N) \subset \Gamma$. Let $c' = \Gamma' \cdot \frac{p_c}{q_c}$; it is a cusp of $X_{\Gamma'}$. The covering $X_{\Gamma'} \rightarrow X_{\Gamma}$ is unramified at $c'$ (where $X_{\Gamma'}$ and $X_{\Gamma}$ are considered over $\mathbf{Q}(\zeta_N)$), since the ramification index of both $X_{\Gamma'} \rightarrow X_{\SL_2(\mathbf{Z})}$ and $X_{\Gamma} \rightarrow X_{\SL_2(\mathbf{Z})}$ is $e_c$ at $c'$ and $c$ respectively. The function $j(\frac{g_c z}{e_c})$ is modular for the group $\Gamma'$, and so it suffices to prove that it is defined over $\mathbf{Q}(\zeta_N)$. This is true at the level of the modular curve $X_{\Gamma(N)}$, so it is true for $X_{\Gamma'}$ since the covering $X_{\Gamma(N)} \rightarrow X_{\Gamma'}$ is defined over $\mathbf{Q}(\zeta_N)$.
\end{proof}
We now prove point (\ref{iv_galois_equivariance}).  The action of $\Gal(\mathbf{Q}(\zeta_N)/\mathbf{Q})$ on $\mathbf{Z}[\mathcal{C}_{\Gamma}]^0$ comes from its action on $\mathcal{C}_{\Gamma}$. If $c \in \mathcal{C}_{\Gamma}$ and $g \in\Gal(\mathbf{Q}(\zeta_N)/\mathbf{Q})$, we denote by $g(c) \in \mathcal{C}_{\Gamma}$ the cusp corresponding to the action of $g$ on $c$. Under (\ref{iso_gen_jac_alg}), the action of $\Gal(\mathbf{Q}(\zeta_N)/\mathbf{Q})$ on $\Div^0(X_{\Gamma}, \mathcal{C}_{\Gamma})(F)/K^{\times}$ is given as follows. The action of $\Gal(\mathbf{Q}(\zeta_N)/\mathbf{Q})$ on $K^{\times}$ and $\Div(Y_{\Gamma})(F)$ is the obvious one. If $g \in \Gal(\mathbf{Q}(\zeta_N)/\mathbf{Q})$, then $g$ acts on $\bigoplus_{c \in \mathcal{C}_{\Gamma}} K_{c}^{\times}/U_c^{\times}$ by $g \cdot (f_c \text{ modulo }U_c)_{c \in \mathcal{C}_{\Gamma}} = (g\cdot f_c)_{g(c), c \in \mathcal{C}_{\Gamma}}$. To prove (\ref{iv_galois_equivariance}), it suffices to prove that for all $c \in \mathcal{C}_{\Gamma}$ and $g \in \Gal(\mathbf{Q}(\zeta_N)/\mathbf{Q})$, we have $(g\cdot f_c)^{e_{g(c)}} \equiv f_{g(c)}^{e_{g(c)}} \text{ (modulo }U_{g(c)}\text{)}$. We have $f_{g(c)}^{e_{g(c)}} \equiv j(z) \text{ (modulo }U_{g(c)}\text{)}$. Since $e_{g(c)} = e_c$, we have $(g\cdot f_c)^{e_{g(c)}} = g\cdot (f_c^{e_c}) \equiv g\cdot j(z) \equiv j(z)  \text{ (modulo }U_{g(c)}\text{)}$. We have used the fact that $j : X_{\Gamma} \rightarrow X_{\SL_2(\mathbf{Z})}$ is defined over $\mathbf{Q}$.
\end{proof}

\subsubsection{Analytic definition}

There is a well-known analytic uniformization of the generalized Jacobian $J_{\Gamma}^{\#}(\mathbf{C})$. Namely, there is an isomorphism of complex abelian varieties (called the generalized Abel-Jacobi map)
$$\AJ_{\Gamma} : J_{\Gamma}^{\#}(\mathbf{C}) \xrightarrow{\sim} \Hom_{\mathbf{C}}(M_2(\Gamma), \mathbf{C})/H_1(Y_{\Gamma}, \mathbf{Z}) \text{ ,}$$
sending the class of a divisor $D$ to the class of the morphism $\varphi : M_2(\Gamma) \rightarrow \mathbf{C}$ given by $f \mapsto \int_{\gamma} 2i\pi f(z)dz$ where $\gamma$ is a $1$-chain in $Y_{\Gamma}$ whose boundary is $D$.

Recall that we have a group homomorphism $\tilde{p}_{\Gamma} : \tilde{\mathcal{M}}_{\Gamma} \rightarrow \Hom_{\mathbf{C}}(M_2(\Gamma),\mathbf{C})$. By Proposition \ref{rank_computation} (\ref{rank_computation_2}) and (\ref{rank_computation_3}), the compositum
$$\tilde{\mathcal{M}}_{\Gamma} \xrightarrow{\tilde{p}_{\Gamma}} \Hom_{\mathbf{C}}(M_2(\Gamma), \mathbf{C}) \rightarrow \Hom_{\mathbf{C}}(M_2(\Gamma), \mathbf{C})/H_1(Y_{\Gamma}, \mathbf{Z}) \xrightarrow{\AJ_{\Gamma}^{-1}} J_{\Gamma}^{\#}(\mathbf{C}) \rightarrow J_{\Gamma}^{\natural}(\mathbf{C})$$
factors through $\partial_{\Gamma} : \tilde{\mathcal{M}}_{\Gamma} \rightarrow \mathbf{Z}[\mathcal{C}_{\Gamma}]^0$, so we have defined a map
$$\delta_{\Gamma}^{\natural, \an} : \mathbf{Z}[\mathcal{C}_{\Gamma}]^0 \rightarrow J_{\Gamma}^{\natural}(\mathbf{C}) \text{ .}$$
By Theorem \ref{Hecke_Q}, the map $\delta_{\Gamma}^{\natural, \an}$ is $\mathbb{T}$-equivariant.
\begin{lem}\label{beta_an_lemma}
We have $\beta_{\Gamma}^{\natural} \circ \delta_{\Gamma}^{\natural, \an} = \delta_{\Gamma}$.
\end{lem}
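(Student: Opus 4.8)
The plan is to unwind the definition of $\delta_{\Gamma}^{\natural, \an}$ and reduce the identity to the compatibility, under the projection $\beta_{\Gamma}^{\#} : J_{\Gamma}^{\#} \rightarrow J_{\Gamma}$, between the generalized Abel--Jacobi isomorphism $\AJ_{\Gamma}$ and the classical Abel--Jacobi isomorphism $J_{\Gamma}(\mathbf{C}) \xrightarrow{\sim} \Hom_{\mathbf{C}}(S_2(\Gamma), \mathbf{C})/H_1(X_{\Gamma}, \mathbf{Z})$. Fix $D \in \mathbf{Z}[\mathcal{C}_{\Gamma}]^0$ and choose $m \in \tilde{\mathcal{M}}_{\Gamma}$ with $\partial_{\Gamma}(m) = D$; since $\beta_{\Gamma}^{\#}$ factors as $J_{\Gamma}^{\#} \rightarrow J_{\Gamma}^{\natural} \xrightarrow{\beta_{\Gamma}^{\natural}} J_{\Gamma}$, the definition of $\delta_{\Gamma}^{\natural, \an}$ gives
$$\beta_{\Gamma}^{\natural}\bigl(\delta_{\Gamma}^{\natural, \an}(D)\bigr) = \beta_{\Gamma}^{\#}\Bigl(\AJ_{\Gamma}^{-1}\bigl(\tilde{p}_{\Gamma}(m) \bmod H_1(Y_{\Gamma}, \mathbf{Z})\bigr)\Bigr) \text{ .}$$

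The key input is that these two analytic uniformizations are intertwined by restriction to cusp forms. Concretely, the natural surjection $H_1(Y_{\Gamma}, \mathbf{Z}) \rightarrow H_1(X_{\Gamma}, \mathbf{Z})$ and the restriction map $\rho_{\Cusp, \Gamma} : \Hom_{\mathbf{C}}(M_2(\Gamma), \mathbf{C}) \rightarrow \Hom_{\mathbf{C}}(S_2(\Gamma), \mathbf{C})$ fit into a commutative square with the respective period maps, so that passing to quotients identifies $\beta_{\Gamma}^{\#}$ with the map induced by $\rho_{\Cusp, \Gamma}$. I expect this to be the main obstacle: it amounts to checking that the analytic description of the torus extension $J_{\Gamma}^{\#}$ cuts out precisely the Eisenstein (third-kind) directions in $\Hom_{\mathbf{C}}(M_2(\Gamma), \mathbf{C})$, and that the period lattice $H_1(Y_{\Gamma}, \mathbf{Z})$ maps onto $H_1(X_{\Gamma}, \mathbf{Z})$ compatibly. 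This is standard Hodge theory for the open curve $Y_{\Gamma}$, but must be stated carefully to match the normalizations in $\AJ_{\Gamma}$.

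Granting this compatibility, $\beta_{\Gamma}^{\natural}(\delta_{\Gamma}^{\natural, \an}(D))$ becomes the class of $\rho_{\Cusp, \Gamma}(\tilde{p}_{\Gamma}(m))$ in $\Hom_{\mathbf{C}}(S_2(\Gamma), \mathbf{C})/H_1(X_{\Gamma}, \mathbf{Z})$, and I would then finish as follows. By Proposition \ref{rank_computation} (\ref{rank_computation_4}) — exactly as used in the proof of Lemma \ref{restriction_Eisenstein_surjective} — the composite $\rho_{\Cusp, \Gamma} \circ \tilde{p}_{\Gamma}$ factors through $\pi_{\Gamma}$ and equals the usual period map on $\mathcal{M}_{\Gamma} \simeq H_1(X_{\Gamma}, \mathcal{C}_{\Gamma}, \mathbf{Z})$. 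Thus $\rho_{\Cusp, \Gamma}(\tilde{p}_{\Gamma}(m))$ is the period functional $f \mapsto \int_{\gamma} 2i\pi f(z)\,dz$ attached to any relative $1$-cycle $\gamma$ representing $\pi_{\Gamma}(m)$, whose boundary is $\partial(\pi_{\Gamma}(m)) = \partial_{\Gamma}(m) = D$. By the definition of the Abel--Jacobi map for $J_{\Gamma}$, the class of this functional modulo $H_1(X_{\Gamma}, \mathbf{Z})$ is precisely $\delta_{\Gamma}(D)$, the class of the divisor $D$, which gives $\beta_{\Gamma}^{\natural} \circ \delta_{\Gamma}^{\natural, \an} = \delta_{\Gamma}$, as desired.
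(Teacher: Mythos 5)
Your proposal is correct and is in substance the paper's own argument: the paper's one-line proof simply cites the fact that $\mathcal{S}(g)(f) = 2i\pi\int_{i\infty}^{g(\infty)} f(z)\,dz$ for cuspidal $f$, which is exactly the content of your step identifying $\rho_{\Cusp,\Gamma}\circ\tilde{p}_{\Gamma}$ (after factoring through $\pi_{\Gamma}$) with the classical period functional, while the compatibility of $\AJ_{\Gamma}$ with the classical Abel--Jacobi map under $\beta_{\Gamma}^{\#}$ is likewise taken as standard there. One small attribution point: Proposition \ref{rank_computation} (\ref{rank_computation_4}) gives only the factoring through $\pi_{\Gamma}$ (since $\tilde{p}_{\Gamma}(\{g,gT\})(f) = 2i\pi\, a_0(f\mid g) = 0$ for cusp forms), and the further identification with the usual period map requires the displayed Stevens formula, a short computation from Proposition \ref{Stevens_trivial_lemma} using $a_0(f) = a_0(f\mid g) = 0$.
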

\begin{proof}
This follows from the fact that for any cuspidal modular form $f$ and any $g \in \SL_2(\mathbf{Z})$, we have $\mathcal{S}(g)(f) = 2i\pi\int_{\infty}^{g(\infty)} f(z)dz$.
\end{proof}

\subsubsection{Comparison between the algebraic and analytic definitions}\label{subsection_comparison}
We now prove Theorem \ref{intro_main_thm_comparison}.
\begin{proof}
We first prove point (\ref{comparison_i}). By Proposition \ref{beta_alg_lemma} (\ref{i_commutative_beta}) and Lemma \ref{beta_an_lemma}, the map $\delta_{\Gamma}^{\natural, \alg} - \delta_{\Gamma}^{\natural, \an}$ takes values in the image of $\prod_{c \in \mathcal{C}_{\Gamma}} \mathbf{C}^{\times}/\mu_{N}$ inside $J_{\Gamma}^{\natural}(\mathbf{C})$. If $(\lambda_c)_{c \in \mathcal{C}_{\Gamma}} \in  \mathbf{C}^{\times}$, we abusively use the same notation for its image in $J_{\Gamma}^{\natural}(\mathbf{C})$.

Recall (\cf paragraph \ref{subsection_alg_def}) that for any $c \in \mathcal{C}_{\Gamma}$, we write $c = \Gamma \cdot \frac{p_c}{q_c}$ and we fix some matrix $g_c \in \SL_2(\mathbf{Z})$ such that $g_c(\frac{p_c}{q_c}) = \infty$. We then let $t_c(z) = j(\frac{g_c(z)}{e_c})^{-1}$ and $u_c(z) = e^{\frac{2i\pi g_c(z)}{e_c}}$. Note that $u_c(z) \sim t_c(z)$ near $\frac{p_c}{q_c}$.

\begin{lem}\label{main_thm_comparison_lemma_1}
Let $D = \sum_{c \in \mathcal{C}_{\Gamma}} n_c \cdot (c) \in \mathbf{Z}[\mathcal{C}_{\Gamma}]^0$, $m_D$ be the order of the image of $D$ in $J_{\Gamma}$ and $u$ be a modular unit with divisor $m_D\cdot D$. We have:
$$m_D \cdot  \delta_{\Gamma}^{\natural, \alg}(D) = ((\frac{t_c^{m_D\cdot n_c}}{u})(c))_{c \in \mathcal{C}_{\Gamma}} \text{ .}$$
\end{lem}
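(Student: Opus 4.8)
The plan is to unwind everything through the algebraic description of $J_{\Gamma}^{\#}(\mathbf{C})$ provided by the isomorphism (\ref{iso_gen_jac_alg}), applied with $F = \mathbf{C}$. By definition, $\delta_{\Gamma}^{\natural, \alg}(D)$ is the class of $0 \oplus (t_c^{n_c} \bmod U_c)_{c \in \mathcal{C}_{\Gamma}}$, so $m_D \cdot \delta_{\Gamma}^{\natural, \alg}(D)$ is the class of $0 \oplus (t_c^{m_D n_c} \bmod U_c)_{c}$ in $\Div^0(X_{\Gamma}, \mathcal{C}_{\Gamma})(\mathbf{C})/K^{\times}$, pushed forward to $J_{\Gamma}^{\natural}(\mathbf{C})$. (This element is indeed of degree zero, since $\sum_c m_D n_c = m_D \deg(D) = 0$.) The idea is to correct this representative by the modular unit $u$.

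First I would use that $u \in K^{\times}$ is a unit on $Y_{\Gamma}$ with $\ord_c(u) = m_D n_c$ for every cusp $c$. Hence its image under $K^{\times} \to \Div^0(X_{\Gamma}, \mathcal{C}_{\Gamma})(\mathbf{C})$ is $0 \oplus (u \bmod U_c)_c$, which is trivial in the quotient by $K^{\times}$. Subtracting this class, i.e. dividing by $u$ in each local factor $K_c^{\times}/U_c$, leaves the class unchanged, so
\[
m_D \cdot \delta_{\Gamma}^{\natural, \alg}(D) = \Big[\,0 \oplus \big(\tfrac{t_c^{m_D n_c}}{u} \bmod U_c\big)_{c}\,\Big].
\]

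Next I would observe that for each $c$ the element $t_c^{m_D n_c}/u$ has $\ord_c = m_D n_c - m_D n_c = 0$, so it is a local unit at $c$; its class in $K_c^{\times}/U_c$ is therefore exactly its value $(t_c^{m_D n_c}/u)(c) \in \mathbf{C}^{\times}$ at $c$, namely the residue modulo the principal units. Consequently the representative above is the image of the tuple $\big((t_c^{m_D n_c}/u)(c)\big)_c \in \prod_c \mathbf{C}^{\times}$ under the torus map $\prod_{c} \Res_{k(c)/k}(\mathbf{G}_m) \to J_{\Gamma}^{\#} \to J_{\Gamma}^{\natural}$, which is precisely the right-hand side of the asserted formula.

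The bulk of the argument is formal, so the main point requiring care is the identification of $K_c^{\times}/U_c$ with $\mathbf{Z} \times \mathbf{C}^{\times}$, recording order together with leading coefficient, and the verification that under this splitting an order-zero element is sent to its value at $c$ inside the torus factor coming from $\prod_c \Res_{k(c)/k}(\mathbf{G}_m)$. One should also note that the quotient by $\prod_c \Res_{k(c)/k}(\mu_N)$ in passing from $J_{\Gamma}^{\#}$ to $J_{\Gamma}^{\natural}$ does not affect the computation, since the equality is in fact established already inside $J_{\Gamma}^{\#}(\mathbf{C})$ for the fixed choices of $g_c$, and it descends to $J_{\Gamma}^{\natural}(\mathbf{C})$ a fortiori.
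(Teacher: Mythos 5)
Your proof is correct and follows essentially the same route as the paper's (which is just a terser version of the same computation): represent $m_D\cdot\delta_{\Gamma}^{\natural,\alg}(D)$ by $0\oplus(t_c^{m_D n_c}\bmod U_c)_c$, divide by the class of $u\in K^{\times}$ --- harmless on the divisor part since $\mathrm{div}_{Y_{\Gamma}}(u)=0$ for a modular unit --- and identify each order-zero local element $t_c^{m_D n_c}/u$ with its value at $c$ modulo principal units. Your added remarks (degree-zero check, the splitting $K_c^{\times}/U_c\simeq\mathbf{Z}\times\mathbf{C}^{\times}$, and that the identity already holds in $J_{\Gamma}^{\#}(\mathbf{C})$ for fixed $g_c$) are accurate elaborations of steps the paper leaves implicit.
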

\begin{proof}
By definition, $m_D \cdot  \delta_{\Gamma}^{\natural, \alg}(D)$ is the image of $0 \oplus (t_c^{m_D\cdot n_c})_{c\in \mathcal{C}_{\Gamma}}\in \Div^0(X_{\Gamma}, \mathcal{C}_{\Gamma})(\mathbf{C})$ in $J_{\Gamma}^{\natural}(\mathbf{C})$. Thus, $m_D \cdot  \delta_{\Gamma}^{\natural, \alg}(D)$ is also the image of $0 \oplus ((\frac{t_c^{m_D\cdot n_c}}{u})(c))_{c \in \mathcal{C}_{\Gamma}} \in \Div^0(X_{\Gamma}, \mathcal{C}_{\Gamma})(\mathbf{C})$ in $J_{\Gamma}^{\natural}(\mathbf{C})$. This concludes the proof of Lemma \ref{main_thm_comparison_lemma_1}.
\end{proof}

\begin{lem}\label{main_thm_comparison_lemma_2}
Let $D = \sum_{c \in \mathcal{C}_{\Gamma}} n_c \cdot (c) \in \mathbf{Z}[\mathcal{C}_{\Gamma}]^0$. Lift $\delta_{\Gamma}^{\natural, \an}(D)$ to an element $ \varphi_D \in \Hom_{\mathbf{C}}(M_2(\Gamma), \mathbf{C})$. Let $v$ be a modular unit of level $\Gamma$, and $d\log(v) \in M_2(\Gamma)$ denote the Eisenstein series $\frac{v'}{v}$. The quantity $\varphi_D(\frac{1}{2i\pi}d\log(v))$ is canonical in $\mathbf{C}/\frac{2i \pi}{N} \mathbf{Z}$. We have the following equality in $\mathbf{C}^{\times}/\mu_{N}$:
$$\exp(\varphi_D(\frac{1}{2i\pi}d\log(v))) = \prod_{c \in \mathcal{C}_{\Gamma}} (\frac{v}{t_c^{\ord_c(v)}})^{n_c}(c)\text{ .}$$
\end{lem}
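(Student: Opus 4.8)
The plan is to compute both sides of the asserted identity directly, after choosing a convenient representative of a lift $\varphi_D$ and observing that the three terms of Stevens's cocycle $\mathcal{S}$ telescope. Throughout write $w := \frac{1}{2i\pi}d\log(v) \in M_2(\Gamma)$, so that $2i\pi\,w(z)\,dz = d\log(v)$ and $2i\pi\,(w\mid g)(z)\,dz = d\log(v\circ g)$ for any $g \in \GL_2^+(\mathbf{Q})$.

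First I would settle the well-definedness. A lift $\varphi_D$ is $\tilde{p}_{\Gamma}(m)$ for any $m \in \tilde{\mathcal{M}}_{\Gamma}$ with $\partial_{\Gamma}(m)=D$, and two such lifts differ by $\tilde{p}_{\Gamma}(\Ker \partial_{\Gamma})$. By Proposition \ref{rank_computation} (\ref{rank_computation_3}), $\Ker \partial_{\Gamma}$ is spanned by $H_1(Y_{\Gamma},\mathbf{Z})$ and the symbols $\{g,gT\}$. For $h \in H_1(Y_{\Gamma},\mathbf{Z})$ one has $\tilde{p}_{\Gamma}(h)(w)=\int_h d\log(v)\in 2i\pi\mathbf{Z}$ because $v$ is nonvanishing on $Y_{\Gamma}$, while $\tilde{p}_{\Gamma}(\{g,gT\})(w)=2i\pi\, a_0(w\mid g)=2i\pi\,\ord_c(v)/e_c \in \tfrac{2i\pi}{N}\mathbf{Z}$ for $c=\Gamma g\infty$, since $e_c \mid N$. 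Hence $\varphi_D(w)$ is canonical in $\mathbf{C}/\tfrac{2i\pi}{N}\mathbf{Z}$.

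Next I would fix, for each cusp $c$, a matrix $h_c \in \SL_2(\mathbf{Z})$ with $\Gamma h_c\infty=c$, together with a base cusp $c_0$. As $\partial_{\Gamma}(\{h,h'\})=[\Gamma h'\infty]-[\Gamma h\infty]$ and $\deg D = 0$, the element $m=\sum_c n_c\{h_{c_0},h_c\}$ satisfies $\partial_{\Gamma}(m)=D$, so $\varphi_D(w)=\sum_c n_c\,\mathcal{S}(h_c)(w)$, the $c_0$-terms cancelling by $\sum_c n_c = 0$. The heart of the argument is then to evaluate $\mathcal{S}(h_c)(w)$ from its definition. Writing $v\circ h_c = b_c\,q_c^{\,n}(1+O(q_c))$ near $\infty$, with $q_c=e^{2i\pi z/e_c}$, $n=\ord_c(v)$ and $b_c\neq 0$, the subtraction of $a_0(w\mid h_c)=n/e_c$ is precisely what makes the regularised integral in Stevens's third term convergent. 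A direct computation shows that the geodesic integral (first term), the $z_0$-correction (second term) and the regularised integral (third term) telescope: all dependence on $z_0$, on $v(h_cz_0)$ and on $v(z_0)$ cancels, leaving $\mathcal{S}(h_c)(w)=\log b_c - \log c_\infty$, where $c_\infty$ is the leading Fourier coefficient of $v$ at $\Gamma\infty$ and is independent of $c$. Summing over $c$ and using $\sum_c n_c=0$ removes the $\log c_\infty$ contribution, giving $\exp(\varphi_D(w))=\prod_c b_c^{\,n_c}$.

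Finally I would identify $b_c$ with the value appearing in the statement. With $g_c(\tfrac{p_c}{q_c})=\infty$ as in \S\ref{subsection_alg_def} and $h_c(\infty)=\tfrac{p_c}{q_c}$, the product $g_ch_c$ fixes $\infty$, so $g_ch_c=\pm T^{j}$ and $t_c\circ h_c \sim \zeta\,q_c$ near $\infty$ with $\zeta=e^{2i\pi j/e_c}\in\mu_{e_c}\subset\mu_N$. Thus $\bigl(\tfrac{v}{t_c^{\,n}}\bigr)(c)=\zeta^{-n}b_c \equiv b_c \pmod{\mu_N}$, and since $\mu_N$ is a group this congruence is preserved under the product $\prod_c(\cdot)^{n_c}$, yielding $\exp(\varphi_D(w))\equiv\prod_c\bigl(\tfrac{v}{t_c^{\ord_c(v)}}\bigr)^{n_c}(c)$ in $\mathbf{C}^\times/\mu_N$. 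The only genuinely delicate point is the bookkeeping of the degree-zero Fourier terms and the branches of the logarithm in the telescoping step; once one records that $a_0(w\mid h_c)=\ord_c(v)/e_c$ and that the leading coefficient $b_c$ agrees with $\bigl(\tfrac{v}{u_c^{\,n}}\bigr)(c)$ up to an $e_c$-th root of unity, the cancellations are forced, and the resulting ambiguities are exactly absorbed by the target $\mathbf{C}^\times/\mu_N$ (respectively $\mathbf{C}/\tfrac{2i\pi}{N}\mathbf{Z}$).
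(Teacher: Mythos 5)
Your proposal is correct and follows essentially the same route as the paper: you realize the lift as $\varphi_D=\sum_c n_c\,\mathcal{S}(h_c)$ with one Stevens cocycle per cusp representative, evaluate each $\mathcal{S}(h_c)\bigl(\tfrac{1}{2i\pi}d\log(v)\bigr)$ by the same telescoping of the three terms (the paper keeps the $z_0$-dependent terms until after summing over $c$ and exponentiating, whereas you cancel them per cusp to get $\log b_c-\log c_\infty$), and you identify the leading coefficients $b_c$ with $\bigl(\tfrac{v}{t_c^{\ord_c(v)}}\bigr)(c)$ modulo $\mu_{e_c}\subset\mu_N$ exactly as the paper does via $v_c(i\infty)$ and $u_c\sim t_c$. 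Your verification of canonicity modulo $\tfrac{2i\pi}{N}\mathbf{Z}$, using that $\Ker\partial_\Gamma$ is spanned by $H_1(Y_\Gamma,\mathbf{Z})$ and the symbols $\{g,gT\}$, is if anything slightly more explicit than the paper, which leaves that point implicit.
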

\begin{proof}
By construction of $\delta_{\Gamma}^{\natural, \an}$, we can choose $\varphi_D$ so that for any $f \in M_2(\Gamma)$, we have
$$\varphi_D(f) = \sum_{c \in \mathcal{C}_{\Gamma}} n_c \cdot \mathcal{S}(g_c^{-1})(f) \text{ ,}$$
where we recall that $$\mathcal{S}(g)(f) = 2i\pi \cdot \int_{z_0}^{g(z_0)} f(z)dz - 2i\pi\cdot  z_0\cdot (a_0(f\mid g)-a_0(f)) + 2i\pi\cdot \int_{z_0}^{i\infty} \left((f\mid g)(z)-a_0(f\mid g)\right) - \left(f(z) - a_0(f)\right) dz $$
for any $g \in \SL_2(\mathbf{Z})$ and $f \in M_2(\Gamma)$.

We can simplify this formula when $f = \frac{1}{2i\pi}\frac{v'}{v}$, where we view $v$ as a function on the upper-half plane $\mathfrak{h}$. For any $c \in \mathcal{C}_{\Gamma}$, let $v_c = \frac{v}{u_c^{\ord_c(v)}} \circ g_c^{-1}$. Fix a logarithm of $v_c$ on $\mathfrak{h}$, denoted by $\log(v_c)$ (so by definition we have $\exp(\log(v_c))=v_c$ on $\mathfrak{h}$). We also fix a logarithm $\log(v)$ of $v$ on $\mathfrak{h}$; note that $\log(v)' = 2i\pi f$. For all $c \in \mathcal{C}_{\Gamma}$, we easily see that $a_0(f\mid g_c^{-1}) = \frac{1}{e_{c}}\cdot \ord_{c}(v)$ and $2i \pi \cdot (f\mid g_c^{-1})-2i\pi \cdot a_0(f\mid g_c^{-1}) = \log(v_c)'$. Thus, we have:
\begin{align*}
\mathcal{S}(g_c^{-1})(f) &= \log(v)(g_c^{-1}(z_0)) - \log(v)(z_0) + 2i\pi \cdot z_0\cdot (\frac{1}{e_c}\cdot \ord_c(v)-\frac{1}{e_{\Gamma \infty}}\cdot \ord_{\Gamma \infty}(v)) \\& + \log(v_c)(i\infty)-\log(v_c)(z_0)-\log(v)(i\infty) + \log(v)(z_0) \text{ .}
\end{align*}
Since $\sum_{c \in \mathcal{C}_{\Gamma}} n_c = 0$, we have
\begin{align*}
\sum_{c \in \mathcal{C}_{\Gamma}} n_c\cdot \mathcal{S}(g_c^{-1})(f) = \sum_{c \in \mathcal{C}_{\Gamma}} n_c\cdot \left( \log(v)(g_c^{-1}(z_0))  + 2i\pi \cdot z_0\cdot \frac{1}{e_c}\cdot \ord_c(v) + \log(v_c)(i\infty)-\log(v_c)(z_0) \right) \text{ .}
\end{align*}
Thus, we have in $\mathbf{C}^{\times}/\mu_N$:
\begin{align*}
\exp(\varphi_D(\frac{1}{2i\pi}d\log(v))) &= \prod_{c \in \mathcal{C}_{\Gamma}} \left(v(g_{c}^{-1}(z_0))\cdot \exp(2i\pi \cdot z_0\cdot \frac{1}{e_c}\cdot \ord_c(v))\cdot v_c(i\infty)\cdot v_c(z_0)^{-1}\right)^{n_c} \\&
=  \prod_{c \in \mathcal{C}_{\Gamma}} v_c(i\infty)^{n_c} \\&= \prod_{c \in \mathcal{C}_{\Gamma}} (\frac{v}{t_c^{\ord_c(v)}})^{n_c}(c) \text{ .}
\end{align*}
This concludes the proof of Lemma \ref{main_thm_comparison_lemma_2}.
\end{proof}
\begin{lem}\label{main_thm_comparison_lemma_3}
Let $(\lambda_c)_{c \in \mathcal{C}_{\Gamma}} \in \prod_{c \in \mathcal{C}_{\Gamma}} \mathbf{C}^{\times}$. Let $\varphi \in \Hom_{\mathbf{C}}(M_2(\Gamma),\mathbf{C})$ whose class in $J_{\Gamma}^{\#}(\mathbf{C})$ corresponds to $(\lambda_c)_{c \in \mathcal{C}_{\Gamma}}$ via the generalized Abel-Jacobi isomorphism $AJ_{\Gamma}$. For any modular unit $v$ of $X_{\Gamma}$, we have
$$\exp(\varphi(\frac{1}{2i\pi}d\log(v))) = \prod_{c \in \mathcal{C}_{\Gamma}} \lambda_c^{-\ord_c(v)} \text{ .}$$
\end{lem}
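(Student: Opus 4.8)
The plan is to read off both sides from the analytic uniformization $\AJ_\Gamma$, after replacing $\varphi$ by a convenient representative and making explicit how the torus $T(\mathbf{C}):=\ker\big(J_\Gamma^{\#}(\mathbf{C})\to J_\Gamma(\mathbf{C})\big)$ embeds into $\Hom_{\mathbf{C}}(M_2(\Gamma),\mathbf{C})/H_1(Y_\Gamma,\mathbf{Z})$. First I would normalise $\varphi$. Since the class of $\varphi$ lies in $T(\mathbf{C})$, it maps to $0$ in $J_\Gamma(\mathbf{C})=\Hom_{\mathbf{C}}(S_2(\Gamma),\mathbf{C})/H_1(X_\Gamma,\mathbf{Z})$; as $H_1(Y_\Gamma,\mathbf{Z})\to H_1(X_\Gamma,\mathbf{Z})$ is surjective by (\ref{boundary_exact_seq_2}), I may subtract from $\varphi$ a suitable element of $H_1(Y_\Gamma,\mathbf{Z})$ so that $\varphi$ vanishes on $S_2(\Gamma)$, i.e. $\varphi\in\Hom_{\mathbf{C}}(\mathcal{E}_2(\Gamma),\mathbf{C})$ extended by zero. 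This changes neither side: the class in $J_\Gamma^{\#}(\mathbf{C})$ is unaffected, and for $\gamma\in H_1(Y_\Gamma,\mathbf{Z})$ the period $\gamma\big(\tfrac{1}{2i\pi}d\log(v)\big)=\int_\gamma d\log(v)$ lies in $2i\pi\mathbf{Z}$, so $\exp\big(\varphi(\tfrac{1}{2i\pi}d\log(v))\big)$ is unchanged.

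Next I would describe $T(\mathbf{C})$ through residues. The residue map $\mathcal{E}_2(\Gamma)\to\{(r_c)_c\in\mathbf{C}^{\mathcal{C}_\Gamma}:\sum_c r_c=0\}$, $f\mapsto(\Res_c(2i\pi f\,dz))_c$, is an isomorphism: it is injective because a third-kind differential with vanishing residues is holomorphic, hence lies in $S_2(\Gamma)\cap\mathcal{E}_2(\Gamma)=0$, and both spaces have dimension $c(\Gamma)-1$. Dualising gives $\Hom_{\mathbf{C}}(\mathcal{E}_2(\Gamma),\mathbf{C})\cong\mathbf{C}^{\mathcal{C}_\Gamma}/\mathbf{C}\cdot(1,\dots,1)$, so I can write $\varphi(f)=\sum_{c\in\mathcal{C}_\Gamma}\mu_c\cdot\Res_c(2i\pi f\,dz)$ for some $(\mu_c)_c$ well defined modulo the diagonal. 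Under $\AJ_\Gamma$ the sublattice $\ker\big(H_1(Y_\Gamma,\mathbf{Z})\to H_1(X_\Gamma,\mathbf{Z})\big)$, spanned by the small loops around the cusps, corresponds to $2i\pi\mathbf{Z}^{\mathcal{C}_\Gamma}$ modulo the diagonal, so coordinatewise exponentiation identifies the image of $\varphi$ in $T(\mathbf{C})$ with $(\exp\mu_c)_c\in(\mathbf{C}^\times)^{\mathcal{C}_\Gamma}/\mathbf{C}^\times$. The one point requiring genuine care is to check that this residue/exponential identification of $T(\mathbf{C})$ agrees with the algebraic coordinates $(\lambda_c)_c$ coming from the defining exact sequence of $J_\Gamma^{\#}$ and the description (\ref{iso_gen_jac_alg}) with the uniformizers $t_c$; this comparison also fixes a sign, and I expect it to be the main obstacle.

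Granting the identification $(\lambda_c)_c=(\exp\mu_c)_c$, the computation is immediate: since $\Res_c\big(2i\pi\cdot\tfrac{1}{2i\pi}d\log(v)\,dz\big)=\Res_c\big(\tfrac{dv}{v}\big)=\ord_c(v)$, we get $\varphi\big(\tfrac{1}{2i\pi}d\log(v)\big)=\sum_{c}\mu_c\,\ord_c(v)$ and hence $\exp\big(\varphi(\tfrac{1}{2i\pi}d\log(v))\big)=\prod_c\lambda_c^{\ord_c(v)}$ up to sign. The remaining minus sign, giving $\prod_c\lambda_c^{-\ord_c(v)}$, reflects that the character of $T$ attached to $v$ is $\sum_c\ord_c(v)\,(c)\in\mathbf{Z}[\mathcal{C}_\Gamma]^0\cong X^*(T)$ evaluated on the torus point with the orientation built into (\ref{iso_gen_jac_alg}); concretely it is the tame-symbol normalisation, and I would confirm it either by a local computation at a single cusp or by reducing to the two-cusp model $X=\mathbf{P}^1$, $Y=\mathbf{G}_m$ where everything is explicit. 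The Weil reciprocity law provides the clean bookkeeping for this sign and for the compatibility of the two descriptions of $J_\Gamma^{\#}$.
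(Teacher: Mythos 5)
Your reductions are all sound as far as they go: normalising $\varphi$ by an element of $H_1(Y_{\Gamma},\mathbf{Z})$ so that it kills $S_2(\Gamma)$ (legitimate, since periods of $d\log(v)$ over integral cycles lie in $2i\pi\mathbf{Z}$), parametrising $\Hom_{\mathbf{C}}(\mathcal{E}_2(\Gamma),\mathbf{C})$ by residue-dual coordinates $(\mu_c)$ modulo the diagonal, and identifying the lattice of small loops around the cusps with $2i\pi\mathbf{Z}^{\mathcal{C}_{\Gamma}}$ modulo the diagonal. But the step you explicitly ``grant'' --- that the algebraic coordinates $(\lambda_c)$ of a torus point, defined through (\ref{iso_gen_jac_alg}), agree (with the right sign) with your analytic coordinates $(\exp\mu_c)$ --- is not bookkeeping: it \emph{is} the lemma, and in fact slightly more than the lemma, since the divisors of modular units only span a finite-index subgroup of $\mathbf{Z}[\mathcal{C}_{\Gamma}]^0$, so you have reduced the statement to a strictly stronger comparison without proving it. Neither of your two suggestions closes the gap as stated: there is no purely local computation at a single cusp, because $(\exp\mu_c)$ is defined through $\AJ_{\Gamma}$, i.e.\ through global integration, and a class $0\oplus(\lambda_c)_c$ only becomes accessible to $\AJ_{\Gamma}$ after being rewritten modulo $K^{\times}$ as the class of an honest divisor supported on $Y_{\Gamma}$; and the two-cusp model $(\mathbf{P}^1,\mathbf{G}_m)$ is not a modular curve, so the general case does not formally reduce to it.

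That rewriting is exactly how the paper argues, and it is short: choose a meromorphic function $f$ on $X_{\Gamma}$ with $\operatorname{div}(f)=\sum_i m_i(Q_i)$ supported on $Y_{\Gamma}$ and $f(c)=\lambda_c$ for every cusp $c$; by (\ref{iso_gen_jac_alg}) the class of $0\oplus(\lambda_c)_c$ is then the class of $-\operatorname{div}(f)$, whence $\varphi(\tfrac{1}{2i\pi}d\log(v))=-\int_{\operatorname{div}(f)}d\log(v)$ in $\mathbf{C}/2i\pi\mathbf{Z}$. Exponentiating path by path gives $\exp\bigl(\int_{\operatorname{div}(f)}d\log(v)\bigr)=\prod_i v(Q_i)^{m_i}$, and Weil reciprocity --- which in your sketch appears only as ``clean bookkeeping'' but here carries the entire weight --- converts this into $\prod_c f(c)^{\ord_c(v)}=\prod_c\lambda_c^{\ord_c(v)}$; the minus sign in the lemma falls out automatically rather than requiring separate confirmation. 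If you want to complete your residue-coordinate strategy instead, the honest route is infinitesimal: both the algebraic inclusion of the torus into $J_{\Gamma}^{\#}(\mathbf{C})$ (transported by $\AJ_{\Gamma}$) and your map $(\mu_c)\mapsto(\exp\mu_c)$ are holomorphic homomorphisms from a connected torus, so it would suffice to compare their differentials at the identity via $\operatorname{Lie}(J_{\Gamma}^{\#})\cong\Hom_{\mathbf{C}}(M_2(\Gamma),\mathbf{C})$ and the residue pairing --- but that computation still has to be carried out, and your proposal does not contain it.
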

\begin{proof}
Let $f$ be a meromorphic function on $X_{\Gamma}$ whose divisor $\text{div}(f)$ is supported on $Y_{\Gamma}$, and such that for all $c \in \mathcal{C}_{\Gamma}$ we have $f(c) = \lambda_c$. We write $\text{div}(f) = \sum_{i=1}^{m} m_i(Q_i)$ for some $Q_i\in Y_{\Gamma}$ and $m_i \in \mathbf{Z}$. By the construction of $J_{\Gamma}^{\#}$ in paragraph \ref{subsection_alg_def}, we have in $\mathbf{C}/2i\pi \mathbf{Z}$:
$$\varphi(\frac{1}{2i\pi}d\log(v)) = -\int_{\text{div}(f)} d\log(v)(z)dz \text{ ,}$$
where the integral is over any $1$-chain with boundary $\text{div}(f)$. The following computation was given to us by the mathoverflow user abx in his answer \cite{MO_abel-jacobi}. Such a $1$-chain can be written as a linear combination of paths $\gamma_j$ from $Q_{j_1}$ to $Q_{j_2}$ for some indexes $j_1$ and $j_2$. By choosing a determination of $\log(v)$ along the path $\gamma_j$, we get:
$$\exp(\int_{\gamma_j} d\log(v)) = \frac{v(Q_{j_1})}{v(Q_{j_2})} \text{ .}$$
Consequently, we have:
$$\exp(\int_{\text{div}(f)} d\log(v)(z)dz) = \prod_{i=1}^m v(Q_i)^{m_i} \text{ .}$$
By Weil's reciprocity law, we have:
$$ \prod_{i=1}^m v(Q_i)^{m_i} = \prod_{c \in \mathcal{C}_{\Gamma}} f(c)^{\ord_c(v)} = \prod_{c \in \mathcal{C}_{\Gamma}} \lambda_c^{\ord_c(v)} \text{ .}$$
This concludes the proof of Lemma \ref{main_thm_comparison_lemma_3}.
\end{proof}

Let $u$ be a modular unit of $Y_{\Gamma}$. Using the generalized Abel-Jacobi isomorphism, lift $\delta_{\Gamma}^{\natural, \alg}(\text{div}(u))$ (resp. $\delta_{\Gamma}^{\natural, \an}(\text{div}(u))$)  to an element $\varphi_u^{\alg}$ (resp. $\varphi_u^{\an}$) of $\Hom_{\mathbf{C}}(M_2(\Gamma), \mathbf{C})$.
By Lemmas \ref{main_thm_comparison_lemma_1} and \ref{main_thm_comparison_lemma_3}, for any modular unit $v$ we have in $\mathbf{C}^{\times}/\mu_{N}$:
$$\exp(\varphi_u^{\alg}(\frac{1}{2i\pi}d\log(v))) = \prod_{c \in \mathcal{C}_{\Gamma}} (\frac{u}{t_c^{\ord_c(u)}}(c))^{\ord_c(v)} \text{ .}$$
On the other hand, by Lemma \ref{main_thm_comparison_lemma_2} we have in $\mathbf{C}^{\times}/\mu_{N}$:
$$\exp(\varphi_u^{\an}(\frac{1}{2i\pi}d\log(v))) = \prod_{c \in \mathcal{C}_{\Gamma}} (\frac{v}{t_c^{\ord_c(v)}}(c))^{\ord_c(u)} \text{ .}$$
By Weil reciprocity law, we have in $\mathbf{C}^{\times}/\mu_{N}$:
$$\exp(\varphi_u^{\alg}(\frac{1}{2i\pi}d\log(v)))=\exp(\varphi_u^{\an}(\frac{1}{2i\pi}d\log(v))) \text{ .}$$
By Lemma \ref{main_thm_comparison_lemma_3}, the image of $\varphi_u^{\alg}-\varphi_u^{\an}$ in $J_{\Gamma}^{\natural}(\mathbf{C})$ is equal to the image of $(\lambda_c)_{c \in \mathcal{C}_{\Gamma}}$ in $J_{\Gamma}^{\natural}(\mathbf{C})$ for some $\lambda_c \in \mathbf{C}^{\times}/\mu_{N}$ such that for all modular unit $v$, we have in $\mathbf{C}^{\times}/\mu_{N}$:
$$\prod_{c \in \mathcal{C}_{\Gamma}} \lambda_c^{\ord_c(v)}=1 \text{ .}$$
In particular, there exists $\lambda \in \mathbf{C}^{\times}/\mu_{N}$ such that for all $c \in \mathcal{C}_{\Gamma}$ we have $\lambda_c^n=\lambda$.
This proves that for all modular unit $u$, we have
$$n\cdot \delta_{\Gamma}^{\natural, \alg}(\text{div}(u)) = n\cdot \delta_{\Gamma}^{\natural, \an}(\text{div}(u)) \text{ .}$$
In particular, for all $D \in \mathbf{Z}[\mathcal{C}_{\Gamma}]^0$, we have 
$$n^2\cdot \delta_{\Gamma}^{\natural, \alg}(D) = n^2\cdot \delta_{\Gamma}^{\natural, \an}(D) \text{ .}$$
This concludes the proof of point (\ref{comparison_i}). 

We now prove point (\ref{comparison_ii}). The map $\delta_{\Gamma}^{\natural, \alg} - \delta_{\Gamma}^{\natural, \an}$ takes values in the image of $\prod_{c \in \mathcal{C}_{\Gamma}} \mathbf{C}^{\times}/\mu_{N}$ in $J_{\Gamma}^{\natural}(\mathbf{C})$, and has finite order by (\ref{comparison_i}). The map $\delta_{\Gamma}^{\natural, \alg} - \delta_{\Gamma}^{\natural, \an}$ is invariant by the action of the complex conjugation by assumption, by Proposition \ref{beta_alg_lemma} (\ref{iv_galois_equivariance}) and by Proposition \ref{equivariance_complex_conjug_period}. Since $N$ is assumed to be odd, an element of finite order in $\mathbf{C}^{\times}/\mu_{N}$ fixed by the complex conjugation is equal to $\pm1$. This concludes the proof of Theorem \ref{intro_main_thm_comparison}.
\end{proof}

\begin{rems}\label{remarks_after_comparison}
\begin{enumerate}
\item\label{remarks_after_comparison_i} By Theorem \ref{intro_main_thm_comparison} (\ref{comparison_i}), the map $\delta_{\Gamma}^{\natural, \alg} : \mathbf{Z}[\mathcal{C}_{\Gamma}]^0 \rightarrow J_{\Gamma}^{\natural}(\mathbf{C})$ is injective if and only if $\delta_{\Gamma}^{\natural, \an}$ is injective. We easily see that $\delta_{\Gamma}^{\natural, \an}$ is injective if $\tilde{p}_{\Gamma} \otimes \mathbf{R}$ is an isomorphism. By Theorem \ref{thm_iso_tilde_p}, this is the case if $\Gamma$ has prime power level.
\item It would be interesting to remove the hypotheses of Theorem \ref{intro_main_thm_comparison} (\ref{comparison_ii}). It is not clear to us whether they are necessary.
\item\label{remarks_after_comparison_ii} Assume that $\Gamma$ satisfies the assumptions of Theorem \ref{intro_main_thm_comparison} (\ref{comparison_ii}). Let $J_{\Gamma}^{\flat}$ be the semi-abelian variety over $k$ defined by $J_{\Gamma}^{\flat} = J_{\Gamma}^{\natural}/\prod_{c \in \mathcal{C}_{\Gamma}} \Res_{k(c)/k}(\mu_2)$. The map $\delta_{\Gamma}^{\natural, \alg}$ (or equivalently $\delta_{\Gamma}^{\natural, \an}$) gives a natural group homomorphism $\delta_{\Gamma}^{\flat} : \mathbf{Z}[\mathcal{C}_{\Gamma}]^0 \rightarrow J^{\flat}_{\Gamma}(\mathbf{Q}(\zeta_N))$. If $\Gamma = \Gamma_1(N)$ or $\Gamma = \Gamma_0(N)$ (with $N$ necessarily an odd prime given our assumptions), then by Theorem \ref{intro_main_thm_comparison} (\ref{comparison_ii}) and Theorem \ref{Hecke_Q}, the map $\delta_{\Gamma}^{\flat}$ is $\mathbb{T}$-equivariant. 
\end{enumerate}
\end{rems}

\subsection{Applications to the modular curve $X_0(p)$}\label{paragraph_applications_Gamma_0(p)}
Let $p$ be an odd prime and $\Gamma = \Gamma_0(p)$. Let $w_p$ be the Atkin-Lehner involution acting on $X_0(p) = X_{\Gamma_0(p)}$. Note that we have $\mathcal{C}_{\Gamma_0(p)} = \{\Gamma_0(p) 0, \Gamma_0(p) \infty\}$. These two cusps are defined over $\mathbf{Q}$ and there is a canonical uniformizer at $\Gamma_0(p) \infty$ (resp. $\Gamma_0(p) 0$) given by $j^{-1}$ (resp. $(j \circ w_p)^{-1}$). The order of $(\Gamma_0(p) \infty) - (\Gamma_0(p) 0)$ in $J_{\Gamma_0(p)}$ is $n:=\frac{p-1}{d}$ where $d = \gcd(p-1,12)$. More precisely, $n\cdot((\Gamma_0(p) \infty) - (\Gamma_0(p) 0))$ is the divisor of the modular unit $$u(z) := \left(\frac{\Delta(pz)}{\Delta(z)}\right)^{\frac{1}{d}}$$ where $\Delta \in S_{12}(\SL_2(\mathbf{Z}))$ is Ramanujan's Delta function and $z \in \mathfrak{H}$.

We have an exact sequence of semi-abelian schemes over $\mathbf{Q}$:
$$1 \rightarrow \mathbf{G}_m \rightarrow \mathbf{G}_m \times \mathbf{G}_m \rightarrow J_{\Gamma_0(p)}^{\#} \rightarrow J_{\Gamma_0(p)} \rightarrow 1$$
which induces (by Hilbert 90) an exact sequence of abelian groups
$$1 \rightarrow \mathbf{Q}^{\times} \rightarrow \mathbf{Q}^{\times}\times \mathbf{Q}^{\times}  \rightarrow J_{\Gamma_0(p)}^{\#}(\mathbf{Q}) \rightarrow J_{\Gamma_0(p)}(\mathbf{Q}) \rightarrow 1 \text{ .}$$
By convention, the first copy of $\mathbf{G}_m$ in $\mathbf{G}_m \times \mathbf{G}_m$ corresponds to the cusp $\Gamma_0(p) \infty$, while the second copy corresponds to $\Gamma_0(p) 0$. Although we shall not use it, in contrast with the case of $J_{\Gamma_0(p)}(\mathbf{Q})$, the torsion subgroup of $J_{\Gamma_0(p)}^{\#}(\mathbf{Q})$ is the image of $\{\pm 1\} \times \{\pm 1\}$ \cite[Theorem 1.1.3]{Yamazaki}.

\subsubsection{The generalized cuspidal $1$-motive}
In this case, we can be a little more precise both on the algebraic and analytic sides. 
Let $$\delta_{\Gamma_0(p)}^{\#, \alg} : \mathbf{Z}[\mathcal{C}_{\Gamma_0(p)}]^0 \rightarrow J_{\Gamma_0(p)}^{\#}(\mathbf{Q})$$ be the map sending $(\Gamma_0(p) \infty)-(\Gamma_0(p) 0)$ to the image of $0 \oplus (j^{-1} \text{ modulo } U_{\Gamma_0(p) \infty}, j \circ w_p \text{ modulo } U_{\Gamma_0(p) 0})$ in $J_{\Gamma_0(p)}^{\#}(\mathbf{Q})$ via (\ref{iso_gen_jac_alg}).
Let  $$\delta_{\Gamma_0(p)}^{\#, \an} : \mathbf{Z}[\mathcal{C}_{\Gamma_0(p)}]^0 \rightarrow J_{\Gamma_0(p)}^{\#}(\mathbf{C})$$ be the map sending $(\Gamma_0(p) \infty)-(\Gamma_0(p) 0)$ to the image of $(f \mapsto -L(f,1)) \in \Hom_{\mathbf{C}}(M_2(\Gamma_0(p)), \mathbf{C})$ via the Abel-Jacobi map $AJ_{\Gamma_0(p)}$ (where $L(f,1)$ is the special value at $s=1$ of the L-function of $f$). 
We now prove Theorem \ref{intro_main_comparison_Gamma_0_case}.

\begin{proof}
Proof of (\ref{main_comparison_Gamma_0_case_i}). Note that $\mathbf{Z}[\mathcal{C}_{\Gamma_0(p)}]$ is annihilated by Mazur's \textit{Eisenstein ideal} $I$, generated by the Hecke operators $T_{\ell}-\ell-1$ for primes $\ell \neq p$ and by $U_p-1$. Thus, we only need to show that $\delta_{\Gamma_0(p)}^{\#, \alg}$ and $\delta_{\Gamma_0(p)}^{\#, \an}$ are annihilated by $I$. 

We first consider the map $\delta_{\Gamma_0(p)}^{\#, \alg}$. Let $\ell \neq p$ be a prime number, and consider the usual double coset $\Gamma_0(p) \begin{pmatrix}1 & 0\\ 0 & p \end{pmatrix} \Gamma_0(p) =\Gamma_0(p)g_{\infty} \bigcup_{i=0}^{\ell-1} \Gamma_0(p)\cdot g_i$ where $g_i = \begin{pmatrix} 1 & i \\ 0 & \ell \end{pmatrix}$ and $g_{\infty} = \begin{pmatrix} \ell & 0 \\ 0 & 1 \end{pmatrix}$.  Since in our case the Hecke operators are self-dual and fix the two cusps, the action of $T_{\ell}$ on the image of a point $0 \oplus (f_c)_{c \in \mathcal{C}_{\Gamma_0(p)}} \in \Div^0(X_0(p),\mathcal{C}_{\Gamma_0(p)})$ in $J_{\Gamma_0(p)}^{\#}$ is the image of $$0 \oplus (\prod_{i \in \{0, ..., \ell-1, \infty\}}f_c\circ g_i)_{c \in \mathcal{C}_{\Gamma_0(p)}} \text{ .}$$ If $\ell > 2$, then we are done since 
$$\prod_{i \in \{0, ..., \ell-1, \infty\}}(j \circ g_i)(z) \sim e^{2i\pi \ell z}\cdot \prod_{i=0}^{\ell-1} e^{2i \pi \frac{z+i}{\ell}} = (e^{2i \pi z})^{\ell+1} \sim j(z)^{\ell+1}$$
and similarly for $j\circ w_p$. If $\ell=2$, we find
$$\prod_{i \in \{0, ..., \ell-1, \infty\}}(j \circ g_i)(z) \sim -j(z)^{\ell+1}$$ and 
$$\prod_{i \in \{0, ..., \ell-1, \infty\}}(j\circ w_p \circ g_i)(z) \sim -(j\circ w_p)(z)^{\ell+1} \text{ .}$$
But the image of $0 \oplus (f_c)_{c \in \mathcal{C}_{\Gamma_0(p)}}$ in $J_{\Gamma_0(p)}^{\#}$ is the same as the image of $0 \oplus (\lambda \cdot f_c)_{c \in \mathcal{C}_{\Gamma_0(p)}}$ for any scalar $\lambda$. Thus, we have proved that $T_{\ell}-\ell-1$ annihilates $\delta_{\Gamma_0(p)}^{\#}$. The case $\ell=p$ is similar.

We now consider the map $\delta_{\Gamma_0(p)}^{\#, \an}$. Let $T$ be a Hecke operator in the Eisenstein ideal $I$.  We need to show that the map $f \mapsto -L(T f, 1) \in \Hom_{\mathbf{C}}(M_2(\Gamma_0(p)), \mathbf{C})$ comes from the integration of an element in $H_1(Y_0(p), \mathbf{Z})$. If $f=E$ is the unique Eisenstein series of $M_2(\Gamma_0(p))$, then $T f =0$ by definition. If $f$ is a cusp form, then there is a unique element $e$ in $H_1(Y_0(p), \mathbf{Q})^+$ (the so-called \textit{winding element} of Mazur) such that $-L(f,1) = \int_{e} 2i\pi f(z)dz$. We know that $Te \in H_1(Y_0(p), \mathbf{Z})^+$. In particular, we have $\int_{Te} 2i\pi E(z)dz = 0$ (this is true for any cycle in $H_1(Y_0(p), \mathbf{Z})^+$ since $2i\pi E(z)dz$ is the logarithmic derivative of the modular unit $u(z)$). Thus, the map $f \mapsto -L(T f, 1)$ of $\Hom_{\mathbf{C}}(M_2(\Gamma_0(p)), \mathbf{C})$ coincides with the map $f \mapsto \int_{Te} 2i\pi f(z)dz$, which concludes the proof of (\ref{main_comparison_Gamma_0_case_i}) since $Te \in H_1(Y_0(p), \mathbf{Z})$.

The proof of (\ref{main_comparison_Gamma_0_case_ii}) and (\ref{main_comparison_Gamma_0_case_iii}) is essentially a particular case of the proof of Theorem \ref{intro_main_thm_comparison}, but we give the details for the convenience of the reader.

Proof of (\ref{main_comparison_Gamma_0_case_ii}). By definition, $n \cdot \delta_{\Gamma_0(p)}^{\#, \alg}((\Gamma_0(p) \infty)-(\Gamma_0(p) 0))$ is the image of $((\frac{j^{-n}}{u})(\Gamma_0(p) \infty), (\frac{(j\circ w_p)^{n}}{u})(\Gamma_0(p) 0))$ in $J_{\Gamma_0(p)}^{\#}(\mathbf{Q})$. By the $q$-expansion product formula for $u$, we see that $(\frac{j^{-n}}{u})(\Gamma_0(p) \infty) =1$. On the other hand, it is well-known that $u \circ w_p = p^{-\frac{12}{d}} \cdot u^{-1}$ (this is noted for instance in \cite[p. 471]{deShalit}). Thus, $(\frac{(j\circ w_p)^{n}}{u})(\Gamma_0(p) 0) = (\frac{j^{n}}{u\circ w_p})(\Gamma_0(p) \infty) = p^{\frac{12}{d}}\cdot (\frac{j^{-n}}{u})^{-1}(\Gamma_0(p) \infty) = p^{\frac{12}{d}}$.

Proof of (\ref{main_comparison_Gamma_0_case_iii}). It suffices to prove that $n \cdot  \delta_{\Gamma_0(p)}^{\#, \alg} = n \cdot  \delta_{\Gamma_0(p)}^{\#, \an}$. The element $n \cdot  \delta_{\Gamma_0(p)}^{\#, \an}((\Gamma_0(p) \infty)-(\Gamma_0(p) 0))$ of $J_{\Gamma_0(p)}^{\#}(\mathbf{C})$ is the image of $(\lambda_{\infty}, \lambda_0) \in \mathbf{R}^{\times} \times \mathbf{R}^{\times}$. By Lemma \ref{main_thm_comparison_lemma_3}, we have $$\lambda_{\infty}^{n} \cdot \lambda_0^{-n} = \exp(n\cdot L(d\log(u), 1)) \text{ .}$$ We know that $d\log(u)$ is the Eisenstein series $$E = \frac{p-1}{d} + \frac{24}{d}\cdot \sum_{k=0}^{\infty} \left(\sum_{m \mid k\atop \gcd(m,p)=1} m \right)q^k \in M_2(\Gamma_0(p)) \text{ .}$$ Thus, we have $L(d\log(u), s) = \frac{24}{d}\cdot (1-p^{1-s})\zeta(s-1)\zeta(s)$, so $L(d\log(u), 1) = -\frac{12}{d}\cdot \log(p)$ and $\lambda_{\infty}^{n} \cdot \lambda_0^{-n} = p^{-n\cdot \frac{12}{d}}$. Thus, we have $\lambda_{\infty} \cdot \lambda_0^{-1} = \epsilon \cdot p^{-\frac{12}{d}}$ where $\epsilon \in \{-1,1\}$ and $\epsilon=1$ if $n$ is odd. This concludes the proof of (\ref{main_comparison_Gamma_0_case_ii}).
\end{proof}
\begin{rem}
It would be interested to remove the sign ambiguity in Theorem \ref{intro_main_comparison_Gamma_0_case} (\ref{main_comparison_Gamma_0_case_iii}).
\end{rem}

\subsubsection{Comparison with de Shalit's extended $p$-adic period pairing}\label{section_deShalit}
We now prove Theorem \ref{intro_main_thm_comparison_p_adic}.
\begin{proof}
The proof makes use of de Shalit's explicit construction of $Q$, so we will use the results and notation of \cite{deShalit} (especially \S 1.5) without recalling them. Let $f$ be a meromorphic function on $X_0(p)$, defined over $\mathbf{Q}$, and such that $f \sim j^{-1}$ at the cusp $\Gamma_0(p) \infty$ and $f \sim j\circ w_p$ at the cusp $\Gamma_0(p) 0$. Write 
$$\text{div}(f) = D + (\Gamma_0(p) \infty) - (\Gamma_0(p) 0)$$
where $D = \sum_{i=1}^m n_i (P_i)$ is a divisor supported on $Y_0(p)$. Then, by definition, $\delta^{\#, \alg}( (\Gamma_0(p) \infty) - (\Gamma_0(p) 0))$ is the class of $-D$ in $J_{\Gamma_0(p)}^{\#}(\mathbf{Q})$. Recall that de Shalit denotes by $\Gamma$ a $p$-adic Schottky group uniformizing $X_0(p)$, $\mathfrak{H}_{\Gamma}$ the associated $p$-adic upper-half plane and $\tau : \mathfrak{H}_{\Gamma} \rightarrow X_0(p)(\mathbf{C}_p)$ the $p$-adic uniformization map. Thus, as a $\Gamma$-invariant function on $\mathfrak{H}_{\Gamma}$, we have 
$$f(z)= \lambda \cdot \Theta(\textbf{D}; z) \cdot \Theta(z_{\infty}^{(0)}, z_0^{(0)}; z)$$
where $\lambda \in \mathbf{C}_p^{\times}$, $\textbf{D}$ is some lift of $D$ to $\mathfrak{H}_{\Gamma}$ and $\Theta(\textbf{D}; z)$,  $\Theta(z_{\infty}^{(0)}, z_0^{(0)}; z)$ are the theta functions defined in \cite[\S 0.2]{deShalit}. We can assume that $\textbf{D}$ is disjoint from $\tau^{-1}(\mathcal{C}_{\Gamma_0(p)})$.
By \cite[\S 1.4]{deShalit}, under (\ref{p-adic_uniformization_generalized}), the class of $-D$ corresponds to the group homomorphism $\psi_D : \mathbf{Z}[S]\rightarrow K^{\times}$ given by 
$$\psi_D(e_i)=\frac{\Theta(\textbf{D}; z_0^{(i)})}{\Theta(\textbf{D}; z_{\infty}^{(i)})} \text{ .}$$
On the other hand, by definition \cite[\S 1.5]{deShalit} for all $i,j \in \{0, ..., g\}$ we have 
$$Q(e_j, e_i) = \lim_{z \rightarrow z_0^{(j)} \atop z' \rightarrow z_{\infty}^{(j)}} (j\circ w_p(z))^2\cdot \frac{\Theta(z',z;z_{\infty}^{(i)})}{\Theta(z',z;z_0^{(i)})} $$
where $z$ and $z'$ satisfy the constraint $\tau(z') = w_p(\tau(z))$.
By \cite[\S 0.2 Properties (e)]{deShalit}, we also have
\begin{align*}
Q(e_j, e_i) &= \lim_{z \rightarrow z_0^{(j)} \atop z' \rightarrow z_{\infty}^{(j)}} (j\circ w_p(z))^2\cdot \frac{\Theta(z_{\infty}^{(i)},z_0^{(i)};z')}{\Theta(z_{\infty}^{(i)},z_0^{(i)};z)} \\& = \lim_{z \rightarrow z_0^{(j)} \atop z' \rightarrow z_{\infty}^{(j)}}  \frac{j(z')\cdot \Theta(z_{\infty}^{(i)},z_0^{(i)};z')}{(j \circ w_p)^{-1}(z)\cdot \Theta(z_{\infty}^{(i)},z_0^{(i)};z)} \text{ .}
\end{align*}
On the other hand, we have by assumption:
\begin{align*}
1 &=  \lim_{z \rightarrow z_0^{(i)} \atop z' \rightarrow z_{\infty}^{(i)}} \frac{j(z') \cdot f(z')}{(j\circ w_p)^{-1}(z)\cdot f(z)}
\\& = \psi_D(e_i)^{-1}\cdot Q(e_i,e_0) \text{ .}
\end{align*}
Thus, we have $\psi_D(e_i) = Q(e_i,e_0)$. To conclude the proof of Theorem \ref{intro_main_thm_comparison_p_adic}, it suffices to prove the following result (expected in \cite[\S 1.5]{deShalit}, altough no proof is given).
\begin{prop}\label{symmetry_Q}
The pairing $Q$ is symmetric. 
\end{prop}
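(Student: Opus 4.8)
The plan is to derive the symmetry $Q(e_i,e_j)=Q(e_j,e_i)$ directly from de Shalit's explicit formula, using the reciprocity law for his theta functions together with the compatibility of everything with the Atkin--Lehner involution. The one input I would isolate first is the reciprocity law \cite[\S 0.2]{deShalit}, which in the normalization used above reads
$$\frac{\Theta(a,b;c)}{\Theta(a,b;d)} = \frac{\Theta(c,d;a)}{\Theta(c,d;b)}$$
for points $a,b,c,d \in \mathfrak{H}_{\Gamma}$ in general position; this is exactly the identity already invoked to pass from $\Theta(z',z;z_{\infty}^{(i)})/\Theta(z',z;z_0^{(i)})$ to $\Theta(z_{\infty}^{(i)},z_0^{(i)};z')/\Theta(z_{\infty}^{(i)},z_0^{(i)};z)$ in the computation of $Q(e_i,e_0)$ above. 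Applying it to the defining formula for $Q(e_j,e_i)$ rewrites
$$Q(e_j,e_i) = \lim_{z \to z_0^{(j)},\, z' \to z_{\infty}^{(j)}} (j\circ w_p(z))^2 \cdot \frac{\Theta(z_{\infty}^{(i)},z_0^{(i)};z')}{\Theta(z_{\infty}^{(i)},z_0^{(i)};z)},$$
still subject to the constraint $\tau(z')=w_p\tau(z)$.

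Next I would encode the Atkin--Lehner constraint group-theoretically. Choose a representative $W$ of $w_p$ in the normalizer of $\Gamma$ in $\mathrm{PGL}_2(\mathbf{Q}_p)$ with $\tau \circ W = w_p \circ \tau$; then $\tau(z')=w_p\tau(z)$ means $z'=Wz$ modulo $\Gamma$, and since $w_p$ interchanges the two cusps it interchanges $z_0^{(m)} \leftrightarrow z_{\infty}^{(m)}$ for every supersingular point $e_m$. Because $\Theta$ is built $\Gamma$-invariantly and $W$ normalizes $\Gamma$, one has $\Theta(Wa,Wb;Wc)=\Theta(a,b;c)$ up to the scalars tracked in \cite[\S 0.2]{deShalit}. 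Substituting $z'=Wz$ and using this invariance turns the expression for $Q(e_j,e_i)$ into one in which the indices $i$ and $j$ enter symmetrically: both $Q(e_j,e_i)$ and $Q(e_i,e_j)$ reduce, after a second application of reciprocity, to the same cross-ratio--type quantity attached to the four points $z_0^{(i)},z_{\infty}^{(i)},z_0^{(j)},z_{\infty}^{(j)}$, which is manifestly invariant under swapping the pair $(i)$ with the pair $(j)$.

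I expect the main obstacle to be the regularization. As $z\to z_0^{(j)}$ and $z'=Wz\to z_{\infty}^{(j)}$, both the theta quotient and the factor $(j\circ w_p(z))^2$ degenerate, so one must check that the product converges and that the normalizing factors cancel to leave a symmetric limit. Concretely this requires the leading-order behaviour of $\Theta(z_{\infty}^{(i)},z_0^{(i)};\cdot)$ in the local parameters $j^{-1}$ at $z_{\infty}^{(j)}$ and $(j\circ w_p)^{-1}$ at $z_0^{(j)}$, together with the relation $u\circ w_p = p^{-12/d}\cdot u^{-1}$ recorded above, which governs how the $w_p$-twist rescales these parameters. Once the leading coefficients are shown to match, the two limits agree and symmetry follows.

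Finally, as an independent consistency check (and a possible shortcut for part of the argument), I would note that the restriction $Q^0$ of $Q$ to $\mathbf{Z}[S]^0\times\mathbf{Z}[S]^0$ is the $p$-adic period pairing of $J_{\Gamma_0(p)}$ and is therefore symmetric; by bilinearity this already forces $Q(e_i,e_j)-Q(e_j,e_i)=A_i-A_j$ with $A_m:=Q(e_m,e_0)-Q(e_0,e_m)$, so it would suffice to prove that the antisymmetric edge term $A_m$ is independent of $m$. The explicit identification $\psi_D(e_m)=Q(e_m,e_0)$ established above, combined with the reciprocity computation, is exactly what pins this down.
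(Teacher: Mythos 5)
Your proposal has a genuine gap exactly where the content of the statement lies. Your main route asserts that after applying the reciprocity law and substituting $z'=Wz$ for a matrix $W$ normalizing $\Gamma$, both $Q(e_i,e_j)$ and $Q(e_j,e_i)$ reduce to ``the same cross-ratio-type quantity'' that is manifestly symmetric in $i$ and $j$. This is not so, and you half-concede it in your third paragraph: the limits are taken at degenerate configurations, and moreover the points $z_{\infty}^{(m)}$ are specific $\Gamma$-translates of a single lift ($z_{\infty}^{(i)}=\alpha_i^{-1}(z_{\infty}^{(0)})$, and by the argument of de Shalit's \S 3.1 also $z_i'=\alpha_i^{-1}(z_0')$ --- the paper's proof quotes exactly this bookkeeping, which your step ``$w_p$ interchanges $z_0^{(m)}\leftrightarrow z_{\infty}^{(m)}$'' silently requires). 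When one transports one limiting configuration to the other, whether by $W$ or by the elements $\alpha_j^{-1}\alpha_i\in\Gamma$, M\"obius-invariance of theta \emph{ratios} is not enough: individual theta values pick up the automorphy constants $c(a,b;\alpha)$, and the paper's computation shows the antisymmetry defect is precisely $Q(e_i-e_j,e_i)\cdot c(z_{\infty}^{(i)},z_0^{(i)};\alpha_j^{-1}\alpha_i)^{-1}$. Symmetry then holds only because, by de Shalit's very construction of the periods, $c(z_{\infty}^{(i)},z_0^{(i)};\alpha_j^{-1}\alpha_i)=Q(e_i-e_j,e_i)$. This identification of an automorphy constant with a value of $Q$ is the key lemma; nothing in your outline produces it, and ``check that the normalizing factors cancel'' is exactly the statement to be proved, not a routine verification of leading coefficients.

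Your fallback reduction is correct as far as it goes and is a genuinely nice observation: writing (multiplicatively, since $Q$ is $K^{\times}$-valued) $B(x,y)=Q(x,y)\cdot Q(y,x)^{-1}$, the symmetry of $Q^0$ on $\mathbf{Z}[S]^0\times\mathbf{Z}[S]^0$ (classical for $p$-adic Schottky periods) forces $B(e_i,e_j)=A_i\cdot A_j^{-1}$ with $A_m=B(e_m,e_0)$, and since $A_0=1$, constancy of $A_m$ would give full symmetry. But your proposed closing step fails: the identity $\psi_D(e_m)=Q(e_m,e_0)$ involves $Q$ only with $e_0$ in the \emph{second} slot, so it carries no information about $Q(e_0,e_m)$ and cannot control $A_m$. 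Indeed, in the paper that identity is the reason symmetry is \emph{needed} (to match $\psi_D$ with $q(e_0)=Q(e_0,\cdot)$), not a tool for proving it; establishing $Q(x,e_0)=Q(e_0,x)$ for $x\in\mathbf{Z}[S]^0$ is no easier than the proposition itself and again comes down to the automorphy-constant identity above.
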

\begin{proof}
Recall (\cf \cite[\S 1.2 and 1.3]{deShalit}) that $\Gamma$ is a free group with generators denoted by $\{\alpha_1, ..., \alpha_g\}$, and that for all $i \in \{0, ..., g\}$ we have the relation $z_{\infty}^{(i)} = \alpha_i^{-1}(z_{\infty}^{(0)})$ (with the convention $\alpha_0=1$). To keep track of indices, let $z_i$ (resp. $z_i'$) be what we called $z$ (resp. $z'$) above (so $z_i$ goes to $z_0^{(i)}$ and $z_i'$ goes to $z_{\infty}^{(i)}$). As de Shalit did, we can and do assume that $z_0^{(i)}$ and $z_i$ do not depend on $i$.  By the argument of \cite[\S 3.1]{deShalit}, we have $z_i' = \alpha_i^{-1}(z_0')$. Thus, we have $z_i' = \alpha_i^{-1}\alpha_j(z_j')$ and $z_{\infty}^{(i)} = \alpha_i^{-1}\alpha_j(z_{\infty}^{(j)})$ for all $(i,j) \in \{0, ..., g\}^2$. We have:
\begin{align*}
Q(e_i, e_j) &= \lim_{z_i \rightarrow z_{0}^{(i)} \atop z_i' \rightarrow z_{\infty}^{(i)}} ((j\circ w_p)(z_i))^2 \cdot \frac{\Theta(z_i', z_i, z_{\infty}^{(j)})}{\Theta(z_i', z_i, z_{0}^{(j)})} 
\\&=  \lim_{z_j \rightarrow z_{0}^{(j)} \atop z_j' \rightarrow z_{\infty}^{(j)}} ((j\circ w_p)(z_j))^2 \cdot \frac{\Theta(\alpha_i^{-1}\alpha_j(z_j'), z_j, \alpha_j^{-1}\alpha_i(z_{\infty}^{(i)}))}{\Theta(\alpha_i^{-1}\alpha_j(z_j'), z_j, z_{0}^{(i)})} 
\\&=  \lim_{z_j \rightarrow z_{0}^{(j)} \atop z_j' \rightarrow z_{\infty}^{(j)}} ((j\circ w_p)(z_j))^2 \cdot \frac{\Theta(z_j', z_j, \alpha_j^{-1}\alpha_i(z_{\infty}^{(i)}))}{\Theta(z_j', z_j, z_{0}^{(i)})} \cdot \frac{\Theta(\alpha_i^{-1}\alpha_j(z_j'), z_j', \alpha_j^{-1}\alpha_i(z_{\infty}^{(i)}))}{\Theta(\alpha_i^{-1}\alpha_j(z_j'), z_j', z_{0}^{(i)})}
\\& =  \lim_{z_j \rightarrow z_{0}^{(j)} \atop z_j' \rightarrow z_{\infty}^{(j)}} ((j\circ w_p)(z_j))^2 \cdot \frac{\Theta(z_j', z_j, z_{\infty}^{(i)})}{\Theta(z_j', z_j, z_{0}^{(i)})} \cdot \frac{\Theta(\alpha_i^{-1}\alpha_j(z_j'), z_j', \alpha_j^{-1}\alpha_i(z_{\infty}^{(i)}))}{\Theta(\alpha_i^{-1}\alpha_j(z_j'), z_j', z_{0}^{(i)})} \cdot \frac{\Theta(z_j', z_j, \alpha_j^{-1}\alpha_i(z_{\infty}^{(i)}))}{\Theta(z_j', z_j, z_{\infty}^{(i)})}
\\& = Q(e_j, e_i) \cdot \lim_{z_j \rightarrow z_{0}^{(j)} \atop z_j' \rightarrow z_{\infty}^{(j)}}  \frac{\Theta(\alpha_i^{-1}\alpha_j(z_j'), z_j', \alpha_j^{-1}\alpha_i(z_{\infty}^{(i)}))}{\Theta(\alpha_i^{-1}\alpha_j(z_j'), z_j', z_{0}^{(i)})} \cdot \frac{\Theta(z_j', z_j, \alpha_j^{-1}\alpha_i(z_{\infty}^{(i)}))}{\Theta(z_j', z_j, z_{\infty}^{(i)})} \text{ .}
\end{align*}
By \cite[\S 0.2 Properties]{deShalit}, for any $a$, $b$ $\in \mathfrak{H}_{\Gamma}$ and any $\alpha \in \Gamma$, there exists a constant $c(a,b;\alpha)$ such that for all $z \in \mathfrak{H}_{\Gamma}$ not in $\Gamma \cdot a \cup \Gamma \cdot b$, we have $\Theta(a,b;z) = c(a,b;\alpha) \cdot \Theta(a,b;\alpha z)$.
Thus, we have:
\begin{align*}
\frac{\Theta(\alpha_i^{-1}\alpha_j(z_j'), z_j', \alpha_j^{-1}\alpha_i(z_{\infty}^{(i)}))}{\Theta(\alpha_i^{-1}\alpha_j(z_j'), z_j', z_{0}^{(i)})} &= c(\alpha_i^{-1}\alpha_j(z_j'),z_j'; \alpha_j^{-1}\alpha_i)^{-1} \cdot \frac{\Theta(\alpha_i^{-1}\alpha_j(z_j'), z_j', z_{\infty}^{(i)})}{\Theta(\alpha_i^{-1}\alpha_j(z_j'), z_j', z_{0}^{(i)})}
\end{align*}
and
$$\frac{\Theta(z_j', z_j, \alpha_j^{-1}\alpha_i(z_{\infty}^{(i)}))}{\Theta(z_j', z_j, z_{\infty}^{(i)})}=c(z_j',z_j, \alpha_j^{-1}\alpha_i)^{-1} \text{ .}$$
By construction, we have
$$Q(e_i-e_j, e_i) =  \frac{\Theta(\alpha_i^{-1}\alpha_j(z_j'), z_j', z_{\infty}^{(i)})}{\Theta(\alpha_i^{-1}\alpha_j(z_j'), z_j', z_{0}^{(i)})} \text{ .}$$
Thus we have:
\begin{align*}
Q(e_i,e_j) &= Q(e_j,e_i) \cdot Q(e_i-e_j,e_i)\cdot  \lim_{z_j \rightarrow z_{0}^{(j)} \atop z_j' \rightarrow z_{\infty}^{(j)}} c(\alpha_i^{-1}\alpha_j(z_j'),z_j'; \alpha_j^{-1}\alpha_i)^{-1} \cdot c(z_j',z_j, \alpha_j^{-1}\alpha_i)^{-1}
\\& = Q(e_j,e_i) \cdot Q(e_i-e_j,e_i) \cdot  \lim_{z_j \rightarrow z_{0}^{(j)} \atop z_j' \rightarrow z_{\infty}^{(j)}}  c(\alpha_i^{-1}\alpha_j(z_j'), z_j, \alpha_j^{-1}\alpha_i)^{-1} \\& = Q(e_j,e_i) \cdot Q(e_i-e_j,e_i) \cdot c(z_{\infty}^{(i)}, z_0^{(i)}, \alpha_j^{-1}\alpha_i)^{-1} \text{ .}
\end{align*}
But by construction we have $c(z_{\infty}^{(i)}, z_0^{(i)}, \alpha_j^{-1}\alpha_i)  = Q(e_i-e_j,e_i)$.
Thus, we have proved that $Q(e_i,e_j)=Q(e_j,e_i)$. 
\end{proof}
\end{proof}

We conclude this paragraph by two important properties of $Q$, which follow easily from the work of de Shalit \cite{deShalit}. 

\begin{prop}\label{Q_T_equiv}
\begin{enumerate}
\item\label{Q_T_equiv_i} The pairing $Q : \mathbf{Z}[S] \times \mathbf{Z}[S] \rightarrow K^{\times}$ is $\mathbb{T}$-equivariant, \ie for all $T \in \mathbb{T}$ and $(x,y) \in  \mathbf{Z}[S] \times \mathbf{Z}[S]$, we have $Q(Tx, y) =  Q(x,Ty)$.

\item\label{Q_T_equiv_ii}  Modulo the principal units of $K^{\times}$, the pairing $Q$ takes values in $\mathbf{Q}_p^{\times}$ and is $\Gal(\mathbf{F}_{p^2}/\mathbf{F}_p)$-equivariant, \ie for all  $h \in \Gal(\mathbf{F}_{p^2}/\mathbf{F}_p)$ and $(x,y) \in  \mathbf{Z}[S] \times \mathbf{Z}[S]$, we have $Q(hx, hy) = h(Q(x,y))$ (modulo principal units).
\end{enumerate}
\end{prop}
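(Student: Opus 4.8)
The plan is to treat the two assertions separately, in both cases leaning on de Shalit's explicit theta-function construction of $Q$ in \cite[\S 1.5]{deShalit} together with the symmetry of $Q$ already established in Proposition \ref{symmetry_Q}.

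For part (\ref{Q_T_equiv_i}), the starting point is that the pairing $Q^0$ on $\mathbf{Z}[S]^0$ is already $\mathbb{T}$-equivariant: this is forced by the fact that the Cerednik--Drinfeld uniformization $J_{\Gamma_0(p)}(\mathbf{C}_p) \cong \Hom(\mathbf{Z}[S]^0, \mathbf{C}_p^{\times})/q^0(\mathbf{Z}[S]^0)$ intertwines the Hecke action on $J_{\Gamma_0(p)}$ with the (self-adjoint) action of the Brandt matrices on the supersingular module. Since each generator $T_\ell$ ($\ell\neq p$) scales the degree on $\mathbf{Z}[S]$ by $\ell+1$ and $U_p$ scales it by a constant as well (the diamond operators being trivial at level $p$), every $T\in\mathbb{T}$ preserves $\mathbf{Z}[S]^0$, so $Q(Tx,y)=Q(x,Ty)$ already holds for $x,y\in\mathbf{Z}[S]^0$. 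To promote this to all of $\mathbf{Z}[S]$, I would introduce the bi-additive defect $B(x,y):=Q(Tx,y)\cdot Q(x,Ty)^{-1}\in K^{\times}$. It is multiplicative in each variable and trivial on $\mathbf{Z}[S]^0\times\mathbf{Z}[S]^0$, so it factors through the degree in each variable and is pinned down by the finitely many boundary values $B(e_0,e_j)$ and $B(e_i,e_0)$, where $e_0$ is a fixed degree-one element. The symmetry of Proposition \ref{symmetry_Q} yields $B(x,y)=B(y,x)^{-1}$, whence in particular $B(e_0,e_0)^2=1$. What then remains is to evaluate these boundary defects directly from the limit formula for $Q$ in \cite[\S 1.5]{deShalit}, using that the Hecke correspondence lifts to a correspondence on $\mathfrak{H}_{\Gamma}$ acting on the points $z_0^{(i)},z_\infty^{(i)}$ and on the theta functions $\Theta$; the bookkeeping is entirely parallel to the cancellations carried out in the proof of Proposition \ref{symmetry_Q}, again via the transformation constants $c(a,b;\alpha)$.

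For part (\ref{Q_T_equiv_ii}), I would pass to the special fiber of the Deligne--Rapoport/Cerednik--Drinfeld model of $X_0(p)$, which is defined over $\mathbf{Z}_p$, its mod-$p$ fiber being two copies of the $j$-line $\mathbf{P}^1$ crossing transversally at the supersingular points $S$. Writing $K^{\times}=p^{\mathbf{Z}}\times\mu_{p^2-1}\times(1+\mathfrak{m}_K)$, reduction modulo principal units records the valuation and the Teichm\"uller class. The valuation of $Q$ is, as de Shalit observes \cite[\S 1.6]{deShalit}, essentially the Kronecker/Brandt pairing, which is $\mathbf{Z}$-valued and manifestly $\Gal(\mathbf{F}_{p^2}/\mathbf{F}_p)$-invariant. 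For the Teichm\"uller part, I would reduce the theta functions occurring in $Q(e_i,e_j)$ to rational functions on the special fiber; since the model and its gluing data are $\mathbf{F}_p$-rational and $\Frob_p$ permutes the vertices $S$ compatibly, the resulting leading coefficients lie in $\mathbf{F}_p^{\times}$ and transform by $Q(hx,hy)=h(Q(x,y))$ modulo principal units. Together with part (\ref{Q_T_equiv_i}) this gives both the $\mathbf{Q}_p^{\times}$-valuedness and the claimed equivariance.

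I expect the genuine obstacle to be part (\ref{Q_T_equiv_i}): establishing the identity \emph{exactly in} $K^{\times}$, rather than merely modulo the period lattice $q(\mathbf{Z}[S]^0)$ (as the uniformization (\ref{p-adic_uniformization_generalized}) alone would give) or modulo principal units, forces one to control the boundary theta-constants precisely and to repeat the delicate cancellations of the symmetry argument. Part (\ref{Q_T_equiv_ii}), by contrast, requires only a reduction to the special fiber and is comparatively formal once the $\mathbf{F}_p$-rationality of the Cerednik--Drinfeld model and the action of $\Frob_p$ on $S$ are invoked.
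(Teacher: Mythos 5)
Your architecture for part (\ref{Q_T_equiv_i}) --- introducing the defect $B(x,y)=Q(Tx,y)\cdot Q(x,Ty)^{-1}$, killing it where both slots have degree zero via the uniformization of $J_{\Gamma_0(p)}$, and invoking the symmetry of Proposition \ref{symmetry_Q} --- parallels the paper's argument with the constants $\lambda_{i,j}=Q(Te_i,e_j)/Q(e_i,Te_j)$, but it stops one input short, and that input is exactly the crux. Starting only from equivariance on $\mathbf{Z}[S]^0\times\mathbf{Z}[S]^0$, the defect is pinned down by the boundary values $B(e_i-e_0,e_0)$ (degree-zero first slot against a degree-one element), and symmetry cannot remove these: it gives $B(x,y)=B(y,x)^{-1}$, hence $B(e_0,e_0)=1$ on the diagonal (note that you get equality, not merely $B(e_0,e_0)^2=1$, since $Q(Te_0,e_0)=Q(e_0,Te_0)$ directly by symmetry), but it says nothing about the off-diagonal boundary terms. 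You defer precisely these to an unexecuted ``bookkeeping'' with theta-constants on $\mathfrak{H}_{\Gamma}$, and you yourself flag this as the genuine obstacle --- so the proposal as written does not prove (\ref{Q_T_equiv_i}). Moreover, your stated reason for avoiding the generalized uniformization (that it ``alone would give'' the identity only modulo the period lattice $q(\mathbf{Z}[S]^0)$) rests on a misconception: the period lattice of an analytic uniformization such as (\ref{p-adic_uniformization_generalized}) is canonical and Hecke-stable, so the Hecke-equivariance of (\ref{p-adic_uniformization_generalized}) descends to the lattice itself and yields $Q(Tx,y)=Q(x,Ty)$ \emph{exactly in} $K^{\times}$ for all $x\in\mathbf{Z}[S]^0$ and all $y\in\mathbf{Z}[S]$. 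This stronger statement --- equivariance of the restriction of $Q$ to $\mathbf{Z}[S]^0\times\mathbf{Z}[S]$, which the paper quotes from the generalized-Jacobian interpretation in \cite[\S 2.3]{deShalit_X_1} --- is what makes $\lambda_{i,j}$ (equivalently your boundary defects) independent of $(i,j)$, after which symmetry on the diagonal finishes the proof with no theta-function computation at all.

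For part (\ref{Q_T_equiv_ii}) your special-fiber sketch is likewise only asserted, not carried out, and it misses the simplification that makes the statement formal. The $\mathbf{Q}_p^{\times}$-valuedness modulo principal units is exactly \cite[Lemma 1.7]{deShalit} (you essentially propose to reprove it geometrically); once it is granted, $\Gal(\mathbf{F}_{p^2}/\mathbf{F}_p)$ fixes the values modulo principal units, so the claimed equivariance reduces to $Q(hx,hy)=Q(x,y)$. Since the nontrivial $h$ acts on $S$ as $U_p$ and $U_p^2=1$ on $\mathbf{Z}[S]$, part (\ref{Q_T_equiv_i}) gives $Q(U_px,U_py)=Q(x,U_p^2y)=Q(x,y)$, and one is done --- this $h=U_p$ observation is absent from your proposal, whose Teichm\"uller-part analysis via the special fiber of the Cerednik--Drinfeld model is both heavier and left unproven. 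In short: the reduction scheme for (\ref{Q_T_equiv_i}) is sound but incomplete at the decisive step, and the correct fix is not the delicate theta-constant bookkeeping you anticipate but the exact equivariance of the restricted pairing supplied by the generalized uniformization, from which (\ref{Q_T_equiv_ii}) then follows formally.
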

\begin{proof}
Proof of (\ref{Q_T_equiv_i}). The restriction of $Q \otimes_{\mathbf{Z}} \mathbf{Z}_{\ell}$ to $\mathbf{Z}[S]^0 \times \mathbf{Z}[S]$ (and thus to $\mathbf{Z}[S] \times \mathbf{Z}[S]^0$ by symmetry of $Q$) is known to be $\mathbb{T}$-equivariant, since it has an interpretation in termes of the generalized Jacobian $J^{\sharp}_{\Gamma_0(p)}$ (\cf \cite[\S 2.3]{deShalit_X_1}). It follows that for any $T \in \mathbb{T}$, the quantity $\lambda_{i,j}:=\frac{Q(Te_i, e_j)}{Q(e_i, Te_j)}$ does not depend on $(i,j) \in \{0,.., g\}^2$ (recall that we have denoted $S = \{e_0, ..., e_g\}$). We have $\lambda_{i,i}=1$ by symmetry of $Q$, so for all $(i,j) \in \{0,.., g\}^2$ we have $Q(Te_i, e_j) = Q(e_i, Te_j)$. By bilinearity of $Q$, this proves (\ref{Q_T_equiv_i}).

Proof of (\ref{Q_T_equiv_ii}). The fact that $Q$ takes values in $\mathbf{Q}_p^{\times}$ modulo principal units is \cite[Lemma 1.7]{deShalit}. Let $h$ be the non-trivial element of $\Gal(\mathbf{F}_{p^2}/\mathbf{F}_p)$. For any $i \in \{0, ... g\}$, we have $h(e_i) = U_p(e_i)$ where $U_p$ is the Hecke operator of index $p$. Thus, 
$$Q(h(e_i), h(e_j)) = Q(U_p(e_i), U_p(e_j)) = Q(e_i, U_p^2(e_j)) = Q(e_i, e_j)$$
where we have used (\ref{Q_T_equiv_i}) and the fact that $U_p^2=1$. Since $Q$ takes values in  $\mathbf{Q}_p^{\times}$ modulo principal units, we have $h(Q(e_i,e_j)) = Q(e_i,e_j)$ modulo principal units.
\end{proof}

\begin{rems}
\begin{enumerate}
\item The pairing $Q$ itself should be $\mathbf{Q}_p^{\times}$-valued (and thus automatically $\Gal(\mathbf{F}_{p^2}/\mathbf{F}_p)$-equivariant).
 \item The Hecke-equivariance property is specific to the level $\Gamma_0(p)$. Indeed, an analogue of Oesterl\'e's conjecture at level $\Gamma(2) \cap \Gamma_0(p)$ (basically replacing the $j$-invariant by Legendre $\lambda$ invariant) was proved in \cite{Lecouturier_Betina}. It appears that $Q$ does not commute with the Hecke operator $U_2$ (although it commutes with $T_{\ell}$ if $\ell \neq 2,p$ and with $U_p$).
\end{enumerate}
\end{rems}

\subsubsection{Application to Galois representations}\label{application_Galois_representations}
To conclude this paper, we give an application of our results to the construction of Galois representations. 

Recall that $\mathbb{T}$ is the Hecke algebra acting on $M_2(\Gamma_0(p))$, generated by the Hecke operators $T_{q}$ for primes $q \neq p$ and by $U_p$. Let $\mathbb{T}^0$ be the cuspidal Hecke algebra, acting faithfully on the cuspidal modular forms $S_2(\Gamma_0(p))$. Let $I \subset \mathbb{T}$ be the Eisenstein ideal, generated by the operators $T_{q} - q-1$ ($q\neq p$ prime) and $U_p-1$. A maximal ideal $\mathfrak{m}$ of $\mathbb{T}$ is said to be \textit{Eisenstein} if $I \subset \mathfrak{m}$. We know that $\mathbb{T}/I = \mathbf{Z}$ and that $\mathbb{T}^0/I = \mathbf{Z}/n\mathbf{Z}$ \cite[Proposition II.9.7]{Mazur_Eisenstein} (recall that $n$ is the numerator of $\frac{p-1}{12}$). In particular, the residue characteristic $\ell$ of a maximal Eisenstein ideal divides $\frac{p-1}{12}$, and such a maximal ideal is unique.

Let $\mathcal{G} = J_{\Gamma_0(p)}^{\#}(\overline{\mathbf{Q}})/\text{Im}(\delta^{\#, \alg})$, where $\text{Im}(\delta^{\#, \alg})$ is the image of $\delta^{\#, \alg}$. If $\ell$ is a prime, let $\mathcal{V}_{\ell}$ be the  $\ell$-adic Tate module of $\mathcal{G}$, \ie $\mathcal{V}_{\ell} := \varprojlim_{n} \mathcal{G}[\ell^n]$. This is a $\mathbb{T}[\Gal(\overline{\mathbf{Q}}/\mathbf{Q})]$-module. Note that $\mathcal{V}_{\ell}$ is also the $\ell$-adic Tate module of $J_{\Gamma_0(p)}^{\#}(\mathbf{C})/\text{Im}(\delta^{\#, \an})$ if $\ell \neq 2$ and of $J_{\Gamma_0(p)}^{\#}(\overline{\mathbf{Q}}_p)/\text{Im}(\delta^{\#, p-\text{adic}})$ (for all prime $\ell$) by Theorems \ref{intro_main_comparison_Gamma_0_case} and \ref{intro_main_thm_comparison_p_adic} respectively. If $\mathfrak{m}$ is a maximal ideal of residue characteristic $\ell$ of $\mathbb{T}$, let $\mathbb{T}_{\mathfrak{m}}$ and $\mathcal{V}_{\mathfrak{m}}:=\mathcal{V}_{\ell} \otimes_{\mathbb{T} \otimes_{\mathbf{Z}} \mathbf{Z}_{\ell}} \mathbb{T}_{\mathfrak{m}}$ be the $\mathfrak{m}$-adic completion of $\mathbb{T}$ and $\mathcal{V}_{\ell}$ respectively. 

\begin{prop}\label{multiplicity_one_V}
The $\mathbb{T}_{\mathfrak{m}}$-module $\mathcal{V}_{\mathfrak{m}}$ is free of rank $2$ if and only if $J_0(p)[\mathfrak{m}]$ is free of rank $2$ over $\mathbb{T}^0/\mathfrak{m}$ (where we view abusively $\mathfrak{m}$ as an ideal of $\mathbb{T}^0$). By Mazur \cite[Lemma II.15.1 and Corollary II.15.2]{Mazur_Eisenstein}, the latter assertion is always true, except possibly if $\mathfrak{m}$ is a non-Eisenstein maximal ideal of characteristic $2$ and $\mathfrak{m}$ is ordinary (\ie the image of $U_p$ in $\mathbb{T}^0/\mathfrak{m}$ is non-zero).
\end{prop}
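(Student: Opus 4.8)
The plan is to extract the weight filtration of $\mathcal{V}_{\ell}$ from the $1$-motive $[\mathbf{Z} \xrightarrow{\delta^{\#, \alg}} J_{\Gamma_0(p)}^{\#}]$, to recognize its graded pieces as the Eisenstein and cuspidal parts of the Hecke module, and thereby reduce the freeness of $\mathcal{V}_{\mathfrak{m}}$ to the classical multiplicity one statement for $J_{\Gamma_0(p)}$. To build the filtration, note that $J_{\Gamma_0(p)}^{\#}(\overline{\mathbf{Q}})$ is divisible and, by Theorem \ref{intro_main_comparison_Gamma_0_case} (\ref{main_comparison_Gamma_0_case_ii}), the image of $\delta^{\#, \alg}$ is a free subgroup of rank one (its generator maps to $(1, p^{12/d})$, hence has infinite order). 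Applying the snake lemma to multiplication by $\ell^n$ on $0 \to \mathbf{Z} \to J_{\Gamma_0(p)}^{\#}(\overline{\mathbf{Q}}) \to \mathcal{G} \to 0$ and passing to the limit yields a short exact sequence of $\mathbb{T}[\Gal(\overline{\mathbf{Q}}/\mathbf{Q})]$-modules
\begin{equation*}
0 \to T_{\ell} J_{\Gamma_0(p)}^{\#} \to \mathcal{V}_{\ell} \to \mathbf{Z}_{\ell} \to 0 \text{ ,}
\end{equation*}
while the semi-abelian structure $1 \to \mathbf{G}_m \to J_{\Gamma_0(p)}^{\#} \to J_{\Gamma_0(p)} \to 1$ (the torus being the quotient $\mathbf{G}_m \times \mathbf{G}_m / \mathbf{G}_m$) gives
\begin{equation*}
0 \to \mathbf{Z}_{\ell}(1) \to T_{\ell} J_{\Gamma_0(p)}^{\#} \to T_{\ell} J_{\Gamma_0(p)} \to 0 \text{ .}
\end{equation*}
Because $\mathbf{Z}[\mathcal{C}_{\Gamma_0(p)}]$ is annihilated by the Eisenstein ideal $I$, both outer graded pieces $\mathbf{Z}_{\ell}(1)$ and $\mathbf{Z}_{\ell}$ are Eisenstein, whereas the middle piece $T_{\ell} J_{\Gamma_0(p)}$ is a faithful $\mathbb{T}^0$-module; since completing at $\mathfrak{m}$ is flat, this three-step filtration survives after $\otimes_{\mathbb{T} \otimes \mathbf{Z}_{\ell}} \mathbb{T}_{\mathfrak{m}}$.

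If $\mathfrak{m}$ is non-Eisenstein, the outer pieces are killed by $I \not\subset \mathfrak{m}$ and vanish after completion; the localization moreover separates the cuspidal and Eisenstein quotients, giving $\mathbb{T}_{\mathfrak{m}} = \mathbb{T}^0_{\mathfrak{m}}$, so $\mathcal{V}_{\mathfrak{m}} \cong T_{\mathfrak{m}} J_{\Gamma_0(p)}$ and the assertion becomes literally the statement that $T_{\mathfrak{m}} J_{\Gamma_0(p)}$ is free of rank $2$, which is well known to be equivalent to $J_{\Gamma_0(p)}[\mathfrak{m}]$ being two-dimensional. If $\mathfrak{m}$ is Eisenstein, all three pieces persist, and I would exploit the self-duality of $\mathcal{V}_{\ell}$ (Proposition \ref{galois_rep_duality}: a perfect pairing $\mathcal{V}_{\ell} \times \mathcal{V}_{\ell} \to \mathbf{Z}_{\ell}(1)$ under which the Hecke operators are self-adjoint, using that the level is $\Gamma_0(p)$). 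This pairing interchanges the two outer Eisenstein lines and restricts to the principal polarization on $T_{\ell} J_{\Gamma_0(p)}$, so $\mathcal{V}_{\mathfrak{m}}$ is self-dual as a $\mathbb{T}_{\mathfrak{m}}$-module. By Nakayama, freeness of rank $2$ is equivalent to $\dim_{\mathbb{T}/\mathfrak{m}} \mathcal{V}_{\mathfrak{m}}[\mathfrak{m}] = 2$; computing $\mathcal{V}_{\mathfrak{m}}[\mathfrak{m}]$ from the filtration modulo $\mathfrak{m}$ I would identify it with $J_{\Gamma_0(p)}[\mathfrak{m}]$, both being the non-split reducible residual representation obtained as an extension of the trivial character by $\overline{\chi}_{\ell}$ (the shape of Theorem \ref{intro_thm_galois_rep} (\ref{intro_galois_prop1})). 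A rank count using $\rk_{\mathbf{Z}_{\ell}} \mathbb{T}_{\mathfrak{m}} = 1 + \rk_{\mathbf{Z}_{\ell}} \mathbb{T}^0_{\mathfrak{m}}$ then shows that the two-dimensionality of $\mathcal{V}_{\mathfrak{m}}[\mathfrak{m}]$ is equivalent to multiplicity one for $J_{\Gamma_0(p)}$. Invoking Mazur \cite[Lemma II.15.1, Corollary II.15.2]{Mazur_Eisenstein} then locates the single possibly exceptional case.

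The hard part will be the Eisenstein case: upgrading the correct numerical rank count to genuine freeness — for which the self-duality must be used as a Gorenstein-type input rather than as a mere symmetry — and controlling the extension classes in the filtration modulo $\mathfrak{m}$ precisely enough to pin down $\mathcal{V}_{\mathfrak{m}}[\mathfrak{m}]$ and match it with $J_{\Gamma_0(p)}[\mathfrak{m}]$, rather than with a strictly larger space into which the two surviving Eisenstein lines would naively inject. The non-Eisenstein case, by contrast, is essentially a direct translation into the classical multiplicity one theorem, the only subtlety being the identification $\mathbb{T}_{\mathfrak{m}} = \mathbb{T}^0_{\mathfrak{m}}$.
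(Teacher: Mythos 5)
Your non-Eisenstein case is essentially the paper's own argument: localizing at $\mathfrak{m} \not\supset I$ kills the lattice and toric graded pieces (both annihilated by $I$), identifies $\mathbb{T}_{\mathfrak{m}} = \mathbb{T}^0_{\mathfrak{m}}$, and reduces the claim to classical multiplicity one for $T_{\mathfrak{m}}J_{\Gamma_0(p)}$, with Mazur locating the possible exception. The Eisenstein case, however, contains a genuine gap, and it is exactly the one you flag at the end as ``the hard part.'' First, your key input is circular within this paper: Proposition \ref{galois_rep_duality} is \emph{deduced from} Proposition \ref{multiplicity_one_V} --- its proof begins by choosing a $\mathbb{T}_{\mathfrak{m}}$-basis of $\mathcal{V}_{\mathfrak{m}}$, i.e.\ it presupposes freeness --- and the self-duality of the $1$-motive itself is only conjectured (Remark \ref{intro_rem_galois_rep_duality}), so no independent Gorenstein-type pairing is available to you. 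Second, even granting a pairing, the three-step filtration with graded pieces $\mathbf{Z}_{\ell}(1)$, $T_{\ell}J_{\Gamma_0(p)}$, $\mathbf{Z}_{\ell}$ does not determine $\mathcal{V}_{\mathfrak{m}}[\mathfrak{m}]$ (nor $\mathcal{V}_{\mathfrak{m}}/\mathfrak{m}\mathcal{V}_{\mathfrak{m}}$, which is what Nakayama actually requires): at an Eisenstein $\mathfrak{m}$ all three graded pieces have nonzero $\mathfrak{m}$-torsion, so freeness forces the two extension classes to be integrally maximally non-split, and nothing in your outline controls them. Identifying $\mathcal{V}_{\mathfrak{m}}[\mathfrak{m}]$ with $J_{\Gamma_0(p)}[\mathfrak{m}]$ is precisely what could fail.

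The paper's Eisenstein-case proof sidesteps the extension problem entirely, and this is the idea missing from your outline: by Theorem \ref{intro_main_thm_comparison_p_adic} ($\delta^{\#, p\text{-adic}} = \delta^{\#, \alg}$), $\mathcal{V}_{\ell}$ is the $\ell$-adic Tate module of $\Hom(\mathbf{Z}[S], \overline{\mathbf{Q}}_p^{\times})/q(\mathbf{Z}[S])$, where $S$ is the supersingular set; injectivity of $q$ then yields a \emph{two}-step exact sequence $0 \to \Hom(\mathbf{Z}[S],\mathbf{Z})\otimes_{\mathbb{T}}\mathbb{T}_{\mathfrak{m}} \to \mathcal{V}_{\mathfrak{m}} \to \mathbf{Z}[S]\otimes_{\mathbb{T}}\mathbb{T}_{\mathfrak{m}} \to 0$, in which each graded piece is free of rank one over $\mathbb{T}_{\mathfrak{m}}$ by Emerton's multiplicity one theorem for the supersingular module at Eisenstein primes together with Mazur's Gorenstein property. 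In effect the de Shalit uniformization regroups your three motivic graded pieces into two, each a full supersingular module in which the Eisenstein line is already glued integrally onto the cuspidal part; an extension of a free module by a free module is free, and the non-splitness you would have had to prove by hand is outsourced to Emerton's theorem. (The paper also remarks that for $\ell \neq 2$ one could instead argue with mixed modular symbols via Theorem \ref{intro_main_comparison_Gamma_0_case}.) If you want to salvage your route, you would need an independent construction of the self-duality of the $1$-motive --- which is open --- plus an explicit computation of the two extension classes modulo $\mathfrak{m}$; as written, the Eisenstein case is a strategy with its decisive step unproved.
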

\begin{proof}
If $\mathfrak{m}$ is not Eisenstein, then this follows from Theorem \ref{intro_main_comparison_Gamma_0_case} (\ref{main_comparison_Gamma_0_case_i}) and the fact that the $\mathbf{Z}[\mathcal{C}_{\Gamma_0(p)}]$ is annihilated by $I$. 

Assume that $\mathfrak{m}$ is Eisenstein, of residue characteristic $\ell$. By Theorem \ref{intro_main_thm_comparison_p_adic}, $\mathcal{V}_{\ell}$ is the $\ell$-adic Tate module of the $\mathbb{T}$-module $\Hom(\mathbf{Z}[S], \overline{\mathbf{Q}}_p^{\times})/q(\mathbf{Z}[S])$. Since we know that $q$ is injective, we get an exact sequence of $\mathbb{T} \otimes_{\mathbf{Z}} \mathbf{Z}_{\ell}$-modules
$$0 \rightarrow \Hom(\mathbf{Z}[S], \mathbf{Z}_{\ell}) \rightarrow \mathcal{V}_{\ell} \rightarrow \mathbf{Z}_{\ell}[S] \rightarrow 0 \text{ .}$$
We get an exact sequence of $\mathbb{T}_{\mathfrak{m}}$-modules
\begin{equation}\label{ordinary_exact_sequence_supersingular}
0 \rightarrow \Hom(\mathbf{Z}[S], \mathbf{Z}) \otimes_{\mathbb{T}} \mathbb{T}_{\mathfrak{m}} \rightarrow \mathcal{V}_{\mathfrak{m}} \rightarrow \mathbf{Z}[S] \otimes_{\mathbb{T}} \mathbb{T}_{\mathfrak{m}} \rightarrow 0 \text{ .}
\end{equation}
By \cite[Theorem 0.5]{Emerton_supersingular} and \cite[Corollary II.16.3]{Mazur_Eisenstein}, the $\mathbb{T}_{\mathfrak{m}}$-modules $\mathbf{Z}[S] \otimes_{\mathbb{T}} \mathbb{T}_{\mathfrak{m}}$ and $ \Hom(\mathbf{Z}[S], \mathbf{Z}) \otimes_{\mathbb{T}} \mathbb{T}_{\mathfrak{m}} $ are free of rank $1$. Thus, $\mathcal{V}_{\mathfrak{m}}$ is free free of rank $2$ over $\mathbb{T}_{\mathfrak{m}}$. This concludes the proof of Proposition \ref{multiplicity_one_V} (another approach if $\ell \neq 2$ would have been to use modular symbols via Theorem \ref{intro_main_comparison_Gamma_0_case} instead of the supersingular module). 
\end{proof}

\begin{prop}\label{galois_rep_duality}
There is a Hecke and Galois equivariant perfect $\mathbf{Z}_{\ell}$-bilinear pairing $\langle \cdot, \cdot \rangle : \mathcal{V}_{\ell} \times \mathcal{V}_{\ell} \rightarrow \mathbf{Z}_{\ell}(1)$ (the equivariance means that for any $(x,y) \in  \mathcal{V}_{\ell} \times \mathcal{V}_{\ell}$, $T \in \mathbb{T}$ and $g \in \Gal(\overline{\mathbf{Q}}/\mathbf{Q})$, we have $\langle gx, gy \rangle = \chi_{\ell}(g)\cdot \langle x, y \rangle$ and $\langle T x, y \rangle = \langle x, Ty \rangle$).
\end{prop}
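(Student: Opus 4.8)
The plan is to realize $\mathcal{V}_{\ell}$ as the $\ell$-adic Tate module of a totally degenerate abelian variety and to obtain the pairing as the Weil pairing attached to the principal polarization encoded by de Shalit's form $Q$. First I would invoke Theorem \ref{intro_main_thm_comparison_p_adic} together with the uniformization (\ref{p-adic_uniformization_generalized}) to identify $\mathcal{G}(\mathbf{C}_p)$ with the Mumford quotient $\Hom(\mathbf{Z}[S], \mathbf{C}_p^{\times})/q(\mathbf{Z}[S])$ of the split torus $T = \Hom(\mathbf{Z}[S], \mathbf{G}_m)$ by the full-rank period lattice $q(\mathbf{Z}[S])$ (full rank because $q$ is injective). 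Here the character lattice of $T$ is $X^{*}(T) = \mathbf{Z}[S]$ and the period map $\mathbf{Z}[S] \to T(K)$ is exactly $Q$. Passing to Tate modules produces the monodromy exact sequence
\begin{equation*}
0 \rightarrow \Hom(\mathbf{Z}[S], \mathbf{Z}_{\ell})(1) \rightarrow \mathcal{V}_{\ell} \rightarrow \mathbf{Z}[S] \otimes_{\mathbf{Z}} \mathbf{Z}_{\ell} \rightarrow 0 \text{ ,}
\end{equation*}
whose $\mathbb{T}_{\mathfrak{m}}$-completed form is the sequence (\ref{ordinary_exact_sequence_supersingular}); the sub is the Tate module of $T$ and the quotient is $q(\mathbf{Z}[S]) \otimes \mathbf{Z}_{\ell}$, which are tautologically dual up to the twist $\mathbf{Z}_{\ell}(1)$.

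Next I would construct the polarization. Since the source lattice of $q$ and the character lattice $X^{*}(T)$ are \emph{both} canonically $\mathbf{Z}[S]$, the identity of $\mathbf{Z}[S]$ together with the symmetric form $Q$ (Proposition \ref{symmetry_Q}), whose valuation is the positive-definite Kronecker pairing (de Shalit \cite[\S 1.6]{deShalit}), is precisely the datum of a \emph{principal} polarization $\lambda : \mathcal{G} \xrightarrow{\sim} \mathcal{G}^{\vee}$ of the totally degenerate abelian variety $\mathcal{G}$: symmetry of $Q$ makes $\lambda$ symmetric, positivity makes it a polarization, and the fact that the induced map on lattices is the identity makes it principal. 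The associated Weil pairing $\langle \cdot, \cdot \rangle : \mathcal{V}_{\ell} \times \mathcal{V}_{\ell} \rightarrow \mathbf{Z}_{\ell}(1)$ is then alternating and perfect (principality forces unimodularity); under the exact sequence above it kills the isotropic sub $\Hom(\mathbf{Z}[S],\mathbf{Z}_{\ell})(1)$ and induces the evaluation pairing between $\Hom(\mathbf{Z}[S],\mathbf{Z}_{\ell})(1)$ and $\mathbf{Z}[S]\otimes\mathbf{Z}_{\ell}$, so perfectness can also be read off block by block. The Hecke-equivariance $\langle Tx, y\rangle = \langle x, Ty\rangle$ is immediate from the $\mathbb{T}$-equivariance of $Q$ (Proposition \ref{Q_T_equiv} (\ref{Q_T_equiv_i})): it says $\lambda$ commutes with the Hecke action, i.e. every $T \in \mathbb{T}$ is self-adjoint for the Weil pairing. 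Note that this is a genuine input from $Q$ and is \emph{not} the naive adjointness for the theta pairing of $J_{\Gamma_0(p)}$, which for $U_p$ would involve an Atkin--Lehner twist.

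The hard part will be the Galois-equivariance $\langle gx, gy\rangle = \chi_{\ell}(g)\cdot\langle x, y\rangle$ for the full group $\Gal(\overline{\mathbf{Q}}/\mathbf{Q})$. The Weil pairing of a polarized abelian variety is canonical and hence $\Gal$-equivariant with the cyclotomic twist \emph{once $\lambda$ is defined over $\mathbf{Q}$}, but $Q$, and with it the whole uniformization, is intrinsically $p$-adic and only manifestly $\Gal(\mathbf{C}_p/K)$-equivariant; making the descent to $\mathbf{Q}$ is the real obstacle. The route I would take is to use Deligne's Cartier duality for the $1$-motive $M = [\mathbf{Z}[\mathcal{C}_{\Gamma_0(p)}]^0 \to J_{\Gamma_0(p)}^{\#}]$ over $\mathbf{Q}$, which gives a canonical $\Gal(\overline{\mathbf{Q}}/\mathbf{Q})$-equivariant identification $T_{\ell}(M^{\vee}) \cong \Hom(\mathcal{V}_{\ell}, \mathbf{Z}_{\ell}(1))$ and shows that $M^{\vee}$ has the same shape $[\mathbf{Z} \to (\text{extension of } J_{\Gamma_0(p)} \text{ by } \mathbf{G}_m)]$ as $M$, using the $\mathbf{Q}$-rational theta polarization of $J_{\Gamma_0(p)}$ and the tautological self-duality of $\mathbf{G}_m$ coming from the pair of cusps. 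The desired pairing is then an isomorphism $\mathcal{V}_{\ell} \to T_{\ell}(M^{\vee})$ of $\mathbf{Q}$-Galois modules; de Shalit's $Q$ supplies such an isomorphism over $\mathbf{Q}_p$, and to check that it is already $\Gal(\overline{\mathbf{Q}}/\mathbf{Q})$-equivariant I would verify compatibility of $\lambda$ with a Frobenius at $p$ on the one hand, which is exactly the $\Gal(\mathbf{F}_{p^2}/\mathbf{F}_p)$-equivariance of $Q$ modulo principal units (Proposition \ref{Q_T_equiv} (\ref{Q_T_equiv_ii})), and the behaviour away from $p$ on the other, which is governed by the good-reduction weight filtration $0 \rightarrow T_{\ell}(J_{\Gamma_0(p)}^{\#}) \rightarrow \mathcal{V}_{\ell} \rightarrow \mathbf{Z}_{\ell} \rightarrow 0$ where the pairing couples the weight-$0$ quotient $\mathbf{Z}_{\ell}$ with the toric weight-$(-2)$ piece $\mathbf{Z}_{\ell}(1)$ and restricts to the Weil pairing of $J_{\Gamma_0(p)}$ on the weight-$(-1)$ graded piece.
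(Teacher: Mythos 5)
Your construction has a genuine gap at the global Galois-descent step, and it sits exactly where the paper deliberately avoids geometry. The polarization you build out of $Q$ lives over the quadratic unramified extension $K$ of $\mathbf{Q}_p$ (that is where de Shalit's uniformization and Theorem \ref{intro_main_thm_comparison_p_adic} identify $\mathcal{G}(\mathbf{C}_p)$ with $\Hom(\mathbf{Z}[S],\mathbf{C}_p^{\times})/q(\mathbf{Z}[S])$), so the resulting Weil pairing is manifestly equivariant only for $\Gal(\mathbf{C}_p/K)$, while the proposition demands equivariance for all of $\Gal(\overline{\mathbf{Q}}/\mathbf{Q})$. Your proposed verification does not close this: Proposition \ref{Q_T_equiv} (\ref{Q_T_equiv_ii}) controls only the action of $\Gal(\mathbf{F}_{p^2}/\mathbf{F}_p)$, i.e.\ a Frobenius on the unramified quotient, and only \emph{modulo principal units}; it says nothing about inertia at $p$, and ``compatibility with the weight filtration'' at the other primes constrains a map without determining it. Two $\mathbb{T}$-linear isomorphisms $\mathcal{V}_{\ell} \rightarrow \Hom_{\mathbf{Z}_{\ell}}(\mathcal{V}_{\ell}, \mathbf{Z}_{\ell}(1))$ can pass all the checks you list and still differ by an automorphism that is not Galois-equivariant; you supply no rigidity argument (e.g.\ no computation of $\mathbb{T}[\Gal(\overline{\mathbf{Q}}/\mathbf{Q})]$-endomorphisms, no Chebotarev-style density of the elements you actually control). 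In fact what your route needs --- that $Q$ descends to a $\mathbf{Q}$-rational self-duality of the $1$-motive $\mathbf{Z} \rightarrow J_{\Gamma_0(p)}^{\#}$ --- is precisely what the paper states only as an expectation, \emph{motivated by} this proposition rather than used to prove it (Remark \ref{intro_rem_galois_rep_duality}). A secondary unverified point: to get a polarization, hence a Weil pairing, on the Mumford quotient you need $Q$ composed with the $p$-adic valuation to be positive definite on all of $\mathbf{Z}[S]$, whereas the paper quotes from \cite{deShalit} only non-degeneracy of the valuation pairing.

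For contrast, the paper's proof is purely module-theoretic and never leaves $\mathbf{Q}$: it reduces to each completion $\mathcal{V}_{\mathfrak{m}}$; for non-Eisenstein $\mathfrak{m}$ the pairing is just the Weil pairing of $J_0(p)$ (there $\mathcal{V}_{\mathfrak{m}}$ is the usual $\mathfrak{m}$-adic Tate module); for Eisenstein $\mathfrak{m}$ it combines the freeness of $\mathcal{V}_{\mathfrak{m}}$ of rank $2$ over $\mathbb{T}_{\mathfrak{m}}$ (Proposition \ref{multiplicity_one_V}), Mazur's Gorenstein property that $\Hom_{\mathbf{Z}_{\ell}}(\mathbb{T}_{\mathfrak{m}}, \mathbf{Z}_{\ell})$ is free of rank one over $\mathbb{T}_{\mathfrak{m}}$ \cite[Corollary II.15.2]{Mazur_Eisenstein}, and the fact that $\det \rho = \chi_{\ell}$ (Theorem \ref{intro_thm_galois_rep}): in a $\mathbb{T}_{\mathfrak{m}}$-basis $(e_1,e_2)$ the twisted dual basis reordered as $(-e_2^*, e_1^*)$ carries the representation $\chi_{\ell}\cdot {}^{t}\rho^{-1}$, which for determinant $\chi_{\ell}$ is conjugate to $\rho$ (the adjugate trick), giving $\Hom_{\mathbf{Z}_{\ell}}(\mathcal{V}_{\mathfrak{m}}, \mathbf{Z}_{\ell}(1)) \simeq \mathcal{V}_{\mathfrak{m}}$ as $\mathbb{T}_{\mathfrak{m}}[\Gal(\overline{\mathbf{Q}}/\mathbf{Q})]$-modules, with Hecke self-adjointness built in. So the global Galois equivariance that is the sticking point of your approach is obtained there for free from linear algebra over the Gorenstein Hecke algebra; if you want to make your geometric construction work, you would first have to prove the self-duality of the generalized cuspidal $1$-motive over $\mathbf{Q}$, which the paper leaves open.
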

\begin{proof}
It suffices indeed to prove the analogous statement for $\mathcal{V}_{\mathfrak{m}}$ for all maximal ideal $\mathfrak{m}$ of $\mathbb{T}$ containing $\ell$. If $\mathfrak{m}$ is not Eisenstein, the pairing $\mathcal{V}_{\mathfrak{m}} \times \mathcal{V}_{\mathfrak{m}} \rightarrow \mathbf{Z}_{\ell}(1)$ comes from the Weil pairing on $J_0(p)$. Assume now that $\mathfrak{m}$ is Eisenstein. Let $\mathcal{V}_{\mathfrak{m}}^* = \Hom_{\mathbf{Z}_{\ell}}(\mathcal{V}_{\mathfrak{m}}, \mathbf{Z}_{\ell}(1))$, with the action of $\Gal(\overline{\mathbf{Q}}/\mathbf{Q})$ given by $(g\cdot \varphi)(x) = \chi_{\ell}(g)\cdot \varphi(g^{-1}\cdot x)$ for all $\varphi \in \mathcal{V}_{\mathfrak{m}}^*$ and $x \in \mathcal{V}_{\mathfrak{m}}$. By Proposition \ref{multiplicity_one_V}, we can choose a basis $(e_1,e_2)$ of the $\mathbb{T}_{\mathfrak{m}}$-module $\mathcal{V}_{\mathfrak{m}}$. Let $(e_1^*, e_2^*)$ be the dual basis in $\mathcal{V}_{\mathfrak{m}}^*$, where $e_i^* \in \Hom_{\mathbf{Z}_{\ell}}(\mathbb{T}_{\mathfrak{m}}, \mathbf{Z}_{\ell})$. By \cite[Corollary II.15.2]{Mazur_Eisenstein}, the $\mathbb{T}_{\mathfrak{m}}$-module $\Hom_{\mathbf{Z}_{\ell}}(\mathbb{T}_{\mathfrak{m}}, \mathbf{Z}_{\ell})$ is free of rank one, so that $(-e_2^*, e_1^*)$ is a basis of the $\mathbb{T}_{\mathfrak{m}}$-module $\mathcal{V}_{\mathfrak{m}}^*$. The Galois representation $ \Gal(\overline{\mathbf{Q}}/\mathbf{Q}) \rightarrow \GL_2(\mathbb{T}_{\mathfrak{m}})$ in that basis is equal to $\rho$, so we get an isomorphism of $\mathbb{T}_{\mathfrak{m}}[\Gal(\overline{\mathbf{Q}}/\mathbf{Q})]$-modules $\mathcal{V}_{\mathfrak{m}}^* \simeq \mathcal{V}_{\mathfrak{m}}$.
\end{proof}

Assume from now on, and until the end of the paper, that $\ell$ is a prime dividing the numerator of $\frac{p-1}{12}$. Fix an embedding $\overline{\mathbf{Q}} \hookrightarrow \overline{\mathbf{Q}}_p$. This fixes the choice of a decomposition group $G_p$ at $p$ in $\Gal(\overline{\mathbf{Q}}/\mathbf{Q})$. We denote by $I_p \subset G_p$ the inertia subgroup.

Fix a cocycle $b \in Z^1(\Gal(\overline{\mathbf{Q}}/\mathbf{Q}), \mathbf{Z}_\ell(1))$ whose class in $H^1(\Gal(\overline{\mathbf{Q}}/\mathbf{Q}), \mathbf{Z}_\ell(1))$ corresponds to the class on $p^{\frac{12}{d}}$ in $\mathbf{Q}^{\times}/(\mathbf{Q}^{\times})^{\ell}$ via Kummer theory. Let $\overline{b} : \Gal(\overline{\mathbf{Q}}/\mathbf{Q}) \rightarrow \mathbf{F}_\ell(1)$ be the reduction of $b$ modulo $\ell$. Let $\overline{\rho} : \Gal(\overline{\mathbf{Q}}/\mathbf{Q})  \rightarrow \GL_2(\mathbf{F}_\ell)$ be given by
$$ \overline{\rho} = \begin{pmatrix} \overline{\chi}_{\ell} & \overline{b} \\ 0 & 1 \end{pmatrix} \text{ .}$$
Let $\overline{L}$ be the line in $\mathbf{F}_{\ell}^{2}$ spanned by the vector $(1,0)$. Note that $\overline{L}$ is the unique line fixed (pointwise) by $\overline{\rho}(I_p)$

We consider the following classical deformation problem. Let $\mathcal{C}$ be the category of local Artinian rings with residue field $\mathbf{F}_{\ell}$. Let $\Def : \mathcal{C} \rightarrow \underline{\text{Set}}$ be the functor such that if $A \in \mathcal{C}$, then $\Def(A)$ is the set of strict-equivalence classes of morphisms $\rho : \Gal(\overline{\mathbf{Q}}/\mathbf{Q}) \rightarrow \GL_2(A)$ such that the following conditions hold:
\begin{enumerate}[label=(\alph*)]
\item\label{def_condition_reduction} The reduction of $\rho$ modulo the maximal ideal of $A$ is $\overline{\rho}$.
\item \label{def_condition_ramification} The representation $\rho$ is unramified outside $p$ and $\ell$.
\item \label{def_condition_determinant} The determinant of $\rho$ if $\chi_p$ (where we abusively view $\chi_p$ as $A^{\times}$ valued via the ring homomorphism $\mathbf{Z}_p \rightarrow A$).
\item \label{def_condition_finite_flat} The representation $\rho$ is finite flat at $\ell$ (meaning that the restriction of $\rho$ to a decomposition group $\Gal(\overline{\mathbf{Q}}_{\ell}/\mathbf{Q}_{\ell})$ arises from the $\overline{\mathbf{Q}}_{\ell}$-points of a finite flat group scheme over $\mathbf{Z}_{\ell}$).
\item \label{def_condition_semistable} There is a line $L$ in $A^2$ stable by $\rho(I_p)$.
\end{enumerate}

Note that $\overline{\rho}$ gives an element of $\Def(\mathbf{F}_{\ell})$ so our deformation problem makes sense.
Since the endomorphisms of $\mathbf{F}_{\ell}^2$ commuting with $\overline{\rho}$ are the scalars, we know that $\Def$ is pro-representable by a local Noetherian $\mathbf{Z}_{\ell}$-algebra $R$. We now prove Theorem \ref{intro_thm_galois_rep}.
\begin{proof}
Let $\overline{\mathcal{V}} := \mathcal{V}_{\mathfrak{m}}/\mathfrak{m}\cdot  \mathcal{V}_{\mathfrak{m}}$. This is a $\mathbf{F}_{\ell}$-vector space of rank $2$, with an action of $\Gal(\overline{\mathbf{Q}}/\mathbf{Q})$. The kernel of the projection $\mathbb{T}\rightarrow \mathbb{T}^0$ is $\mathbf{Z}\cdot T_0$ for some $T_0 \in \mathbb{T}$. One can choose $T_0$ so that $T_0-n\in I$ \cite[Proposition 1.8]{Emerton_supersingular}. In particular, we have $T_0 \in \mathfrak{m}$. Let $\mathcal{V}_{\mathfrak{m}}^0 := \mathcal{V}_{\mathfrak{m}}/T_0\cdot \mathcal{V}_{\mathfrak{m}}$ and $\mathcal{V}_{\mathfrak{m}}^{\Eis} := \mathcal{V}_{\mathfrak{m}}/I\cdot  \mathcal{V}_{\mathfrak{m}}$; these are naturally $\Gal(\overline{\mathbf{Q}}/\mathbf{Q})$-modules. Note that $\mathcal{V}_{\mathfrak{m}}^{\Eis}$ is free of rank $2$ over $\mathbf{Z}_{\ell}$.
\begin{lem}\label{lemma_galois_rep}
The projection map $f : \mathcal{V}_{\mathfrak{m}} \rightarrow \mathcal{V}_{\mathfrak{m}}^{\Eis} \times \mathcal{V}_{\mathfrak{m}}^{0}$ is injective, $\Gal(\overline{\mathbf{Q}}/\mathbf{Q})$-equivariant, and its image has finite index in the fiber product $\mathcal{V}_{\mathfrak{m}}^{\Eis} \times_{\overline{\mathcal{V}}} \mathcal{V}_{\mathfrak{m}}^{0}$. Furthermore, there is a $\mathbf{Z}_{\ell}$-basis of $\mathcal{V}_{\mathfrak{m}}^{\Eis}$ such that the action of $\Gal(\overline{\mathbf{Q}}/\mathbf{Q})$ is given by $\begin{pmatrix} \chi_{\ell} & b \\ 0 & 1 \end{pmatrix}$. In particular, the $\Gal(\overline{\mathbf{Q}}/\mathbf{Q})$-module $\overline{\mathcal{V}}$ is isomorphic to $\overline{\rho}$.
\end{lem}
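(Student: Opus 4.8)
The plan is to deduce the whole lemma from the freeness of $\mathcal{V}_{\mathfrak{m}}$ over $\mathbb{T}_{\mathfrak{m}}$ (Proposition \ref{multiplicity_one_V}) together with the weight filtration on $\mathcal{V}_{\ell}$ coming from the $1$-motive structure of $\delta_{\Gamma_0(p)}^{\#, \alg}$. First I would record the purely $\mathbb{T}$-module consequences of freeness. Writing $\mathcal{V}_{\mathfrak{m}} \cong \mathbb{T}_{\mathfrak{m}}^2$ and using $\mathbb{T}_{\mathfrak{m}}/I = \mathbf{Z}_{\ell}$ (since $\mathbb{T}/I = \mathbf{Z}$), $\mathbb{T}_{\mathfrak{m}}/T_0\mathbb{T}_{\mathfrak{m}} = \mathbb{T}^0_{\mathfrak{m}}$ (since $T_0$ generates $\Ker(\mathbb{T} \to \mathbb{T}^0)$), and $\mathbb{T}_{\mathfrak{m}}/\mathfrak{m} = \mathbf{F}_{\ell}$, one gets $\mathcal{V}_{\mathfrak{m}}^{\Eis} \cong \mathbf{Z}_{\ell}^2$, $\mathcal{V}_{\mathfrak{m}}^{0} \cong (\mathbb{T}^0_{\mathfrak{m}})^2$ and $\overline{\mathcal{V}} \cong \mathbf{F}_{\ell}^2$. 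The map $f$ is then the sum of two copies of the natural map $\mathbb{T}_{\mathfrak{m}} \to \mathbb{T}_{\mathfrak{m}}/I \times \mathbb{T}_{\mathfrak{m}}/T_0\mathbb{T}_{\mathfrak{m}}$, and it is $\Gal(\overline{\mathbf{Q}}/\mathbf{Q})$-equivariant because $I$ and $T_0\mathbb{T}$ are Hecke ideals and the Galois action on $\mathcal{V}_{\ell}$ commutes with $\mathbb{T}$.

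Next I would prove injectivity and the finite index statement at the level of $\mathbb{T}_{\mathfrak{m}}$. Since $\mathbb{T}/I = \mathbf{Z}$, the Eisenstein ideal is $I = \Ker(\mathbb{T} \to \mathbf{Z})$, while $T_0\mathbb{T} = \Ker(\mathbb{T} \to \mathbb{T}^0)$. As $\mathbb{T}$ acts faithfully on $M_2(\Gamma_0(p)) = \mathcal{E}_2(\Gamma_0(p)) \oplus S_2(\Gamma_0(p))$, the map $\mathbb{T} \to \mathbf{Z} \times \mathbb{T}^0$ is injective, hence $I \cap T_0\mathbb{T} = 0$ and $f$ is injective. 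The image of $f$ consists exactly of the pairs agreeing in $\mathcal{V}_{\mathfrak{m}}/(I + T_0\mathbb{T})\mathcal{V}_{\mathfrak{m}}$; since $\mathbb{T}_{\mathfrak{m}}/(I + T_0\mathbb{T}) \cong \mathbf{Z}/\ell^{v_{\ell}(n)}\mathbf{Z}$ is finite and surjects onto $\mathbb{T}_{\mathfrak{m}}/\mathfrak{m} = \mathbf{F}_{\ell}$, the image is of finite index in the fiber product $\mathcal{V}_{\mathfrak{m}}^{\Eis} \times_{\overline{\mathcal{V}}} \mathcal{V}_{\mathfrak{m}}^{0}$ (the index being $\ell^{2(v_{\ell}(n)-1)}$).

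The substantive part is the Galois structure of $\mathcal{V}_{\mathfrak{m}}^{\Eis}$. The $1$-motive $[\mathbf{Z}[\mathcal{C}_{\Gamma_0(p)}]^0 \xrightarrow{\delta^{\#, \alg}} J_{\Gamma_0(p)}^{\#}]$ furnishes a weight filtration on $\mathcal{V}_{\ell}$, namely the two exact sequences of $\mathbb{T}[\Gal(\overline{\mathbf{Q}}/\mathbf{Q})]$-modules $0 \to \mathbf{Z}_{\ell}(1) \to T_{\ell}J^{\#}_{\Gamma_0(p)} \to T_{\ell}J_{\Gamma_0(p)} \to 0$ (the toric part being $\mathbf{G}_m$) and $0 \to T_{\ell}J^{\#}_{\Gamma_0(p)} \to \mathcal{V}_{\ell} \to \mathbf{Z}_{\ell} \to 0$ (the lattice quotient carrying trivial Galois action, as both cusps are $\mathbf{Q}$-rational). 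I would then pass to the Eisenstein quotient and localize at $\mathfrak{m}$: the Eisenstein part of $T_{\ell}J_{\Gamma_0(p)}$ is finite by Mazur's work, while $\mathbf{Z}_{\ell}(1)$ and $\mathbf{Z}_{\ell}$ are annihilated by $I$ and survive. Since $\mathcal{V}_{\mathfrak{m}}^{\Eis}$ is torsion-free of rank $2$, the finite contribution of $T_{\ell}J_{\Gamma_0(p)}$ must die, leaving the clean extension
\[
0 \to \mathbf{Z}_{\ell}(1) \to \mathcal{V}_{\mathfrak{m}}^{\Eis} \to \mathbf{Z}_{\ell} \to 0 \text{ .}
\]
Its class in $H^1(\Gal(\overline{\mathbf{Q}}/\mathbf{Q}), \mathbf{Z}_{\ell}(1))$ is the image of the generator of the lattice under the $\ell$-adic Kummer map, which by Theorem \ref{intro_main_comparison_Gamma_0_case} (\ref{main_comparison_Gamma_0_case_ii}) (where $n \cdot \delta^{\#,\alg}$ is the image of $(1, p^{\frac{12}{d}})$ in the toric part) is the Kummer class of $p^{\frac{12}{d}}$, that is, the class of the chosen cocycle $b$. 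Choosing a $\mathbf{Z}_{\ell}$-basis adapted to this extension gives the action $\begin{pmatrix} \chi_{\ell} & b \\ 0 & 1 \end{pmatrix}$. I expect the one genuinely delicate point to be precisely this extension-class computation: since $\delta^{\#,\alg}$ of the generator only meets the toric part after multiplication by $n$, one must check that this factor of $n$ does not perturb the Kummer class; I anticipate that the integrality forced by the clean extension above (equivalently by the freeness of $\mathcal{V}_{\mathfrak{m}}^{\Eis}$) resolves this.

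Finally, the last assertion is immediate: because $\overline{\mathcal{V}} = \mathcal{V}_{\mathfrak{m}}/\mathfrak{m}\,\mathcal{V}_{\mathfrak{m}} = \mathcal{V}_{\mathfrak{m}}^{\Eis}/\ell\,\mathcal{V}_{\mathfrak{m}}^{\Eis}$, reducing the representation $\begin{pmatrix} \chi_{\ell} & b \\ 0 & 1 \end{pmatrix}$ modulo $\ell$ yields exactly $\overline{\rho} = \begin{pmatrix} \overline{\chi}_{\ell} & \overline{b} \\ 0 & 1 \end{pmatrix}$, so $\overline{\mathcal{V}} \cong \overline{\rho}$ as $\Gal(\overline{\mathbf{Q}}/\mathbf{Q})$-modules.
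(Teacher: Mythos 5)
The structural portions of your argument are correct, and in two places genuinely different from the paper's proof. For injectivity, the paper observes that $I\cdot T_0=0$ in $\mathbb{T}$, so any element of $I\mathcal{V}_{\mathfrak{m}}\cap T_0\mathcal{V}_{\mathfrak{m}}$ is killed by both $I$ and $T_0$, hence by $n$ (as $T_0-n\in I$), hence vanishes by $\mathbf{Z}_{\ell}$-torsion-freeness; your route via $I\cap T_0\mathbb{T}=0$ (faithfulness of $\mathbb{T}$ on $M_2(\Gamma_0(p))=\mathcal{E}_2\oplus S_2$) plus $\mathbb{T}_{\mathfrak{m}}$-freeness of $\mathcal{V}_{\mathfrak{m}}$ is a valid alternative. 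Your finite-index computation (image $=$ fiber product over $\mathcal{V}_{\mathfrak{m}}/(I+T_0)\mathcal{V}_{\mathfrak{m}}$, index $\ell^{2(v_{\ell}(n)-1)}$) actually refines the paper's bare $\mathbf{Z}_{\ell}$-rank count. Your derivation of the exact sequence $0\to\mathbf{Z}_{\ell}(1)\to\mathcal{V}_{\mathfrak{m}}^{\Eis}\to\mathbf{Z}_{\ell}\to 0$ from the weight filtration can also be pushed through, modulo one misstatement: the $\mathfrak{m}$-adic Tate module of $J_{\Gamma_0(p)}$ is \emph{not} finite (it is free of rank $2$ over $\mathbb{T}^0_{\mathfrak{m}}$ by Mazur's multiplicity one); what is finite is the $I$-coinvariants $T_{\ell}J/I\cdot T_{\ell}J$, killed by $n$, which is what your right-exactness argument needs.

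The genuine gap is exactly the point you flag and then wave away: the extension class. The class $c$ of $0\to\mathbf{Z}_{\ell}(1)\to\mathcal{V}_{\mathfrak{m}}^{\Eis}\to\mathbf{Z}_{\ell}\to 0$ is $\pi_*\kappa(\delta)$, where $\kappa(\delta)\in H^1(\mathbf{Q}, T_{\ell}J^{\#}_{\mathfrak{m}})$ is the Kummer class of the point $\delta^{\#,\alg}((\Gamma_0(p)\infty)-(\Gamma_0(p)0))$ --- which does \emph{not} lie in the toric part --- and $\pi: T_{\ell}J^{\#}_{\mathfrak{m}}\twoheadrightarrow W:=\Ker(\mathcal{V}_{\mathfrak{m}}^{\Eis}\to\mathbf{Z}_{\ell})$ is the mod-$I$ projection. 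Theorem \ref{intro_main_comparison_Gamma_0_case} (\ref{main_comparison_Gamma_0_case_ii}) only computes $n\cdot c=\pi_*\iota_*[p^{\frac{12}{d}}]$, where $\iota:\mathbf{Z}_{\ell}(1)\to T_{\ell}J^{\#}_{\mathfrak{m}}$ is the toric line and $\pi\circ\iota$ is multiplication by $\ell^{a}\cdot(\text{unit})$ for some unknown $a\geq 0$; your argument shows only that the cokernel of $\pi\circ\iota$ is finite, not that $a$ has any particular value. Integrality and torsion-freeness of $H^1(\mathbf{Q},\mathbf{Z}_{\ell}(1))$ (for odd $\ell$) then force $a\geq v_{\ell}(n)$ and give $c=\ell^{a-v_{\ell}(n)}\cdot(\text{unit})\cdot[p^{\frac{12}{d}}]$ --- and that is all ``integrality/freeness'' buys. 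It leaves open $a>v_{\ell}(n)$, i.e.\ $c$ divisible by $\ell$, i.e.\ $\overline{b}=0$ and $\overline{\rho}$ split; but distinguishing the non-split $\overline{\rho}$ from the split one (the Calegari--Emerton situation) is precisely the content of the lemma, so the gap is fatal as it stands (and for $\ell=2$ even the torsion-freeness step fails). The paper closes it with a multiplication-by-$T_0$ device that sidesteps the pushforward entirely: since $\Ann_{\mathbb{T}_{\mathfrak{m}}}(T_0)=I\mathbb{T}_{\mathfrak{m}}$, multiplication by $T_0$ gives a Galois-equivariant isomorphism $\mathcal{V}_{\mathfrak{m}}^{\Eis}=\mathcal{V}_{\mathfrak{m}}/I\mathcal{V}_{\mathfrak{m}}\xrightarrow{\sim}T_0\mathcal{V}_{\mathfrak{m}}$, and because $T_0$ annihilates $S_2(\Gamma_0(p))$ (hence the abelian part of $J^{\#}_{\Gamma_0(p)}$) while $T_0\equiv n \bmod I$, Theorem \ref{intro_main_comparison_Gamma_0_case} (\ref{main_comparison_Gamma_0_case_ii}) identifies $T_0\mathcal{V}_{\mathfrak{m}}$ on the nose with the Tate module of the Kummer $1$-motive $\mathbf{Z}\to\mathbf{G}_m$, $1\mapsto p^{\frac{12}{d}}$, with no divisibility ambiguity. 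You should replace your extension-class step by this argument (or supply an independent proof that $a=v_{\ell}(n)$, which I do not see how to do within your framework).
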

\begin{proof}
We have $I \cdot T_0 = 0$ in $\mathbb{T}$, so $I\cdot \mathcal{V}_{\mathfrak{m}} \cap T_0\cdot \mathcal{V}_{\mathfrak{m}}$ is annihilated by $I$ and by $T_0$, and hence by $n$ since $T_0-n \in I$. Since $\mathcal{V}_{\mathfrak{m}}$ is a free $\mathbf{Z}_{\ell}$-module, we get $I\cdot \mathcal{V}_{\mathfrak{m}} \cap T_0\cdot \mathcal{V}_{\mathfrak{m}} = 0$. Thus $f$ is injective and takes values in $\mathcal{V}_{\mathfrak{m}}^{\Eis} \times_{\overline{\mathcal{V}}} \mathcal{V}_{\mathfrak{m}}^{0}$ since $I+(T_0) \subset \mathfrak{m}$. Furthermore, the image of $f$ into $\mathcal{V}_{\mathfrak{m}}^{\Eis} \times_{\overline{\mathcal{V}}} \mathcal{V}_{\mathfrak{m}}^{0}$ has finite index since $\rk_{\mathbf{Z}_{\ell}}(\mathcal{V}_{\mathfrak{m}}) = 2\cdot \rk_{\mathbf{Z}_{\ell}}(\mathbb{T}_{\mathfrak{m}}) = \rk_{\mathbf{Z}_{\ell}}( \mathcal{V}_{\mathfrak{m}}^{\Eis} \times_{\overline{\mathcal{V}}} \mathcal{V}_{\mathfrak{m}}^{0})$. 

There is a $\mathbf{Z}_{\ell}$ linear isomorphism $T_0 \cdot \mathbb{T}_{\mathfrak{m}} \xrightarrow{\sim} \mathbb{T}_{\mathfrak{m}}/I\cdot  \mathbb{T}_{\mathfrak{m}}$ sending $T_0\cdot T$ to the class of $T$ modulo $I$. This induces a $\Gal(\overline{\mathbf{Q}}/\mathbf{Q})$-equivariant isomorphism $T_0\cdot \mathcal{V}_{\mathfrak{m}} \xrightarrow{\sim} \mathcal{V}_{\mathfrak{m}}^{\Eis}$. The facts that $T_0 \cdot J_{\Gamma_0(p)}^{\#}$ is the image of $\mathbf{G}_m\times \mathbf{G}_m$ in $J_{\Gamma_0(p)}^{\#}$, $T_0 - n \in I$ and $n\cdot \delta^{\#, \alg}((\infty)-(0)) = (1, p^{\frac{12}{d}}) \in \mathbf{Q}^{\times} \times \mathbf{Q}^{\times}$ imply that $T_0 \cdot \mathcal{V}_{\mathfrak{m}}$ is isomorphic to the $\ell$-adic Tate module of the $1$-motive $\mathbf{Z} \rightarrow \mathbf{G}_m$ sending $1$ to the class of $p^{\frac{12}{d}}$, which is given by $\begin{pmatrix} \chi_{\ell} & b \\ 0 & 1 \end{pmatrix}$. This concludes the proof of Lemma \ref{lemma_galois_rep}. 
\end{proof}

We now prove that $\mathcal{V}_{\mathfrak{m}}$ gives rise to a continuous homomorphism $R \rightarrow \mathbb{T}_{\mathfrak{m}}$ of local $\mathbf{Z}_{\ell}$-algebras. By Lemma \ref{lemma_galois_rep} , there is a $\mathbb{T}_{\mathfrak{m}}$ basis of $\mathcal{V}_{\mathfrak{m}}$ giving rise to a representation $\rho : \Gal(\overline{\mathbf{Q}}/\mathbf{Q}) \rightarrow \GL_2(\mathbb{T}_{\mathfrak{m}})$ such that the reduction of $\rho$ modulo $\mathfrak{m}$ is $\overline{\rho}$, which is the deformation condition \ref{def_condition_reduction}. The conditions \ref{def_condition_ramification}, \ref{def_condition_determinant} and \ref{def_condition_finite_flat}  follow from the analogous statement for the $\Gal(\overline{\mathbf{Q}}/\mathbf{Q})$-modules $\mathcal{V}_{\mathfrak{m}}^{\Eis}$ (by Lemma \ref{lemma_galois_rep}) and $\mathcal{V}_{\mathfrak{m}}^{0}$ (which is well-known). Condition \ref{def_condition_semistable}  follows from (\ref{ordinary_exact_sequence_supersingular}). Thus, we get a canonical continuous homomorphism $u : R \rightarrow \mathbb{T}_{\mathfrak{m}}$ of local $\mathbf{Z}_{\ell}$-algebras. It remains to show that $u$ is an isomorphism if $\ell \geq 5$. This follows from \cite[Corollary 7.1.3]{WWE}: the authors construct a universal pseudo-deformation ring $R'$ with an isomorphism $v:R' \xrightarrow{\sim} \mathbb{T}_{\mathfrak{m}}$. Obviously, $v$ is the composition of a map $R' \rightarrow R$ and $u$. Since $v$ is an isomorphism, so is $u$.
\end{proof}

\bibliography{biblio}
\bibliographystyle{plain}
\newpage

\end{document}